\def\es{\emptyset}
\def\eps{\varepsilon}
\def\Z{\mathbb{Z}}
\def\R{\mathbb{R}}
\def\Av{\mathrm{Av}}
\def\dist{\mathrm{dist}}
\def\max{\mathrm{max}}
\def\op{\mathrm{open}}
\def\cl{\mathrm{closed}}
\def\UST{\mathsf{UST}}
\def\WSF{\mathsf{WSF}}
\def\cC{\mathcal{C}}
\def\cE{\mathcal{E}}
\def\cN{\mathcal{N}}
\def\cR{\mathcal{R}}
\def\cT{\mathcal{T}}
\def\LE{\mathrm{LE}}
\def\p{\mathbf{p}}
\def\Pr{\mathbf{P}}
\def\E{\mathbf{E}}
\def\one{\mathbf{1}}
\def\tail{\mathrm{tail}}
\def\head{\mathrm{head}}
\def\weight{\mathrm{weight}}
\def\outdeg{\mathrm{outdeg}}
\def\diff{\mathrm{diff}}
\def\UST{\mathsf{UST}}
\def\i1{\mathbf{i_1}}
\def\i1pr{\mathbf{i_1'}}
\def\j1{\mathbf{j_1}}
\def\j2{\mathbf{j_2}}
\def\j3{\mathbf{j_3}}
\def\j4{\mathbf{j_4}}
\def\jpr1{\mathbf{j''_1}}
\def\jpr2{\mathbf{j''_2}}
\def\jpr4{\mathbf{j''_4}}
\def\w1{\mathbf{w_1}}
\def\w3{\mathbf{w_3}}
\def\w4{\mathbf{w_4}}
\newcommand{\eqn}[2]{\begin{equation}\label{#1}#2\end{equation}}
\newcommand{\eqnst}[1]{\begin{equation*}#1\end{equation*}}
\newcommand{\eqnspl}[2]{\begin{equation}\begin{split}\label{#1}%
    #2\end{split}\end{equation}}
\newcommand{\eqnsplst}[1]{\begin{equation*}\begin{split}%
    #1\end{split}\end{equation*}}
\theoremstyle{plain}
\newtheorem{theorem}{Theorem}[section]
\newtheorem{lemma}[theorem]{Lemma}
\newtheorem{proposition}[theorem]{Proposition}
\newtheorem{corollary}[theorem]{Corollary}
\theoremstyle{definition}
\newtheorem{definition}[theorem]{Definition}
\newtheorem{exercise}[theorem]{Exercise}
\newtheorem{example}[theorem]{Example}
\theoremstyle{remark}
\newtheorem{remark}[theorem]{Remark}
\newtheorem{conjecture}[theorem]{Conjecture}
\newtheorem{open}[theorem]{Open Question}
\begin{document}

\title{Sandpile models}

\author{Antal A.~J\'arai \thanks{Department of Mathematical Sciences,
University of Bath, Claverton Down, Bath BA2 7AY, United Kingdom. 
Email: {\tt A.Jarai@bath.ac.uk}}}

\maketitle

\begin{abstract}
This survey is an extended version of lectures given at the Cornell Probability
Summer School 2013. The fundamental facts about the Abelian sandpile model 
on a finite graph and its connections to related models are presented.
We discuss exactly computable results via Majumdar and Dhar's method. 
The main ideas of Priezzhev's computation of the height probabilities in 2D
are also presented, including explicit error estimates involved in passing 
to the limit of the infinite lattice. We also discuss various questions 
arising on infinite graphs, such as convergence to a sandpile measure, and
stabilizability of infinite configurations. 
\end{abstract}

\textbf{AMS subject classification:} Primary 60K35; secondary 82B20 \\

\textbf{Key-words:} Abelian sandpile, chip-firing, uniform spanning tree, 
loop-erased random walk, Wilson's algorithm, burning bijection,
height probabilities.

\tableofcontents

\section{Introduction}
\label{sec:intro}

The Abelian sandpile model and close variants were introduced 
several times in different contexts independently. There is motivation
coming from statistical physics, probability and combinatorics. 
However, we are going to delay a detailed discussion of where the
model comes from to Section \ref{sec:motivation}, and start with its
definition and some of its basic properties in Section \ref{sec:model}. 
There are a number of reasons why this seems to be a good choice:
\begin{enumerate}
\item The basic model is very simple to define, and some of its
fundamental properties can be established without any serious 
pre-requisites. It is hoped that the model will have sufficient
appeal on its own without motivation in advance. 
\item We do not want to assume prior familiarity with statistical
physics models such as percolation, the Ising model, etc. However,
since the connection with critical phenomena is very important, 
it has to be explained, and it will be easier to do so when the 
basic model can be used as illustration. We have attempted to organize 
Section \ref{sec:motivation} in such a way that a reader unfamiliar
with statistical physics has a quick access to some important concepts.
\item As part of the motivation, we will also be ready to state some of
the main open questions. 
\end{enumerate}

There are a number of excellent surveys already available on 
sandpile models \cites{IP98,MRZ01,MRS05,Dhar06,Redig,HLMPPW,LP10,LP17}. 
Our focus is similar to that of Redig's 
notes \cite{Redig}, in that we cover rigorous results
roughly at the level of beginning PhD students. 
On the other hand, we have incorporated topics complementary to those in
\cite{Redig} and some results that are more recent. For example, we
discuss connections to the Tutte polynomial, the rotor-router walk
and a large part of Priezzhev's computation of height probabilities 
in 2D. Dhar's extensive survey \cite{Dhar06}, written from the point of view
of theoretical physics, will be an invaluable guide 
to anyone wanting to learn about the model.
An aspect of the theory that does not seem to receive much attention in 
the physics literature, though, is the precise arguments involving the 
limit of infinite graphs. Here we explain how this can be done based 
on one-endedness of components of the wired uniform spanning 
forest \cite{JW12}. The connection to the rotor-router model 
is due to \cite{HLMPPW}, that extends many of the basic results to 
directed graphs. For simplicity, in these notes we restricted attention 
to undirected graphs.

The outline of the paper is as follows. Section \ref{sec:model} 
introduces the sandpile Markov chain, recurrent configurations, 
the sandpile group and Dhar's formula for the average number of 
topplings. In Section \ref{sec:motivation} we give a 
brief introduction to critical phenomena using the percolation model
as example. Self-organized criticality is first illustrated with the 
forest-fire model built on the Erd\H{o}s-R\'{e}nyi random graph, that 
is perhaps the most intuitive example of the concept.
Then we discuss self-organized criticality in the Abelian sandpile 
model in terms of critical exponents. Section \ref{sec:connections} 
starts with the burning
bijection of Majumdar and Dhar and the connection to uniform spanning
trees. Following this we present the connections to the rotor-router 
model and the Tutte polynomial. Section \ref{sec:determinantal} 
is devoted to exactly computable results, and starts with Majumdar and 
Dhar's method. The scaling limit of the height $0$ field is discussed. 
Section \ref{ssec:height-123} is devoted to an exposition of the
computation of height probabilities in 2D due to Priezzhev, and
is followed by further 2D results in Section \ref{ssec:corr-123}.
Section \ref{sec:measures} is devoted to questions on infinite graphs
and highlights the role that properties of the wired uniform spanning 
forest play in infinite volume limits. Finally, 
Section \ref{sec:infinite-conf} discusses certain questions of 
stabilizability of infinite configurations.

\section{The Abelian sandpile model / chip-firing game on a finite graph}
\label{sec:model}

\subsection{Definition of the model}
\label{ssec:model-def}

Let $G = (V \cup \{ s \}, E)$ be a finite, connected multigraph
(i.e.~we allow multiple edges between vertices). The distinguished vertex
$s$ is called the \textbf{sink}. We exclude loop-edges for 
simplicity (their presence would involve only trivial modifications).
We write $\deg_G(x)$ for the degree of the vertex $x$ in the graph $G$,
and we write $x \sim y$ to denote that vertices $x$ and $y$ are
connected by at least one edge.

\begin{example}
\label{example:wired-graph}
Let $V \subset \Z^d$ be finite. Identify all vertices in $V^c = \Z^d \setminus V$
into a single vertex that becomes the sink $s$. Then remove all loop-edges
at $s$. This is called the \textbf{wired graph} induced by $V$.
Instead of $\Z^d$, we can start from any locally finite, infinite,
connected graph.
\end{example}

A \textbf{sandpile} is a collection 
of indistinguishable particles (chips, sand grains, etc.) on the 
vertices in $V$. A sandpile is hence specified by a map
$\eta : V \to \{ 0, 1, 2, \dots \}$. We say that $\eta$ is 
\textbf{stable at $x \in V$}, if $\eta(x) < \deg_G(x)$, and 
we say that it is \textbf{stable}, if it is stable at all 
$x \in V$.

We now introduce a dynamics that stabilizes any unstable sandpile.
If $\eta$ is unstable at some $x \in V$ (i.e.~$\eta(x) \ge \deg_G(x)$),
$x$ is \textbf{allowed to topple} which means that $x$ sends one
particle along each edge incident to it. (In the combinatorics literature
it is common to say \textbf{the vertex $x$ fires} by sending chips
to its neighbours.) On toppling the vertex $x$, the particles are 
re-distributed as follows:
\eqnsplst
{ \eta(x) &\ \longrightarrow\ \eta(x) - \deg_G(x) \\
  \eta(y) &\ \longrightarrow\ \eta(y) + a_{xy}, \quad y \in V,\, y \not= x, }
where $a_{xy} = \text{number of edges between $x$ and $y$}$.
Regarding $\eta$ as a row vector, this can be concisely written as
\eqn{e:topple}
{ \eta \longrightarrow \eta - \Delta'_{x,\cdot}, }
where 
\eqnsplst
{ \Delta'_{xy}
  &= \begin{cases}
     \deg_G(x) & \text{if $x = y \in V$;}\\
     -a_{xy}   & \text{if $x \not= y$, $x, y \in V$;}
     \end{cases} \\
  \Delta'_{x,\cdot}
  &= \text{row $x$ of $\Delta'$}. }
That is, if $\Delta = \text{graph Laplacian of $G$}$ 
then $\Delta' = \text{restriction of $\Delta$ to $V \times V$}$.
Particles arriving at the sink are lost, that is, we do not
keep track of them. Observe that requiring $\eta(x) \ge \deg_G(x)$
before toppling ensures that we still have a sandpile 
after toppling (i.e.~the number of particles at $x$ is still 
non-negative after toppling). We also say in this case that toppling $x$ is
\textbf{legal}.

Toppling a vertex may create further unstable 
vertices. 

\begin{definition}
Given a sandpile $\xi$, we define its \textbf{stabilization}
\eqnst
{ \xi^\circ \in \Omega_G 
  := \{ \text{stable sandpiles} \}
  = \prod_{x \in V} \{ 0, 1, \dots, \deg_G(x) - 1 \}, }  
by carrying out all possible legal topplings, in any order, until
a stable sandpile is reached.
\end{definition}

\begin{theorem}
\label{thm:stabilization}
\cite{Dhar90}
The map $\xi \mapsto \xi^\circ$ is well-defined.
\end{theorem}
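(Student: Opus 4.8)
The claim has two parts hidden in the phrase "well-defined": first, that the stabilization process terminates (only finitely many legal topplings can be performed), and second, that the resulting stable configuration, as well as the number of times each vertex topples, does not depend on the order in which legal topplings are carried out. The plan is to establish termination first, and then derive order-independence from a combinatorial lemma usually called the \emph{strong confluence} or \emph{abelian} property.

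For termination, I would argue that since $G$ is connected and contains the sink $s$, from every vertex $x \in V$ there is a path to $s$. Fix such paths; then whenever a vertex topples, the "potential" measured by a suitable weighted count of chips (weighting a chip at $x$ by the distance to $s$, say, or more simply tracking that each toppling at $x$ sends at least one chip one step closer to $s$ along a fixed spanning tree rooted at $s$) strictly decreases, and chips reaching $s$ are removed. More concretely, one shows that the total number of topplings is bounded: if some vertex $x$ toppled infinitely often then, since its neighbours receive a chip each time, a neighbour on a shortest path to $s$ would also topple infinitely often, and iterating this along the path forces $s$ to "receive" infinitely many chips, i.e. infinitely many chips leave $V$; but the total number of chips entering $V$ is bounded (only internal redistribution plus the finite initial amount), a contradiction. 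Hence every legal stabilization sequence is finite.

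For order-independence I would prove the standard diamond lemma: if from a configuration $\eta$ both toppling $x$ and toppling $y$ are legal (with $x \ne y$), then after toppling $x$ the vertex $y$ is still unstable and legal to topple, after toppling $y$ the vertex $x$ is still unstable and legal, and the two configurations $(\eta - \Delta'_{x,\cdot} - \Delta'_{y,\cdot})$ obtained either way coincide — this last point is immediate because $\eta - \Delta'_{x,\cdot} - \Delta'_{y,\cdot}$ does not depend on the order of subtraction, and the legality/instability of $y$ after toppling $x$ holds because toppling $x$ can only add chips to $y$ (it cannot decrease $\eta(y)$ since $x \ne y$), so $\eta(y) \ge \deg_G(y)$ is preserved. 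Given this local confluence together with termination, a Newman-type argument (Newman's lemma: a terminating, locally confluent rewriting system is confluent) shows that any two maximal sequences of legal topplings reach the same stable configuration $\xi^\circ \in \Omega_G$; one can even track the toppling vector (the function $u : V \to \mathbb{Z}_{\ge 0}$ recording how many times each vertex topples) and show it too is independent of the order, by a least-counterexample argument comparing two sequences step by step.

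The main obstacle is the termination step: local confluence is essentially a one-line observation, but bounding the number of topplings requires genuinely using the presence of the sink and connectivity of $G$. The cleanest route is probably the "chip bookkeeping along paths to the sink" argument sketched above, or equivalently observing that if $u$ is any toppling vector of a legal sequence then $\eta - u\Delta' \ge 0$ coordinatewise, and since $\Delta'$ is a nonsingular $M$-matrix (its inverse has nonnegative entries, because $G$ is connected with a sink), this forces $u \le \Delta'^{-1}\eta$ entrywise, giving an explicit a priori bound on the number of topplings of each vertex. I would likely present this $M$-matrix bound as it is short and also yields the uniform bound needed later for Dhar's formula.
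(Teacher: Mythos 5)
Your proposal is correct and follows essentially the same route as the paper: termination via the ``chips are only lost at the sink'' induction along paths to $s$, and order-independence via the commutation (diamond) lemma for legal topplings followed by a step-by-step comparison of two stabilizing sequences, which also yields equality of the toppling vectors. The $M$-matrix bound $u \le \eta\,(\Delta')^{-1}$ that you offer as an alternative termination argument is also valid (and gives the quantitative control that resurfaces later in Dhar's formula), but the paper itself sticks to the elementary induction.
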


\begin{proof}
We need to show:
\begin{itemize}
\item[(a)] Only finitely many topplings can occur, regardless 
of how we choose to topple vertices.
\item[(b)] The final stable configuration is independent of the
sequence of topplings chosen.
\end{itemize}
In order to see (a), observe that if $x \sim s$ then $x$ can
topple only finitely many times (the system loses particles 
to $s$ on each toppling of $x$). It follows by induction that 
for all $k \ge 1$, if $x \sim x_{k-1} \sim \dots \sim x_1 \sim s$,
then $x$ can topple only finitely many times. Since $G$
is connected, we are done.

We now prove (b) in two steps.

(i) \emph{``Topplings commute''}. If $x, y \in V$, $x \not= y$ and
$\eta$ is unstable at both $x$ and $y$, then writing $T_x$ to denote
the effect of toppling $x$ we claim that 
\eqn{e:topplings-commute}
{ T_y T_x \eta 
  = T_x T_y \eta. }
Observe that in either order, both topplings are legal.
Then the claim is immediate from observing that both sides equal
$\eta - \Delta'_{x,\cdot} - \Delta'_{y,\cdot}$.

(ii) Suppose now that 
\eqn{e:1st}
{ x_1, x_2, \dots, x_k }
and 
\eqn{e:2nd}
{ y_1, y_2, \dots, y_\ell }
are two sequences of vertices that are both possible stabilizing sequences 
of $\eta$. That is, when carried out in order from left to right,
in both sequences each toppling is legal, and the final results
are stable configurations. If $\eta$ is already stable, then 
$k = \ell = 0$ and there is nothing to prove.

Otherwise, we have $k, \ell \ge 1$ and $\eta(x_1) \ge \deg_G(x_1)$.
Therefore, $x_1$ must occur somewhere in the second sequence, otherwise
the second sequence would never reduce the number of particles at $x_1$.
Let $x_1 = y_i$, $1 \le i \le \ell$, and suppose that $i$ is the smallest 
such index. By part (i), the toppling of $y_i = x_1$ can be moved 
to the front of the second stabilizing sequence. Precisely, we have
\eqnsplst
{ T_{x_1} T_{y_{i-1}} \dots T_{y_1} \eta
  &= T_{y_{i-1}} T_{x_1} T_{y_{i-2}} \dots T_{y_1} \eta \\
  &= T_{y_{i-1}} T_{y_{i-2}} T_{x_1} \dots T_{y_1} \eta \\
  &\ \ \vdots \\
  &= T_{y_{i-1}} T_{y_{i-2}} \dots T_{y_1} T_{x_1} \eta. }
It follows that the sequence
\eqn{e:2nd'}
{ x_1, y_1, \dots, y_{i-1}, y_{i+1}, \dots, y_\ell }
also stablizes $\eta$. We now remove $x_1$ from the beginning 
of the sequences \eqref{e:1st} and \eqref{e:2nd'} and repeat the 
argument for $T_{x_1} \eta$. Iterating gives that $k = \ell$ and the 
multisets $[x_1,\dots,x_k]$ and $[y_1,\dots,y_\ell]$ are permutations 
of each other. That is, each vertex topples the same number of times in 
the two stabilizing sequences, and hence they reach the same final 
configuration. This completes the proof that the
stabilization $\xi \mapsto \xi^\circ$ is well-defined.
\end{proof}

\begin{remark}
Sometimes, especially in the physics literature, a stable sandpile is 
defined as having possible values $1, \dots, \deg_G(x)$ at $x$, and
a toppling of $x$ is allowed when $\eta(x) > \deg_G(x)$.
It is easy to see that this merely amounts to a shift
of coordinates, and defines the same model.
\end{remark}

\medbreak

{\bf Motivating remarks.} The sandpile dynamics can be
viewed as a toy model of avalanche-type phenomena. Adding a single particle 
to the pile and stabilizing can induce a complex sequence of topplings,
called an ``avalanche''. However, the model is \emph{not intended} as a 
realistic model of sand. In order to model sand grains moving down a slope, 
a more suitable condition for toppling could be that the \emph{discrete gradient} 
exceeds some fixed critical value $d_c > 0$. It is easy to see that in such 
models topplings do not commute. In fact, if $y_1 \sim x \sim y_2$ and
$\eta(x) - \eta(y_1) = d_c = \eta(x) - \eta(y_2)$, one
needs to make a choice in the model if a particle from $x$ will move to 
$y_1$ or $y_2$. We will see later that commutativity in the
Abelian sandpile has many nice consequences, which make
it more amenable to study. The point is that the Abelian model 
already possesses important qualitative features of avalanche-like 
phenomena and as we will see, has very nontrivial behaviour.
We will return to this in Section \ref{sec:motivation}.

\medbreak

\begin{exercise} \emph{(Asymmetric sandpile model)}
Let $G = (V \cup \{ s \}, E)$ be a \emph{directed} graph with a distinguished vertex $s$.
Find appropriate definitions of ``stable'' and ``toppling''.
Find a condition on $G$ that ensures that stabilization is well-defined.
See \cite{HLMPPW}.
\end{exercise}

\begin{exercise} \emph{(Least action principle)}
Check that the argument of Theorem \ref{thm:stabilization}(b) gives the 
following stronger statement. 
Suppose that $x_1, x_2, \dots, x_k$ is a stabilizing
sequence for $\eta$ consisting of legal topplings. Suppose that 
$y_1, y_2, \dots, y_\ell$ is any other sequence of possibly illegal 
topplings, such that carrying them out results in a stable configuration. 
Then each vertex is toppled at least as many times in the $y$-sequence 
as in the $x$-sequence. In other words, with legal topplings, 
each vertex does the minimum amount of required ``work'' to
stabilize the configuration. See \cite{FLP10} for more on 
this ``least action principle''.
\end{exercise}

\begin{definition}
The \textbf{addition operators} are the maps on sandpiles 
defined by adding one particle at $x$ and stabilizing.
More formally, $E_x \eta = ( \eta + \one_x )^\circ$, where
$\one_x$ is the row vector with $1$ in position $x$ and $0$ 
elsewhere. The sequence of topplings carried out in stabilizing 
$\eta + \one_x$ is called the \textbf{avalanche} induced by
the addition.
\end{definition}

\begin{lemma}
\label{lem:Abelian}
\cite{Dhar90}
We have $E_x E_y = E_y E_x$ for all $x, y \in V$.
\end{lemma}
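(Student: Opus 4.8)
The plan is to reduce everything to the well-definedness of stabilization (Theorem~\ref{thm:stabilization}), by showing that both $E_x E_y \eta$ and $E_y E_x \eta$ equal the common stabilization $(\eta + \one_x + \one_y)^\circ$.

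First I would record an elementary monotonicity fact: if toppling a vertex $z$ is legal for a sandpile $\zeta$ (i.e.~$\zeta(z) \ge \deg_G(z)$), then it is still legal for $\zeta + \one_w$ for any $w \in V$, since adding a particle only increases coordinates and never decreases any of them. Consequently, if $x_1, \dots, x_k$ is a legal stabilizing sequence for a sandpile $\xi$, then the same sequence consists of legal topplings when applied to $\xi + \one_w$ — although it need not be a \emph{stabilizing} sequence for $\xi + \one_w$, since the extra particle may have created new instabilities. Carrying out this sequence on $\xi + \one_w$ produces exactly $\xi^\circ + \one_w$.

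Next comes the key claim: for any sandpile $\xi$ and any vertex $w \in V$,
\[
  (\xi^\circ + \one_w)^\circ = (\xi + \one_w)^\circ .
\]
Indeed, fix a legal stabilizing sequence $x_1, \dots, x_k$ for $\xi$; by the monotonicity fact it is a legal partial sequence for $\xi + \one_w$ with end state $\xi^\circ + \one_w$. Now append to it any legal stabilizing sequence for $\xi^\circ + \one_w$. The concatenation is a legal stabilizing sequence for $\xi + \one_w$, so by Theorem~\ref{thm:stabilization} its end result is $(\xi + \one_w)^\circ$; but by construction that end result is also $(\xi^\circ + \one_w)^\circ$, proving the claim.

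Finally I apply this twice. Taking $\xi = \eta + \one_y$ gives
\[
  E_x E_y \eta = \big( (\eta + \one_y)^\circ + \one_x \big)^\circ = (\eta + \one_y + \one_x)^\circ = (\eta + \one_x + \one_y)^\circ ,
\]
the last equality simply because $\one_x + \one_y$ and $\one_y + \one_x$ are the same row vector. Interchanging the roles of $x$ and $y$ yields $E_y E_x \eta = (\eta + \one_x + \one_y)^\circ$ as well, so $E_x E_y \eta = E_y E_x \eta$; since $\eta$ was arbitrary, $E_x E_y = E_y E_x$. I expect the only real obstacle to be the key claim $(\xi^\circ + \one_w)^\circ = (\xi + \one_w)^\circ$: once the monotonicity-of-legality observation is in hand it is an immediate consequence of the order-independence in Theorem~\ref{thm:stabilization}, but it deserves to be stated carefully, since it is precisely the point where one uses that adding particles never invalidates a previously legal toppling. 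Everything else is bookkeeping.
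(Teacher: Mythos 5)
Your proposal is correct and follows essentially the same route as the paper: the paper also shows both $E_xE_y\eta$ and $E_yE_x\eta$ equal $(\eta+\one_x+\one_y)^\circ$ by running a stabilizing sequence for $\eta+\one_y$ in the presence of the extra particle at $x$ (your monotonicity-of-legality observation) and then invoking Theorem~\ref{thm:stabilization}. Your explicit isolation of the key claim $(\xi^\circ+\one_w)^\circ=(\xi+\one_w)^\circ$ is just a cleanly packaged version of the same argument.
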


\begin{proof}
We have
\eqn{e:ExEy}
{ E_x E_y \eta 
  = ( ( \eta + \one_y )^\circ + \one_x )^\circ }
and 
\eqn{e:EyEx}
{ E_y E_x \eta 
  = ( ( \eta + \one_x )^\circ + \one_y )^\circ. }
We show that both expressions equal
\eqn{e:both}
{ ( \eta + \one_x + \one_y )^\circ. }
To see this, start with the configuration $\eta + \one_x + \one_y$, 
and carry out topplings as in the stabilization of $\eta + \one_y$. 
The extra particle present at $x$ does not affect the legality
of any of the topplings. Hence with the extra particle at $x$ present, 
we arrive at the configuration $( \eta + \one_y )^\circ + \one_x$.
Now carry out any further topplings that are possible, arriving at
the right hand side of \eqref{e:ExEy}. Due to Theorem \ref{thm:stabilization},
the final configuration also equals \eqref{e:both}. Equality of
\eqref{e:both} and \eqref{e:EyEx} is seen similarly.
\end{proof}

So far the dynamics has been determinisitic. We now add randomness
and define the \textbf{sandpile Markov chain} as follows. We take as
state space the set $\Omega_G$ of stable sandpiles. Fix a positive 
probability distribution $p$ on $V$, i.e.~$\sum_{x \in V} p(x) = 1$ 
and $p(x) > 0$ for all $x \in V$. Given the current state
$\eta \in \Omega_G$, pick a random vertex $X \in V$ according to $p$,
add a particle there, and stabilize to obtain the next state of the
Markov chain. That is, the Markov chain makes the transition
\eqnst
{ \eta \ \longrightarrow\  E_X \eta = ( \eta + \one_X )^\circ. }
If the Markov chain has initial state $\eta_0$, we can write 
the time evolution, using Theorem \ref{thm:stabilization} and
Lemma \ref{lem:Abelian}, as
\eqnst
{ \eta_n
  = \left( \eta_0 + \sum_{i=1}^n \one_{X_i} \right)^\circ
  = E_{X_n} \dots E_{X_1} \eta_0, }
where $X_1, X_2, \dots$ are i.i.d.~random variables distributed
according to $p$.

We denote by $\eta^\max$ the sandpile defined by 
$\eta^\max(x) = \deg_G(x) - 1$, $x \in V$. For sandpiles $\eta, \xi$, 
we write $\eta \ge \xi$, if $\eta(x) \ge \xi(x)$ for all $x \in V$.

Recall the following standard terminology for general Markov chains.
For two states $\eta, \xi$ of a Markov chain we say that
\emph{$\xi$ can be reached from $\eta$}, if there exists $n \ge 0$ such that
$\mathbf{p}^n(\eta,\xi) > 0$, where $\mathbf{p}^n$ is the $n$-step
transition probability. We say that \emph{$\eta$ and $\xi$ communicate},
if they can be reached from each other. This is an equivalence relation,
and the equivalence classes are the \emph{communicating classes}.
A state $\eta$ is \emph{recurrent}, if starting from $\eta$ the chain returns
to $\eta$ with probability $1$, and it is \emph{transient} otherwise. 
Recurrence and transience are \emph{class properties}, that is, if
one state in a class is recurrent then all states are.

\begin{theorem}
\label{thm:recurrent}
\cites{Dhar90,HLMPPW}
Consider the sandpile Markov chain on any finite connected multigraph
$G = (V \cup \{ s \}, E)$ (satisfying $p(x) > 0$ for all $x \in V$).\\
(i) There is a single recurrent class.\\
(ii) The following are equivalent for $\eta \in \Omega_G$:
\begin{itemize}
\item[(a)] $\eta$ is recurrent;
\item[(b)] there exists a sandpile $\xi \ge \eta^\max$ such that 
$\xi^\circ = \eta$;
\item[(c)] for any sandpile $\sigma$, it is possible to reach $\eta$ from 
$\sigma$ by adding particles and toppling vertices, i.e.~there exists a 
sandpile $\zeta$ such that $\eta = (\sigma + \zeta)^\circ$.
\end{itemize}
\end{theorem}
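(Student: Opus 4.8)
The plan is to reduce everything to one combinatorial fact: from \emph{any} sandpile one can reach $\eta^\max$ by adding particles and stabilizing. First I would set up the dictionary between the Markov chain and the deterministic dynamics. Since $p(x)>0$ for every $x$ and the addition operators commute (Lemma~\ref{lem:Abelian}), a state $\xi$ is reachable from $\eta$ in the chain if and only if $\xi=(\eta+\zeta)^\circ$ for some sandpile $\zeta\ge 0$: after $n$ steps the chain sits at $(\eta+\sum_{i=1}^{n}\one_{X_i})^\circ$, and every value of $\zeta=\sum_{i=1}^{n}\one_{X_i}$ occurs with positive probability. I would also use the identity $(\eta+\zeta)^\circ=(\eta^\circ+\zeta)^\circ$, obtained by carrying out the stabilizing topplings of $\eta$ first (the extra $\zeta$ particles never block a legal toppling) and then invoking Theorem~\ref{thm:stabilization}.

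The core step is the claim: \emph{for every sandpile $\sigma$ there is a sandpile $\zeta\ge 0$ with $(\sigma+\zeta)^\circ=\eta^\max$.} By the identity above it suffices to treat a stable $\sigma$, so $\sigma\le\eta^\max$. Write $d_s(x)$ for the number of edges from $x$ to $s$; since there are no loop-edges, $\deg_G(y)=d_s(y)+\sum_{x\in V,\,x\ne y}a_{xy}$, so a \emph{global toppling} --- one in which every vertex of $V$ is toppled exactly once --- changes any configuration by $-\sum_{x\in V}\Delta'_{x,\cdot}=-d_s$, reading $d_s$ as a row vector on $V$. Fix $k\ge 1$ and put $\zeta:=\eta^\max-\sigma+k\,d_s\ge 0$, $\rho:=\sigma+\zeta=\eta^\max+k\,d_s$. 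I would show $\rho^\circ=\eta^\max$ by performing $k$ global topplings in succession and checking that they stay legal. The configurations from which a global toppling is to be performed are exactly $\eta^\max+j\,d_s$ for $j=k,k-1,\dots,1$, each $\ge\eta^\max$. Now if $\eta\ge\eta^\max$ and every connected component of the subgraph of $G$ induced on $V$ contains a vertex $v$ with $\eta(v)\ge\deg_G(v)$, then a legal global toppling is possible: treat each component separately (topplings in distinct components do not interact, since a vertex only feeds its neighbours in its own component and the sink), toppling first such a vertex $v$ and thereafter, repeatedly, any not-yet-toppled vertex $w$ having an already-toppled neighbour in that component; since $w$ has not been toppled it still carries its initial $\ge\deg_G(w)-1$ particles and has gained at least one more, so toppling $w$ is legal. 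Finally, since $G$ is connected, each component of the induced subgraph on $V$ contains some $v\sim s$, and for $j\ge 1$ we have $(\eta^\max+j\,d_s)(v)=\deg_G(v)-1+j\,d_s(v)\ge\deg_G(v)$; hence the $k$ global topplings are all legal, after them we are at $\eta^\max+0\cdot d_s=\eta^\max$, which is stable, and so $\rho^\circ=\eta^\max$ by Theorem~\ref{thm:stabilization}.

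Granting the claim, the remainder is Markov-chain bookkeeping. Every state reaches $\eta^\max$, so the communicating class of $\eta^\max$ is closed (nothing reachable from $\eta^\max$ can fail to return to it), hence recurrent; and any recurrent state, also reaching $\eta^\max$, lies in this class. This yields (i), and shows that (a) holds exactly when $\eta$ is reachable from $\eta^\max$, i.e.\ exactly when there is a sandpile $\zeta\ge 0$ with $(\eta^\max+\zeta)^\circ=\eta$. Putting $\xi=\eta^\max+\zeta$ turns this into (b); conversely any $\xi\ge\eta^\max$ with $\xi^\circ=\eta$ gives such a $\zeta=\xi-\eta^\max$. Hence (a)$\Leftrightarrow$(b). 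For (c): taking $\sigma=\eta^\max$ in (c) gives (c)$\Rightarrow$(a); and if $\eta$ is recurrent and $\sigma$ is any sandpile, the claim provides $\zeta_1$ with $(\sigma+\zeta_1)^\circ=\eta^\max$, while recurrence of $\eta$ provides $\zeta_2$ with $(\eta^\max+\zeta_2)^\circ=\eta$, so $\zeta=\zeta_1+\zeta_2$ works because $(\sigma+\zeta_1+\zeta_2)^\circ=((\sigma+\zeta_1)^\circ+\zeta_2)^\circ$.

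I expect the only real difficulty to be the claim, and within it the check that the cascade of $k$ global topplings stays legal. This is exactly where connectedness of $G$ enters --- to furnish an unstable vertex in every component of the subgraph induced on $V$ --- and it needs a little care precisely because that induced subgraph can itself be disconnected.
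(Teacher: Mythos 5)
Your argument is correct and follows the same skeleton as the paper's proof: every state reaches $\eta^\max$, so its communicating class is the unique recurrent class; recurrence is then equivalent to reachability from $\eta^\max$, which is exactly (b); (c)$\Rightarrow$(b) by taking $\sigma=\eta^\max$; and (b)$\Rightarrow$(c) by composing a route from $\sigma$ to $\eta^\max$ with one from $\eta^\max$ to $\eta$. The only divergence is your proof of the core claim, where the global-toppling cascade with $k\ge 1$ is valid but superfluous: once you have reduced to a stable $\sigma\le\eta^\max$ via $(\sigma+\zeta)^\circ=(\sigma^\circ+\zeta)^\circ$, taking $\zeta=\eta^\max-\sigma$ (your own construction with $k=0$) already lands you at $\eta^\max$ with no topplings at all, which is the one-line observation the paper uses.
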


\begin{proof}
(i) The configuration $\eta^\max$ is reachable for the Markov chain from
any $\zeta \in \Omega_G$ (by addition of particles). Hence $\eta^\max$ is
recurrent, and the recurrent class containing it is the only recurrent class.

(ii) (a) $\Longrightarrow$ (b). If $\eta$ is recurrent, it is reachable
from $\eta^\max$, i.e.~there exist $k \ge 0$ and $x_1, \dots, x_k \in V$ such 
that 
\eqnst
{ \eta 
  = E_{x_k} \dots E_{x_1} \eta^\max 
  = \left( \eta^\max + \sum_{i = 1}^k \one_{x_i} \right)^\circ. }
Hence we can take $\xi$ to be the configuration inside the parentheses
on the right hand side.

(b) $\Longrightarrow$ (a). If $\xi^\circ = \eta$, $\xi \ge \eta^\max$,
we can write $\xi = \eta^\max + \sum_{i=1}^k \one_{x_i}$ with 
some $x_1, \dots, x_k \in V$, so that $\eta = E_{x_k} \dots E_{x_1} \eta^\max$.
This shows that $\eta$ is reachable from $\eta^\max$, and hence 
recurrent.

(c) $\Longrightarrow$ (b). This is obvious by taking $\sigma = \eta^\max$.

(b) $\Longrightarrow$ (c). 
Let $\xi \ge \eta^\max$ be such that 
$\xi^\circ = \eta$. Take $\zeta := \xi - \sigma^\circ \ge \eta^\max - \sigma^\circ \ge 0$. 
Then since $\xi - \sigma^\circ \ge 0$, starting from 
$\sigma + \zeta = \sigma + \xi - \sigma^\circ$ we can legally topple
a sequence of vertices that stabilizes $\sigma$, and arrive at the
configuration $\sigma^\circ + \xi - \sigma^\circ = \xi$. 
Now we can legally topple a sequence of vertices that stabilizes $\xi$, 
and arrive at the configuration $\eta$.
This shows that $\eta$ has property (c).
\end{proof}

\begin{definition}
We denote by $\cR_G$ the set of recurrent sandpiles. 
\end{definition}

\subsection{The sandpile group / critical group}
\label{ssec:sandpile-group}

Let $G = (V \cup \{ s \}, E)$ be a finite connected multigraph.
We now define the sandpile group of $G$.
Consider $\Z^V$ as an Abelian group. The integer row span
$\Z^V \Delta'_G$ of the matrix $\Delta'_G$ forms a subgroup of
$\Z^V$. For $\xi, \zeta \in \Z^V$, let us write $\xi \sim \zeta$
if $\xi - \zeta \in \Z^V \Delta'_G$. This is an equivalence
relation, and we write $[\xi]$ to denote the equivalence
class containing $\xi$. The equivalence classes form 
an Abelian group, the factor group
\eqnst
{ K_G 
  := \Z^V / \Z^V \Delta'_G. }
The group $K_G$ is called the \textbf{sandpile group} of $G$
(sometimes called the \textbf{critical group} in the 
combinatorics literature). Any toppling corresponds to
subtracting a row of $\Delta'_G$ from the configuration
(recall \eqref{e:topple}). Therefore, during
stabilization a configuration is replaced by an equivalent one.
Hence we can expect that the group $K_G$ plays a role in
understanding the sandpile Markov chain. This is made
precise by the following theorem.

\begin{theorem}
\label{thm:group}
\cite{Dhar90}
(i) Every equivalence class in $K_G$ contains precisely one recurrent 
sandpile in $\cR_G$. In particular, 
\eqnst
{ | \cR_G |
  = | K_G |
  = \det(\Delta'_G). }
(ii) Consequently, the following operation 
$\oplus : \cR_G \times \cR_G \to \cR_G$ turns $\cR_G$ into an Abelian
group isomorphic to $K_G$:
\eqnst
{ \eta \oplus \xi
  := (\eta + \xi)^\circ. }
\end{theorem}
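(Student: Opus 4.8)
The plan is to prove part (i) first, since part (ii) follows almost immediately from it. For part (i), I must show that each coset of $\Z^V \Delta'_G$ in $\Z^V$ contains exactly one element of $\cR_G$. I would establish \emph{existence} and \emph{uniqueness} separately.

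For existence: given an arbitrary $\xi \in \Z^V$, I first add a large multiple of some fixed positive vector (for instance, add copies of $\eta^\max + \one_x$ summed over $x$, or simply add enough particles everywhere) to obtain a configuration $\xi'$ with $\xi' \ge \eta^\max$ and $\xi' \ge 0$; this changes the coset, so instead I should be more careful and argue within a fixed coset. The cleaner route: by adding suitable nonnegative integer combinations of the rows of $\Delta'_G$ — which is the same as performing topplings, or \emph{reverse} topplings — one can move any $\xi$ to a configuration that is both nonnegative and $\ge \eta^\max$. Concretely, since $\Delta'_G$ has a strictly positive row-null-vector relationship coming from connectivity (the vector of degrees minus boundary terms), one can add a large multiple of an all-topplings vector to make all coordinates as large as desired while staying in the same coset; then stabilize. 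The stabilization stays in the coset (each toppling subtracts a row of $\Delta'_G$), is stable, and by Theorem \ref{thm:recurrent}(ii)(b) it is recurrent because it arose as $\xi''^\circ$ with $\xi'' \ge \eta^\max$. This gives a recurrent representative.

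For uniqueness: suppose $\eta, \eta' \in \cR_G$ with $\eta - \eta' \in \Z^V \Delta'_G$. By Theorem \ref{thm:recurrent}(ii)(c), applied with $\sigma = \eta'$, there is a sandpile $\zeta \ge 0$ with $\eta = (\eta' + \zeta)^\circ$; hence $\eta - \eta' \equiv \zeta \pmod{\Z^V \Delta'_G}$, so $\zeta \in \Z^V\Delta'_G$ as well, i.e.\ $\zeta = m \Delta'_G$ for some integer row vector $m$. The key sub-claim is then that adding $\zeta = m\Delta'_G$ and stabilizing returns $\eta'$ itself: writing $m = m_+ - m_-$ with $m_\pm \ge 0$ componentwise, the vector $m_+\Delta'_G$ corresponds to a batch of (possibly illegal) topplings and $-m_-\Delta'_G$ to reverse topplings, and one shows using commutativity of topplings (equation \eqref{e:topplings-commute}) together with the least-action/confluence idea behind Theorem \ref{thm:stabilization} that $(\eta' + m\Delta'_G)^\circ = (\eta')^\circ = \eta'$. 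I expect this confluence argument — showing that "toppling by a row-span vector and then stabilizing is the identity on stable configurations" — to be the main obstacle, as it is the one place where one genuinely needs the Abelian structure beyond a formal coset count; it is essentially the statement that the stabilization map descends to a well-defined section $K_G \to \cR_G$. With uniqueness in hand, the bijection $\cR_G \leftrightarrow K_G$ gives $|\cR_G| = |K_G| = |\Z^V/\Z^V\Delta'_G| = |\det \Delta'_G|$, the last equality being the standard Smith-normal-form fact that the index of a full-rank integer sublattice equals the absolute value of the determinant of a defining matrix (and $\det \Delta'_G > 0$ since it is an irreducible diagonally dominant M-matrix, or by the matrix-tree theorem it counts spanning trees).

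For part (ii): the bijection $\Phi : K_G \to \cR_G$ from part (i) sending $[\xi]$ to its unique recurrent representative is, by construction, compatible with addition in the sense that $\Phi([\xi] + [\zeta])$ is the recurrent representative of $[\xi + \zeta]$, which equals $(\xi + \zeta)^\circ$ whenever $\xi, \zeta$ are already recurrent — here I use that stabilization preserves cosets and lands in $\cR_G$. Transporting the group law of $K_G$ through $\Phi$ therefore yields exactly $\eta \oplus \xi = (\eta+\xi)^\circ$, so $(\cR_G, \oplus)$ is an Abelian group isomorphic to $K_G$. I would close by remarking that associativity and the group axioms for $\oplus$ need not be checked by hand precisely because they are inherited from $K_G$ via the bijection — this is the payoff of phrasing part (i) as a bijection of groups rather than merely of sets.
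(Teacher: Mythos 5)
Your overall architecture (existence of a recurrent representative in each coset, uniqueness, then transporting the group law) matches the paper, and the existence step, the determinant identification, and part (ii) are all fine. The problem is the step you yourself flag as the main obstacle: the sub-claim that $(\eta' + m\Delta'_G)^\circ = \eta'$ when $m\Delta'_G \ge 0$ and $\eta'$ is recurrent. The justification you offer --- commutativity of topplings \eqref{e:topplings-commute} plus the least-action/confluence idea --- makes no use of the recurrence of $\eta'$, and the statement is \emph{false} without it. For instance, the all-zero configuration is stable but (on most graphs) not recurrent, and adding to it a large multiple of $\eps = \delta - \delta^\circ$ --- which is nonnegative, lies in $\Z^V\Delta'_G$, and dominates $\eta^\max$ for large multiples --- and stabilizing produces a recurrent, hence nonzero, configuration. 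So any argument that works uniformly for stable configurations cannot close this gap. Moreover, even restricting to $m \ge 0$, the least-action principle only gives that the legal stabilization of $\eta' + m\Delta'_G$ topples each vertex \emph{at most} $m(x)$ times, so the result is $\eta' + (m-n)\Delta'_G$ with $0 \le n \le m$; concluding $n = m$ is equivalent to the uniqueness you are trying to prove, so the argument is circular at that point. And when $m$ has entries of both signs, ``reverse topplings'' are not covered by Theorem \ref{thm:stabilization} at all.

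The missing ingredient is exactly the paper's Lemma \ref{lem:id-like}: for recurrent $\eta$ one has $(\eta + \eps)^\circ = \eta$ with $\eps = \delta - \delta^\circ$, a vector in the row span whose entries are all at least $1$. The paper's uniqueness argument then takes $\eta_1 = \eta_2 + \sum_x c_x \Delta'_{x,\cdot}$, forms the common ``ancestor'' $\eta = \eta_1 + \sum_{x: c_x<0}(-c_x)\Delta'_{x,\cdot} = \eta_2 + \sum_{x: c_x>0}c_x\Delta'_{x,\cdot}$, and adds $k\eps$ with $k$ large enough that toppling each $x$ the required number of times is \emph{legal} from $\eta + k\eps$; the two legal sequences reach $\eta_1 + k\eps$ and $\eta_2 + k\eps$, which stabilize to $\eta_1$ and $\eta_2$ by Lemma \ref{lem:id-like}, and Theorem \ref{thm:stabilization} forces $\eta_1 = \eta_2$. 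This is where recurrence genuinely enters (via Theorem \ref{thm:recurrent}(ii)(c) in the proof of Lemma \ref{lem:id-like}), and it is the piece your proposal would need to supply before the rest of your argument goes through.
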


The proof of (i) we give is due to \cite{HLMPPW}.
We will need the following lemma of \cite{HLMPPW} that provides a
configuration with a special property (later it will become clear that
this is a representative of the identity of $K_G$).

\begin{lemma}
\label{lem:id-like}
\cite{HLMPPW}
Let $\eps := \delta - \delta^\circ$, where $\delta$ is defined 
by $\delta(x) = \deg_G(x)$, $x \in V$. If $\eta$ is recurrent, then
$(\eta + \eps)^\circ = \eta$.
\end{lemma}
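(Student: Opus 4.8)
The plan is to add $\eps$ and then use a stabilizing sequence of $\delta$ to ``undo'' that addition. I would begin by recording two elementary facts. Since $\delta^\circ$ is stable, $\delta^\circ(x)\le\deg_G(x)-1$, hence $\eps(x)=\deg_G(x)-\delta^\circ(x)\ge 1$ for every $x\in V$; that is, $\eps\ge\one$, and consequently $\eta^\max+\eps\ge\delta$. Second, I would fix once and for all a legal sequence of topplings $T$ that stabilizes $\delta$; by Theorem~\ref{thm:stabilization} such a sequence exists and takes $\delta$ to $\delta^\circ$.

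Now let $\eta$ be recurrent. By Theorem~\ref{thm:recurrent}(ii) there is a sandpile $\xi\ge\eta^\max$ with $\xi^\circ=\eta$. Arguing exactly as in the proof of Lemma~\ref{lem:Abelian} — the extra $\eps$ chips never destroy the legality of a toppling — one gets $(\eta+\eps)^\circ=(\xi^\circ+\eps)^\circ=(\xi+\eps)^\circ$. By the first observation $\xi+\eps\ge\eta^\max+\eps\ge\delta$, so we may write $\xi+\eps=\delta+\rho$ with $\rho:=\xi+\eps-\delta\ge 0$. The key point is that, started from $\delta+\rho$ rather than from $\delta$, every toppling of $T$ is still legal: since each toppling of $x$ subtracts the fixed vector $\Delta'_{x,\cdot}$ (recall \eqref{e:topple}), the run begun at $\delta+\rho$ differs from the run begun at $\delta$ by the constant nonnegative offset $\rho$ at every step, so wherever a vertex can be toppled in the latter run it can be toppled in the former. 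This run therefore ends at $\delta^\circ+\rho$, and using $\delta=\delta^\circ+\eps$ one computes $\delta^\circ+\rho=\delta^\circ+\xi+\eps-\delta=\xi$. Thus from $\xi+\eps$ we have legally reached $\xi$; continuing with further legal topplings we reach $\xi^\circ=\eta$. Since stabilization does not depend on the order of topplings (Theorem~\ref{thm:stabilization}), $(\xi+\eps)^\circ=\eta$, and hence $(\eta+\eps)^\circ=\eta$.

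The thing to be careful about is avoiding circularity: this lemma feeds into the proof of Theorem~\ref{thm:group}, so the argument may use only Theorems~\ref{thm:stabilization} and~\ref{thm:recurrent} together with the Abelian property, and nothing about the group $K_G$ itself. Granting that discipline, no step is genuinely hard; the one idea that has to be spotted is that $\eps\ge\one$ is precisely what forces a recurrent representative $\xi\ge\eta^\max$, once $\eps$ is added, to sit above $\delta$, so that the stabilizing sequence of $\delta$ can be replayed to cancel the added copy of $\eps$.
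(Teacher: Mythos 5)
Your proof is correct and is essentially the paper's argument: both exhibit a configuration dominating $\delta$ that stabilizes to $\eta+\eps$ along one route and, by replaying a legal stabilizing sequence of $\delta$ against a nonnegative offset and using $\delta=\delta^\circ+\eps$, stabilizes to $\eta$ along another, after which Theorem~\ref{thm:stabilization} closes the argument. The only (minor) difference is that you get the domination from characterization (b) of Theorem~\ref{thm:recurrent}(ii) together with the observation $\eps\ge\one$, whereas the paper invokes characterization (c) with $\sigma=\delta$ to write $\eta=(\delta+\zeta)^\circ$ and compares two stabilizations of $\gamma=\delta+\zeta+\eps$.
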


\begin{proof}
By Theorem \ref{thm:recurrent}, if $\eta$ is recurrent, it is possible to
add particles to $\delta$ and stabilize to get $\eta$. That is, there
exists $\zeta \ge 0$ such that $\eta = (\delta + \zeta)^\circ$. Consider the
configuration
\eqnst
{ \gamma
  = (\zeta + \delta) + \eps
  = \delta + \zeta + \delta - \delta^\circ. }
Since $\eps \ge 0$, we can start from $\gamma$ and legally topple a sequence
of vertices that stabilizes $\zeta + \delta$, arriving at the 
configuration $\eta + \eps$. Stabilizing further gives $(\eta + \eps)^\circ$. 
On the other hand, since $\delta - \delta^\circ \ge 0$,
we can start from $\gamma$, and legally topple a sequence of vertices 
that stabilizes $\delta$, arriving at the configuration 
$\delta^\circ + \zeta + \delta - \delta^\circ = \zeta + \delta$. 
Stabilizing further we obtain $\eta$. Comparing the two stabilizing 
sequences, Theorem \ref{thm:stabilization}(ii) yields
$\eta = (\eta + \eps)^\circ$.
\end{proof}

\begin{proof}[Proof of Theorem \ref{thm:group}.]
(i) We first observe that every equivalence class contains some
representative $\xi$ with $\xi \ge \eta^\max$. Then
$\xi^\circ =: \eta \in \cR_G$ by Theorem \ref{thm:recurrent}(ii), and
$\eta \in [\xi]$. It follows that $\cR_G$ intersects each 
equivalence class.

It remains to show that the intersection of $\cR_G$ with any equivalence
class contains at most one element. To see this, suppose that 
$\eta_1 \sim \eta_2$, $\eta_1, \eta_2 \in \cR_G$, and we show $\eta_1 = \eta_2$.
Since $\eta_1 \sim \eta_2$, there exist $c_x \in \Z$, $x \in V$, such that 
\eqnst
{ \eta_1
  = \eta_2 + \sum_{x \in V} c_x \Delta'_{x,\cdot}. }
Let $V_- := \{ x \in V : c_x < 0 \}$ and $V_+ := \{ x \in V : c_x > 0 \}$, and define
\eqnst
{ \eta 
  := \eta_1 + \sum_{x \in V_-} (-c_x) \Delta'_{x,\cdot}
  = \eta_2 + \sum_{x \in V_+} c_x \Delta'_{x,\cdot}. }  
Take $k$ large enough such that the configuration $\eta'$ defined by
$\eta' = \eta + k \eps$ satisfies $\eta'(x) \ge |c_x| \deg_G(x)$ for all $x \in V$.
This is possible, since each entry of $\eps$ is at least $1$.
Starting from $\eta'$, we may legally topple $(-c_x)$-times 
each vertex $x \in V_-$, arriving at the configuration
$\eta_1 + k \eps$. This further stabilizes to $\eta_1$, by Lemma \ref{lem:id-like}.
Similarly, we can legally topple $c_x$-times each vertex
$x \in V_+$, arriving at the configuration $\eta_2 + k \eps$. 
This further stabilizes to $\eta_2$, again by Lemma \ref{lem:id-like}.
Comparing the two stabilzations, Theorem \ref{thm:stabilization}(ii)
yields $\eta_1 = \eta_2$.

The number of elements of $K_G$ is the index of the subgroup $\Z^V \Delta'_G$.
It is easy to see that this equals the determinant of $\Delta'_G$.

(ii) It is not difficult to show (see Exercise \ref{ex:oplus}) that 
if $\eta, \xi \in \cR_G$, we have $(\eta + \xi)^\circ \in \cR_G$.
Hence $\oplus$ indeed maps $\cR_G \times \cR_G$ into $\cR_G$.
We also have 
\eqnst
{ [\eta \oplus \xi]
  = [ (\eta + \xi)^\circ ]
  = [ \eta + \xi ]
  = [ \eta ] + [ \xi ]. }
This shows that $\oplus$ indeed corresponds to the group operation in $K_G$.
\end{proof}

\begin{exercise}
\label{ex:oplus}
Show that if $\eta, \xi \in \cR_G$, then $(\eta + \xi)^\circ \in \cR_G$.
(\emph{Hint:} One way to see this is the criterion of 
Theorem \ref{thm:recurrent}(ii)(b).)
See \cite{Dhar90} and \cite{HLMPPW}*{Corollary 2.16}.
\end{exercise}

The identity element of $K_G$, that is, the unique recurrent 
configuration $\eta_0 \in \cR_G$ such that $[\eta_0] = [0]$, 
displays highly non-trivial features. On large rectangular regions
in $\Z^2$, the identity element displays both regular and fractal 
patterns; see the pictures in \cites{BR02,LP10}. 
Le Borgne and Rossin \cite{BR02} prove some rigorous results
for rectangular regions.

\begin{exercise}
\label{ex:sink-nbr}
Show that if there is a unique $x \in V$ such that $x \sim s$, then
$E_x$ restricted to $\cR_G$ is the identity. 
(\emph{Hint:} Show that ${\bf 1}_x$ equals the sum of the rows
of $\Delta'_G$. This is a special case of Dhar's ``multiplication
by identity test''; see \cite{Dhar90}.)
\end{exercise}

\begin{exercise}
\label{ex:smaller-gen}
Suppose that $V \subset \Z^d$, and $G$ is the wired graph induced by $V$.
Show that if $p(x) > 0$ for all $x \in V$ such that $x \sim s$, then
the sandpile Markov chain is irreducible. (That is, in this case the 
condition imposed on $p$ in Theorem \ref{thm:recurrent} can be substantially
relaxed.) See \cite{MRZ01}.
\end{exercise}

\subsection{The stationary distribution and Dhar's formula}
\label{ssec:stationary}

Once the sandpile Markov chain reaches the set $\cR_G$ of 
recurrent sandpiles, it never leaves it. Let us write
$\nu_G$ for the stationary distribution, which by 
Theorem \ref{thm:recurrent}(i) is unique, and is concentrated
on $\cR_G$. In view of Theorem \ref{thm:group}, 
the restriction of the Markov chain to $\cR_G$ can be identified 
with a random walk on the finite group $K_G$. A transition from 
the state $\eta \in \cR_G$ to $(\eta + \mathbf{1}_x)^\circ$, $x \in V$,
is identified with adding to $[\eta] \in K_G$ the 
group element $[\mathbf{1}_x]$. As the next easy exercise shows, this 
implies that the stationary distribution of the sandpile Markov chain is
\emph{uniform} on $\cR_G$.

\begin{exercise}
\label{ex:rw-group}
Let $K$ be a finite group, and let $(X_n)_{n \ge 0}$ be an
irreducible random walk on $K$, that is a Markov chain with transition 
matrix $P(h,gh) = \mu(g)$, where $\sum_{g \in K} \mu(g) = 1$, and 
the support of $\mu$ generates $K$. Show that the stationary 
distribution of $(X_n)_{n \ge 0}$ is uniform on $K$. 
See \cite{S-C}.
\end{exercise}

The following exercise is essentially a triviality. However, we prefer 
to state it explicitly for two reasons: (i) it will play an important role in 
Theorem \ref{thm:Dhar's-formula} below; (ii) its version for infinite
graphs is far from trivial.

\begin{exercise}
\label{ex:inv-addition}
Check that the additition operators $E_x : \cR_G \to \cR_G$, $x \in V$,
leave the measure $\nu_G$ invariant. See \cites{Dhar90,MRZ01}.
\end{exercise}

The following theorem due to Dhar \cite{Dhar90} gives a formula 
for the average number of topplings induced by adding a single particle, 
under stationarity. 
For a sandpile $\eta$ and $x, y \in V$, let us write $n(x,y;\eta)$ for 
the number of topplings occurring at $y$ during the stabilization 
of $\eta + \one_x$. 

\begin{theorem}
\label{thm:Dhar's-formula}
Let $G = (V \cup \{ s \},E)$ be a finite connected multigraph. 
We have
\eqn{e:Dhar's-formula}
{ \E_{\nu_G} [ n(x,y,\cdot) ]
  = (\Delta'_G)^{-1}_{xy}, \quad x, y \in V. }
\end{theorem}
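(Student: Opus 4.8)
The plan is to set up a stationary version of the Markov chain and compute the expected change in the toppling-count via a telescoping/conservation argument. Concretely, suppose $\eta$ is distributed according to $\nu_G$, pick $X$ according to $p$ independently, and consider the transition $\eta \mapsto \eta' := (\eta + \one_X)^\circ = E_X \eta$. During this stabilization each vertex $y$ topples $n(X,y;\eta)$ times, and by \eqref{e:topple} the net effect on the configuration is
\eqnst
{ \eta' = \eta + \one_X - \sum_{y \in V} n(X,y;\eta)\, \Delta'_{y,\cdot}. }
Writing $N_y(\eta) := \sum_{x \in V} p(x)\, n(x,y;\eta)$ for the expected number of topplings at $y$ in one step given the current state $\eta$, and taking conditional expectation over $X$, the expected one-step increment of the configuration is $\p - \sum_{y} N_y(\eta)\, \Delta'_{y,\cdot}$, where $\p$ denotes the row vector $(p(x))_{x\in V}$.

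Next I would take expectations over $\eta \sim \nu_G$ and use stationarity (Exercise \ref{ex:inv-addition}, or directly that $\nu_G$ is stationary): since $\eta$ and $\eta'$ have the same law, $\E_{\nu_G}[\eta'] = \E_{\nu_G}[\eta]$, so the expected increment vanishes. This gives
\eqnst
{ \p = \sum_{y \in V} \E_{\nu_G}[N_y(\cdot)]\, \Delta'_{y,\cdot}, }
i.e.\ in matrix form $\p = \mathbf{m}\, \Delta'_G$ where $\mathbf{m}$ is the row vector with entries $m_y := \E_{\nu_G}[N_y(\cdot)] = \sum_x p(x)\, \E_{\nu_G}[n(x,y;\cdot)]$. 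Since $\Delta'_G$ is invertible (Theorem \ref{thm:group}(i), $\det \Delta'_G = |\cR_G| \neq 0$), this yields $\mathbf{m} = \p\, (\Delta'_G)^{-1}$, i.e.\ $\sum_x p(x)\, \E_{\nu_G}[n(x,y;\cdot)] = \sum_x p(x)\, (\Delta'_G)^{-1}_{xy}$ for every $y$. To pass from this $p$-averaged identity to the pointwise formula \eqref{e:Dhar's-formula}, I would note that the distribution $p$ was arbitrary (subject to $p(x) > 0$), while the quantities $\E_{\nu_G}[n(x,y;\cdot)]$ — computed with respect to $\nu_G$, which is the \emph{uniform} measure on $\cR_G$ and hence does not depend on $p$ — are fixed; varying $p$ over the (full-dimensional) simplex forces $\E_{\nu_G}[n(x,y;\cdot)] = (\Delta'_G)^{-1}_{xy}$ for each pair $x,y$ individually.

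The main obstacle, and the step requiring the most care, is the justification of the conservation identity $\E_{\nu_G}[\eta'] = \E_{\nu_G}[\eta]$ together with the finiteness of all expectations involved: one must know $\E_{\nu_G}[n(x,y;\cdot)] < \infty$ so that the telescoping is legitimate. Since $\Omega_G$ is finite, $\nu_G$ puts mass on finitely many configurations, and by Theorem \ref{thm:stabilization}(a) each stabilization involves only finitely many topplings, this is immediate — $n(x,y;\eta)$ is a bounded function of $(x,y,\eta)$. A cleaner alternative for the decoupling from $p$ is to run, from the same stationary $\eta$, the deterministic perturbation by $\one_x$ for a \emph{fixed} $x$; then $E_x$ preserves $\nu_G$ by Exercise \ref{ex:inv-addition}, and the one-step identity reads $\one_x = \sum_y \E_{\nu_G}[n(x,y;\cdot)]\, \Delta'_{y,\cdot}$ directly, giving \eqref{e:Dhar's-formula} upon inverting $\Delta'_G$ without ever averaging over $p$. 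I would present this latter route as the main argument, since it is the most transparent.
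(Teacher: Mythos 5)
Your proposal is correct, and the ``cleaner alternative'' you settle on as your main argument is exactly the paper's proof: write the conservation identity $(\eta+\one_x)^\circ(y) = \eta(y) + \one_x(y) - \sum_z n(x,z;\eta)(\Delta'_G)_{zy}$, average over $\nu_G$, cancel using the invariance of $\nu_G$ under $E_x$ (Exercise \ref{ex:inv-addition}), and invert $\Delta'_G$. The first, $p$-averaged route also works but is an unnecessary detour, as you yourself observe.
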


\begin{proof}
From the definition of stabilization we have the relation:
\eqnst
{ (\eta + \one_x)^\circ(y)
  = \eta(y) + \one_x(y) - \sum_{z \in V} n(x,z;\eta) (\Delta'_G)_{zy}. }
Now average both sides with respect to the stationary distribution
$\nu_G$. We get
\eqnst
{ \E_{\nu_G} [ (\eta + \one_x)^\circ(y) ] 
  = \E_{\nu_G} [ \eta(y) ] + \one_x(y)
    - \sum_{z \in V} \E_{\nu_G} [ n(x,z;\eta) ] (\Delta'_G)_{zy}. }
Due to Exercise \ref{ex:inv-addition}, the left hand side equals
the first term on the right hand side, which gives
\eqnst
{ \sum_{z \in V} \E_{\nu_G} [ n(x,z;\eta) ] (\Delta'_G)_{zy}
  = \one_x(y). }
Since this holds for all $x, y \in V$, we get \eqref{e:Dhar's-formula}.
\end{proof}

The above theorem is extremely useful in estimating topplings in an 
ava\-lanche. However, it only gives information about the first moment
of the toppling numbers.

\begin{open}
Find a useful expression for the second moment of the toppling numbers.
\end{open}

\section{Motivation from statistical physics}
\label{sec:motivation}

In the physics literature, the sandpile model appears in connection 
with the notion of ``self-organized criticality'' (SOC) \cites{Dhar90,Dhar06}. 
In order to explain what SOC is, we first clarify the meaning 
of ``criticality'' in the example of percolation in Section \ref{ssec:percolation}.
In Section \ref{ssec:SOC} the notion of SOC is illustrated via an example
closely related to percolation. In Section \ref{ssec:SOC-sandpile}, we discuss 
SOC in the sandpile model and state some open problems.
In Section \ref{sec:connections} various connections to other models 
will be presented as well.

\subsection{Percolation --- an example of a critical phenomenon}
\label{ssec:percolation}

A simple-to-define but deep example of criticality is provided by 
\textbf{bond percolation} on the $d$-dimensional integer lattice $\Z^d$. 
Let $0 < p < 1$. Declare each edge of $\Z^d$ (also called a bond)
\textbf{occupied} with probability $p$ and \textbf{vacant} with 
probability $1-p$, independently. Percolation theory studies the
geometry of the connected components (called \textbf{clusters})
of the random subgraph of $\Z^d$ induced by the occupied bonds. 
We are going to write $\Pr_p$ for the underlying probability measure
when the parameter value is $p$. 
Let $\cC$ denote the connected occupied component containing the origin. 
A fundamental result
in percolation theory is the following theorem due to Broadbent and
Hammersley \cites{BH57,Hamm57a,Hamm59}.

\begin{theorem}
\label{thm:phase-transition}
Let $d \ge 2$. There exists a critical probability $0 < p_c = p_c(d) < 1$ 
such that 
\eqnsplst
{ \Pr_p [ |\cC| < \infty ] = 1, \quad \text{ if $p < p_c$;}\\
  \Pr_p [ |\cC| = \infty ] > 0, \quad \text{ if $p > p_c$.} }
\end{theorem}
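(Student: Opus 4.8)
The plan is to introduce the percolation function $\theta(p) := \Pr_p[\,|\cC| = \infty\,]$, show it is non-decreasing in $p$, and set
\[
  p_c = p_c(d) := \sup\{\, p \in (0,1) : \theta(p) = 0 \,\};
\]
the whole content of the theorem is then that this supremum lies strictly between $0$ and $1$. Monotonicity of $\theta$ comes from the standard coupling: attach to each edge $e$ of $\Zd$ an independent uniform random variable $U_e$ on $[0,1]$, and for each $p$ declare $e$ open iff $U_e \le p$. This realizes all the measures $\Pr_p$ on one probability space so that the open subgraph at level $p$ is contained in the one at level $p'$ whenever $p \le p'$; since $\{\,|\cC| = \infty\,\}$ is an increasing event, $\theta(p) \le \theta(p')$. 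In particular $\theta(p) = 0$ for $p < p_c$ and $\theta(p) > 0$ for $p > p_c$, so it remains only to prove the two bounds $p_c > 0$ and $p_c < 1$.

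For the bound $p_c > 0$ I would run a first-moment (Peierls) argument over self-avoiding paths. If $|\cC| \ge n+1$ then the origin is the endpoint of an open self-avoiding path of length $n$; the number of such paths is at most $2d(2d-1)^{n-1}$, and each is open with probability $p^n$, so
\[
  \Pr_p[\,|\cC| \ge n+1\,] \le 2d(2d-1)^{n-1} p^n,
\]
which tends to $0$ as $n \to \infty$ whenever $p < 1/(2d-1)$. Hence $\theta(p) = 0$ for such $p$, and therefore $p_c(d) \ge 1/(2d-1) > 0$.

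For the bound $p_c < 1$ I would first treat $d = 2$ via planar duality. When the cluster of the origin is finite, it is surrounded by a circuit in the dual lattice $\Z^2 + (\tfrac12,\tfrac12)$ all of whose edges are dual to \emph{closed} edges of $\Zd$. A dual circuit of length $n$ surrounding the origin must cross the positive horizontal axis within distance $n$, and once a starting vertex is fixed a self-avoiding circuit has at most $3$ choices at each subsequent step, so the number of such circuits is at most $Cn3^n$ for an absolute constant $C$; each is all-closed with probability $(1-p)^n$. Therefore
\[
  \Pr_p[\,|\cC| < \infty\,] \le \sum_{n \ge 4} C n 3^n (1-p)^n,
\]
and the right-hand side is strictly less than $1$ once $1-p$ is sufficiently small, say for $p > p_0$ with $p_0 < 1$. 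Thus $\theta(p) > 0$ for $p > p_0$, giving $p_c(2) \le p_0 < 1$. For general $d \ge 3$, embed $\Z^2 \times \{0\}^{d-2}$ as a sublattice of $\Zd$: any configuration whose restriction to this plane makes the origin's cluster infinite already does so in $\Zd$, so $\theta_d(p) \ge \theta_2(p)$ and hence $p_c(d) \le p_c(2) < 1$.

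The main obstacle is the duality step in dimension two: one must set up the dual lattice carefully and establish the discrete Jordan-curve statement that a finite open cluster of the origin is separated from infinity by a closed dual circuit, together with the combinatorial count of such circuits. The coupling, the increasing-event remark, the self-avoiding-path count, and the higher-dimensional reduction are all routine by comparison.
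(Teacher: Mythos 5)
Your argument is correct, but note that the paper does not prove this theorem at all: it is stated as a classical result of Broadbent and Hammersley and simply cited, so there is no in-paper proof to compare against. What you give is the standard textbook argument (as in Grimmett's book): the monotone coupling via uniform edge labels, the Peierls count over self-avoiding paths giving $p_c(d) \ge 1/(2d-1)$, the dual-circuit count giving $p_c(2) < 1$, and the planar embedding giving $p_c(d) \le p_c(2)$ for $d \ge 3$. All four steps are sound, and you correctly isolate the only genuinely nontrivial ingredient, namely the discrete Jordan-curve lemma that a finite open cluster of the origin is enclosed by a circuit of closed dual edges; that lemma does require a careful (if elementary) topological argument, and with it in hand the estimate $\Pr_p[\,|\cC|<\infty\,] \le \sum_{n\ge 4} C n 3^n (1-p)^n < 1$ for $p$ near $1$ is exactly as you state.
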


It is easy to see using translation invariance that in the 
case $p < p_c$ there is no infinite cluster
anywhere in the lattice $\Pr_p$-a.s. It can also be shown that in the case
$p > p_c$ there exists a \emph{unique} infinite cluster \emph{somewhere}
in the lattice; see \cite{Grimmett}.
Note the qualitative similarity with extinction/survival for a 
branching process depending on whether the mean offspring is
less than or greater than $1$. One says that a \textbf{phase transition} occurs
at the \textbf{critical value} $p = p_c$ as the parameter $p$ is 
increased. The critical value separates the \textbf{subcritical phase}
$p < p_c$ where all clusters are finite a.s., and 
the \textbf{supercritical phase} $p > p_c$ where there exists an
infinite cluster a.s.


Percolation at the critical point $p = p_c$ has features that set it
apart from the sub- and supercritical phases. 
For example, it is known that for percolation with $p \not= p_c$, the probabilities
$\Pr_p [ |\cC| = k ]$ decay fast with $k$. The results \cite{Grimmett}*{Theorem 6.78},
\cite{Grimmett}*{Theorem 8.65} say that when $d \ge 2$, we have
\eqnsplst
{ \Pr_p \Big[ |\cC| \ge k \Big] 
  &\le C_1(p) \exp ( - c_1(p) k ) \quad \text{ when $p < p_c$;}\\ 
  \Pr_p \Big[ k \le |\cC| < \infty \Big] 
  &\le C_2(p) \exp \big( - c_2(p) k^{(d-1)/d} \big) \quad \text{ when $p > p_c$;}\\
  \Pr_p \Big[ x\in \cC,\, |\cC| < \infty \Big] 
  &\le \exp \big( - c_3(p) |x| \big) \quad \text{ when $p \not= p_c$.} } 
While these results are already not easy to establish, the case of $p = p_c$ is 
even more challenging. For example, it is a major conjecture  
that $\Pr_{p_c} [ |\cC| < \infty ] = 1$ in all dimensions $d \ge 2$.
So far, this has only been established in the planar case $d = 2$ \cites{Harris,Kesten80}
and when $d$ is sufficiently large ($d \ge 15$) \cites{BA91,HS90,vdHF,F13}.
A more detailed conjecture is that in all dimensions $d \ge 2$, the behaviour at and 
close to $p = p_c$ is characterized by power laws. For example, it
is expected that there exist \textbf{critical exponents} 
$\beta, \delta, \rho, \gamma, \eta \ge 0$, depending on the dimension $d$, 
such that 
\eqnspl{e:exponents}
{ \Pr_p [ |\cC| = \infty ]
  &= (p - p_c)^{\beta + o(1)} \quad \text{ as $p \downarrow p_c$;} \\
  \Pr_{p_c} [ |\cC| \ge k ]
  &= k^{-1/\delta + o(1)} \quad \text{ as $k \to \infty$;} \\
  \Pr_{p_c} [ \mathrm{rad}(\cC) \ge n ] 
  &= n^{-1/\rho + o(1)} \quad \text{ as $n \to \infty$;} \\
  \E_p [ |\cC|; |\cC| < \infty ] 
  &= |p - p_c|^{-\gamma + o(1)} \quad \text{ as $p \to p_c$;} \\
  \Pr_{p_c} [ x \in \cC ]
  &= \frac{1}{|x|^{d - 2 + \eta + o(1)}} \quad \text{ as $|x| \to \infty$;} }
where $\mathrm{rad}(\cC) = \sup \{ |x| : x \in \cC \}$. 
A further conjecture of \textbf{universality} states that the values
of the exponents are not sensitive to the structure of the lattice. 
In particular, they would not change if the cubic lattice is replaced
by any other $d$-dimensional periodic lattice. 

Most progress on the conjectures \eqref{e:exponents} has been made in
$d = 2$ and in high dimensions. In the planar case,
replace bond percolation on $\Z^2$ by so-called \textbf{site percolation}
on the triangular lattice. Here the \emph{vertices} of the triangular lattice
are declared occupied/vacant with probabilities $p$ and $1-p$, 
and the nearest neighbour occupied connected components are considered.
The combined result of the papers \cites{Kesten87,Smirnov01,LSW02,SW01}
is that for site percolation on the triangular grid we have
$\beta = 5/36$, $\delta = 91/5$, $\rho = 48/5$, $\gamma = 43/18$, $\eta = 5/24$.
In sufficiently high dimensions ($d \ge 11$) it has been established
that $\beta = 1, \delta = 2$, $\rho = 1/2$, $\gamma = 1$, $\eta = 0$;
\cites{BA91,HS90,HS00a,HS00b,HvdHS03,KN11,vdHF,F13}. It is conjectured that 
these are the values of the exponents for all $d > 6$. 
(The exponents have been established for all $d > 6$ in a modified model 
where long bonds are allowed; see the above references.)

\subsection{Self-organized criticality}
\label{ssec:SOC}

At first it seems that the intriguing properties of critical percolation
are very sensitive to the fact that we are at the critical point: $p = p_c$ 
exactly, or (in large finite systems) $p \approx p_c$. However,
there are interesting examples of \emph{dynamically evolving} models
where criticality (i.e.~power law behaviour of various distributions)
occurs in a rather robust fashion.

An example can be built on top of the Erd{\H{o}}s-R{\'e}nyi random graph 
model \cite{Bbook}, that can be viewed as percolation on the complete graph 
with $n$ vertices. It will be useful to consider the random graph in a
dynamical fashion. At time $t=0$ we have $n$ vertices and no edges.
Between any pair of vertices, independently, an edge is added at rate $1/n$.
This choice of rates ensures that locally around each vertex $O(1)$ edges 
appear per unit time. Write 
\eqnst
{ v^n_k(t)
  = \text{proportion of vertices in clusters of size $k$ at time $t$}, }
where the `size' of a cluster is the number of its vertices. 
In the limit $n \to \infty$ there is a phase transition. 
A giant component containing a positive fraction of all vertices
emerges at the critical time $t_c = 1$. One way to formalize
this statement is that $v^n_k(t)$ converges in probability 
to a deterministic limit $v_k(t)$, as $n \to \infty$, where the 
limit satisfies:
\eqnst
{ \theta(t)
  := 1 - \sum_{k \ge 1} v_k(t)
  \begin{cases}
  = 0 & \text{if $t \le 1$;} \\
  > 0 & \text{if $t > 1$.}
  \end{cases} }
Compare with Theorem \ref{thm:phase-transition}. At the critical time 
$t_c = 1$, we have the power law $v_k(t_c) \sim c k^{-3/2}$, 
as $k \to \infty$.

\medbreak

\emph{Forest-fire model.}
Let us modify the dynamics in a way that prevents the giant component
from emerging, and keeps the system ``at criticality''. Return to the
finite $n$ model, and let $\lambda(n)$ be a function satisfying
$1/n \ll \lambda(n) \ll 1$.
Suppose that ``lightning'' hits each vertex, independently, at rate
$\lambda(n)$. When a vertex is hit by lightning, the cluster containing
it disintegrates into individual vertices, that is, all edges within
the cluster become vacant instantaneously. Heuristically, this mechanism 
should prevent clusters to reach size of order $n$, since then it is
extremely likely that they are hit by lightning ($n \lambda(n) \gg 1$). 
On the other hand, our assumption $\lambda(n) \ll 1$ implies that small 
clusters are not likely to be hit by lightning, so they will be growing 
more-or-less as in the Erd{\H{o}}s-R{\'e}nyi model. The heuristic suggests 
that after the critical time the system remains critical forever. 
R{\'a}th and T{\'o}th \cite{RT09} proved that this is indeed the case.

\begin{theorem} \cite{RT09}
Suppose that $1/n \ll \lambda(n) \ll 1$. Let $\bar{v}^n_k(t)$ be the 
proportion of vertices in clusters of size $k$ at time $t$ in the
forest-fire evolution.\\
(i) There is a deterministic limit in probability
$\bar{v}_k(t) = \lim_{n \to \infty} \bar{v}^n_k(t)$.\\
(ii) If $t \le t_c = 1$, $\bar{v}_k(t) = v_k(t)$.\\
(iii) If $t \ge t_c = 1$ we have $\sum_{\ell \ge k} \bar{v}_\ell(t) \asymp k^{-1/2}$. 
\end{theorem}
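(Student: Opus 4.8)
The plan is to replace the finite random graph by a mean-field (hydrodynamic) limit and then analyse the resulting system of differential equations via generating functions. In the Erd\H{o}s--R\'enyi dynamics an edge appears at rate $1/n$ between each pair of vertices, and with probability $1-o(1)$ it joins two \emph{distinct} clusters (internal edges of a cluster of size $o(n)$ are negligible); thus two clusters of sizes $i$ and $j$ coalesce at rate $ij/n$, which is the multiplicative-kernel Smoluchowski coagulation. Lightning adds the transition ``a cluster of size $k$ is replaced by $k$ singletons'' at rate $k\lambda(n)$. Viewing $(\bar v^n_k(t))_{k\ge1}$ as a density-dependent Markov jump process and applying standard fluid-limit estimates (martingale concentration, or a coupling with the multiplicative coalescent), one obtains that on each compact time interval, uniformly in $k$, $\bar v^n_k(t)\to\bar v_k(t)$ in probability, where $(\bar v_k)_{k\ge1}$ solves a limiting system; this is (i). For $t\le t_c$ every cluster has size $o(n)$ with high probability, so a fixed cluster of size $k$ is struck at rate $k\lambda(n)\to0$: the lightning term drops out of the limit on $[0,t_c]$, whence $\bar v_k=v_k$ there, which is (ii) (with continuity at $t_c$).

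\emph{The forest-fire equations for $t\ge t_c$.} The assumption $1/n\ll\lambda(n)\ll1$ forces a separation of time scales: a macroscopic cluster, of size $\Theta(n)$, is struck at rate $\Theta(n\lambda(n))\to\infty$ and is destroyed essentially instantaneously, whereas a sub-macroscopic cluster is essentially never struck on the $O(1)$ time scale. Hence in the limit the mass that coagulates out of the finite-cluster regime is recycled into singletons immediately, and
\eqnst{ \dot{\bar v}_k = \frac{k}{2}\sum_{i+j=k}\bar v_i\,\bar v_j - k\,\bar v_k + \rho(t)\,\one_{\{k=1\}}, \qquad k\ge1,\ t>t_c, }
where $\rho(t)\ge0$ is the coagulation flux from finite to macroscopic clusters, pinned down self-consistently by conservation of mass $\sum_{k\ge1}\bar v_k(t)\equiv1$ (nothing is permanently lost). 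Summing against $k$ up to a cutoff and letting the cutoff tend to $\infty$ exhibits $\rho(t)$ as a finite ``$\infty-\infty$'' limit --- finite precisely because the susceptibility $\chi(t):=\sum_k k\,\bar v_k(t)$ is infinite for all $t\ge t_c$. This \emph{persistence of infinite susceptibility}, already present at $t_c$ since $v_k(t_c)\sim c\,k^{-3/2}$, is the analytic core of self-organized criticality here, and it is what yields the power law in (iii).

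\emph{Generating functions and a square-root singularity.} Put $F(t,x):=\sum_{k\ge1}\bar v_k(t)\,e^{-kx}$, so $F(t,0)=1$ for all $t$. A short computation turns the system into the quasilinear first-order equation
\eqnst{ \partial_t F = (1-F)\,\partial_x F + \rho(t)\,e^{-x}, }
an inviscid Burgers-type equation with a source term. Letting $x\downarrow0$ forces $(1-F)\,\partial_x F\to-\rho(t)$; since $\rho(t)>0$ for $t>t_c$ and $1-F\to0$, this forces $\partial_x F\to-\infty$, i.e.\ $\chi(t)=\infty$, and it pins the leading asymptotics to $F(t,x)=1-c(t)\sqrt x+o(\sqrt x)$ as $x\downarrow0$ with $0<c(t)<\infty$ (and relates $c(t)$ to $\rho(t)$). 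I would make this rigorous by integrating the equation along characteristics from the initial data $F(t_c,x)=\sum_k v_k(t_c)e^{-kx}=1-c_0\sqrt x+o(\sqrt x)$ and verifying that the square-root singularity is preserved for all $t\ge t_c$. Since $1-F(t,x)=\sum_k\bar v_k(t)(1-e^{-kx})$, a Tauberian argument then converts $1-F(t,x)\asymp\sqrt x$ into $\sum_{\ell\ge k}\bar v_\ell(t)\asymp k^{-1/2}$, which is (iii).

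\emph{Main obstacle.} The real work lies in the first two steps for $t>t_c$: establishing the hydrodynamic limit \emph{across} the phase transition, in the presence of macroscopic clusters that are created and burned on vanishing time scales. One must control the critical window (of width $\sim n^{-1/3}$ for the multiplicative coalescent), rule out anomalous accumulation of mass at intermediate scales $1\ll k\ll n$ beyond what $\rho(t)$ prescribes, and make ``instantaneous burning'' rigorous --- this is where $\lambda(n)\gg1/n$ is used, while $\lambda(n)\ll1$ is what keeps the finite clusters undisturbed. The generating-function analysis, by contrast, is delicate but essentially deterministic; the probabilistic limit theorem is where the two-sided hypothesis on $\lambda(n)$ genuinely bites.
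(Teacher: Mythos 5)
The paper does not prove this theorem: it is quoted from R\'ath and T\'oth \cite{RT09} as motivation for self-organized criticality, so there is no in-paper proof to compare against. Your plan is, in essence, a faithful reconstruction of the strategy of \cite{RT09}: a mean-field limit to the ``critical forest fire equations'' (multiplicative-kernel Smoluchowski coagulation with a singleton source term $\rho(t)\mathbf{1}_{\{k=1\}}$ fixed by conservation of mass), the passage to the generating function $F(t,x)=\sum_k \bar v_k(t)e^{-kx}$ solving the Burgers-type equation $\partial_t F=(1-F)\partial_x F+\rho(t)e^{-x}$, the boundary relation $(1-F)\partial_x F\to-\rho(t)$ as $x\downarrow 0$ giving $1-F(t,x)\sim\sqrt{2\rho(t)x}$, and a Tauberian step yielding $\sum_{\ell\ge k}\bar v_\ell(t)\asymp k^{-1/2}$. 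Your algebra checks out (the coagulation term does produce $-F\partial_x F$, the burn term $+\partial_x F$), and you correctly identify where the hypotheses $1/n\ll\lambda(n)\ll 1$ enter.

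What you present is, however, a proof plan rather than a proof, and the two steps you defer are where essentially all of the work in \cite{RT09} lies: (a) the hydrodynamic limit \emph{through and beyond} $t_c$, including ruling out mass accumulating at mesoscopic scales $1\ll k\ll n$ and justifying that macroscopic clusters are burned instantaneously; and (b) showing that $\rho(t)>0$ for all $t>t_c$ (equivalently, that the square-root singularity of $F(t,\cdot)$ at $x=0$ persists and the system never relaxes back to subcriticality), which is the actual content of the self-organized criticality assertion (iii). Your sentence ``$1-F\to 0$ and $\rho>0$ force $\partial_x F\to-\infty$'' presupposes (b) rather than proving it; one must show the solution of the PDE with the self-consistent source cannot have $\rho\equiv 0$ on any interval past $t_c$. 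So the outline is sound and matches the cited source's route, but it is not yet a proof.
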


\begin{remark}
Analogous statements hold for general initial conditions satisfying
a moment condition; see \cite{RT09}.
\end{remark}

The phenomenon that a state characterized by power laws is reached and 
then maintained by the dynamics is called \textbf{self-organized criticality}. 
The term was introduced by Bak, Tang and Wiesenfeld \cite{BTW88}, who suggested 
that this mechanism would be present in many physical systems for which 
power laws had been observed empirically. Examples include: energy release in 
earthquakes, avalanche sizes in rice- and sandpiles, areas of forest-fires
and many others; see the book \cite{Jensen}. Bak, Tang and Wiesenfeld used the sandpile
dynamics to illustrate their idea via numerical simulations \cite{BTW88}. 
After Dhar \cite{Dhar90} discovered the Abelian property and established the 
fundamental results discussed in Section \ref{sec:model}, the Abelian sandpile 
became the primary theoretical example of SOC \cite{Dhar06}.

\subsection{Self-organized criticality in the Abelian sandpile model}
\label{ssec:SOC-sandpile}

The following heuristic suggests that on a large graph, avalanches in 
the stationary sandpile Markov chain will occur on all scales up to the
size of the system. Start from an empty pile. Initially, when not many 
particles have been added yet, avalanches will be small. As more particles
get added, the typical size of avalanches grows. The only limit to this growth 
is that particles are lost to the sink. Hence when stationarity is reached, 
we can expect to see avalanches on all scales up to the size of the system.

Numerical simulations \cites{Dhar06,Manna90} of the model on subsets of 
$\Z^d$ suggest that the above heuristic is correct, and various
avalanche characteristics have power law distributions, up to a cut-off
that grows with the system size. In this section we state some conjectures 
that quantify this in terms of critical exponents. In our discussions, 
we consider the model on the wired graph $G_n$ constructed from a finite box 
$V_n = [-n,n]^d \cap \Z^d$, as in Example \ref{example:wired-graph}.

First we briefly comment on the case $d = 1$. Here one can explicitly 
compute the set of recurrent configurations and the sandpile group;
see Exercise \ref{ex:1D}. The sandpile group of $G_n$ is isomorphic to 
$\Z_{2n+2}$; in particular, the number of recurrent configurations grows only 
linearly in $n$. This is in contrast with $d \ge 2$, where the number of
recurrent configurations grows exponentially in $n^d$.
There is significantly ``less randomness'' in $d = 1$ than 
in $d \ge 2$. Avalanches can be computed explicitly in $d = 1$, 
and it is found that with probability approaching $1$ all avalanches 
reach the sink. On the other hand, for $d \ge 2$ most avalanches 
do not reach the sink. Below we restrict our attention to 
$d \ge 2$, and refer to \cites{Dhar06,MRSV00} for more details on 
the one-dimensional case.

We write $\nu_n$ for the stationary distribution of the sandpile 
Markov chain on $G_n$. Given $z \in V_n$, we define 
the \textbf{avalanche cluster at $z$} as the set of 
vertices that are toppled when we add a particle at $z$
to the sandpile $\xi$:
\eqn{e:Av}
{ \Av_{z,V_n}
  = \Av_{z,V_n}(\xi)
  := \{ x \in V_n : n(z,x,\xi) > 0 \}. }
We also define the \textbf{size} of the avalanche as
the number of topplings, with multiplicity:
\eqn{e:S}
{ S_{z,V_n}
  = S_{z,V_n}(\xi)
  := \sum_{x \in V_n} n(z,x;\xi). }
The \textbf{radius} of the avalanche is:
\eqn{e:R}
{ \mathrm{rad}(\Av_{z,V_n}(\xi))
  := \max \{ |x - z| : x \in \Av_{z,V_n}(\xi) \}. }

\subsubsection{Two easy critical exponents}
\label{sssec:easy}

We start with computing two easy exponents.
Recall Dhar's formula from Section \ref{ssec:stationary}.
Observe that 
\eqnst
{ \frac{1}{2d} (\Delta'_{V_n})_{zx} 
  = I_{zx} - \p^1_n(z,x), } 
where $\p^1_n(z,x)$ is the transition matrix of simple random 
walk on $V_n$ stopped on the first exit from $V_n$, and
$I$ is the identity matrix. Denote
\eqnsplst
{ \p^k_n(z,x)
  &:= \text{$k$-step transition probability from $z$ to $x$}; \\
  G_n(z,x)
  &:= \Big( \Delta'_{V_n} \Big)^{-1}_{zx}, \quad z, x \in V_n. }
The matrix $G_n$ has a well-known interpretation in terms of the 
simple random walk.

\begin{exercise}
\label{ex:Green}
(i)  Show that 
\eqnst
{ 2d\, G_n(z,x)
  = \sum_{k=0}^\infty \p^k_n(z,x)
  = \E^z [ \text{number of visits to $x$ before exiting $V_n$} ]. }
(ii) Show that if $z \in [-n/2,n/2]^d \cap \Z^d$, 
we have
\eqnst
{ \sum_{x \in V_n} G_n(z,x)
  \asymp n^2, \quad n \ge 1. }
See \cite{LLbook}*{Lemma 4.6.1 and Proposition 6.2.6}
\end{exercise}

Dhar's formula in the present setting says that 
\eqn{e:Dhar-sq}
{ \E_{\nu_n} [ n(z,x;\cdot) ]
  = G_n(z,x). }
Exercise \ref{ex:Green}(ii) gives that 
\eqn{e:n^2}
{ \E_{\nu_n} [ S_{o,V_n} ] 
  \asymp n^2, }
where we write $o$ to denote the origin in $\Z^d$.
In particular, in the stationary sandpile, the expected size of an 
avalanche started at $o$ diverges as $n \to \infty$. Compare this with 
the divergence of the expected cluster size for critical percolation.

Let $d \ge 3$. Then
\eqn{e:lim-Green}
{ \lim_{n \to \infty} G_n(z,x) 
  = G(z,x)
  = (2d)^{-1} \E^z [ \text{number of visits to $x$} ]
  < \infty } 
exists. The function $2d \, G(z,x)$ is called the \textbf{Green function} 
of the random walk. It is known that for all $d \ge 3$ the Green function 
is asymptotic to $a_d |x-z|^{2-d}$ as $|x-z| \to \infty$; see \cite{LLbook}*{Theorem 4.3.1}. 
Hence, by \eqref{e:Dhar-sq} and \eqref{e:lim-Green}, we have
\eqn{e:mean-toppling}
{ \lim_{n \to \infty} \E_{\nu_n} [ n(o,x; \cdot) ] 
  = G(o,x)
  \sim \frac{(2d)^{-1} a_d}{|x|^{d-2}} \quad \text{ as $|x| \to \infty$.} }

In order to neatly formulate asymptotic results as $n \to \infty$,
the following theorem will be useful. 

\begin{theorem}\cite{AJ04}*{Theorem 1}
Let $d \ge 2$. There is a measure $\nu$ on the space 
$\{ 0, 1, \dots, 2d - 1 \}^{\Z^d}$ such that $\nu_n \Rightarrow \nu$
in the sense of weak convergence.
\end{theorem}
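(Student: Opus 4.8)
The plan is to establish weak convergence $\nu_n \Rightarrow \nu$ by exhibiting the limit measure $\nu$ directly, using the burning bijection of Majumdar and Dhar together with the known convergence of uniform spanning trees on $\Z^d$ to the wired uniform spanning forest. First I would recall that the burning bijection gives, for each $n$, a bijection between $\cR_{G_n}$ and the set of spanning trees of $G_n$, and that under this bijection the uniform measure $\nu_n$ on $\cR_{G_n}$ corresponds to the uniform spanning tree measure $\UST_{G_n}$. The height variables $\eta(x)$ for $x$ in a fixed finite window $W \subset \Z^d$ are determined (via the burning/wired-erasure procedure) by the local structure of the spanning tree near $W$ — more precisely by the way the tree branches connect the vertices of $W$ and their neighbours on their way to the sink. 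Since, as $n \to \infty$, $\UST_{G_n}$ converges weakly (in the local topology on subsets of edges) to the wired uniform spanning forest $\WSF$ on $\Z^d$ by the Pemantle/BLPS theory, one gets convergence of the joint law of $(\eta(x))_{x \in W}$ provided the height readout is a continuous function of the spanning forest in the relevant topology.

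The key steps, in order, are: (1) state the burning bijection precisely and note $\nu_n \leftrightarrow \UST_{G_n}$; (2) express $\P_{\nu_n}[\eta|_W = \sigma]$ for a fixed finite $W$ and fixed local height pattern $\sigma$ as the $\UST_{G_n}$-probability of an event depending on the tree — the subtlety is that the burning algorithm's decision about the height at $x$ depends not just on edges incident to $W$ but on which ``wave'' of burning reaches $x$, hence on paths in the tree to the sink; (3) rewrite this tree event as a countable union/intersection of cylinder events plus a tail term, and show the tail term is uniformly small in $n$; (4) invoke $\UST_{G_n} \Rightarrow \WSF$ to pass to the limit on the cylinder events; (5) define $\nu$ as the resulting limiting law of the height field, check Kolmogorov consistency across windows $W$ (automatic, since everything comes from one forest measure $\WSF$), and conclude $\nu_n \Rightarrow \nu$ since convergence of all finite-dimensional marginals on the product space $\{0,\dots,2d-1\}^{\Z^d}$ is exactly weak convergence there.

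The main obstacle is step (3), controlling the non-locality of the burning readout. The height at a vertex $x$ is decided by the order in which $x$'s incident tree-edges get ``burnt,'' and in the spanning-tree picture this is governed by the path from $x$ to the sink (equivalently, by loop-erased random walk from $x$); so the event $\{\eta|_W = \sigma\}$ genuinely depends on unboundedly long tree-paths. What saves the argument is that these paths leave any fixed large annulus around $W$ with probability tending to $1$ uniformly in $n$ (a consequence of uniform transience estimates for the random walk on $\Z^d$, $d \ge 2$, feeding into Wilson's algorithm), and on that event the height readout factors through the finite local picture together with a single bit of information — roughly, whether the relevant branches have already merged with the ``burnt'' region. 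Quantifying this requires the uniform-in-$n$ estimates on loop-erased walk / UST branches that are standard but genuinely do work; I expect this is where the bulk of the technical effort in the original argument lies, and one would cite the relevant UST convergence and hitting-time bounds rather than reproving them. For $d = 2$, where $\WSF$ is a single one-ended tree, the same scheme applies but one must be slightly more careful that the relevant branches are a.s.\ eventually disjoint from $W$; one-endedness of the WSF components (the property emphasized in the introduction) is exactly what guarantees the readout is well-defined in the limit.
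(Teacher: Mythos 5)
Your overall route --- the burning bijection, the identification of $\nu_n$ with $\UST_{G_n}$, passage to the wired spanning forest, and a locality argument for the height readout --- is essentially the approach the paper takes: it states this result as a citation to [AJ04], but proves the more general Theorem \ref{thm:measure} in Section \ref{sec:measures} by exactly this scheme, organized via the two-phase burning bijection anchored at the window $Q$ and the conditional independence of Lemma \ref{lem:cond-indep}.

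There is, however, a concrete problem in your step (3), which is where the real work lives. You attribute the locality control to ``uniform transience estimates for the random walk on $\Z^d$, $d \ge 2$''; simple random walk on $\Z^2$ is recurrent, so no such estimate exists there, and even in transient dimensions the relevant issue is not whether the forward path from $x$ to the sink escapes a fixed annulus (it always does), but whether the set of vertices whose tree-path to the sink passes \emph{through} the window --- the set of descendants of $Q$ (Exercise \ref{ex:desc}) --- is finite. Under the anchored bijection the height at $x$ is a function of $\deg_{W_n}(x)$, where $W_n$ is this descendant set, together with an independent uniform tie-break (Exercise \ref{ex:cond-height}); the events $\{W_n = W\}$ for finite $W$ are genuinely local tree events, and the whole limit argument hinges on the limiting descendant set being a.s.\ finite. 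That finiteness is precisely the one-end property of the $\WSF$ components (Theorem \ref{thm:Pemantle}), and it is the required input for \emph{all} $d \ge 2$ --- a single one-ended tree for $2 \le d \le 4$, infinitely many one-ended trees for $d \ge 5$ --- not just the $d=2$ afterthought you append. So you have the right structural theorem in view but cite the wrong mechanism at the decisive step; replacing ``uniform transience'' by ``a.s.\ finiteness of the past of $Q$ in the $\WSF$, i.e.\ one-endedness,'' and summing over the possible descendant sets $W$ using the conditional independence of Lemma \ref{lem:cond-indep}, closes the gap and recovers the paper's argument.
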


Now \eqref{e:mean-toppling} can be rephrased in the simpler form
(see \cite{JR08}):
for all $d \ge 3$ we have 
\eqn{e:mean-toppling-nu}
{ \E_{\nu} [ n(o,x; \cdot) ]
  \sim \frac{(2d)^{-1} a_d}{|x|^{d-2}} \quad \text{ as $|x| \to \infty$.} }
Here the precise meaning of the random variable $n(o,x; \cdot)$ is as 
follows. Draw a sample configuration from the limiting measure $\nu$. 
Add a particle at $o$, and attempt to stabilize by toppling all unstable
sites simultaneously, whenever there are such. Then $n(o,x; \cdot)$ is 
the number of induced topplings at $x$. We write 
$\Av_z = \{ x \in \Z^d : n(z,x; \cdot) > 0 \}$ and
$S_z = \sum_{x \in \Z^d} n(z,x; \cdot)$.

\subsubsection{Further critical exponents}
\label{sssec:further}

The relation \eqref{e:mean-toppling-nu} gives the average 
number of topplings at $x$ induced by adding a particle at $o$.
We now state a theorem and a conjecture concerning the \emph{probability}
that $x$ topples, if we add a particle at $o$.

\begin{theorem}\cite{JRS13}
\label{thm:prob-toppling-nu}
For all $d \ge 5$ there are constants $c = c(d), C = C(d) > 0$ such that
\eqn{e:prob-toppling-nu}
{ \frac{c}{|x|^{d-2}} 
  \le \nu [ x \in \Av_o ]
  \le \frac{C}{|x|^{d-2}}, \quad x \not= 0. }
\end{theorem}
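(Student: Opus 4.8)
The upper bound is immediate from the first moment. Since $\{ x \in \Av_o \} = \{ n(o,x;\cdot) \ge 1 \}$, Markov's inequality together with the Green function asymptotics in \eqref{e:mean-toppling-nu} gives
\eqnst{ \nu[ x \in \Av_o ] \le \E_\nu[ n(o,x;\cdot) ] = G(o,x) \le \frac{C}{|x|^{d-2}}, }
and this works for every $d \ge 3$; the hypothesis $d \ge 5$ will be needed only for the lower bound.

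For the lower bound the natural tool is the second moment inequality. Writing $Z = n(o,x;\cdot)$, Cauchy--Schwarz applied to $\E_\nu[Z] = \E_\nu[ Z\, \one_{\{Z \ge 1\}} ]$ yields
\eqnst{ \nu[ x \in \Av_o ] = \nu[ Z \ge 1 ] \ge \frac{ ( \E_\nu[Z] )^2 }{ \E_\nu[ Z^2 ] }. }
Since $\E_\nu[Z] = G(o,x) \asymp |x|^{-(d-2)}$ is already in hand, the whole problem reduces to the matching second moment bound $\E_\nu[ Z^2 ] \le C\, |x|^{-(d-2)}$, equivalently $\E_\nu[Z^2] \le C\, \E_\nu[Z]$: the second moment of the toppling number at $x$ should be of the same order as its mean. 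This is the crux of the argument.

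To obtain such a bound I would decompose the avalanche into \emph{waves} following Ivashkevich--Ktitarev--Priezzhev: topple $o$ once, then repeatedly topple every site that has become unstable, each at most once, producing the first wave $W_1$; while $o$ is still unstable, repeat, producing a nested sequence $W_1 \supseteq W_2 \supseteq \cdots \supseteq W_N$ with $N = n(o,o;\cdot)$. By abelianness this computes the same toppling numbers as the simultaneous dynamics defining $n(o,x;\cdot)$, and nesting gives $n(o,x;\cdot) = \#\{ k : x \in W_k \}$, so that $\E_\nu[ Z^2 ] = \sum_{k \ge 1}(2k-1)\, \nu[ x \in W_k ]$. Each single wave admits a spanning-forest description via the burning bijection: in a wave the toppled set is the set of descendants of $o$ in the associated spanning tree (rooted at the sink in finite volume, or at infinity in the limiting model on the wired spanning forest), so by Wilson's algorithm $\nu_n[ x \in W_1 ]$ is the probability that a loop-erased walk from $x$ first meets the loop-erased walk from $o$ to the sink exactly at $o$, which a transient-walk hitting estimate pins at order $|x|^{-(d-2)}$. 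One then estimates $\nu[ x \in W_k ]$ for $k \ge 2$ by combining this one-wave bound with control of the law of the configuration after $k-1$ waves and of the tail of $N$, and sums over $k$. A more computational alternative is to derive from Dhar's formula an expression for the mixed moment $\E_{\nu_n}[ n(z,x;\cdot)\, n(z,y;\cdot) ]$ in terms of the random walk Green function $G_n$, and then pass to the limit.

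The main obstacle is exactly the control of the higher waves --- equivalently, the convergence of the error terms in the mixed-moment identity --- which amounts to the finiteness of intersection-type sums of Green functions, the prototype being $\sum_{z \in \Zd} G(o,z)^2 < \infty$, i.e.\ two independent simple random walks intersect only finitely often, which holds precisely when $d > 4$. The same condition makes the first wave have finite expected size $\sum_x \nu[ x \in W_1 ] < \infty$, so that the avalanche localizes and one can legitimately transfer the finite-volume estimates (where the spanning-tree description is literal) to the infinite-volume measure using $\nu_n \Rightarrow \nu$ and the a.s.\ finiteness of the avalanche. In dimensions $d = 3, 4$ these sums diverge and the approach breaks down, which is why the statement is restricted to $d \ge 5$. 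A secondary technical point, needed already for the first moment, is to justify the interchange of the limit $n \to \infty$ with the (unbounded) expectations of $n(o,x;\cdot)$.
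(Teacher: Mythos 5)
Your treatment of the upper bound coincides with the paper's: the survey itself only remarks that the upper bound follows from Dhar's formula \eqref{e:mean-toppling-nu} and Markov's inequality, and that ``the real content of the theorem is the lower bound''. For the lower bound the survey contains no proof at all; the result is attributed to \cite{JRS13}, listed as in preparation. So there is nothing in the paper to compare your argument against, and your proposal must stand on its own.

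As written, it does not: it is a programme rather than a proof. The Cauchy--Schwarz reduction is sound, and you correctly isolate both the needed estimate $\E_\nu[n(o,x;\cdot)^2]\le C\,|x|^{2-d}$ and the reason $d\ge 5$ should enter (summability of $G(o,z)^2$, i.e.\ finiteness of two-walk intersections). But both routes you offer for that estimate stop exactly at the decisive point. The ``computational alternative'' --- extracting the mixed moment $\E_\nu[n(z,x;\cdot)\,n(z,y;\cdot)]$ from Dhar's formula --- is not available: Dhar's conservation identity is linear in the toppling numbers and yields only first moments, and the survey explicitly lists finding a useful expression for the second moment of the toppling numbers as an Open Question. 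The wave route is the right one, but its heart is the bound on $\nu[x\in W_k]$ for $k\ge 2$, which you dispose of with ``one then estimates \dots by combining \dots and sums over $k$''. That is where all the work lies: after the first wave the configuration is no longer $\nu$-distributed, successive waves are strongly dependent, and the Ivashkevich--Ktitarev--Priezzhev bijection describes the \emph{ensemble} of all waves as two-component forests rather than the law of the $k$-th wave of a fixed avalanche, so the one-wave estimate cannot simply be reused $k$ times. Until $\sum_{k\ge 2}(2k-1)\,\nu[x\in W_k]\le C\,|x|^{2-d}$ is actually established, the lower bound is not proved. A smaller slip: the expected size of a single wave is \emph{not} finite for $d\ge 5$, since a probability of order $|x|^{2-d}$ sums to $\sum_r r^{d-1}\cdot r^{2-d}=\infty$; what justifies transferring finite-volume estimates to $\nu$ is the almost-sure finiteness of avalanches (Theorem \ref{thm:finite}), not finiteness in expectation.
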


Note that the upper bound follows easily from Dhar's formula \eqref{e:mean-toppling-nu}
and Markov's inequality, so the real content of the theorem is the lower bound.

\begin{conjecture}
For $2 \le d \le 4$ there exists $\eta = \eta(d) \ge 0$ such that 
\eqnst
{ \nu [ x \in \Av_o ]
  = \frac{1}{|x|^{d-2 + \eta + o(1)}} \quad \text{ as $|x| \to \infty$.} }
\end{conjecture}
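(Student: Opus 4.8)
The plan is to route everything through the connection between avalanches and the uniform spanning tree underlying Section \ref{sec:connections}, and thereby reduce the conjecture to a question about loop-erased random walk. First I would recall the decomposition of an avalanche into \emph{waves} of topplings (see \cite{Dhar06}): when a grain is added at $o$, the topplings group into successive nested waves $W_1 \supseteq W_2 \supseteq \cdots$, in each of which $o$ topples exactly once, so that a site topples at least once if and only if it belongs to $W_1$; hence $\Av_o = W_1$. The content of the wave--burning correspondence is that, for a recurrent $\eta$ with associated spanning tree $T(\eta)$ rooted at the sink, $W_1$ is precisely the set of descendants of $o$, i.e.\ the sites $x$ whose path to the root in $T(\eta)$ passes through $o$. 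Since under $\nu_n$ the tree $T(\eta)$ is the UST of $G_n$, this gives the exact finite-volume identity that $\nu_n[x \in \Av_o]$ equals the probability that the UST path from $x$ to the sink passes through $o$.

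Next I would pass to the infinite-volume limit. Using one-endedness of the components of the wired spanning forest \cite{JW12}, each vertex $x$ has a well-defined ray $\gamma_x$ to infinity in the $\WSF$, to which the UST path-to-sink converges; this should identify $\nu[x \in \Av_o] = \Pr[\, o \in \gamma_x \,]$ under the $\WSF$. By Wilson's algorithm rooted at infinity, $\gamma_x$ is distributed as the loop erasure $\LE(S[0,\infty))$ of a random walk started at $x$, so the target becomes
\eqnst{ \nu[x \in \Av_o] = \Pr_x\big[\, o \in \LE(S[0,\infty)) \,\big]. }
The conjecture is thereby reduced to determining the decay exponent of the loop-erased-walk one-point (Green's) function.

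The exponent would then be read off from the fractal dimension $d_{\LE}(d)$ of the loop-erased walk. A set of dimension $\delta$ reaching scale $r$ occupies $\asymp r^{\delta}$ of the $\asymp r^{d}$ sites in the corresponding ball, so a fixed vertex at distance $r$ lies on it with probability of order $r^{\delta - d}$; matching with $r^{-(d-2+\eta)}$ yields the clean prediction $\eta(d) = 2 - d_{\LE}(d)$. In $d = 2$, convergence to $\mathrm{SLE}_2$ \cite{LSW02} gives $d_{\LE} = 5/4$, predicting $\eta(2) = 3/4$; in $d = 4$, $d_{\LE} = 2$ (with logarithmic corrections) gives $\eta(4) = 0$; in $d = 3$ the exponent exists, by recent work on three-dimensional loop-erased walk, but is not explicit. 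This is fully consistent with Theorem \ref{thm:prob-toppling-nu}, where $d_{\LE} = 2$ and $\eta = 0$ for all $d \ge 5$.

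The hardest part will be twofold. The transfer to infinite volume is delicate: the avalanche under $\nu$ is defined by simultaneous stabilization of a sample of $\nu$ with one extra grain at $o$, and one must show that this toppling cluster coincides $\nu$-almost surely with the $\WSF$ descendant set --- exactly the type of infinite-volume stabilizability question treated in Sections \ref{sec:measures}--\ref{sec:infinite-conf}, where one-endedness is essential, most delicately in $d = 2$, where the $\WSF$ is a single tree with very long rays. Second, extracting the exponent with the required $o(1)$ precision demands sharp loop-erased-walk estimates: $d = 2$ is the most promising case, since the Green's function asymptotics $r^{-3/4 + o(1)}$ are accessible through $\mathrm{SLE}_2$, but even there one needs quasi-multiplicativity and separation-of-scales arguments to tie the point-hitting probability to $d_{\LE}$; in $d = 3$ the value $d_{\LE}(3)$ is not known, so one could at best hope to prove that $\eta(3)$ \emph{exists}; and $d = 4$ requires controlling the logarithmic corrections at the critical dimension.
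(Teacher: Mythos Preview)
This statement is an open \emph{conjecture} in the paper; there is no proof to compare against. Your strategy---pass through the wave decomposition and the spanning-tree correspondence, then read off the exponent from the LERW growth exponent---is the natural heuristic and is how physicists reason about these exponents.

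The fatal gap is the identification of the first wave $W_1$ with the descendants of $o$ in $T(\eta)$: this is false. On the three-site segment $\{1,2,3\}$ with sink at both ends, take $\eta = (1,1,1)$ and add a particle at $o = 2$; the first wave is all of $\{1,2,3\}$, yet under the burning bijection for $(1,1,1)$ sites $1$ and $3$ connect directly to the sink and $2$ is a leaf, so the descendant set of $o$ is $\{2\}$ alone. The correspondence of \cite{IKP94} invoked in the proof sketch of Theorem~\ref{thm:finite} is \emph{not} ``first wave $=$ subtree below $o$'': it is a bijection between the ensemble of all pairs (recurrent $\eta$ with $\eta(o) = 2d-1$, choice of wave) and the set of two-component spanning forests separating $o$ from $s$, the wave being the $o$-component. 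The forest attached to the $k$-th wave is built from the intermediate configuration \emph{after} that wave, not from $T(\eta)$, so the first wave is not a functional of the UST subtree at $o$. Since $\Av_o = W_1 \supsetneq W_{\text{last}}$ in general, even the cleaner link between the last wave and the post-avalanche tree yields only one inequality.

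Two further obstacles would remain even with a correct correspondence. In $d = 2$, finiteness of $\Av_o$ under $\nu$ is itself the Open Question of Section~\ref{ssec:one-end}, so your infinite-volume step is unjustified, and no upper bound on $\nu[x \in \Av_o]$ can come from Dhar's formula, which diverges there. In $d = 3$ the value of $d_{\LE}$ is unknown, so your reduction leaves the conjecture open; and even establishing \emph{existence} of the exponent via this route would require up-to-constants control of a two-component-forest connection probability, which is not the same object as the LERW one-point function you invoke.
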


We have intentionally written the exponent in the form $d - 2 + \eta$,
in order to highlight the comparison with \eqref{e:prob-toppling-nu} 
and \eqref{e:exponents}. Upper bounds for the exponent $\eta$
in dimensions $d = 2, 3, 4$ were recently proved in \cite{BHJ17}.

The second conjecture we state concerns the number of topplings
in an avalanche. This can be measured by the total number 
of topplings, with or without multiplicity. 

\begin{conjecture}
For all $d \ge 2$ there exist exponents 
$\tau = \tau(d),\, \tau' = \tau'(d) \ge 0$ depending on $d$ such that 
\eqnsplst
{ \nu [ S_o \ge k ]
  &= k^{1-\tau + o(1)}, \quad \text{ as $k \to \infty$;} \\
  \nu [ |\Av_o| \ge k ]
  &= k^{1-\tau' + o(1)}, \quad \text{ as $k \to \infty$;} }
\end{conjecture}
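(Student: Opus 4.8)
These are critical exponents for the infinite-volume sandpile measure $\nu$, and I do not expect an unconditional proof for all $d \ge 2$; what follows is the route that looks most promising in the mean-field regime of large $d$, together with the obstructions that leave the other cases — and even the bare existence of the exponents — open. The plan is a two-sided comparison of the avalanche with a critical branching random walk, from which $\tau = \tau' = 3/2$ (the Bak--Tang--Wiesenfeld mean-field values) would be read off.

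First I would explore the avalanche produced by adding a particle at $o$ to a sample of $\nu$ one toppling at a time, keeping track of the set of currently unstable sites and of the heights they have accumulated. A site $y$ becomes unstable — hence ``reproduces'', sending one particle along each incident edge — exactly when a particle reaches it while its height is already $\deg(y)-1$, so the avalanche is a branching random walk whose effective per-edge offspring probability at $y$ is the conditional probability $q_y$ that the incoming particle is the one that topples $y$, given the exploration so far. Dhar's formula \eqref{e:mean-toppling-nu} supplies the first moment, $\E_\nu[S_o] = \sum_{x \in \Zd} G(o,x) = \infty$ for $d \ge 3$, which is consistent with criticality and excludes any subcritical (exponential-tail) behaviour. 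The aim of this step is to show that for $d$ large $q_y$ is uniformly within $o(1)$ of the annealed density $p^* := \nu[\eta(y) = \deg(y)-1]$, so that the avalanche is squeezed between two branching random walks with offspring mean close to $1$ — the precise mean being forced to $1$ by the eventual loss of particles to the sink. Feeding this into the classical high-dimensional asymptotics — total progeny $P$ with $\Pr[P \ge k] \asymp k^{-1/2}$, and range comparable to $P$ above the upper critical dimension of the tree — would give both tails the exponent $1 - \tau = 1 - \tau' = -1/2$, including the existence of the limits.

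For the matching \emph{lower} bounds it is cleaner to bypass the conditional probabilities and reuse the wave decomposition that underlies Theorem \ref{thm:prob-toppling-nu}: the avalanche at $o$ splits into nested waves $\Omega_1 \supseteq \Omega_2 \supseteq \cdots$ with $\Av_o = \Omega_1$ and $S_o = \sum_j |\Omega_j| \ge |\Omega_1|$, and through the burning bijection $\Omega_1$ is comparable to the ``past'' of $o$ — its descendant subtree — in the wired uniform spanning forest rooted at the sink. By Wilson's algorithm that past is generated by loop-erased random walks, whose size in high dimension again has the law of a critical Galton--Watson total progeny, giving $\nu[S_o \ge k] \ge \nu[|\Av_o| \ge k] \ge c\,k^{-1/2 + o(1)}$. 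This is precisely the loop-erased-walk input behind Theorem \ref{thm:prob-toppling-nu}, and the reason the clean statements require $d$ above the upper critical dimension (conjecturally $d > 4$).

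The hard part is the decoupling in the second paragraph: the heights seen along an avalanche are strongly correlated with its geometry, and estimating $\nu(\eta(y) = \deg(y)-1 \mid \text{exploration so far})$ is exactly the multi-point correlation control that is missing — the avalanche analogue of what the lace expansion provides for percolation and self-avoiding walk, and it is unclear whether a workable expansion exists here (perhaps via the spanning-forest representation and Wilson's algorithm). A further gap is that even convergence of $\log \nu[S_o \ge k] / \log k$ would require a renewal- or subadditivity-type decomposition of long avalanches into nearly independent pieces. In $d = 2$ one should instead exploit the $\UST$/loop-erased-walk/$\mathrm{SLE}$ picture (in the spirit of Priezzhev's 2D computations and of Lawler--Schramm--Werner) to pin down $\tau(2), \tau'(2)$ conjecturally, and for $d = 3, 4$ no comparable tools exist, so the conjecture seems genuinely open there.
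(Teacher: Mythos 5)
The statement you were asked about is a \emph{conjecture} in the paper; no proof is given there, so there is nothing to check your argument against, and you are right to present a programme rather than a proof. Your sketch is consistent with everything the paper does say around the conjecture: the observation that $\E_\nu S_o = \infty$ forces $\tau \le 2$; Manna's numerical evidence and the consequence of Theorem \ref{thm:prob-toppling-nu} (namely $\E_\nu[n(o,x;\cdot)\,|\,n(o,x;\cdot)\ge 1] = O(1)$ for $d \ge 5$) supporting $\tau = \tau'$; the heuristic mean-field values $\tau = \tau' = 3/2$ for $d > 4$ attributed to Priezzhev; the exact computation $\tau = 3/2$ on the Bethe lattice by Dhar and Majumdar; and the wave decomposition of Ivashkevich, Ktitarev and Priezzhev that the paper invokes in the proof sketch of Theorem \ref{thm:finite}. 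The obstructions you name (decoupling heights from avalanche geometry, and the lack of any subadditivity structure that would even give existence of the exponents) are the genuine ones.

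One caution on your lower-bound paragraph: you present the comparison of the first wave with the past of $o$ in the wired spanning forest as if it delivers $\nu[|\Av_o| \ge k] \ge c\,k^{-1/2+o(1)}$ essentially for free from loop-erased walk estimates. What Theorem \ref{thm:prob-toppling-nu} actually provides is the two-point bound $\nu[x \in \Av_o] \asymp |x|^{2-d}$; passing from that to a tail bound on $|\Av_o|$ (a one-arm/volume statement) requires second-moment or quasi-multiplicativity control over pairs $x, x' \in \Av_o$ that is not supplied by the first-moment and conditional-first-moment information the paper records, and is exactly of the same nature as the correlation control you flag as missing in the upper-bound direction. So the lower bound should be listed among the open steps rather than the ``clean'' ones. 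With that caveat, your assessment of the status of the conjecture matches the paper's.
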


Since $\E_\nu S_o = \infty$, we must have $\tau \le 2$, if it
exists. It is also plausible that $\E_\nu |\Av_o| = \infty$, 
so we should also have $\tau' \le 2$, if it exists. 
Manna \cite{Manna90} presents numerical evidence suggesting 
that $\tau = \tau'$. Theorem \ref{thm:prob-toppling-nu} gives
rigorous support to this conjecture when $d \ge 5$. Indeed,
\eqref{e:prob-toppling-nu} shows that  
\eqnst
{ \E_\nu [ n(o,x;\cdot) \,|\, n(o,x;\cdot) \ge 1 ]
  = O(1), }
that is, the average number of topplings at $x$, conditional on
the event that $x$ topples, is $O(1)$.
Heuristic arguments suggesting that 
$\tau = \tau' = 3/2$ when $d > 4$ were given by Priezzhev \cite{Pr00}. 
This has recently been proved Hutchcroft \cite{H18} in great 
generality that goes well beyond $\Z^d$.

Finally, we state a conjecture regarding the radius of an avalanche.

\begin{conjecture}
For all $d \ge 2$ there exists $\alpha \ge 0$ such that 
\eqnst
{ \nu [ \mathrm{rad}(\Av_o) > r ]
  = r^{-\alpha+o(1)}, \quad \text{ as $r \to \infty$.} }
\end{conjecture}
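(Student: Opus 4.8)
\medbreak
\noindent{\bf Towards the conjecture.}
This statement is open; what follows is a strategy rather than a proof. The natural setting is the spanning-tree picture. By the burning bijection of Majumdar and Dhar, recurrent configurations on the finite wired graph $G_n$ correspond to spanning trees of $G_n$ rooted at the sink, and as $n \to \infty$ the measure $\nu$ is coupled with the wired uniform spanning forest $\WSF$ on $\Zd$; the one-endedness of the components of $\WSF$ (cf.~\cite{JW12}) is what legitimises this limit. One then decomposes the avalanche at the origin into \emph{waves} in the sense of Ivashkevich, Ktitarev and Priezzhev: if $o$ topples $N := n(o,o;\cdot)$ times, the sets $W_1 \supseteq W_2 \supseteq \cdots \supseteq W_N$ of sites toppled in successive waves are nested, so $\Av_o = W_1$ and $\mathrm{rad}(\Av_o) = \mathrm{rad}(W_1)$. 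The first wave has a clean combinatorial description: toppling $o$ once and stabilizing without re-toppling $o$ is a local surgery on the associated spanning tree near $o$, so that $\nu[x \in W_1]$ can be rewritten as a two-point function of the $\WSF$ --- roughly, the probability that the loop-erased structure of the forest joins $o$ to $x$ in a prescribed way. The target quantity $\nu[\mathrm{rad}(\Av_o) > r]$ thereby becomes the tail of the radius of an explicit $\WSF$-measurable object rooted at $o$.

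I would first treat the high-dimensional regime $d \ge 5$. Here Dhar's formula \eqref{e:mean-toppling-nu} gives $\E_\nu[n(o,x;\cdot)] = G(o,x) \asymp |x|^{2-d}$, and Theorem~\ref{thm:prob-toppling-nu} upgrades this to $\nu[x \in \Av_o] \asymp |x|^{2-d}$. The naive bound $\nu[\mathrm{rad}(\Av_o) > r] \le \sum_{|x|>r}\nu[x\in\Av_o] \le C\sum_{|x|>r}|x|^{2-d}$ diverges, so one must use that $\Av_o$ is connected and contains $o$: the event $\{\mathrm{rad}(\Av_o) > 2r\}$ forces a toppled $\WSF$-crossing of the annulus $\{r < |x| \le 2r\}$, and summing the $|x|^{2-d}$-weights over a cutset of that annulus, via the $W_1$-representation, should give $\nu[\mathrm{rad}(\Av_o) > r] \le C r^{-\alpha}$. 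For the matching lower bound one exhibits, with probability $\gtrsim r^{-\alpha}$, a favourable $\WSF$ configuration near $o$ whose associated sandpile produces a wave of radius $\gtrsim r$ --- a second-moment shortcut being obstructed by the lack of a useful formula for $\E_\nu[n(o,x;\cdot)n(o,y;\cdot)]$. The expected value is $\alpha = 2$ for every $d > 4$: above the upper critical dimension $d_c = 4$ the avalanche cluster should behave like a critical branching-random-walk tree embedded in $\Zd$, of spatial radius $\asymp (\mathrm{size})^{1/4}$, whence $\nu[\mathrm{rad}(\Av_o)>r] = \nu[S_o \gtrsim r^4] \asymp r^{-2}$, consistent with the scaling relation $\alpha = d_f(\tau'-1)$ with $d_f = 4$ and $\tau' = 3/2$ (see \cite{Pr00}).

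In the physical range $2 \le d \le 4$ one expects $\alpha = \alpha(d)$ to be genuinely dimension-dependent, linked to the other conjectural exponents by relations such as $\alpha = d_f(\tau'-1)$, and in $d=2$ to the geometry of loop-erased random walk and the $\mathrm{SLE}$-type curves that describe avalanche boundaries (the loop-erased dimension $5/4$ feeding into $d_f$). Here even the upper bound is out of reach: one needs not merely connectedness but sharp two-point and crossing estimates for the $\WSF$ with controlled error terms, which are currently unavailable.

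The principal obstacles are: (i) stabilization is a nonlocal deterministic operation and $\nu$ is neither Markov nor a product measure, so controlling the \emph{spatial} extent of a single wave $W_1$ --- which is exactly what the radius sees, and which the first-moment content of Dhar's formula does not supply --- is already delicate; (ii) establishing that the limit $\alpha = -\lim_{r\to\infty}\log\nu[\mathrm{rad}(\Av_o)>r]/\log r$ even exists, rather than oscillating or carrying logarithmic corrections, is open for essentially all exponents of this model, and in high dimensions would require a lace-expansion-level analysis of the avalanche cluster; (iii) for $2 \le d \le 4$ the requisite sharp $\WSF$ and loop-erased random walk estimates are simply not known, so progress there seems to demand substantial new input.
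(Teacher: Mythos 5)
This statement is a \emph{conjecture}, stated as open in the paper; there is no proof of it anywhere in the text, so there is nothing to compare your argument against, and you are right not to claim one. Your surrounding heuristics are consistent with what the paper does say: the conjectured value $\alpha=2$ for $d>4$ (attributed to \cite{Pr00}), the Bethe-lattice computation of \cite{DM90} giving $\alpha=2$ with respect to the square-root metric, the critical dimension $d_c=4$, the wave decomposition of \cite{IKP94}, and the identification $\Av_o=W_1$ via nestedness of waves are all in line with Sections \ref{sssec:further} and \ref{ssec:one-end}.

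One concrete caveat about the part of your strategy that reads most like an argument: the proposed upper bound for $d\ge 5$ via connectedness plus ``summing the $|x|^{2-d}$-weights over a cutset'' does not close as written. A sphere of radius $\rho$ contains $\asymp \rho^{d-1}$ vertices, so the total two-point weight on such a cutset is $\asymp \rho^{d-1}\cdot\rho^{2-d}=\rho$, which \emph{grows}; and the first-moment consequence of connectedness, namely $r\,\nu[\mathrm{rad}(\Av_o)>r]\le \E_\nu\bigl[\,|\Av_o\cap B_r|\,\bigr]\asymp r^2$, yields only the vacuous bound $\nu[\mathrm{rad}(\Av_o)>r]\le Cr$. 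Theorem \ref{thm:prob-toppling-nu} is a one-point estimate, and extracting any power-law decay of the radius from it requires control of correlations of the events $\{x\in\Av_o\}$ (a second-moment or lace-expansion-type input), which is exactly the missing ingredient you correctly list among the obstacles. So the status is accurately represented by your final paragraph, not by the ``should give $\nu[\mathrm{rad}(\Av_o)>r]\le Cr^{-\alpha}$'' in the middle one.
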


When $d > 4$, it was conjectured in \cite{Pr00} that $\alpha = 2$.
This has recently been proved by Hutchcroft \cite{H18}.
Upper bounds on the exponent $\alpha$ in $d = 2, 3, 4$, and lower bounds
in $d = 3, 4$ were given in \cite{BHJ17}.

It is a well-known heuristic in statistical physics that the behaviour 
of a lattice model in sufficiently high dimensions can be approximated
by its behaviour on an infinite $k$-regular tree with $k \ge 3$,
called the \textbf{Bethe lattice}. 
In general, there exists a \textbf{critical dimension} $d_c$ such that for $d > d_c$ 
the values of the critical exponents do not depend on $d$ and take on
the same values as on the $k$-regular tree. The conjectured critical
dimension for the sandpile model is $d_c = 4$ (for the percolation 
model of Section \ref{ssec:percolation} it is conjectured to be $d_c = 6$).
Rigorous support to the idea that $d_c = 4$ for the Abelian
sandpile model is provided by Theorem \ref{thm:Pemantle}, showing
that there is a clear change in behaviour at dimension $4$ for the
closely related wired spanning forest measure. 

When $\Z^d$ is replaced by a $k$-regular tree, the distribution of the 
random set $\Av_o$ has been computed explicitly by 
Dhar and Majumdar \cite{DM90} using combinatorial methods. 
In particular, they obtain $\tau = 3/2$. 
This has been recently extended to supercritical Galton-Watson trees
\cite{JRS18}. It is natural to define the 
Euclidean distance between vertices $z, x$ of the tree as the square 
root of their graph distance. With respect to this distance 
Dhar and Majumdar \cite{DM90} also obtain $\alpha = 2$.

\section{Connections to other models}
\label{sec:connections}

One of the appeals of the Abelian sandpile is its close relationship with 
other probability models on graphs. In this section we present 
connections to: spanning trees; the rotor-router walk;
the random cluster model and the Tutte polynomial. 

\subsection{The burning bijection}
\label{ssec:burning}

As in Section \ref{sec:model}, let $G = (V \cup \{ s \}, E)$ be 
a finite connected multigraph. The \textbf{burning algorithm} introduced
by Dhar \cite{Dhar90} provides an efficient way to check whether a
given stable sandpile is recurrent. This gives a combinatorial 
characterization of recurrent sandpiles. The algorithm leads
to the \textbf{burning bijection} between recurrent sandpiles on $V$ 
and spanning trees of $G$. This bijection is due to Majumdar and Dhar \cite{MD92}.

\begin{definition}
Let $\eta \in \Omega_G$, and let $\es \not= F \subset V$. We say that 
$\eta$ is \textbf{ample} for $F$, if there exists $x \in F$ such that 
$\eta(x) \ge \deg_F(x)$ (the degree of $x$ in the subgraph induced 
by the set of vertices $F$).
\end{definition}

\begin{lemma}
\label{lem:ample}
If $\eta \in \cR_G$, then $\eta$ is ample for all $\es \not= F \subset V$.
\end{lemma}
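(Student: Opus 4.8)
The plan is to argue by contradiction using the characterization of recurrent configurations in Theorem~\ref{thm:recurrent}(ii), specifically the criterion (b): $\eta \in \cR_G$ iff there exists $\xi \ge \eta^{\max}$ with $\xi^\circ = \eta$. Suppose $\eta$ is \emph{not} ample for some $\es \not= F \subset V$, i.e.\ $\eta(x) < \deg_F(x)$ for every $x \in F$. I want to show $\eta$ cannot be recurrent. The intuition is that the vertices of $F$ are collectively ``starved'': none of them has enough chips to be forced to topple even if we only count the edges staying inside $F$, so during any stabilization that reaches $\eta$, the set $F$ behaves like a trap that the topplings cannot escape from in the right way.

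First I would set up the following key observation: if $\xi$ is any configuration with $\xi \ge \eta^{\max}$, consider stabilizing $\xi$ to reach $\eta = \xi^\circ$. Look at the moment when the \emph{last} vertex of $F$ topples during this stabilization (some vertex of $F$ must topple, since $\xi(x) \ge \deg_G(x) - 1 \ge \deg_F(x)$ for $x \in F$, and in fact $\eta^{\max}(x) \ge \deg_F(x)$ with equality only when $x$ has no neighbours outside $F \cup \{s\}$; one must be slightly careful here but the point is that $F$-vertices are generically unstable in $\eta^{\max}$). Let $x_0 \in F$ be the last vertex of $F$ to topple, and consider the state right after that toppling: from then on, no vertex of $F$ topples again, so the chips at vertices of $F$ can only \emph{increase} (they receive from toppling neighbours but never send). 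After $x_0$'s final toppling, $x_0$ had its count reduced by $\deg_G(x) \ge \deg_F(x)$ after having been (at that instant) at least $\deg_G(x_0)$; so immediately after, $x_0(x) = $ (value before) $ - \deg_G(x_0) \ge 0$. The crucial quantity to track is $\sum_{x \in F}\eta(x)$ versus $\sum_{x \in F} \deg_F(x) = 2 |E(F)|$ (twice the number of edges internal to $F$).

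The cleanest route, rather than tracking the last toppling, is an inductive/greedy argument. The hypothesis $\eta(x) < \deg_F(x)$ for all $x \in F$ says $\eta$ restricted to $F$ is a \emph{stable} configuration for the induced subgraph on $F$ (with the rest of the graph acting as a sink). Now run the following ``reverse burning'': since $\eta$ is stable on $F$, and the sandpile on the induced subgraph $F$ (with a sink) has its own recurrent/transient theory, one shows that starting from $\eta$ on $F$ and the all-max configuration outside, no toppling sequence that only ever topples $F$-vertices can produce $\eta$ on $F$ from anything $\ge \eta^{\max}|_F$; equivalently, the matrix $\Delta'$ restricted to $F$ has a strictly dominant structure that forbids it. Concretely: if $\eta = \xi^\circ$ with $\xi \ge \eta^{\max}$ and $n(x)$ denotes the number of times $x$ topples in the stabilization, then for the vertex $x^* \in F$ minimizing $n$ over $F$, examine the balance equation $\eta(x^*) = \xi(x^*) - \deg_G(x^*) n(x^*) + \sum_{y \sim x^*} a_{x^* y} n(y)$. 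Neighbours inside $F$ topple $\ge n(x^*)$ times, neighbours outside (including $s$) topple $\ge 0$ times, so $\eta(x^*) \ge \xi(x^*) - \deg_G(x^*) n(x^*) + \deg_F(x^*) n(x^*) \ge \eta^{\max}(x^*) - (\deg_G(x^*) - \deg_F(x^*)) n(x^*)$. If $n(x^*) = 0$ this forces $\eta(x^*) \ge \eta^{\max}(x^*) = \deg_G(x^*) - 1 \ge \deg_F(x^*)$, contradicting non-ampleness; if $n(x^*) \ge 1$ one needs a touch more care, but since $x^*$ is the \emph{minimum}, one argues that some $F$-vertex must have $n = 0$ (otherwise every $F$-vertex toppled, and then looking at the first time all of $F$ has ``finished'' leads back to the same bound) — this is the delicate bookkeeping step.

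The main obstacle I anticipate is exactly this last point: handling the case where every vertex of $F$ topples at least once, and making the minimality/extremality argument airtight. I expect the right device is to apply the balance inequality not to raw toppling counts but to \emph{differences} $n(x) - \min_{y \in F} n(y)$, or equivalently to reduce to the case $\min_{F} n = 0$ by noting that subtracting a common constant number of topplings from all of $F$ corresponds to a global relation that preserves the congruence class — effectively using that $\mathbf{1}_F \Delta'$ has nonnegative off-$F$ coordinates. Once $\min_F n = 0$ is secured, the inequality above closes the contradiction immediately. If that reduction proves awkward, the fallback is the ``last $F$-toppling'' argument sketched above, tracking that after the final toppling of the $n$-minimal vertex the $F$-counts only grow, yet must land below $\deg_F$, which is incompatible with the max-boundedness of $\xi$.
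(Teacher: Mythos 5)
There is a genuine gap, and it sits exactly where you flagged it. Your balance-equation argument (the ``cleanest route'') only closes when the $n$-minimizing vertex $x^*\in F$ has $n(x^*)=0$: for $m=n(x^*)\ge 1$ your inequality reads $\eta(x^*)\ge \xi(x^*)-m\bigl(\deg_G(x^*)-\deg_F(x^*)\bigr)$, which degrades without bound as $m$ grows, and the proposed reduction (subtracting a common number of topplings from all of $F$) changes the configuration off $F$ and does not obviously return you to the same situation. Even the $n(x^*)=0$ case is not quite right: it yields $\eta(x^*)\ge\deg_G(x^*)-1$, which beats $\deg_F(x^*)$ only if $x^*$ has a neighbour outside $F$, and the minimizer need not be such a vertex. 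Separately, starting from criterion (b) of Theorem \ref{thm:recurrent}(ii) gives only $\xi(x)\ge\deg_G(x)-1$ on $F$, so it does not force the vertices of $F$ to topple at all; and in your first sketch the extremal choice is wrong: after the \emph{last} toppling performed by any vertex of $F$, the vertices of $F$ receive chips only from outside $F$, so you learn nothing involving $\deg_F$.

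The paper's proof repairs all three points with two small twists. First, it invokes criterion (c) rather than (b): taking $\sigma$ equal to $\deg_G$ on $F$, one gets $\xi$ with $\xi^\circ=\eta$ and $\xi(x)\ge\deg_G(x)$ for every $x\in F$, so every vertex of $F$ is unstable in $\xi$ and must topple at least once in any stabilizing sequence. Second, it takes $x$ to be the \emph{first} vertex of $F$ to perform its \emph{final} toppling. Immediately after that toppling $x$ holds a nonnegative number of chips, and every other vertex $y\in F$ still has at least one toppling left, each of which delivers $a_{yx}$ chips to $x$; since $x$ never topples again, $\eta(x)\ge\sum_{y\in F,\,y\ne x}a_{yx}=\deg_F(x)$. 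This is the temporal analogue of your minimal-$n$ idea, but ordering the $F$-vertices by the \emph{time} of their last toppling rather than by their toppling \emph{counts} is what makes the bookkeeping close; I would rewrite your argument along these lines.
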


\begin{proof}
Due to Theorem \ref{thm:recurrent}(ii)(c), there exists $\xi$ such that 
$\xi^\circ = \eta$ and $\xi(x) \ge \deg_G(x)$ for all $x \in F$. 
Fix a stabilizing sequence for $\xi$. Observe that each vertex in $F$ 
must topple in this stabilizing sequence. Let $x$ be the first vertex 
among the vertices in $F$ that finishes toppling.
After $x$ finishes toppling, it receives particles from each of its 
neighbours in $F$ (as each of these neighbours will still topple). 
The number of particles received is altogether 
$\sum_{y \in F,\, y \not= x} a_{yx} = \deg_F(x)$.
Hence $\eta(x) \ge \deg_F(x)$, as required.
\end{proof}

\medbreak

\textbf{The burning algorithm.} The input of the algorithm is a 
stable sandpile $\eta \in \Omega_G$. 

At time $t = 0$, we declare $s$ ``burnt''. We set $B_0 = \{ s \}$,
the set of vertices burnt at time $0$, and set $U_0 = V$, the set of
vertices unburnt at time $0$.

At time $t = 1$, we declare burnt all $x \in U_0$ such that 
$\eta(x) \ge \deg_{U_0}(x)$. That is, we set
\eqnsplst
{ B_1
  &= \{ x \in U_0 : \eta(x) \ge \deg_{U_0}(x) \} \\
  U_1
  &= U_0 \setminus B_1. }

At a generic time $t \ge 1$, we declare burnt all vertices 
in $x \in U_{t-1}$ such that $\eta(x) \ge \deg_{U_{t-1}}(x)$. 
That is, we set 
\eqnsplst
{ B_t
  &= \{ x \in U_{t-1} : \eta(x) \ge \deg_{U_{t-1}}(x) \} \\
  U_t
  &= U_{t-1} \setminus B_t. }

\medbreak

We must eventually have $U_T = U_{T+1} = \dots$ for some $1 \le T < \infty$.
If $U_T \not= \es$, then the relation $U_T = U_{T+1}$ (equivalently:
$B_T = \es$) shows that $\eta$ is not ample for $U_T$. 
Hence Lemma \ref{lem:ample} shows that $\eta$ is not recurrent
in this case. We will see shortly the converse, that is, when $U_T = \es$
we can conclude that $\eta$ is recurrent.

\medbreak 

\textbf{The burning bijection.} Denote $\cT_G = \{ \text{spanning trees of $G$} \}$.
We use the burning algorithm to define a map $\varphi : \cR_G \to \cT_G$.
For every $x \in V$ fix an ordering $\prec_x$ of the edges incident
with $x$. It will be useful to think of these edges as being oriented from
$x$ towards the corresponding neighbours. Let $\eta \in \cR_G$. 
The spanning tree $\varphi(\eta)$ will be defined by assigning to each
$x \in V$ an edge incident with $x$, and oriented outwards from $x$.
The construction will guarantee that the edges form a spanning tree
oriented towards $s$.  
We just saw that running the burning algorithm on $\eta$ burns 
all vertices. Therefore, given any vertex $x \in V$, there is a 
unique $t = t(x) \ge 1$ such that $x \in B_t$. Let 
\eqnsplst
{ F_x 
  &= \{ f : \tail(f) = x,\, \head(f) \in B_{t-1} \} \\
  m_x
  &= \left| \Big\{ f : \tail(f) = x,\, 
     \head(f) \in \bigcup_{r \le t-1} B_r \Big\} \right|. }
Here $| \cdot |$ denotes the number of elements of a set.
Observe the following properties:\\
(i) We have $F_x \not= \es$. This is because a vertex becomes burnable 
in the algorithm precisely because a sufficient number of its neighbours
have burnt to satisfy the inequality $\eta(x) \ge \deg_{U_{t-1}}(x)$. Hence 
there always is at least one neighbour of $x$ that burned in the previous 
step.\\
(ii) We have $\deg_G(x) - m_x \le \eta(x) < \deg_G(x) - m_x + | F_x |$.
The first inequality holds because the left hand side is $\deg_{U_{t-1}}(x)$.
The second inequality holds because if it was violated, then $x$ should
have burnt at a time $\le t-1$.\\
Supposing $\eta(x) = \deg_G(x) - m_x + i$, with $0 \le i < | F_x |$, and
$F_x = \{ f_0 \prec_x f_1 \prec_x \prec_x \dots \prec_x f_{| F_x | -1} \}$,
we set $e_x = f_i$. Now put $\varphi(\eta) := \tau := \{ e_x : x \in V \}$.
Since $\head(e_x) \in B_{t(x)-1}$ for each $x$, the collection $\tau$ 
does not contain cycles. Therefore, it is a spanning tree of $G$
(oriented towards $s$). Now forget the orientation of the edges
to obtain an unoriented spanning tree. (Note: there is no loss of
information in doing so, since the orientation is uniquely recovered
by following paths leading to $s$).

\medbreak

\begin{exercise}
\label{ex:injective}
Show that $\varphi$ is injective. \emph{Hint:} If $\eta_1 \not= \eta_2$,
there is a first time $t$ when ``different things happen'' in the 
constructions of $\varphi(\eta_1)$ and $\varphi(\eta_2)$. Check that at this
time some $e_x$ is assigned differently for the two configurations.
See \cite{MD92}.
\end{exercise}

A well-known result in combinatorics is the Matrix-Tree Theorem \cite{Bbook}, 
that states that $|\cT_G| = \det ( \Delta'_G )$. We saw in Theorem \ref{thm:group}(i)
that also $|\cR_G| = \det ( \Delta'_G )$. Therefore Exercise \ref{ex:injective}
implies the following corollary.

\begin{corollary}
The mapping $\varphi : \cR_G \to \cT_G$ is a bijection.
\end{corollary}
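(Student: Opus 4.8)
The plan is to conclude by a pure counting argument, since all the substantive work has already been done. Injectivity of $\varphi$ is Exercise \ref{ex:injective}, and both $\cR_G$ and $\cT_G$ are finite sets, so it suffices to check that they have the same number of elements.

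First I would invoke the Matrix-Tree Theorem, which gives $|\cT_G| = \det(\Delta'_G)$. Then I would recall that Theorem \ref{thm:group}(i) established $|\cR_G| = |K_G| = \det(\Delta'_G)$ as well. Hence $|\cR_G| = |\cT_G|$, and this common value is finite.

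Second, I would use the elementary fact that an injective map between two finite sets of the same cardinality is necessarily surjective. Applying this to $\varphi : \cR_G \to \cT_G$ yields that $\varphi$ is a bijection, which is the claim.

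There is no real obstacle remaining; the one caveat worth noting is that this route is non-constructive, in that it does not exhibit an explicit inverse $\cT_G \to \cR_G$. Producing such an inverse would mean reversing the construction of $\varphi$: from a spanning tree oriented towards $s$ one would reconstruct the burning times $t(x)$, the quantities $m_x$ and the sets $F_x$, and then recover the height $\eta(x) = \deg_G(x) - m_x + i$ from the rank $i$ of the assigned tree-edge at $x$ within the $\prec_x$-ordered set $F_x$. This is routine but unnecessary for the statement as posed, so I would not carry it out.
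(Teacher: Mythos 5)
Your argument is exactly the one the paper uses: injectivity from Exercise \ref{ex:injective}, together with $|\cT_G| = \det(\Delta'_G)$ from the Matrix-Tree Theorem and $|\cR_G| = \det(\Delta'_G)$ from Theorem \ref{thm:group}(i), so the injection between finite sets of equal cardinality is a bijection. Correct and the same approach as the paper.
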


\begin{exercise}
\label{ex:comb}
Deduce the following combinatorial characterization of recurrent
states:
\eqnst
{ \parbox{12cm}{a sandpile $\eta$ is recurrent if and only if 
    it is ample for any $\es \not= F \subset V$.} }
\emph{Hint:} The spanning tree $\varphi(\eta)$ is well-defined, and
injectivity still holds, whenever $\eta$ ``passes'' the burning 
algorithm, that is, when all vertices burn. 
See \cite{HLMPPW}*{Lemma 4.2}.
\end{exercise}

\begin{exercise}
\label{ex:1D}
Show that if $G_n = (V_n \cup \{ s \}, E_n)$ is the wired graph
constructed from $V_n = \{ 1, \dots, n \} \subset \Z$, then 
$\cR_{G_n}$ consists of those sandpiles for which there is 
at most one vertex with no particles (in particular, $|\cR_{G_n}| = n+1$).
Show that the sandpile group of $G_n$ is isomorphic to $\Z_{n+1}$,
and it is generated by $E_1$. See \cites{MRSV00,DRSV95}.
\end{exercise}

\begin{exercise}
\label{ex:inverse}
Specify explicitly the inverse map $\varphi^{-1} : \tau \mapsto \eta$.
\emph{Hint:} $x \in B_t$ if and only if $\dist_\tau(s,x) = t$,
where $\dist_\tau$ is the graph-distance in the tree $\tau$. 
See \cite{AJ04}.
\end{exercise}

Recall that the stationary measure $\nu_G$ is
the uniform distribution on recurrent sandpiles. The bijection 
$\varphi$ maps this measure to the uniform distribution 
on the set of spanning trees of $G$. This is called the
\textbf{uniform spanning tree} measure, denoted $\mathsf{UST}_G$.
See \cite{LPbook} and \cite{BLPS} for the rich theory of 
uniform spanning trees.

What makes the burning bijection a very useful tool, is that 
there is a simple (and indeed very efficient) algorithm due 
to Wilson \cite{W96} to generate a uniformly random element 
of $\cT_G$, that is, a sample from $\UST_G$. Mapping this random 
tree back via the map $\varphi^{-1} : \cT_G \to \cR_G$,
one can analyze the measure $\nu_G$. In order to describe
Wilson's algorithm, we need the procedure of loop-erasure.
Given a path $\pi = [w_0, w_1, \dots, w_k]$ in $G$, we define 
the \textbf{loop-erasure} $\LE(\pi) = [v_0, \dots, v_\ell]$
of $\pi$ by chronologically erasing loops from $\pi$, as
they are created. That is, we follow the steps of $\pi$ until
the first time $t$, if any, when $w_t \in \{ w_0, \dots, w_{t-1} \}$.
Suppose $w_t = w_i$. We remove the loop 
$[w_i, w_{i+1}, \dots, w_t = w_i]$ from $\pi$, and continue
tracing $\pi$. The process stops when there are no more loops
to remove, yielding a self-avoiding path denoted $\LE(\pi)$.
If $\pi$ is obtained from a random walk process on $G$, its 
loop-erasure is called the loop-erased random walk (LERW)
\cite{LLbook}.

\medbreak

\textbf{Wilson's algorithm.} Fix a vertex $r$ of $G$ (for example, 
in the sandpile context $r = s$ turns out to be a natural choice), 
and let $v_1, v_2, \dots, v_K$ be an arbitrary enumeration of the 
remaining vertices of $G$. Let $\tau_0 = \{ r \}$. Start a simple 
random walk on $G$ at the vertex $v_1$, and stop it when $r$ is first 
hit. We attach to $\tau_0$ the loop-erasure of the path from $v_1$ to 
$r$, and call the resulting path $\tau_1$. Now we start a second simple 
random walk from $v_2$, stop it when it hits $\tau_1$, and attach the 
loop-erasure to $\tau_1$. This gives a tree $\tau_2$. When we have 
visited all the vertices, we have a spanning tree $\tau_K$ of $G$. 
Wilson's theorem \cite{W96} shows that this tree is uniformly distributed
over all spanning trees of $G$.

\medbreak

The LERW and Wilson's algorithm are also very useful when we pass to 
infinite graphs (see \cite{BLPS}). In $\Z^d$, $d \ge 3$, the 
loop-erasure of an infinite simple random walk path is still
well-defined, because the path visits any vertex only finitely
often, due to transience. In $\Z^2$
the definition of the infinite LERW is not as straightforward.
One possible definition is take a LERW from the origin to the 
boundary of a ball of radius $n$, and take the weak limit of these
paths as $n \to \infty$ \cite{LLbook}.

\begin{exercise}
Give a direct proof (without appealing to the Matrix-Tree Theorem) 
that $\varphi : \cR_G \to \cT_G$ is a bijection. 
\emph{Hint:} See Exercises \ref{ex:injective} and \ref{ex:inverse}, 
the hint for Exercise \ref{ex:comb} and \cite{HLMPPW}*{Lemma 4.2}.
\end{exercise}

\subsection{The rotor-router model}
\label{ssec:rotor-router}

The rotor-router model, invented by Jim Propp, is a 
deterministic analogue of random walk \cite{HLMPPW}. It has also 
been discovered independently in the physics literature, where it 
is called the Eulerian walkers model \cite{PDDK96}. In the sandpile 
model, each vertex $x$ has to ``wait'' until it has collected 
$\deg(x)$ chips before it can send them to its neighbours.
The rotor-router mechanism allows us to send chips one-by-one.
The principal reference for this section is \cite{HLMPPW}.

The natural setting for the rotor-router walk is directed graphs. 
However, since the emphasis in this section is 
the connection to Abelian sandpiles, we will restrict to connected 
graphs of the form $G = (V \cup \{ s \}, E)$ as in Section \ref{sec:model},
and regard each edge of $E$ being present with both orientations.
For each $x \in V$, fix a cyclic
ordering of the edges incident with $x$, and orient these edges
outward from $x$. If $e$ is one of these edges ($\tail(e) = x$),
then we denote by $e^+$ the next edge in the cyclic ordering.

\begin{definition}
A \textbf{rotor configuration} is a choice of edges 
\eqnst
{ \rho
  = ( \rho(x) : x \in V ), }
such that $\rho(x) \in E$ and $\tail(\rho(x)) = x$ for each $x \in V$.
We think of $\rho(x)$ as the state of a rotor placed at the
vertex $x$. A \textbf{single-chip-and-rotor state} is a pair
$(\rho, w)$, where $w \in V$. We think of $w$ as the location 
of a chip placed on the graph. The \textbf{rotor-router operation}
advances the rotor at the current position $w$ according to the
cyclic ordering, and then moves the chip following the
new direction of the rotor. That is, we assign to $(\rho, w)$ 
the new state $(\rho^+, w^+)$, where
\begin{eqnarray*}
  \rho^+(y)
  &=& \begin{cases}
    (\rho(w))^+ & \text{if $y = w$;} \\
    \rho(y)     & \text{if $y \not= w$;} 
    \end{cases} \\
  w^+
  &=& \head(\rho^+(x)). 
\end{eqnarray*}  
Iterating the rotor-router operation gives the \textbf{rotor-router walk}.
When the chip arrives at the sink, we stop the walk, and remove the chip.
\end{definition}

\begin{lemma}
Starting from any single-chip-and-rotor state $(\rho,w)$, the rotor-router
walk eventually arrives at the sink. 
\end{lemma}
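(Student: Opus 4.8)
The plan is to argue by contradiction, assuming the rotor-router walk never reaches the sink, and to exploit the fact that in such an infinite walk some vertex must be visited infinitely often. The key observation is the following: if the chip visits a vertex $x$ exactly $\deg_G(x)$ times in a row of consecutive visits (i.e.~$\deg_G(x)$ visits), then the rotor at $x$ completes a full cycle and hence, over those visits, sends the chip once along every edge incident to $x$. More generally, if $x$ is visited $k$ times during the walk, the rotor at $x$ advances $k$ steps through its cyclic ordering, so the chip is sent to each neighbour of $x$ either $\lfloor k/\deg_G(x)\rfloor$ or $\lceil k/\deg_G(x)\rceil$ times. So a vertex visited infinitely often sends the chip infinitely often to each of its neighbours, and in particular to each neighbour, those neighbours are visited infinitely often as well.

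First I would formalise this ``infinitely-visited set is closed under adjacency'' claim. Let $W_\infty \subseteq V$ be the set of vertices visited infinitely often by the walk (nonempty under our assumption, since an infinite walk on a finite vertex set must revisit some vertex infinitely often, and the chip is never removed). For $x \in W_\infty$, the rotor at $x$ cycles through all incident edges infinitely often, so every neighbour of $x$ receives the chip infinitely often and thus lies in $W_\infty$. Hence $W_\infty$ is a union of connected components of $G$ that avoids... no: since $G$ is connected, $W_\infty$ being nonempty and closed under adjacency forces $W_\infty = V \cup \{s\}$ — but wait, once the chip reaches $s$ the walk stops, so $s \notin W_\infty$ unless the walk terminates. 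This is exactly the contradiction: if the walk is infinite then $s$ is never visited, yet $W_\infty$ closed under adjacency and containing some vertex of the connected graph $G$ must contain every vertex, including any neighbour of $s$, and then that neighbour sends the chip to $s$ — so the walk does reach $s$ and terminates.

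Let me state the steps in order. (1) Suppose for contradiction the walk is infinite; then $s$ is never reached. (2) Since $V$ is finite and the chip is never removed, the set $W_\infty$ of vertices visited infinitely often is nonempty. (3) For each $x \in W_\infty$, the rotor-router operation advances $\rho(x)$ by one each visit, so after every $\deg_G(x)$ visits the chip has been dispatched once along each incident edge; hence every neighbour of $x$ is visited infinitely often, i.e.~$W_\infty$ is closed under the adjacency relation of $G$. (4) Because $G$ is connected, a nonempty adjacency-closed set equals the whole vertex set $V \cup \{s\}$; in particular $s \in W_\infty$, contradicting (1). Therefore the walk is finite, which (since the walk only stops at the sink) means it arrives at the sink. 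The main obstacle — really the only non-routine point — is step (3): one must be careful that ``visited infinitely often'' genuinely forces the rotor to complete infinitely many full cycles and that each full cycle dispatches the chip to \emph{every} neighbour (using that the cyclic ordering ranges over \emph{all} edges incident to $x$, counted with multiplicity for multigraphs), so that adjacency-closure is exact rather than merely ``some neighbour is hit infinitely often.''
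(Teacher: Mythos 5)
Your proof is correct and rests on the same key observation as the paper's: that $\deg_G(x)$ consecutive visits to $x$ force the rotor to dispatch the chip along every edge incident to $x$, so visits propagate to all neighbours. The paper organizes this as a direct induction along a path to the sink (a vertex adjacent to $s$ can be visited only finitely often before the walk terminates, then induct outward), whereas you take the contrapositive via the adjacency-closed set of infinitely visited vertices; the two arguments are equivalent.
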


\begin{proof}
If $x \sim s$, then after at most $\deg_G(x)$ visits to $s$, the walk will
arrive at $s$. Inducting along a path to $s$, we obtain that any vertex
can only be visited finitely many times before the walk arrives at the sink.
\end{proof}

\begin{definition}
A \textbf{chip-and-rotor state} is a pair $t = (\rho, \eta)$, where 
$\rho$ is a rotor configuration and $\eta$ is a chip configuration 
(sandpile) on $V$.
If $\eta(x) \ge 1$, we say that \textbf{$x$ is active}. In this case,
by \textbf{firing $x$} we mean letting a single chip at $x$ take one
rotor-router step. $t$ is stable, if there is no active vertex (all have
moved to the sink).
\end{definition}

The next lemma shows that this model has an Abelian property.

\begin{lemma}
\label{lem:rotor-stabilize}
Starting from any chip-and-rotor state $(\rho, \eta)$, we reach the same
stable state eventually (that is when all chips arrived at the sink), 
regardless of what rotor-router steps we choose. 
\end{lemma}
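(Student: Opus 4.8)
The plan is to mimic the two-step proof of Theorem~\ref{thm:stabilization}. First I would show that only finitely many firings can occur, no matter which active vertex we choose to fire at each stage; then I would show that the total number of firings at each vertex is independent of these choices, which pins down both the final rotor configuration and the final chip configuration.

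For termination, the single-chip argument of the preceding lemma needs a small addition. Note that once a chip reaches $s$ it is removed and never returns to $V$, so the total number of chips on $V$, namely $\sum_{x\in V}\eta(x)$, is non-increasing along any firing sequence; in particular each vertex of $V$ holds at most $N:=\sum_{x\in V}\eta(x)$ chips at every stage. Suppose for contradiction that infinitely many firings occur. Since $V$ is finite, some vertex is fired infinitely often, and because the rotor at such a vertex advances cyclically through all incident edges, that vertex sends infinitely many chips to each of its neighbours. If that vertex is adjacent to $s$ we contradict $N<\infty$ directly. Otherwise a non-sink neighbour $y$ receives infinitely many chips; since $y$ holds at most $N$ at any time, $y$ must itself be fired infinitely often. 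Walking along a path in $G$ from such a vertex to $s$ (which exists since $G$ is connected) produces a vertex adjacent to $s$ that is fired infinitely often, the desired contradiction. Hence the process terminates in a stable state, i.e.\ one with $\eta\equiv 0$ on $V$.

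For order-independence, the key point is that firings at two \emph{distinct} active vertices commute. If $x\neq y$ are both active in $(\rho,\eta)$, then firing $x$ advances only the rotor at $x$ and removes one chip from $x$, so $y$ stays active; carrying out the firings in either order advances the rotors at $x$ and at $y$ each by one cyclic step and moves one chip from $x$ to $\head(\rho(x)^+)$ and one from $y$ to $\head(\rho(y)^+)$, so the resulting chip-and-rotor state is the same. (Repeated firings of one vertex are merely iterated and need no commutation.) Granting this, I would run the exact argument of Theorem~\ref{thm:stabilization}(ii): given two legal firing sequences $x_1,\dots,x_k$ and $y_1,\dots,y_\ell$ for $(\rho,\eta)$, each ending at a stable state, if $(\rho,\eta)$ is already stable then $k=\ell=0$; otherwise $x_1$ is active, so $\eta(x_1)\ge 1$, and $x_1$ must appear in the $y$-sequence, since a firing sequence avoiding $x_1$ never decreases $\eta(x_1)$ and so cannot reach a state with $\eta(x_1)=0$. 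Using the commutation relation I would move the first occurrence of $x_1$ to the front of the $y$-sequence (all intermediate firings stay legal because $\eta(x_1)$ is non-decreasing up to that point, hence $x_1$ remains active), cancel $x_1$ from the front of both sequences, and iterate. This gives $k=\ell$ and that each vertex is fired the same number of times in the two sequences.

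To finish: since each firing of $x$ advances $\rho(x)$ by one step in the fixed cyclic order, the final rotor at $x$ is $\rho(x)$ advanced by the (now choice-independent) number of firings at $x$, and the final chip configuration on $V$ is identically $0$ by stability; hence the final chip-and-rotor state does not depend on the firing order. I expect the only real obstacle to be the termination step, because the single-chip induction in the preceding lemma does not transfer verbatim to several chips and one has to invoke conservation of chips to rule out a vertex firing infinitely often; once commutation is in hand, the cancellation argument is a routine transcription of Theorem~\ref{thm:stabilization}.
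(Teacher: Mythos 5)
Your proof is correct and is precisely the argument the paper has in mind: the paper's own "proof" of this lemma is a one-line remark that it follows by the same ideas as Theorem \ref{thm:stabilization}, and your write-up (finiteness via conservation of chips plus induction along a path to $s$, then pairwise commutation of firings at distinct active vertices and the cancellation argument) is a faithful and complete execution of that plan. The only point worth noting is that you have in effect supplied the details the paper omits, including the observation that the final rotor configuration is pinned down by the choice-independent firing counts.
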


\begin{proof}
This can be proved using similar ideas as Theorem \ref{thm:stabilization}.
\end{proof}

\begin{definition}
We denote by $\eta(\rho)$ the result of adding chips to the rotor configuration
$\rho$ according to $\eta$ and stabilizing. The \textbf{chip addition operator}
$E_x$ is defined on a rotor configuration $\rho$ as the result of
adding a single chip at $x$ and stabilizing, that is: $\mathbf{1}_x(\rho)$.
\end{definition}

\begin{definition}
A rotor configuration $\rho$ is \textbf{acyclic}, if the edges $(\rho(x) : x \in V)$
form a spanning tree of $G$ (oriented towards $s$, necessarily).
\end{definition}

\begin{lemma}
\label{lem:permute}
For any chip configuration $\eta$, the map $\rho \mapsto \eta(\rho)$
permutes the collection of acyclic rotor configurations.
\end{lemma}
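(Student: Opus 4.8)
The plan is to show that $\rho \mapsto \eta(\rho)$, restricted to acyclic configurations, is a bijection of a finite set, by exhibiting it as one-to-one; surjectivity then follows by counting. First I would reduce to the case $\eta = \mathbf{1}_x$, i.e.\ to the single chip addition operator $E_x$. Indeed, adding a general $\eta$ and stabilizing can be carried out one chip at a time, so $\eta(\rho) = E_{x_k} \cdots E_{x_1} \rho$ where $[x_1,\dots,x_k]$ is the multiset of chips in $\eta$ (the order is immaterial by Lemma \ref{lem:rotor-stabilize}). A composition of maps each permuting the acyclic configurations is again such a permutation, so it suffices to prove the claim for a single $E_x$.

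So I would fix $x \in V$ and an acyclic rotor configuration $\rho$, and track the rotor-router walk of the single chip started at $x$. The key observation is that an acyclic $\rho$ is a spanning tree oriented towards $s$, so the chip, following rotor directions, reaches $s$ quickly; but along the way it may create cycles. The crucial structural fact (this is the heart of the argument, essentially the ``cycle popping'' phenomenon from Wilson's algorithm) is the following: if we run the walk until the chip reaches $s$, then the resulting rotor configuration $E_x \rho$ is again acyclic. To see this I would argue that every vertex $y$ that is visited by the walk has its rotor advanced at least once, and track that after the walk the rotors still point ``towards $s$'' along a spanning tree structure — concretely, one shows by induction on the steps that the rotors at visited vertices, together with the unvisited portion of $\rho$, never close a cycle, using that each time the chip leaves a vertex it does so along the \emph{updated} rotor and that the chip's own trajectory from that point eventually exits. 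A clean way to organize this is: the set of edges $\{\rho^+(y) : y \in V\}$ at the moment the chip sits at $s$ contains no cycle because any cycle would have to be traversed by the chip's walk, forcing the chip to loop forever rather than reach $s$.

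Given acyclicity of the image, injectivity of $E_x$ on acyclic configurations is obtained by running the operation \emph{backwards}: from $(E_x\rho, \text{chip at } s)$ one can recover, step by step, the entire trajectory and hence $\rho$ and $x$ — each backward step retreats the rotor at the chip's current location and moves the chip back along the old rotor direction, and the acyclicity guarantees this reverse process is well-defined and terminates at $x$. (Alternatively, one checks directly that $E_x$ is injective on \emph{all} rotor configurations reachable as images, then restricts.) Once $E_x$ is injective on the finite set of acyclic configurations, it is a bijection of that set, proving the lemma for $\eta = \mathbf{1}_x$ and hence, by the reduction above, in general. The main obstacle I anticipate is the acyclicity-preservation step: making precise the ``no cycle can form because the chip would then circulate forever'' argument requires care about the interleaving of rotor advances and chip moves, and is best done by a careful induction on the number of rotor-router steps, maintaining the invariant that the current rotor configuration restricted to already-visited vertices is a forest each of whose components either reaches $s$ or reaches the chip's current location.
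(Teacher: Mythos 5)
Your proposal is correct and follows essentially the same route as the paper, which itself only sketches the argument through two exercises: first that acyclicity is preserved (there, via the observation that each updated rotor points toward the last chip emitted, so following updated rotors strictly increases last-firing times and no cycle can close), and second that the single-chip operator $E_x$ is invertible on acyclic rotor configurations by an explicit step-by-step reversal of the walk, whence bijectivity on a finite set; your reduction to $E_x$ via the Abelian property is the same implicit composition step. Two small caveats: the shortcut that a cycle in the final configuration ``would have to be traversed by the chip, forcing it to loop forever'' is not a valid argument as stated (the chip never follows the \emph{final} rotor configuration, and vertices that never fired contribute edges the chip never used), so you should rely on the invariant you correctly identify --- any cycle in the current configuration must pass through the chip's current location, hence none survives once the chip reaches $s$ --- and, for the reversal, note that the backward step at $s$ requires the auxiliary-edge/unique-cycle device of the paper's surjectivity exercise to determine which neighbour the chip came from, since $s$ carries no rotor.
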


Perhaps the cleanest way to prove this is to consider unicycles on 
strongly connected graphs; see \cite{HLMPPW}. For our specific
setting, the next two exercises sketch a proof.

\begin{exercise}
Show that in $\eta(\rho)$, each rotor is either in its original
position $\rho(x)$, or it points in the direction of the last chip
emitted from $x$. Conclude: if $\rho$ is acyclic, so is $\eta(\rho)$.
See \cite{HLMPPW}*{Section 3}
\end{exercise}

\begin{exercise}
Show that $\rho' \mapsto E_x(\rho')$, as a map from the collection of
acyclic rotor configurations to itself, is surjective for any $x \in V$.
The following steps can be used: Given $\rho$, add an oriented edge 
$(s,x)$ to $\rho$.\\
(i) There is an oriented cycle starting at $s$, let $(y_1,s)$ be its 
last edge. Place a chip at $y_1$, and move back the rotor at $y_1$ 
by one step. \\
(ii) Now there is an oriented cycle starting at $y_1$, let 
$(y_2,y_1)$ be its last edge. Move the chip back to $y_2$, and 
move back the rotor at $y_2$ by one step. \\
(iii) Show that eventually the chip arrives at $x$ and if the rotor
configuration at that time is $\rho'$, then we have $E_x \rho' = \rho$.\\
See \cite{HLMPPW}*{Section 3}.
\end{exercise}

\begin{theorem} \ \\
(i) The map $(\rho, [\eta]) \mapsto \eta(\rho)$ defines an action of the
sandpile group on acyclic rotor configurations.\\
(ii) The action is transitive, that is, for any acyclic $\rho, \rho'$
there exists $\eta$ such that $\eta(\rho) = \rho'$.\\
(iii) The action is free, that is, if $\eta(\rho) = \rho$ for some 
acyclic $\rho$ then $[\eta] = [0]$.
\end{theorem}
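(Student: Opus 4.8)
The plan is to establish (i), (ii), (iii) in sequence, reducing each to facts already proven about $\cR_G$, the sandpile group $K_G$, and the burning bijection. For (i), I need to check that $(\rho, [\eta]) \mapsto \eta(\rho)$ is well-defined on equivalence classes and satisfies the axioms of a group action. Well-definedness means: if $[\eta] = [\eta']$ then $\eta(\rho) = \eta'(\rho)$ for every acyclic $\rho$. By Lemma~\ref{lem:permute} the map $\rho \mapsto \eta(\rho)$ already permutes acyclic configurations, so the issue is purely that adding chips according to $\eta$ versus $\eta' = \eta + \sum_x c_x \Delta'_{x,\cdot}$ yields the same result. The key observation is that firing a chip $\deg_G(x)$ times at $x$ (emptying $x$ once it has accumulated a full rotation) advances the rotor at $x$ by a full cycle — returning $\rho(x)$ to itself — and sends one chip along each edge out of $x$, exactly realizing one toppling at $x$. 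Thus adding $\Delta'_{x,\cdot}$ worth of chips (i.e. $\deg_G(x)$ at $x$, and $-a_{xy}$ at each neighbour, absorbing the subtractions into the stabilization bookkeeping) acts trivially on the acyclic configuration. Making this precise via the Abelian property (Lemma~\ref{lem:rotor-stabilize}) — choosing a firing order that first performs a complete toppling-cycle at $x$ — gives well-definedness. The action axioms $0(\rho) = \rho$ and $(\eta + \xi)(\rho) = \xi(\eta(\rho))$ then follow directly from Lemma~\ref{lem:rotor-stabilize} (stabilize $\eta$ first, then $\xi$), matching the group operation $[\eta] + [\xi] = [\eta + \xi]$ on $K_G$.

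For (ii), transitivity, I would use the surjectivity statement from the second exercise above: for each $x$, the chip-addition operator $E_x$ maps acyclic configurations onto themselves surjectively. Since it is a surjection of a finite set to itself, it is a bijection, hence the $E_x$ act as invertible maps. The subgroup of permutations of acyclic configurations generated by $\{E_x : x \in V\}$ therefore acts, and I claim it acts transitively. Indeed, by the acyclicity lemma (first exercise above) and Wilson-type reasoning, starting from any acyclic $\rho$ one can reach the reference spanning-tree configuration by a finite sequence of (possibly inverse) chip additions — concretely, the surjectivity exercise shows that every acyclic $\rho'$ is in the image $E_x(\,\cdot\,)$, and iterating along a path to $s$ lets one "pull" any acyclic configuration to any other. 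Since each $E_x$ corresponds to the group element $[\mathbf{1}_x]$, and these generate $K_G$, transitivity of the $K_G$-action follows.

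For (iii), freeness, suppose $\eta(\rho) = \rho$ with $\rho$ acyclic and $[\eta] \ne [0]$; I derive a contradiction via a counting argument. By (i) and (ii) the $K_G$-action on the set $\mathcal{A}$ of acyclic rotor configurations is transitive, so $|\mathcal{A}|$ divides $|K_G|$ and the stabilizer of $\rho$ has size $|K_G|/|\mathcal{A}|$. But acyclic rotor configurations are exactly spanning trees of $G$ oriented towards $s$, so $|\mathcal{A}| = |\cT_G| = \det(\Delta'_G) = |K_G|$ by the Matrix-Tree Theorem and Theorem~\ref{thm:group}(i). Hence the stabilizer of every $\rho$ is trivial, i.e. $\eta(\rho) = \rho$ forces $[\eta] = [0]$. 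This simultaneously re-proves transitivity is a genuine free transitive action (a torsor).

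The main obstacle I anticipate is the well-definedness step in (i): carefully matching a single sandpile toppling at $x$ to a block of $\deg_G(x)$ rotor-router firings, while tracking the chips that the neighbours send back, and invoking the Abelian property to reorder firings so that these blocks can be isolated. Once that correspondence is cleanly set up, the action axioms are formal and parts (ii)–(iii) follow from the finite-set bijection and cardinality arguments with essentially no further work.
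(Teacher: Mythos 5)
Your key observation in (i) --- that advancing $\deg_G(x)$ chips at $x$ returns the rotor to its original position and realizes one sandpile toppling, so that $\eta(\rho)=\eta^\circ(\rho)$ --- is exactly the paper's starting point. But your reduction of well-definedness to ``adding $\Delta'_{x,\cdot}$ worth of chips acts trivially'' does not close the argument: if $\eta_1-\eta_2=\sum_x c_x\Delta'_{x,\cdot}$ with some $c_x<0$, the intermediate configurations obtained by adding rows one at a time need not be non-negative, so the block-firing correspondence cannot be applied step by step. (This is the same difficulty that forces the proof of Theorem~\ref{thm:group}(i) to introduce $\eta+k\eps$ and Lemma~\ref{lem:id-like}.) The paper sidesteps it: it first shows $I(I(\rho))=I(\rho)$ for the recurrent identity $I$, concludes $I(\rho)=\rho$ for every acyclic $\rho$ from Lemma~\ref{lem:permute}, and then writes $\eta_i(\rho)=(I+\eta_i)^\circ(\rho)$, where $(I+\eta_1)^\circ=(I+\eta_2)^\circ$ by uniqueness of the recurrent representative in each equivalence class. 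You would need either this detour or a careful padding argument of the $k\eps$ type to make (i) rigorous.

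The more serious gap is in (ii). Knowing that each $E_x$ permutes the acyclic configurations and that the classes $[\mathbf{1}_x]$ generate $K_G$ only tells you that $K_G$ acts by permutations; it says nothing about the orbit of a given $\rho$ being all of $\mathcal{A}$. ``Iterating along a path to $s$ lets one pull any configuration to any other'' is not an argument, and the surjectivity exercise produces a preimage of a prescribed target, not a route from an arbitrary $\rho$ to an arbitrary $\rho'$. The paper proves transitivity constructively: let $\alpha(x)$ be the number of turns taking $\rho(x)$ to $\rho'(x)$, add $\alpha(x)$ chips at each $x$ and let each take a single step to reach the state $(\rho',\beta)$, then add $\sigma$ with $[\sigma]=[-\beta]$, so that $\eta=\alpha+\sigma$ sends $\rho$ to $\rho'$ (using that $[0]$ acts trivially, from (i)). Your (iii) is then deduced from (ii) by orbit counting, so it inherits the gap; since transitivity and freeness are equivalent here once $|\mathcal{A}|=|K_G|$ is known, you must prove at least one of them by hand, and as written you prove neither. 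The paper proves (iii) directly by a conservation argument --- if $\eta(\rho)=\rho$ then each rotor makes $c_x\ge 0$ full turns and $\eta(x)=\sum_y c_y\Delta'_{yx}$, whence $[\eta]=[0]$ --- and that argument, combined with your counting, would indeed recover (ii). Note also that the paper deliberately avoids the Matrix-Tree Theorem here (it obtains a new proof of it as a corollary), whereas your computation of $|\mathcal{A}|$ assumes it.
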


\begin{proof}
(i) From Lemma \ref{lem:rotor-stabilize} it is clear that 
$\eta_2( \eta_1 ( \rho ) ) = (\eta_1 + \eta_2) (\rho)$. Suppose
$\eta_1 \sim \eta_2$. We show that $\eta_1 (\rho) = \eta_2 (\rho)$. 
If $\eta(x) \ge \deg_G(x)$, we can advance $\deg_G(x)$ chips at $x$,
one along each edge incident with $x$, and leave the rotor at $x$ 
unchanged. It follows from this that $\eta(\rho) = \eta^\circ (\rho)$ 
for any chip configuration $\eta$. Let $I \in \cR_G$ be the sandpile
corresponding to the identity (i.e.~$[I] = [0]$). Then we have
\eqnst
{ I ( I (\rho) ) 
  = (I + I) (\rho) 
  = (I + I)^\circ (\rho)
  = I (\rho) }
for all acyclic rotor configurations $\rho$. Due to 
Lemma \ref{lem:permute}, $\{ I(\rho) : \text{$\rho$ acyclic} \}
= \{ \rho : \text{$\rho$ acyclic} \}$, and it follows that 
$I(\rho) = \rho$ for all acyclic $\rho$. Now we have
$(\eta_1 + I)^\circ = (\eta_2 + I)^\circ$, and 
\eqnst
{ \eta_i (\rho) 
  = I ( \eta_i (\rho) ) 
  = (I + \eta_i) (\rho)
  = (I + \eta_i)^\circ, \quad i = 1, 2. }
This implies the claim.

(ii) Given $\rho$, $\rho'$, let $0 \le \alpha(x) < \deg_G(x)$ be
the number of turns the rotor at $x$ has to make from position
$\rho(x)$ to $\rho'(x)$. Adding chips to $\rho$ according to $\alpha$ 
and letting each chip take a single step we obtain a
chip-and-rotor state of the form: $(\rho', \beta)$. 
Choose a chip configuration $\sigma$ such that 
$[\sigma] = [-\beta]$ (the inverse of $\beta$ in the sandpile
group). Let $\eta = \alpha + \sigma$. Then we have
\eqnst
{ \eta(\rho) 
  = (\sigma + \alpha)(\rho) 
  = (\sigma + \beta) (\rho')
  = \rho'. }
This proves transitivity of the action.

(iii) Suppose $\eta$ is a chip configuration, $\rho$ is acyclic, and
$\eta(\rho) = \rho$. This means that adding chips according to $\eta$
the rotor at $x$ makes a non-negative integer $c_x$ number of full turns
during stabilization. Since all chips arrive at the sink, $\eta(x)$
equals the number of chips emitted from $x$ minus the number of chips 
received at $x$, for each $x \in V$. Therefore:
\eqnst
{ \eta(x) 
  = \deg_G(x) c_x - \sum_{y \in V} a_{yx} c_y
  = \sum_{y \in V} c_y \Delta'_{yx}, \quad x \in V. }
This shows that $[\eta] = [0]$. 
\end{proof}

\begin{remark}
The above proof does not rely on the Matrix-Tree Theorem, and 
in fact provides a new proof of it; see \cite{HLMPPW}*{Corollary 3.18}.
\end{remark}

\begin{remark}
Regarding acyclic rotor configurations as spanning trees of $G$,
the action of $K_G$ allows one to view the sandpile Markov chain
as a dynamics on trees. This dynamics on trees seems more transparent 
and explicit than the one obtained using the burning bijection.
\end{remark}

\begin{open}
Is there a meaningful link between avalanches in the Abelian sandpile
and either the rotor-router dynamics on spanning trees or the dynamics 
induced by the burning bijection?
\end{open}

\subsection{The random cluster model / Tutte polynomial}
\label{ssec:Tutte}

The uniform spanning tree measure $\mathsf{UST}_G$ is a limiting
case of the so-called random cluster measure. The random cluster 
model is a generalization of percolation. The relationship between
sandpiles and the random cluster measure leads to 
a formula for the generating function of recurrent sandpiles
enumerated by their total number of particles. In this section 
again $G = (V \cup \{ s \}, E)$ is a finite connected multigraph.

\begin{definition}
If $\eta \in \cR_G$, the \textbf{mass of $\eta$} is defined
as $m(\eta) = \sum_{x \in V} \eta(x)$. We put
$N_m = | \{ \eta \in \cR_G : m(\eta) = m \} |$, and
let $\cN(y) = \sum_{m} N_m y^m$ be the generating function 
according to mass.
\end{definition}

\begin{exercise}
Show that for all $\eta \in \cR_G$ we have:
\eqnst
{ |E| - \deg_G(s) 
  \le m(\eta)
  \le 2 |E| - \deg_G(s) - |V|. }
Show that the lower bound is achieved for any $\eta \in \cR_G$
that is \emph{minimal} in the sense that $\eta - \mathbf{1}_x \not\in \cR_G$
for all $x \in V$. \emph{Hint for the lower bound:} Use the burning algorithm.
See \cite{Dhar06}*{Section 7.2}
\end{exercise}

The \textbf{random cluster model} on $G$ has two parameters:
$0 < p < 1$ and $q > 0$. It is specified by a probability measure
$\Pr_{p,q}$ on the space $\{ E ' : E' \subset E \}$, that is 
given by:
\eqnst
{ \Pr_{p,q} [ E' ]
  = \frac{1}{Z_{p,q}} p^{|E'|} (1 - p)^{|E|-|E'|} q^{k(E')}, }
where $k(E')$ denotes the number of connected clusters in the 
edge-configuration $E'$, and $Z_{p,q}$ is a normalizing factor
to make $\Pr_{p,q}$ a probability measure. Observe that when
$q = 1$, we get the percolation model. 

\begin{exercise}[See \cite{Grbook2}*{Theorem 1.23}]
As $p \to 0$ and $q/p \to 0$ we have $\Pr_{p,q} \to \mathsf{UST}_G$.
\end{exercise}

Abbreviating $v = p/(1-p)$, we have
\eqnsplst
{ Z_{p,q}
  &= \sum_{E' \subset E} p^{|E'|} (1 - p)^{|E|-|E'|} q^{k(E')}
  = (1 - p)^{|E|} \sum_{E' \subset E} v^{|E'|} q^{k(E')} \\
  &=: (1 - p)^{|E|} Z'_{v,q}. }
Letting $q \downarrow 0$, the dominant terms are the ones
with $k(E') = 1$, that is the ones where $E'$ is connected.
The number of edges in such a graph is at least $|V|$
(with equality for spanning trees). Hence we can write:
\eqnst
{ Z'_{v,q}
  = q v^{|V|} H(v) + O(q^2), \quad \text{ as $q \downarrow 0$,} }
where $H(v)$ is a polynomial. Note that $H(0)$ equals the 
number of spanning trees of $G$. 
The following theorem is due to Merino L{\'o}pez \cite{ML97}.

\begin{theorem}
\label{thm:Merino}
We have
\eqnst
{ \cN(y)
  = y^{|E|-\deg_G(s)} H(y-1). }
\end{theorem}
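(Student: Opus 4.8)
\emph{Strategy.} The identity to be proved is Merino L\'opez's theorem, and it is convenient to restate it as
$\cN(y) = y^{|E|-\deg_G(s)}\,T_G(1,y)$, where $T_G$ is the Tutte polynomial of $G$. Indeed, using the rank--nullity expansion
$T_G(x,y) = \sum_{A\subseteq E}(x-1)^{k(A)-1}(y-1)^{|A|-|V|-1+k(A)}$
for the connected graph $G$ on $|V|+1$ vertices (with $k(A)$ the number of clusters of $A$, as in Section~\ref{ssec:Tutte}), only the connected spanning subgraphs survive at $x=1$, so $T_G(1,y) = \sum_{A:\,k(A)=1}(y-1)^{|A|-|V|}$; since $H$ was defined so that $H(v) = \sum_{A:\,k(A)=1} v^{|A|-|V|}$, this is exactly $T_G(1,y) = H(y-1)$. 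Hence it is enough to show $\cN(y) = y^{|E|-\deg_G(s)}\sum_{\tau\in\cT_G}y^{\mathrm{ea}(\tau)}$ and to invoke Tutte's classical activity expansion $\sum_{\tau}y^{\mathrm{ea}(\tau)}=T_G(1,y)$, valid for any fixed linear order on $E$, where $\mathrm{ea}(\tau)$ is the number of externally active edges of $\tau$. The plan is therefore to read the mass of a recurrent configuration off the burning bijection of Section~\ref{ssec:burning} and match it, edge by edge, with external activity.

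\emph{Mass via the burning bijection.} Fix $\eta\in\cR_G$ and let $\tau=\varphi(\eta)$. From the description of $\varphi^{-1}$, a vertex $x$ is burnt at time $t(x)=\dist_\tau(s,x)$ and $\eta(x)=\deg_G(x)-m_x+i_x$, where $i_x\in\{0,\dots,|F_x|-1\}$ (the index $i$ in that construction) is the $\prec_x$-rank, inside $F_x$, of the tree-parent edge $e_x$ of $x$. Summing over $x\in V$: $\sum_x\deg_G(x)=2|E|-\deg_G(s)$; and $\sum_x m_x=|E|-e_0(\tau)$, where $e_0(\tau)$ is the number of edges of $G$ joining two vertices at equal $\tau$-distance from $s$ (all of which are necessarily non-tree edges). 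For this last count, observe that an edge whose two endpoints lie at distinct $\tau$-distances --- including every edge at $s$, since $s$ is the unique vertex at distance $0$ --- contributes exactly once to $\sum_x m_x$, through the orientation running from its farther to its nearer endpoint, whereas an equal-distance edge contributes nothing. Therefore
\eqnst{ m(\eta)=|E|-\deg_G(s)+e_0(\tau)+\sum_{x\in V}i_x, }
so $\cN(y)=y^{|E|-\deg_G(s)}\sum_{\tau\in\cT_G}y^{\,e_0(\tau)+\sum_x i_x(\tau)}$, and the whole theorem reduces to the identity $e_0(\tau)+\sum_x i_x(\tau)=\mathrm{ea}(\tau)$ for a suitable linear order on $E$.

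\emph{Matching with external activity.} I would order $E$ so that an edge is smaller the deeper (in $\tau$) its deeper endpoint lies, breaking ties at a common deeper endpoint $x$ so that first come the non-tree edges joining $x$ to another vertex at the same depth, then the edges of $F_x$ ordered by $\prec_x$, then the edges from $x$ to strictly shallower vertices; ties among edges with distinct (or ambiguous) deeper endpoints at one depth are resolved by a fixed linear ordering of the vertices at that depth. A short case analysis on the unique cycle $C(e,\tau)\subseteq\tau+e$ of a non-tree edge $e$ then gives: if the endpoints of $e$ lie at equal $\tau$-distance, $e$ is the smallest edge of $C(e,\tau)$, hence externally active, which accounts for $e_0(\tau)$; if they differ by one, then $e\in F_x$ for its deeper endpoint $x$, the cycle $C(e,\tau)$ contains the parent edge $e_x$, and $e$ is externally active exactly when $e\prec_x e_x$, which accounts for $\sum_x i_x(\tau)$; and if they differ by at least two, $C(e,\tau)$ contains $e_x$, which is smaller than $e$, so $e$ is not externally active. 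The main obstacle is that the order just described is defined through $\dist_\tau$ and so varies with $\tau$, while the burning bijection fixes the orders $\prec_x$ in advance: the real work is to produce a single linear order on $E$, adapted to the sink, for which the matching holds simultaneously for all spanning trees (this is precisely what the bijection of Cori and Le Borgne achieves). Granted such an order, the case analysis above --- together with the routine bookkeeping forced by parallel edges --- delivers $e_0(\tau)+\sum_x i_x(\tau)=\mathrm{ea}(\tau)$, and combining this with $\sum_\tau y^{\mathrm{ea}(\tau)}=T_G(1,y)=H(y-1)$ completes the proof.

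\emph{Alternative.} One may instead reproduce Merino L\'opez's \cite{ML97} original argument, checking that $y^{\deg_G(s)-|E|}\cN_G(y)$ and $T_G(1,y)$ satisfy the same deletion--contraction recursion over edges not incident to the sink --- a bridge being absorbed by contraction, an ordinary edge contributing the sum of the deletion and contraction terms --- with the two-vertex graph as base case. There the obstacle is to track how recurrence, the sink, and the normalising power of $y$ behave under successive contractions.
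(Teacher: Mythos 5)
The paper does not actually prove Theorem \ref{thm:Merino}; it only points to \cite{Dhar06} for a proof ``that follows the ideas of the burning algorithm to associate a recurrent configuration to groups of connected graphs $E'$''. So there is no in-text argument to compare yours against, and I will assess your proposal on its own terms.

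The skeleton of your argument is sound and is the standard route to this theorem. The identification $H(y-1)=T_G(1,y)$ via the rank--nullity expansion is correct (with the bookkeeping $|V(G)|=|V|+1$ done properly), and the mass computation is correct and genuinely useful: from $\eta(x)=\deg_G(x)-m_x+i_x$, together with $\sum_x\deg_G(x)=2|E|-\deg_G(s)$ and $\sum_x m_x=|E|-e_0(\tau)$, you correctly get $m(\eta)=|E|-\deg_G(s)+e_0(\tau)+\sum_x i_x$, reducing the theorem to the statement that the ``level'' statistic $e_0(\tau)+\sum_x i_x(\tau)$ is equidistributed over $\cT_G$ with external activity. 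This is in the spirit of the proof the paper cites.

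The gap is exactly where you flag it, and it is not a small one: Tutte's expansion $\sum_{\tau}y^{\mathrm{ea}(\tau)}=T_G(1,y)$ is valid for a linear order on $E$ that is \emph{fixed in advance}, whereas the order you construct is built from $\dist_\tau(s,\cdot)$ and therefore changes with $\tau$. Your three-case analysis of the cycle $C(e,\tau)$ is locally correct for that $\tau$-dependent order, but it does not combine with Tutte's theorem, because each tree is being measured against a different order. Repairing this --- either by exhibiting one global order that works for every tree simultaneously, or by proving equidistribution without a tree-by-tree match --- is precisely the nontrivial content of the theorem (it is what Cori and Le Borgne, and in a different form Merino L\'opez's deletion--contraction induction, actually establish), and your proposal defers it entirely to a citation. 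As written, then, the argument correctly reduces the theorem to its combinatorial crux but does not prove that crux. Your sketched alternative via deletion--contraction would be self-contained, but there too the substance (how $\cR_G$ and the normalisation $y^{|E|-\deg_G(s)}$ behave under deleting and contracting an edge away from the sink) is left unproved.
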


See \cite{Dhar06} for a proof that follows the ideas of the 
burning algorithm to associate a recurrent configuration to 
groups of connected graphs $E'$.

One has $H(y-1) = T(1,y;G)$, where $T(x,y;G)$ is the so-called
Tutte polynomial of $G$, a well-known graph invariant in combinatorics
\cite{Bbook}. More generally, $Z_{p,q}$ can be expressed 
in terms of the Tutte polynomial; see \cite{Grbook2}*{Section 3.6}.

\begin{exercise}
According to Theorem \ref{thm:Merino}, the number of 
recurrent sandpiles that have a minimal number of particles
is $\cN(0) = H(-1) = T_G(1,0)$. It is known \cite{Bbook}
that $T_G(1,0)$ counts the number of \emph{acyclic orientations} 
of $G$ with a unique sink at a fixed vertex of $G$. Taking the 
unique sink to be at $s$, use the burning algorithm to 
construct an explicit bijection between 
\emph{minimal sandpiles} and \emph{acyclic orientations}
of $G$ with unique sink at $s$.
\end{exercise}

\section{Determinantal formulas and exact computations}
\label{sec:determinantal}

In this section we will see that certain sandpile probabilities 
can be expressed in terms of determinants, and in some
cases these can be evaluated explicitly. The fundamental fact behind 
this is that all finite-dimensional marginals of the uniform spanning tree 
admit a determinantal formula.

\subsection{The Transfer-Current Theorem}

Let $G$ be a finite connected (unoriented) graph. Write $T_G$ for a random 
spanning tree of $G$ chosen uniformly. The following 
theorem is due to Burton and Pemantle.

\begin{theorem}[\textbf{Transfer Current Theorem} \cite{BP93}]
\label{thm:TCT}
There exists a matrix $Y_G$ such that for any $k \ge 1$ and 
distinct edges $e_1, \dots, e_k$ of $G$ we have
\eqn{e:transfer-current}
{ \Pr [ e_1, \dots, e_k \in T_G ]
  = \det ( Y_G(e_i, e_j) )_{i,j=1}^k. }
\end{theorem}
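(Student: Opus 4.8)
The plan is to follow Burton and Pemantle and prove \eqref{e:transfer-current} by a deletion--contraction induction on the number of edges, taking $Y_G$ to be the \emph{transfer current matrix}. Orient the edges of $G$ arbitrarily and put unit conductances on them; for an edge $e = (x,y)$ let $i_e \in \R^E$ be the unit current flow from $x$ to $y$, and set $Y_G(e,f) := i_e(f)$. Equivalently, $Y_G$ is the matrix of the orthogonal projection of $\R^E$ onto the cut space spanned by $\{\, d\one_v : v \in V \cup \{s\} \,\}$, where $(d\one_v)(e) = \one_v(\head e) - \one_v(\tail e)$; this makes $Y_G$ symmetric and idempotent, shows $Y_G(e,e) \in [0,1]$, and shows that $Y_G(e,e) = 0$ exactly when $\one_e$ lies in the cycle space (e.g.\ $e$ is a loop, or $e$ is parallel to another edge), in which case the whole $e$-row of $Y_G$ vanishes. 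I would first check that the statement is well posed: reversing the chosen orientation of an edge $e_i$ negates both the $i$-th row and the $i$-th column of $(Y_G(e_i,e_j))_{i,j}$, which leaves the determinant --- and of course the event $\{e_i \in T_G\}$ --- unchanged. The base case $k = 1$ is Kirchhoff's effective-resistance formula, $\Pr[e \in T_G] = i_e(e) = Y_G(e,e)$, which I would quote.

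For the inductive step, fix distinct $e_1, \dots, e_k$ with $k \ge 2$. If $Y_G(e_1,e_1) = 0$ then the $e_1$-row of $Y_G$ vanishes, so $\det(Y_G(e_i,e_j)) = 0$, and likewise $\Pr[e_1 \in T_G] = 0$ forces the left side of \eqref{e:transfer-current} to be $0$; so assume $Y_G(e_1,e_1) > 0$. I would use the two standard facts that, conditionally on $\{e_1 \in T_G\}$, the law of $T_G$ is $\UST_{G/e_1}$ (contraction), and conditionally on $\{e_1 \notin T_G\}$ it is $\UST_{G \setminus e_1}$ (deletion); both come from the evident bijections between the relevant families of spanning trees together with uniformity. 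Hence
\eqnsplst
{ \Pr[e_1, \dots, e_k \in T_G]
  &= \Pr[e_1 \in T_G] \cdot \Pr[e_2, \dots, e_k \in T_{G/e_1}] \\
  &= Y_G(e_1,e_1)\, \det\big( Y_{G/e_1}(e_i,e_j) \big)_{i,j=2}^{k}, }
the last equality being the inductive hypothesis applied to the smaller graph $G/e_1$ (the edges $e_2, \dots, e_k$ remain distinct there, a loop among them just contributing a zero row on both sides). It therefore remains to relate $Y_{G/e_1}$ to $Y_G$.

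The crux is the contraction identity
\eqn{e:Y-contraction}
{ Y_{G/e_1}(f,g)
  = Y_G(f,g) - \frac{Y_G(f,e_1)\, Y_G(e_1,g)}{Y_G(e_1,e_1)}, \qquad f, g \ne e_1. }
To prove it I would argue with currents. Restricting the $G$-current $i_f$ to $E \setminus \{e_1\}$ gives a unit flow from $\tail f$ to $\head f$ in $G/e_1$; it is a gradient on every cycle of $G$, but a path $\gamma$ joining the endpoints of $e_1$ in $G \setminus \{e_1\}$ becomes a cycle of $G/e_1$ along which its pairing equals the voltage drop of $i_f$ across $e_1$, namely $i_f(e_1) = Y_G(f,e_1)$ (Ohm's law, unit conductance). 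The restriction of $i_{e_1}$ to $E \setminus \{e_1\}$ is a circulation in $G/e_1$ whose pairing with that same $\gamma$ is $Y_G(e_1,e_1)$, so subtracting $\tfrac{Y_G(f,e_1)}{Y_G(e_1,e_1)}\, i_{e_1}|_{E \setminus \{e_1\}}$ removes the obstruction and yields the genuine unit current of $G/e_1$ across $f$; evaluating at $g$ gives \eqref{e:Y-contraction} (here I also use the reciprocity $i_f(e_1) = i_{e_1}(f)$, i.e.\ symmetry of $Y_G$). Plugging \eqref{e:Y-contraction} into the displayed formula and invoking the Schur-complement expansion $\det M = M_{11} \det\big( M_{ij} - M_{i1} M_{1j} / M_{11} \big)_{i,j \ge 2}$ with $M = (Y_G(e_i,e_j))_{i,j=1}^{k}$ closes the induction. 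I expect the main obstacle to be the bookkeeping in \eqref{e:Y-contraction} --- getting the cycle/cut decomposition on $G/e_1$ right and using reciprocity consistently with the fixed orientations. An alternative would be to derive \eqref{e:Y-contraction} purely linear-algebraically by writing $Y_G = D^{T}(DD^{T})^{-1}D$ for a grounded incidence matrix $D$ and applying the rank-one Sherman--Morrison update to $(DD^{T})^{-1}$ under contraction of $e_1$; one could also avoid deletion--contraction and instead run Wilson's algorithm, reading off each $\{e_i \in T_G\}$ from the loop-erased walks, though handling general $k$ that way is considerably messier.
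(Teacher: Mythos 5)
The paper does not prove this theorem: it is stated as a quoted result of Burton and Pemantle \cite{BP93}, with $Y_G$ described via the expected net edge-usage of a random walk from $\tail(e)$ to $\head(e)$ --- which is exactly the unit current you use, so your definition of $Y_G$ agrees with the paper's. Your argument is, in substance, the original Burton--Pemantle proof: induction on $k$, base case Kirchhoff's effective-resistance formula, inductive step via the bijection between spanning trees containing $e_1$ and spanning trees of $G/e_1$, and the observation that the contraction formula for the transfer currents is precisely the Schur complement appearing in the cofactor expansion of $\det(Y_G(e_i,e_j))$. The derivation of the contraction identity is correct: restricting $i_f$ to $E\setminus\{e_1\}$ preserves the divergence and the cycle law on old cycles, and subtracting the multiple $Y_G(f,e_1)/Y_G(e_1,e_1)$ of the circulation $i_{e_1}|_{E\setminus\{e_1\}}$ restores the cycle law on the new cycles of $G/e_1$ created by identifying the endpoints of $e_1$. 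The handling of the degenerate cases ($Y_G(e_1,e_1)=0$; an $e_i$ becoming a self-loop after contraction) and of orientation-reversal is also right. One slip in an aside: an edge $e$ parallel to another edge does \emph{not} have $\one_e$ in the cycle space, and $Y_G(e,e)=\Pr[e\in T_G]>0$ in general (for two vertices joined by two parallel edges each edge lies in half the spanning trees); only a self-loop produces a vanishing row. This does not affect the argument, since the only degenerate edges you actually need to dispose of are self-loops arising from contraction, which you treat correctly.
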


The simplest definition of the \textbf{transfer-current matrix} $Y_G$, 
is in terms of random walk. (See \cite{LPbook} for a definition of $Y_G$
in terms of electrical networks.) Given \emph{oriented} edges $e,f$ of $G$,
consider the simple random walk on $G$ started at $\tail(e)$ and stopped
when it first hits $\head(e)$. Let $J^e(f)$ be the expected net usage of $f$
by the walk, i.e. the number of times $f$ was used
minus the number of times the reversal of $f$ was used. 
Then $Y_G(e,f) = J^e(f)$. Note that this requires us to chose an orientation
for each edge appearing in the right hand side of the Transfer Current Theorem,
whereas in the left hand side the edges are unoriented. It is part of the
statement of the theorem that the right hand side is independent of what
orientations are chosen.\footnote{This can also be checked directly 
using the time reversal of the simple random walk on $G$.}
Due to the structure present in \eqref{e:transfer-current}, the random 
collection of edges $T_G$ is called a \textbf{determinantal process} 
with \textbf{kernel} $Y_G$.
There is an extension of \eqref{e:transfer-current} to all
cylinder events, also due to \cite{BP93}. A simple case of it that we will
use later is:
\eqnst
{ \Pr [ e_1, \dots, e_k \not\in T_G ]
  = \det ( K_G ( e_i, e_j ) )_{i,j=1}^k, }
where $K_G = I_G - Y_G$, with $I_G$ the identitiy matrix.
See the survey \cite{HKPV06} for more information on determinantal processes.

\subsection{The height $0$ probability}
\label{ssec:height-0}

No simple formula like \eqref{e:transfer-current} is known for the 
finite-dimensional marginals of the sandpile measure $\nu_G$. However,
there is a method due to Majumdar and Dhar \cite{MD91} for the 
computation of the probabilities of \emph{minimal configurations}.
The simplest example is computing the probability that $\eta(o) = 0$, 
that we now explain.

Consider $V_n = [-n,n]^2 \cap \Z^2$, and let $G_n$ be the wired graph 
obtained from $V_n$. We write $\nu_n = \nu_{G_n}$ for short.
We will obtain a formula for $\nu_n [ \eta(o) = 0 ]$ that makes it
possible to compute its limit as $n \to \infty$.

Let $j_1, j_2, j_3$ denote the south, west, north neighbours of the origin, 
respectively. Let $G'_n$ be the graph obtained from $G_n$ 
by deleting the edges $\{ o, j_i \}$, $i = 1, 2, 3$. 
Given $\eta \in \cR_{G_n}$, let 
\eqnst
{ \eta'(y)
  := \begin{cases}
     \eta(y) - 1 & \text{if $y = j_1, j_2, j_3$;} \\
     \eta(y)     & \text{otherwise.}
     \end{cases} }

\begin{exercise}
\label{ex:reduce}
Show that 
\eqnst
{ \eta \in \cR_{G_n},\, \eta(o) = 0 \qquad\qquad \text{ if and only if } \qquad\qquad
  \eta' \in \cR_{G'_n}. }
\emph{Hint:} Use the burning algorithm. See \cite{MD91}.
\end{exercise}

We write $\Delta'_{G'_n}$ in the form $\Delta'_{G'_n} = \Delta'_{G_n} + B$. 
Note that the matrix $B$ has nonzero entries only in the rows and columns 
corresponding to $\{ o, j_1, j_2, j_3 \}$, and these are:
\eqn{e:Bmatrix}
{ \begin{blockarray}{ccccc}
  o & j_1 & j_2 & j_3 &  \\
  \begin{block}{(cccc)c}
 -3 &   1 &  1  &   1 & o \\
  1 &  -1 &  0  &   0 & j_1 \\
  1 &   0 & -1  &   0 & j_2 \\
  1 &   0 &  0  &  -1 & j_3 \\
  \end{block}
  \end{blockarray} }
The above allows us to write
\eqnspl{e:calculate}
{ \nu_n [ \eta(o) = 0 ]
  &= \frac{| \{ \eta \in \cR_{G_n} : \eta(o) = 0 \} |}{|\cR_{G_n}|} 
  \stackrel{\text{Ex.~\ref{ex:reduce}}}{=}
  \frac{|\cR_{G'_n}|}{|\cR_{G_n}|} 
  = \frac{\det ( \Delta'_{G_n} + B )}{\det ( \Delta'_{G_n} )} \\
  &= \det ( I + B (\Delta'_{G_n})^{-1} ). }
Due to the fact that $B$ is $0$ apart from the entries shown in \eqref{e:Bmatrix},
the determinant on the right hand side of \eqref{e:calculate}
reduces to a $4 \times 4$ determinant. Recall that 
$(\Delta'_{G_n})^{-1}_{xy} = G_n(z,w)$. Since 
the random walk is recurrent in two dimensions, 
$\lim_{n \to \infty} G_n(z,w) = \infty$.
Hence in order to take the limit $n \to \infty$, we need to 
rely on cancellations. 

In two dimensions the \textbf{recurrent random walk potential kernel} is defined as
\eqnst
{ a(x)
  = \lim_{N \to \infty} \sum_{k=0}^{N} \left[ \p^k(o,o) - \p^k(o,x) \right], }
where $\p^k(z,x)$ is the $k$-step transition probability of
simple random walk on $\Z^2$. See \cite{LLbook}*{Section 4.4.1}
for a proof that the limit exists and for further background.
Note that $a(o) = 0$. It holds that 
$\frac{1}{4}(\Delta a) (x) = - \mathbf{1}_o(x)$ (see \cite{LLbook}*{Proposition 4.4.2});
in particular, $a$ is a discrete harmonic function in
$\Z^2 \setminus \{ o \}$.

The potential kernel is related to $G_n$ by the following lemma
that is well-known. 

\begin{lemma}
\label{lem:pot-kern} 
For all $x \in \Z^2$, we have
\eqnst
{ A(x)
  := \frac{1}{4} a(x) 
  = \lim_{n \to \infty} [ G_n(o,o) - G_n(o,x) ]. }
\end{lemma}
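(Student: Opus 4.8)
The plan is to realise $G_n(\o,\cdot)$ as the Green's function of simple random walk killed on exiting $V_n$, to match it against the potential kernel $a$ via the discrete Dirichlet problem, and then to dispose of the resulting boundary term with a gradient estimate for discrete harmonic functions.

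First I would record that $g_n(x):=G_n(\o,x)=(\Delta'_{G_n})^{-1}_{\o x}$, extended by $0$ to $\Z^2\setminus V_n$, is characterised by $g_n\equiv 0$ off $V_n$ and $\Delta g_n=\one_\o$ on $V_n$, where $\Delta$ is the $\Z^2$-graph Laplacian (indeed, for the wired graph $G_n$ every vertex of $V_n$ keeps all four of its $\Z^2$-neighbours, so $\Delta'_{G_n}$ acts on functions supported in $V_n$ exactly as $\Delta$ does). Let $\sigma_n$ be the first exit time of simple random walk $(S_k)$ from $V_n$ and set $h_n(x):=\E^x[a(S_{\sigma_n})]$; this is finite since $|S_{\sigma_n}|\le n+1$, and $h_n$ is the bounded discrete-harmonic extension to $V_n$ of $a$ restricted to $\Z^2\setminus V_n$. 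Using $\tfrac14\Delta a=-\one_\o$, both $4g_n$ and $h_n-a$ vanish off $V_n$ and solve $\Delta(\cdot)=4\one_\o$ on $V_n$; by the discrete maximum principle this Dirichlet problem has a unique solution, so $4G_n(\o,x)=\E^x[a(S_{\sigma_n})]-a(x)$ for $x\in V_n$. Subtracting the case $x=\o$ and using $a(\o)=0$ gives
\[
  G_n(\o,\o)-G_n(\o,x)=\tfrac14 a(x)+\tfrac14\big(h_n(\o)-h_n(x)\big).
\]
Thus (in the normalisation of the excerpt, where $(\Delta'_{G_n})^{-1}=\tfrac{1}{2d}\sum_k\p^k_n$, so that $A(x)=\tfrac14 a(x)$) the lemma reduces to showing $h_n(\o)-h_n(x)\to 0$ for each fixed $x$.

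For that I would use that $h_n$ is discrete-harmonic on $V_n$, together with the standard difference estimate $|h_n(\o)-h_n(x)|\le C\,\tfrac{|x|}{n}\,\operatorname{osc}_{V_n}h_n$, valid once $n\gg|x|$. By the maximum principle $\operatorname{osc}_{V_n}h_n$ is at most the oscillation of $a$ over the exit set, which lies at distance $\asymp n$ from the origin; by the well-known estimate $a(y)=\tfrac{2}{\pi}\log|y|+\kappa+O(|y|^{-2})$ (see \cite{LLbook}*{Theorem 4.4.4}) that oscillation is $O(\log n)$. Hence $|h_n(\o)-h_n(x)|=O(|x|\,n^{-1}\log n)\to 0$, which finishes the proof.

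The one genuinely non-formal point is this last step: the Dirichlet-problem identity is mere bookkeeping, but the error term only vanishes because of external information about $a$ (its logarithmic size near $\partial V_n$) combined with interior regularity of harmonic functions. An alternative that avoids quoting the difference estimate is a coupling: run the walks from $\o$ and from $x$ and couple them to coalesce within $O(|x|^2)$ steps with probability $1-o(1)$, well before the exit time $\sigma_n\asymp n^2$; this makes $\|\mathrm{law}(S_{\sigma_n}\mid S_0=\o)-\mathrm{law}(S_{\sigma_n}\mid S_0=x)\|_{\mathrm{TV}}=o(1)$, and pairing with $|a|=O(\log n)$ on the exit set gives $h_n(\o)-h_n(x)\to 0$ as before.
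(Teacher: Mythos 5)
Your argument is correct and is essentially the paper's own: the paper omits a separate proof of this lemma, deferring to the stronger estimate \eqref{e:Gn(z,o)} in Lemma \ref{lem:estimates}, whose proof runs through exactly your two ingredients --- the representation $4G_n(o,x)=\E^x[a(S_{\sigma_n})]-a(x)$ (Lemma \ref{lem:Gn(z,w)} at $r=1$, which you rederive via the discrete Dirichlet problem) and the difference estimate for discrete harmonic functions \cite{LLbook}*{Theorem 6.3.8} paired with the $O(\log n)$ bound on the potential kernel near $\partial V_n$. The only cosmetic difference is that the paper phrases the harmonic-function estimate as an $O(|x|/n)$ total-variation bound on the two exit distributions rather than applying it directly to $h_n$; the resulting error term $O(|x|\log n/n)$ is the same.
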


Since we are going to prove a stronger version of this statement 
in Lemma \ref{lem:estimates}, Eqn.~\eqref{e:Gn(z,o)}, we omit the proof.

The values of $A(x)$ can be computed recursively from symmetry considerations
and the facts that:\\
(i) $\frac{1}{4} \Delta A(x) = - \frac{1}{4} \mathbf{1}_o(x)$;\\
(ii) $A((n,n)) = \frac{1}{\pi} \left[ 1 + \frac{1}{3} + \dots + \frac{1}{2n-1} \right]$;\\
see for example \cite{LPbook} or \cite{Spbook}*{Section 15}.
In particular, 
\begin{align}
\label{e:pot-kern}
  A(o) &= 0 &
  A(j_1) &= \frac{1}{4} &
  A(j_1+j_2) &= \frac{1}{\pi} \\
\notag
  A(j_1-j_3) &= 1 - \frac{2}{\pi} &
  & & 
  A(j_1 + j_2 - j_3) &= \frac{-1}{4} + \frac{2}{\pi}. 
\end{align}

Let us return to the limit of the determinant in \eqref{e:calculate}. 
Since the row sums of $B$ are $0$, the computation can be recast in terms of $A$. 
For example, the $o,o$ entry of $I + B (\Delta'_{G_n})^{-1}$ equals
\eqnst
{ 1 - 3 G_n(o,o) + G_n(o,j_1) + G_n(o,j_2) + G_n(o,j_3)
  \stackrel{n \to \infty}{\longrightarrow} 1 - 3 A(j_1)
  = \frac{1}{4}. }
Straightforward calculations using the values \eqref{e:pot-kern} and symmetry yield:
\eqn{e:p(0)}
{ p(0)
  := \lim_{n \to \infty} \nu_n [ \eta(o) = 0 ] 
  = \nu [ \eta(o) = 0 ] 
  = \frac{2}{\pi^2} - \frac{4}{\pi^3}. }

When $d \ge 3$, similar arguments apply. As
$\lim_{n \to \infty} G_n(z,x) = G(z,x)$, we have that 
$\nu [ \eta(o) = 0 ]$ is expressed
in terms of the Green function $G(z,x)$.

\subsection{The height $0$-$0$ correlation}
\label{ssec:corr-0-0}

The idea of Majumdar and Dhar presented in the previous section also gives 
a formula for the covariance between the events $\{ \eta(o) = 0 \}$ and 
$\{ \eta(y) = 0 \}$; see \cite{MD91}. Consider first 
two dimensions. This time we modify the
graph both near $o$ and $y$, by removing the edges leading
from $o$ to $j_1$, $j_2$, $j_3$, and the edges leading from
$y$ to neighbours $j'_1$, $j'_2$, $j'_3$.
Then similarly to the previous section,
$\nu_n [ \eta(o) = 0,\, \eta(y) = 0 ]$ can be
written as an $8 \times 8$ determinant, that arises from four
blocks of size $4 \times 4$. Since the row sums of $B$ are $0$,
row and column operations can be used to reduce the size of the blocks 
to $3 \times 3$. The result of this can be written in the 
form (see for example \cite{Durre}):
\eqn{e:det-3x3}
{ \nu_n [ \eta(o) = 0,\, \eta(y) = 0 ]
  = \det \left( I_{v = w} - K_n(v,w) \right)_{v,w \in \{o,y\}}, }
where $I_{v=w}$ is the $3 \times 3$ identity matrix when $v = w$ and the
$3 \times 3$ null matrix when $v \not= w$. The $3 \times 3$ matrix
$K_n(v,w)$ is given by:
\eqn{e:K_n}
{ K_n(v,w)
  = \left( \partial^{(1)}_{e} \partial^{(2)}_{f} G_n(v,w) \right)_{e, f}, }
where for any function $h : \Z^2 \to \R$ and vector $e \in \Z^2$
we define:
\eqnsplst
{ \partial^{(1)}_e h(v,w)
  &= h(v+e,w) - h(v,w), \\
  \partial^{(2)}_f h(v,w)
  &= h(v,w+f) - h(v,w). }
In the formula \eqref{e:K_n}, the vectors $e$ and $f$ range over the 
unit vectors: $(0,-1)$, $(-1,0)$ and $(0,1)$ (these are the vectors 
pointing from $o$ to $j_1$, $j_2$ and $j_3$).

Letting $n \to \infty$, we obtain an expression 
\eqn{e:det-3x3-asymp}
{ \nu [ \eta(o) = 0,\, \eta(y) = 0 ]
  = \det \left( I_{v = w} - K(v,w) \right)_{v,w \in \{o,y\}}, }
with
\eqn{e:K}
{ K(v,w)
  = \left( \partial^{(1)}_{e} \partial^{(2)}_{f} A(w-v) \right)_{e, f}. }
Here $\det(K(o,o)) = \det(K(y,y)) = p(0)$, from the previous
section. In order to understand how correlations decay as $|y| \to \infty$,
let us examine the order of magnitude of the entries of $K(o,y)$.
It is well know (see \cite{LLbook}*{Theorem 4.4.4}) that there 
exists a constant $c_0$ such that 
\eqn{e:pot-kern-asymp}
{ A(y) 
  = \frac{1}{2 \pi} \log|y| + c_0 + O\left( |y|^{-2} \right), \quad
    \text{as $|y| \to \infty$.} }
This shows that the entries of $K(o,y)$ (and that of $K(y,o)$) 
are $O(|y|^{-2})$, and hence 
\eqn{e:corr-asymp}
{ \nu [ \eta(o) = 0,\, \eta(y) = 0 ] - \nu [ \eta(o) = 0 ] \nu [ \eta(y) = 0 ]
  = O \left( |y|^{-4} \right), \quad \text{as $|y| \to \infty$.} }
One can compute the constant in this asymptotics using more precise
information on the error term in \eqref{e:pot-kern-asymp}. 
Indeed, regarding $y$ as a complex number, the error term 
in \eqref{e:pot-kern-asymp} is of the form
\eqnst
{ \frac{\mathfrak{Re}\, y^4}{|y|^6} + O \left( |y|^{-4} \right); }
see \cite{FU96} or \cite{KS04}.
Therefore, after taking second differences, the error term 
of \eqref{e:pot-kern-asymp} does not contribute to the $|y|^{-4}$ 
term in \eqref{e:corr-asymp}. 
This yields the result obtained by Majumdar and Dhar \cite{MD91}:
\eqn{e:cov-0-0}
{ \nu [ \eta(o) = 0,\, \eta(y) = 0 ] - \nu [ \eta(o) = 0 ] \nu [ \eta(y) = 0 ]
  \sim - \frac{p(0)^2}{2 |y|^4}, \quad \text{as $|y| \to \infty$.} }

In dimensions $d \ge 3$ a similar computation can be carried out
showing that the covariance between two $0$'s decays as
$-c |y|^{-2d}$, as $|y| \to \infty$, with $c = c(d) > 0$.

\subsection{Scaling limit of the height $0$ field}
\label{ssec:scaling-0}

The second differences of discrete Green functions considered in the 
previous section converge, under rescaling, to partial derivatives
of continuous Green functions. This allows to get formulas for the
scaling limit of the covariance functions between heights $0$.
The result is especially interesting in two dimensions, as there the
continuous Green function is conformally invariant, which implies
that the covariance functions transform in a nice way under
conformal maps. Although this fact seems to be well-known by physicists 
(see for example \cite{IP98}*{Section 3.3} and \cite{JPR06}), 
we are not aware of a mathematically precise formulation of it in the
physics literature. We state below a theorem of D\"{u}rre \cite{Durre} 
that provides such a formulation.

Let $U \subset \mathbb{C}$ be a bounded connected domain with smooth 
boundary. Let $U_\eps = (U/\eps) \cap \Z^2$, and for $v \in U$
let $v_\eps \in U_\eps$ be such that $|v/\eps - v_\eps| \le 2$.
Denote $h_\eps (v) = \mathbf{1}_{\eta(v_\eps) = 0}$, which is a random field,
indexed by $v \in U$, under the measure $\nu_{U_\eps}$.

\begin{theorem}[\cite{Durre}*{Theorem 1}]
Let $V \subset U$ be a finite set of points in the interior of $U$.  Then as
$\eps \to 0$, the rescaled joint moments
\eqnst
{ \eps^{-2 |V|} \E_{\nu_{U_\eps}} \left[ \prod_{v \in V} 
     \left[ h_\eps(v_\eps) - \E_{\nu_{U_\eps}} h_\eps(v) \right] \right] }
have a finite limit $E_U(v : v \in V)$, which is conformally covariant
with scale dimension $2$.
\end{theorem}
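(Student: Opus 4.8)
\emph{Overall strategy and Step 1 (reduction).}
The plan is to extend the Majumdar--Dhar computations of Sections~\ref{ssec:height-0}--\ref{ssec:corr-0-0} to $|V|$ marked points, obtaining a determinant of discrete Green-function second differences; to extract the genuinely ``connected'' part of the centred product by a short cluster expansion; and then to pass to the limit using sharp asymptotics of the discrete Green function of $U_\eps$. For Step~1, fix a compact $K$ with $V\subset\mathrm{int}\,K\subset K\subset\mathrm{int}\,U$, so that for all small $\eps$ each $v_\eps$ has its four neighbours in $U_\eps$ and the points $v_\eps$ ($v\in V$) are pairwise far apart; the local modifications below then do not interact. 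As in Exercise~\ref{ex:reduce}, for each $v\in V$ delete from the wired graph on $U_\eps$ the three edges joining $v_\eps$ to three fixed neighbours and subtract a particle from each of those neighbours. The burning algorithm gives a bijection between $\{\eta\in\cR_{U_\eps}:\eta(v_\eps)=0\ \forall v\in V\}$ and the recurrent configurations of the modified graph, and, just as in \eqref{e:calculate}--\eqref{e:det-3x3}, the vanishing row sums of the Laplacian perturbation reduce the count to a determinant with $3\times3$ blocks:
\eqnst{ \nu_{U_\eps}\big[\eta(v_\eps)=0\ \forall v\in V\big]=\det\big(I_{v=w}-K_\eps(v,w)\big)_{v,w\in V},\qquad K_\eps(v,w)=\big(\partial^{(1)}_e\partial^{(2)}_f G_{U_\eps}(v_\eps,w_\eps)\big)_{e,f}, }
with $e,f$ ranging over the three unit vectors of \eqref{e:K_n}; in particular $\det\big(I-K_\eps(v,v)\big)=\nu_{U_\eps}[\eta(v_\eps)=0]=:p_\eps(v)$.

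\emph{Step 2 (the connected part).}
Put $D_v:=I-K_\eps(v,v)$ and let $N$ be the block matrix equal to $K_\eps(v,w)$ off the diagonal and $0$ on it, so that for $S\subseteq V$ one has $\det\big(I_{v=w}-K_\eps(v,w)\big)_{v,w\in S}=\big(\prod_{v\in S}p_\eps(v)\big)R_\eps(S)$ with $R_\eps(S):=\det\big(I-(D^{(S)})^{-1}N^{(S)}\big)$ and $R_\eps(\es)=R_\eps(\{v\})=1$. Expanding the centred product and using this identity,
\eqnst{ \eps^{-2|V|}\E_{\nu_{U_\eps}}\Big[\prod_{v\in V}\big(h_\eps(v_\eps)-p_\eps(v)\big)\Big]=\Big(\prod_{v\in V}p_\eps(v)\Big)\,\eps^{-2|V|}\sum_{S\subseteq V}(-1)^{|V\setminus S|}R_\eps(S), }
and M\"obius inversion on the Boolean lattice identifies the alternating sum with $g_\eps(V)$, defined by $R_\eps(S)=\sum_{T\subseteq S}g_\eps(T)$. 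Since $\log R_\eps(S)=-\sum_{m\ge1}\tfrac1m\operatorname{tr}\big(((D^{(S)})^{-1}N^{(S)})^m\big)$ is a sum of weights of closed walks on $S$ with consecutive vertices distinct, a short cluster-expansion computation writes $g_\eps(V)$ as a sum over set partitions of $V$, each block carrying the total weight of walks supported on it. As each off-diagonal block is $O(\eps^2)$ (Step~3), a closed walk on $j$ distinct vertices has at least $j$ edges with equality only for Hamiltonian cycles, and a singleton block contributes $0$, the leading order is
\eqnst{ g_\eps(V)=\sum_{\substack{\pi\ \text{set partition of}\ V\\ |B|\ge2\ \forall B\in\pi}}\prod_{B\in\pi}\Big(-\!\!\sum_{\text{cyclic orders}\ (v_1,\dots,v_j)\ \text{of}\ B}\!\!\operatorname{tr}\big(D_{v_1}^{-1}K_\eps(v_1,v_2)\cdots D_{v_j}^{-1}K_\eps(v_j,v_1)\big)\Big)+O(\eps^{2|V|+2}). }

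\emph{Step 3 (asymptotics and the limit).}
Near the diagonal, for fixed $e,f$ and $v$ in a compact subset of $\mathrm{int}\,U$, $\partial^{(1)}_e\partial^{(2)}_f G_{U_\eps}(v_\eps,w_\eps)\big|_{w=v}\to A(e)+A(f)-A(e-f)$ as $\eps\to0$: by Lemma~\ref{lem:pot-kern} and its quantitative refinement (Lemma~\ref{lem:estimates}), near $v_\eps$ the boundary correction to $G_{U_\eps}$ is asymptotically one additive constant, hence is killed by an $O(1)$-scale second difference. So $D_v\to D$ with $D$ independent of $v$ and of $U$, $\det D=p(0)$ (cf. the diagonal blocks in \eqref{e:det-3x3-asymp} and \eqref{e:p(0)}), and $p_\eps(v)\to p(0)$. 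For distinct $v,w$ the macroscopic second difference gives $\eps^{-2}K_\eps(v,w)\to L(v,w)$, the $3\times3$ matrix whose $(e,f)$ entry is $(e\!\cdot\!\nabla_v)(f\!\cdot\!\nabla_w)g_U(v,w)$, where $g_U$ is the continuous Green function of $U$; this is the convergence of the rescaled discrete Green function of $U_\eps$, together with its first and second differences, to $g_U$ and its derivatives, uniformly on compacts of $U\times U$ off the diagonal with error $o(\eps^2)$. Substituting into Step~2, the rescaled moment converges to the finite limit
\eqnst{ E_U(v:v\in V)=p(0)^{|V|}\sum_{\substack{\pi\ \text{set partition of}\ V\\ |B|\ge2\ \forall B\in\pi}}\prod_{B\in\pi}\Big(-\!\!\sum_{\text{cyclic orders of}\ B}\!\!\operatorname{tr}\big(D^{-1}L(v_1,v_2)\cdots D^{-1}L(v_j,v_1)\big)\Big). }

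\emph{Step 4 (conformal covariance) and the main obstacle.}
The continuous Green function is conformally invariant, $g_U(v,w)=g_{\phi(U)}(\phi(v),\phi(w))$ for a conformal bijection $\phi$; differentiating once in $v$ and once in $w$ produces no $\phi''$ terms, so $L(v,w)$ transforms as a rank-two tensor, acquiring $J_\phi(v)^{\!\top}\cdots J_\phi(w)$, and for holomorphic $\phi$ the Jacobian $J_\phi(v)$ is the scalar $|\phi'(v)|$ times a rotation. In each cyclic trace every point of a block occurs in exactly two consecutive factors, so the scalars enter squared; the rotations cancel once one notes that $L(v,w)$ always has its third row, and its third column, equal to minus the first, so that each cyclic trace may be rewritten through the $2\times2$ mixed Hessian of $g_U$ and a fixed $2\times2$ matrix built from $D$, which the lattice symmetry of the height-$0$ event forces to be a scalar multiple of the identity (equivalently, one re-expresses the answer via the complex derivatives $\partial_z\partial_{\bar w}g_U$ and $\partial_z\partial_w g_U$, which carry definite conformal weights). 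Summing over partitions yields $E_{\phi(U)}(\phi(v):v\in V)=\prod_{v\in V}|\phi'(v)|^{-2}E_U(v:v\in V)$, i.e.\ conformal covariance with scale dimension~$2$; for $|V|=2$ this recovers \eqref{e:cov-0-0}. The principal difficulty is Step~3: a signal of size $\eps^{2|V|}$ is distilled from $O(1)$ quantities by exact cancellations, so the discrete Green function of $U_\eps$ and its discrete second differences must converge with errors genuinely $o(\eps^2)$, uniformly on compacts — which is where smoothness of $\partial U$ enters, through boundary regularity of the discrete harmonic measure (or a coupling of simple random walk with Brownian motion). A secondary point is the Step~4 bookkeeping that eliminates the rotational factors from every cyclic trace.
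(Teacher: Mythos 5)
First, a caveat on the comparison: the survey does not prove this theorem at all --- it is quoted from D\"urre \cite{Durre} --- so the only thing to measure your proposal against is the two-point Majumdar--Dhar computation the survey does carry out in Sections \ref{ssec:height-0}--\ref{ssec:corr-0-0}. Your Steps 1--2 are the correct and natural generalization of that computation: the multi-point height-$0$ probability is indeed a block determinant as in \eqref{e:det-3x3}, your identity relating the centred moment to the M\"obius inverse $g_\eps(V)$ of $R_\eps(S)$ is right, and the cluster/trace expansion correctly identifies the leading order as a sum over partitions into blocks of size at least two, each contributing Hamiltonian-cycle traces of order $\eps^{2|B|}$ (for $|V|=2$ this reproduces \eqref{e:cov-0-0} and the displayed formula for $E_U(v,w)$). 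This is, in outline, the route D\"urre takes.

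The genuine gap is exactly where you put your finger in Step 3, but flagging it is not the same as closing it: the entire theorem lives in the claim that $\eps^{-2}\,\partial^{(1)}_e\partial^{(2)}_f G_{U_\eps}(v_\eps,w_\eps) \to (e\cdot\nabla_v)(f\cdot\nabla_w)g_U(v,w)$ with error $o(1)$ \emph{uniformly} on compacts off the diagonal, and that the diagonal blocks converge to $D$ with the boundary correction killed to order $o(\eps^2)$. Since the signal being extracted is of size $\eps^{2|V|}$ and is produced by cancellations among $O(1)$ entries, an error $O(\eps^2)$ rather than $o(\eps^2)$ in any single off-diagonal block destroys the limit; nothing in the survey (Lemma \ref{lem:pot-kern} and Lemma \ref{lem:estimates} concern the square $V_n$, not a general smooth domain) supplies this, and proving it --- via boundary regularity of discrete harmonic measure or a KMT-type coupling --- is the actual content of \cite{Durre}. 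A second, smaller soft spot is Step 4: your cancellation of the rotational parts of the Jacobians rests on the assertion that a certain $2\times 2$ reduction of $D^{-1}$ is a scalar matrix ``by lattice symmetry,'' but $D$ is built from an asymmetric choice of three of the four edges at $v$ (east omitted), so it is not itself invariant under the dihedral group; one must either verify the scalar claim by direct computation with the values \eqref{e:pot-kern}, or bypass it by exhibiting the limit explicitly as a combination of $\partial_z\partial_w g_U$ and $\partial_z\partial_{\bar w}g_U$ with definite conformal weights, as is done for $|V|=2$ in the formula for $E_U(v,w)$ quoted after the theorem.
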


Here \textbf{conformally covariant} means that if $f: U \to U'$ is a conformal map,
then 
\eqnst
{ E_{U}(v : v \in V) 
  = E_{U'}(f(v) : v \in V) \cdot \prod_{v \in V} \left| f'(v) \right|^2, }
and the exponent $2$ is the \textbf{scale dimension}.
When $V = \{ v, w \}$, the limit is:
\eqnsplst
{ &E_U(v,w) \\
  &\quad = -c \left[ \left( \partial^{(1)}_x \partial^{(2)}_x g_U \right)^2
    + \left( \partial^{(1)}_y \partial^{(2)}_y g_U \right)^2
    + \left( \partial^{(1)}_x \partial^{(2)}_y g_U \right)^2
    + \left( \partial^{(1)}_y \partial^{(2)}_x g_U \right)^2 \right], }
where $g_U(v,w) = g_U((x_1,y_1),(x_2,y_2))$ is the continuous Green function in $U$ 
for the Laplacian with Dirichlet boundary conditions.

Summability of the covariance function $-c/|y|^4$ of \eqref{e:cov-0-0} 
suggests that if the random field $h_\eps$ is integrated against 
smooth test functions then we get a Gaussian limit. This is indeed the case.

\begin{theorem}[\cite{Durre}*{Theorem 3}]
There is a constant $\mathcal{V} > 0$ such that the following holds.
Let $f_i \in C_0^\infty(U)$, $1 \le i \le n$. Then the random variables
\eqnst
{ f_i \diamond h_\eps
  := \frac{\eps}{\sqrt{\mathcal{V}}} \sum_{v \in U_\eps}
     f_i(\eps v) \left( h_\eps(v) - \E_{\nu_{U_\eps}} h_\eps(v) \right) }
converge in distribution to a multivariate normal random variable with
covariance 
\eqnst
{ C_{ij} 
  = \int_U f_i(x,y) f_j(x,y) \, dx \, dy, \quad i, j = 1, \dots, n. }
\end{theorem}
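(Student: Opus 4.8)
The plan is to prove the theorem by the method of cumulants, reducing everything to decay estimates for second differences of the Dirichlet Green function $G_{U_\eps}(v,w):=(\Delta'_{U_\eps})^{-1}_{vw}$ of the rescaled domain. Since $f\mapsto f\diamond h_\eps$ is linear, it suffices to fix reals $t_1,\dots,t_n$, set $f:=\sum_i t_if_i\in C_0^\infty(U)$, and show that the second cumulant of $f\diamond h_\eps$ converges to $\int_U f^2$ while every cumulant of order $\ge3$ converges to $0$; since a Gaussian law is moment-determinate, this gives the asserted joint convergence with $C_{ij}=\int_U f_if_j$.

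First I would record the determinantal structure of the height-$0$ correlations. For a finite set $A\subset U_\eps$, deleting at each $v\in A$ the three edges joining $v$ to its south, west and north neighbours and arguing as in Exercise \ref{ex:reduce} writes $\nu_{U_\eps}[\,\eta(v)=0\ \forall v\in A\,]$ as a ratio of determinants as in \eqref{e:calculate}; performing the same row and column operations that led to \eqref{e:det-3x3}--\eqref{e:K_n} turns this into a determinant of a $3\times3$-block matrix indexed by $A$ whose $(v,w)$ block is $K_\eps(v,w)=\big(\partial^{(1)}_e\partial^{(2)}_f G_{U_\eps}(v,w)\big)_{e,f}$. Hence $(\mathbf1_{\eta(v)=0})_{v\in U_\eps}$ is a ($3$-fold inflated) determinantal-type process, and its joint cumulants admit the usual connected/cyclic-diagram expansion. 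Granting the Green-function bound
\eqn{e:green-2diff}
{ \big|\partial^{(1)}_e\partial^{(2)}_f G_{U_\eps}(v,w)\big|\ \le\ C\min\big(1,|v-w|^{-2}\big)
  \quad\text{uniformly in }\eps\text{ and }v,w\in U_\eps, }
this expansion yields, with $\psi(z):=C\min(1,|z|^{-2})$,
\eqn{e:cumul-diagram}
{ \big|\kappa_k\big(\mathbf1_{\eta(v_1)=0},\dots,\mathbf1_{\eta(v_k)=0}\big)\big|
  \ \le\ C_k\sum_{\sigma}\ \prod_{i=1}^k\psi(v_i-v_{\sigma(i)}), }
where $\sigma$ runs over the cyclic permutations of $\{1,\dots,k\}$; for $k=2$ this recovers the covariance decay $O(|v-w|^{-4})$ of \eqref{e:corr-asymp}, and the bound persists when some $v_i$ coincide, with $\psi(0)=C$.

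Next I would prove \eqref{e:green-2diff} and handle the second cumulant. The estimate follows from the decomposition $G_{U_\eps}(v,\cdot)=-A(\cdot-v)+H_{U_\eps}(v,\cdot)$, where $A$ is the full-plane potential kernel and $H_{U_\eps}(v,\cdot)$ is the discrete-harmonic function on $U_\eps$ agreeing with $A(\cdot-v)$ on $\partial U_\eps$: second differences of $A$ are $O(|v-w|^{-2})$ by \eqref{e:pot-kern-asymp}, and second differences of $H_{U_\eps}$ are controlled by interior gradient estimates for discrete harmonic functions together with the smoothness of $\partial U$. The second cumulant of $f\diamond h_\eps$ equals $\mathcal V^{-1}\eps^2\sum_{v,w\in U_\eps}f(\eps v)f(\eps w)\,\Cov_{\nu_{U_\eps}}(\mathbf1_{\eta(v)=0},\mathbf1_{\eta(w)=0})$; since this covariance is $O(|v-w|^{-4})$ uniformly and $\sum_{z\in\Z^2\setminus\{0\}}|z|^{-4}<\infty$, I would replace $f(\eps w)$ by $f(\eps v)$ and the domain covariance by its full-plane counterpart (both errors $o(1)$, using that $f$ is supported in the interior of $U$, where $G_{U_\eps}$ matches $-A$ to the needed order), recognise a Riemann sum, and obtain the limit $\mathcal V^{-1}\big(\sum_{z\in\Z^2}\Cov_\nu(\mathbf1_{\eta(o)=0},\mathbf1_{\eta(z)=0})\big)\int_U f^2$. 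This identifies $\mathcal V:=\sum_{z\in\Z^2}\Cov_\nu(\mathbf1_{\eta(o)=0},\mathbf1_{\eta(z)=0})$, finite by \eqref{e:cov-0-0}; it is nonnegative, being the value at the origin of the spectral density of the stationary field $(\mathbf1_{\eta(v)=0})_{v\in\Z^2}$, and one must also verify $\mathcal V>0$ (equivalently $\mathrm{Var}\big(\sum_{v\in V_n}\mathbf1_{\eta(v)=0}\big)\asymp n^2$), after which the limiting variance is exactly $\int_U f^2$.

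Finally, for $k\ge3$ the $k$-th cumulant of $f\diamond h_\eps$ equals $\mathcal V^{-k/2}\eps^k\sum_{v_1,\dots,v_k}\big(\prod_j f(\eps v_j)\big)\kappa_k(\cdots)$, and by \eqref{e:cumul-diagram} its absolute value is, up to a $k$-dependent constant, at most $\eps^k\sum_{v_1,\dots,v_k\in B}\prod_{i=1}^k\psi(v_i-v_{\sigma(i)})$ for one fixed cyclic $\sigma$, with $B$ a box of side $O(\eps^{-1})$ such that $f(\eps v)=0$ for $v\notin B$. Bounding a single factor of the $k$-cycle by $\|\psi\|_\infty$ turns the cyclic product into a path, whose sum factorises, when summed from one end, into $|B|\asymp\eps^{-2}$ times $(k-1)$ sums each at most $\max_{u}\sum_{z\in B-u}\psi(z)\le C\log(1/\eps)$ (using $\sum_{1\le|z|\le R}|z|^{-2}\asymp\log R$); hence the $k$-th cumulant is $O\big(\eps^{k-2}(\log(1/\eps))^{k-1}\big)\to0$. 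Combined with the second-cumulant computation, all cumulants of $f\diamond h_\eps$ of order $\ge3$ vanish in the limit, proving the theorem. I expect the main obstacle to be the analytic input \eqref{e:green-2diff} --- the uniform-in-$\eps$ bound on second differences of $G_{U_\eps}$ \emph{up to the boundary} of $U_\eps$ --- together with the quantitative control of the boundary and discretisation errors in the Riemann-sum limit for the variance; once these are in hand, the diagram combinatorics and the power counting above are routine.
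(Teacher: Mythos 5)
The survey itself gives no proof of this theorem --- it is quoted from D\"urre --- so there is no in-paper argument to compare against. Your plan (reduce to a single linear statistic, represent joint height-$0$ probabilities as block determinants in the kernel of second differences of the Dirichlet Green function via the minimal-configuration/burning mechanism, then kill all cumulants of order $\ge 3$ by power counting against the $|v-w|^{-2}$ decay of the kernel entries) is essentially the route of the cited proof, and the power counting $\eps^{k-2}(\log(1/\eps))^{k-1}\to 0$ for $k\ge 3$ is correct. One simplification you are entitled to: since the $f_i$ are compactly supported in the interior of $U$, you never need the second-difference bound up to the boundary. Writing $G_{U_\eps}(v,\cdot)=-A(\cdot-v)+H_{U_\eps}(v,\cdot)$, a single difference of the harmonic part in the $w$-variable is $O(\eps)$ on $\partial U_\eps$ (boundary points are at distance $\gtrsim\eps^{-1}$ from the support of $f$), and the interior gradient estimate then makes the mixed second difference $O(\eps^2)=O(|v-w|^{-2})$ in the bulk. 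So the ``main obstacle'' you identify at the end is not actually an obstacle.

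Two points need more than you give them. First, your cumulant bound as a sum over cyclic permutations is the truncated-correlation formula for a determinantal \emph{point} process at distinct points; here each $\mathbf{1}_{\eta(v)=0}$ is a product of three edge-absence indicators, so the connected-diagram expansion produces unions of kernel cycles whose graph on the $k$ groups is connected, not single cycles. This does not change the conclusion --- every connected diagram carries at least a spanning tree of factors $\min(1,|v_i-v_j|^{-2})$, and the tree bound gives the same $\eps^{k-2}(\log(1/\eps))^{k-1}$ --- but the combinatorics should be stated in that form. Second, and more seriously, the positivity $\mathcal V>0$ is part of the assertion of the theorem and cannot be deferred: by \eqref{e:cov-0-0} the off-diagonal covariances are negative at large distance, so $\mathcal V=\sum_{z}\Cov_\nu\bigl(\mathbf{1}_{\eta(o)=0},\mathbf{1}_{\eta(z)=0}\bigr)$ is a positive diagonal term minus a positive tail sum, and nonnegativity of the spectral density at the origin only yields $\mathcal V\ge 0$. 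You need an actual lower bound, e.g.\ showing $\mathrm{Var}\bigl(\sum_{v\in V_n}\mathbf{1}_{\eta(v)=0}\bigr)\gtrsim n^2$; as written this is the one genuine gap in the plan.
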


\subsection{The probabilities of heights $1$, $2$, $3$ in two dimensions}
\label{ssec:height-123}

The probabilities of heights different from $0$ are, in general, more difficult to compute.
In the case of an infinite regular tree all height probabilities can be computed
using combinatorial methods; see \cite{DM90}. However, on Euclidean lattices
of dimension at least $2$, exact results are only known when $d = 2$. 
The goal of this section is to sketch the main ideas of Priezzhev \cite{Pr94} 
that yield the probabilities of heights $1$, $2$, $3$ on $\Z^2$. 
This section is quite long and technical in many parts, so the reader might 
want to skip some of the proofs on first reading.

\subsubsection{Background}

Let us denote
\eqnst
{ p(i) 
  := \nu [ \eta(o) = i ], \quad i = 0, 1, \ldots, 2d-1. }
In the case $d = 2$, Priezzhev \cite{Pr94} gave exact formulas for $p(1), p(2), p(3)$. 
He was able to express them in terms of explicit rational polynomials in $1/\pi$ 
and two multiple integrals. Grassberger evaluated the integrals numerically 
(see \cite{Dhar06}*{Section 9.3.1}), and observed that 
mysteriously the \emph{average height} $\zeta = \sum_{i=0}^3 i\, p(i)$ 
appears to be the simple rational number $17/8$. 

Jeng, Piroux and Ruelle \cite{JPR06} extended Priezzhev's ideas, and 
in particular, were able to express the $p(i)$'s in terms of a single integral. 
They noticed that numerical evaluation of the unknown integral gave $1/2 \pm 10^{-12}$, 
and conjectured that this integral is exactly equal to $1/2$. 
Assuming this conjecture, and combined with Priezzhev's work, 
they obtained the remarkable formulas:
\eqnspl{e:p(i)'s}
{ p(0)
  &= \frac{2}{\pi^2} - \frac{4}{\pi^3} \\
  p(1)
  &= \frac{1}{4} - \frac{1}{2 \pi} - \frac{3}{\pi^2} + \frac{12}{\pi^3} \\
  p(2)
  &= \frac{3}{8} + \frac{1}{\pi} - \frac{12}{\pi^3} \\
  p(3)
  &= 1 - p(0) - p(1) - p(2) 
  = \frac{3}{8} - \frac{1}{2 \pi} + \frac{1}{\pi^2} + \frac{4}{\pi^3}. }
These values indeed yield $\zeta = 17/8$ as the average height. 

Poghosyan and Priezzhev \cite{PP11} observed that the average height 
can be re\-phrased in terms of the LERW. Let 
\eqnst
{ \xi 
  = \E [ \text{number of neighbours of $o$ visited by the infinite LERW} ]. }
Then the statement $\zeta = 17/8$ is equivalent to $\xi = 5/4$ (this
equivalence will be explained below). Levine and Peres \cite{LP13} 
called $\xi$ the \textbf{looping constant of $\Z^2$}, and proved 
further relations between $\xi$, the number of spanning uni-cycles
and the Tutte-polynomial. The relations they prove hold 
in all dimensions $d \ge 2$.

Kenyon and Wilson \cite{KW11}, and independently, 
Poghosyan, Priezzhev and Ruelle \cite{PPR11} gave different proofs that 
$\xi = 5/4$, which in turn gives a rigorous confirmation that the 
aforementioned integral is exactly $1/2$ and proves the values \eqref{e:p(i)'s}.
A direct evaluation of the integral was given by 
Caracciolo and Sportiello \cite{CS12}. 
Subsequently, Kassel and Wilson \cite{KW14} gave a more direct proof of the
sandpile density. Kenyon and Wilson develop a general method for
calculating the probability that the infinite LERW in two dimensions
passes through any given vertex or any given oriented edge of $\Z^2$.
In principle, their method can be used to calculate all finite-dimensional
marginals of $\nu$.
The proof of Poghosyan, Priezzhev and Ruelle proceeds via a
connection to monomer-dimer coverings. They reduce the 
problem of $\xi = 5/4$ to calculating the probabilities of
certain local events in the monomer-dimer model that can be expressed in 
terms of finite determinants akin to the calculations in 
Section \ref{ssec:height-0}.

\subsubsection{The looping constant}
\label{sssec:looping}

Let us see the connection between the average height and the looping
constant. It will be convenient at this point
to introduce a slightly different 
version of the burning bijection. In what follows, let 
$G_n = (V_n \cup \{ s \}, E_n)$ be the
wired graph obtained from $V_n = [-n,n]^d \cap \Z^d$, $d \ge 2$.

\medbreak

\textbf{Burning bijection anchored at the origin.}
The burning process will consist of two Phases.

\emph{Phase I.} We burn all vertices we can \emph{without} burning
the origin. That is, we define $B^{(I)}_0 = \{ s \}$, $U^{(I)}_0 = V_n$, and
for $t \ge 1$ we set:
\eqnsplst
{ B^{(I)}_{t}
  &:= \left\{ x \in U^{(I)}_{t-1} \setminus \{ o \} : 
     \eta(x) \ge \deg_{U^{(I)}_{t-1}}(x) \right\} \\
  U^{(I)}_t
  &:= U^{(I)}_{t-1} \setminus B^{(I)}_t. }
At some finite time no more vertices can be burnt, that is, $B^{(I)}_{J} = \es$
for some $1 \le J < \infty$.

\emph{Phase II.} Burn all the remaining vertices in the usual way.
That is, we start with $B^{(II)}_0 = \cup_{j \ge 0} B^{(I)}_j$, 
$U^{(II)}_0 = \cap_{j \ge 0} U^{(I)}_j$, and for $t \ge 1$ set 
\eqnsplst
{ B^{(II)}_{t}
  &:= \left\{ x \in U^{(II)}_{t-1} : 
     \eta(x) \ge \deg_{U^{(II)}_{t-1}}(x) \right\} \\
  U^{(II)}_t
  &:= U^{(II)}_{t-1} \setminus B^{(II)}_t. }

It is not difficult to see that for all $\eta \in \cR_G$, 
all vertices burn eventually.

Now build a bijection $\varphi_o : \cR_G \to \cT_G$ based on the 
above burning rule, similarly to what we did in Section \ref{ssec:burning}.
That is, if $x \in B^{(I)}_t$ for some $t \ge 1$, draw an oriented edge from $x$
to one of its neighbours in $B^{(I)}_{t-1}$. If $x \in B^{(II)}_t$ for 
some $t \ge 1$, draw an edge from $x$ to one of its neighbours 
in $B^{(II)}_{t-1}$. In both cases, break ties as in the usual bijection, 
if necessary.

\medbreak

The following two claims are not difficult to verify, and are left as 
exercises.

\begin{exercise}
\label{ex:desc}
Put 
\eqnst
{ W_n
  = W_n (\eta) 
  = \{ \text{vertices that did not burn in Phase I} \} 
  = U^{(II)}_0. }
Then $W_n = \{ \text{descendants of $o$ in $\tau = \varphi_o(\eta)$} \}$. 
Here a vertex $w$ is called a descendant of the vertex $v$, if $v$
lies on the unique path between $w$ and $s$ in the tree $\tau$.
See \cite{JW12}. 
\end{exercise}

\begin{exercise}
\label{ex:cond-height}
Under the measure $\nu_n$ and conditional on the event $\deg_{W_n} (o) = i$, 
the random variable $\eta(o)$ is uniformly distributed on $\{ i, i+1, \dots, 2d-1 \}$.
\emph{Hint:} Condition further on the entire set $W_n$, and consider
the possible values of $\eta(o)$ in relation to the outgoing edge
from $o$ in $\varphi_o(\eta)$.
See \cite{LP13}*{Lemma 4}.
\end{exercise}

The $p(i)$'s can be rephrased in terms of the quantities
\eqnst
{ q(i)
  := \lim_{n \to \infty} \nu_n [ \deg_{W_n}(0) = i ], \quad i = 0, 1, \dots, 2d - 1. }
Existence of the limit follows, for example, from results presented in 
Section \ref{sec:measures}. Due to Exercise \ref{ex:cond-height}, we have
\eqn{e:p-q}
{ p(i) 
  = \sum_{j=0}^i \frac{1}{2d-j} q(j). } 
Linearity of expectation and Wilson's
algorithm yield
\eqnsplst
{ \sum_{i=0}^{2d-1} i q(i)
  &= \lim_{n \to \infty} \sum_{x \sim o} 
    \nu_n [ x \in W_n ] \\
  &= \lim_{n \to \infty} \sum_{x \sim o} 
    \Pr [ \text{LERW from $x$ to $s$ in $G_n$ visits $o$} ]. }
By the definition of the infinite LERW and translation invariance
the right hand side equals
\eqnsplst
{ &\sum_{x \sim o} \Pr [ \text{infinite LERW started from $x$ visits $o$} ] \\
  &\qquad\quad = \sum_{x \sim o} \Pr [ 
      \text{infinite LERW started from $o$ visits $-x$} ] \\
  &\qquad\quad = \xi. }
The relation \eqref{e:p-q} now yields $\zeta = d + \frac{\xi - 1}{2}$.

\medbreak

We are now ready to present the main ideas of Priezzhev's computation 
of $p(1)$ and $p(2)$. (We have seen in Section \ref{ssec:height-0} that 
$p(0) = \frac{1}{4} q(0) = \frac{2}{\pi^2} - \frac{4}{\pi^3}$.)
For the remainder of Section \ref{ssec:height-123}, we restrict to $d = 2$. 
One of our concerns will be to supply explicit error bounds that allow 
one to pass to the limit $n \to \infty$ in the computations. These are 
not provided in the physics literature, and we believe that such 
estimates may be useful in further work on related questions, 
and therefore would benefit a reader who is not yet familiar with 
the details of the work of physicists. Attention is also due to the 
fact that Priezzhev's integrals \cite{Pr94}*{Eqn.~(6)} include 
logarithmically divergent singularities, and do not exist as Lebesgue 
integrals; see our Remark \ref{rem:not-integrable}. Implicit in Priezzhev's 
formula is to apply a regularization that allows divergent singularities 
to cancel. We provide a suitable regularization in 
Propositions \ref{prop:limn} and \ref{prop:summation}.

\subsubsection{Decomposition of $q(1)$ into three terms}

Due to \eqref{e:p-q}, it is enough to find $q(1)$ and $q(2)$. We restrict 
to the computation of $q(1)$, as the computations for $q(2)$ follow similar 
ideas; see \cites{Pr94,JPR06}.

Let $q_n(1) = \nu_n [ \deg_{W_n}(o) = 1 ]$. We will work in the (large) 
finite graph $G_n$. Let $j_1, j_2, j_3, j_4$ be the south, west, north, east 
neighbours of the origin $o$, respectively.\footnote{We have tried to keep 
the notation consistent with that of \cite{Pr94} as much as possible.} 
Due to symmetry, we have
\eqn{e:x1-fixed}
{ q_n(1)
  = 4\, \nu_n [ \deg_{W_n}(o) = 1,\, j_1 \in W_n,\, j_2, j_3, j_4 \not\in W_n ]. }
It will be useful to regard spanning trees of $G_n$ as being oriented towards $s$.
Then specifying a spanning tree is equivalent to specifying an acyclic rotor
configuration $\rho$ on $V_n$, and we are required to count certain
acyclic rotor configurations.\footnote{\emph{Note:} Here we are using 
the bijection anchored at $o$ introduced in Section \ref{sssec:looping}, 
and \emph{not} the sandpile group action on acyclic rotors of 
Section \ref{ssec:rotor-router}.}
The event on the right hand side of \eqref{e:x1-fixed}
is equivalent to the event that the rotor at $j_1$ is pointing to $o$, and 
there is no directed path from any of $j_2, j_3, j_4$ to $j_1$.
Using the idea of Exercise \ref{ex:cond-height}, we can fix the rotor at $o$
to be pointing to $j_2$, say, and introduce a factor $3$. That is:
\eqnst
{ q_n(1)
  = \frac{12}{\det(\Delta'_n)} \left| \left\{ \rho: 
    \parbox{6cm}{$\rho$ acyclic, 
    $\rho(j_1) = [j_1,o]$, $\rho(o) = [o,j_2]$, $\head(\rho(j_3)) \not= o$,
    $\head(\rho(j_4)) \not= o$, no oriented path from $j_3$ and $j_4$ to $j_1$} 
    \right\} \right|. }
Due to planarity, it is in fact enough to require that there be no 
oriented path from $j_4$ to $j_1$. This is because if $j_3$ had such 
a path, so would $j_4$, due to the fact that $j_2$ has an oriented 
path to the sink. Hence
\eqn{e:acyclic-formula}
{ q_n(1)
  = \frac{12}{\det(\Delta'_n)} \left| \left\{ \rho: 
    \parbox{6cm}{$\rho$ acyclic, 
    $\rho(j_1) = [j_1,o]$, $\rho(o) = [o,j_2]$, $\head(\rho(j_3)) \not= o$,
    $\head(\rho(j_4)) \not= o$,
    no oriented path from $j_4$ to $j_1$} \right\} \right|. }
The non-local constraint that there be no oriented path from 
$j_4$ to $j_1$ amounts to requiring that if a \emph{second}
rotor were introduced at $o$, pointing to $j_4$, then the
resulting configuration would still be acyclic. For short 
let $e = [o,j_2]$, $f = [o,j_4]$, $h = [j_1,o]$, and put
\eqnst
{ \cT_0
  = \left\{ \rho_0 : \parbox{9.5cm}{$\rho_0$ an acyclic rotor 
    configuration on $V_n \setminus \{ o \}$, 
    $\rho_0(j_1) = h$, $\head(\rho_0(j_2)) \not= o$, 
    $\head(\rho_0(j_3)) \not= o$, $\head(\rho_0(j_4)) \not= o$} \right\}. }
Put 
\eqnsplst
{ \cT_e 
  &:= \{ \rho_0 \in \cT_0 : \text{$\rho_0 \cup \{ e \}$ is acyclic} \} \\
  \cT_f
  &:= \{ \rho_0 \in \cT_0 : \text{$\rho_0 \cup \{ f \}$ is acyclic} \}. }
Then $|\cT_e \cap \cT_f|$ counts the number of elements of the set in the
right hand side of \eqref{e:acyclic-formula}, that we write as:
\eqn{e:decomp-AcB}
{ |\cT_e \cap \cT_f|
  = |\cT_e| - |\cT_f^c| + |\cT_e^c \cap \cT_f^c|. }

\subsubsection{An extension of the Matrix-tree theorem}

In order to get formulas for the three terms in \eqref{e:decomp-AcB},
we are going to use the theorem below that states variations 
on the Matrix-Tree Theorem for directed graphs \cite{Bbook}*{Theorem II.14}. 
Let $G = (V \cup \{ s \}, E)$ be a \emph{directed} graph,
with $-\Delta_{xy} = a_{xy}$ the number of directed edges from $x$ to $y$, and 
$\Delta_{xx} = \outdeg(x) := \sum_{y \in V \cup \{s\}} a_{xy}$. 
We assume that $\Delta_{sx} = 0$ for 
all $x \in V$. From now on, but in this section only, we are going to call an oriented 
cycle of $G$ an \emph{oriented loop} (to distinguish from permutation 
cycles in the proof below). In order to state the theorem,
we need some notation. Let $N_0$ denote the number of rotor configurations 
on $V$ with no oriented loop (acyclic rotor configurations). Given a directed 
edge $h$, let $N_1(h)$ denote the number of rotor configurations that contain 
precisely one oriented loop, with the edge $h$ contained in this loop. Let 
\eqnst
{ \widetilde{\Delta}_{xy}^h
  = \begin{cases}
    \Delta_{xy} & \text{if $[x,y] \not= h$;} \\
    - \omega    & \text{if $[x,y] = h$;}
    \end{cases} }
where $\omega$ is a real parameter.
Similarly, given oriented egdes $f_1, f_2, f_3$ of $G$, 
let $N_i(f_1,f_2,f_3)$, $i = 1, 2, 3$ respectively,
denote the number of rotor configurations that contain precisely
$i$ oriented loops, respectively, in such a way that each loop contains at least 
one of $f_1, f_2, f_3$, and each of $f_1, f_2, f_3$ is contained 
in at least one of the loops. Let
\eqnst
{ \widetilde{\Delta}_{xy}^{f_1,f_2,f_3}
  = \begin{cases}
    \Delta_{xy} & \text{if $[x,y] \not= f_1, f_2, f_3$;} \\
    - \omega    & \text{if $[x,y] \in \{ f_1, f_2, f_3 \}$}.
    \end{cases} }
As before, let $\Delta'$ denote the matrix obtained from $\Delta$ by restricting
the indices to $V \times V$.

\begin{theorem}
\label{thm:MT-Pr}
(Priezzhev \cite{Pr94})
We have:
\eqnsplst
{ \det(\Delta') 
  &= N_0 \\
  \lim_{\omega \to \infty} \frac{1}{\omega} 
  \det( (\widetilde{\Delta}^h)' ) 
  &= - N_1(h) \\
  \lim_{\omega \to \infty} \frac{1}{\omega^3} 
  \det( (\widetilde{\Delta}^{f_1,f_2,f_3})' ) 
  &= - N_1(f_1,f_2,f_3) + N_2(f_1,f_2,f_3) - N_3(f_1,f_2,f_3). }
\end{theorem}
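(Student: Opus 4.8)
The plan is to prove all three identities by expanding the relevant determinant via the permutation (Leibniz) formula and then matching the resulting terms to rotor configurations through the functional-digraph correspondence. The starting point is the observation that, after restricting to $V \times V$, the matrix $\Delta'$ has diagonal entries $\outdeg(x)$ and off-diagonal entries $-a_{xy}$, so that each term in $\det(\Delta') = \sum_{\sigma} \mathrm{sgn}(\sigma) \prod_{x \in V} \Delta'_{x,\sigma(x)}$ is nonzero only when, for every $x$, either $\sigma(x) = x$ (contributing a factor $\outdeg(x)$, which we further expand as a sum over the $\outdeg(x)$ outgoing edges at $x$) or $\sigma(x) \neq x$ with an edge $x \to \sigma(x)$ present (contributing $-a_{x,\sigma(x)}$, again expanded over the parallel edges). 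In this way every nonzero term corresponds to choosing one outgoing edge $\rho(x)$ at each vertex $x$ — that is, a rotor configuration $\rho$ — together with the permutation $\sigma$ whose nontrivial cycles are exactly the oriented loops formed by $\rho$. The key bookkeeping fact is that if $\rho$ has $k$ oriented loops then $\sigma$ is the product of those $k$ cycles (all fixed points otherwise), so $\mathrm{sgn}(\sigma) = (-1)^{\sum_i (\ell_i - 1)}$ where $\ell_i$ is the length of the $i$-th loop, while the accumulated sign from the off-diagonal entries is $(-1)^{\sum_i \ell_i}$; the product of these two is $(-1)^k$. Hence $\det(\Delta') = \sum_{\rho} (-1)^{(\text{number of oriented loops of }\rho)}$, and in particular the acyclic $\rho$ all contribute $+1$.

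For the first identity I would then argue that the contributions of rotor configurations with at least one loop cancel in pairs, or — more cleanly — invoke that $\det(\Delta')$ counts spanning trees by the Matrix-Tree Theorem (available via Theorem~\ref{thm:group}(i) together with the burning bijection, or directly), and that acyclic rotor configurations are in bijection with spanning trees oriented towards $s$; this gives $\det(\Delta') = N_0$. Alternatively, a self-contained sign-reversing involution (pick the loop through the least-indexed vertex in some fixed order and toggle whether a marked auxiliary structure is present) kills all non-acyclic terms. For the second identity, replacing the entry at the edge $h$ by $-\omega$ means that in the Leibniz expansion a term acquires a factor of $\omega$ exactly once for each time the chosen edge at $\tail(h)$ equals $h$ and $h$ is "used", i.e. exactly when $\rho(\tail(h)) = h$; extracting $\frac1\omega \det((\widetilde\Delta^h)')$ and letting $\omega \to \infty$ isolates the terms linear in $\omega$, which are precisely the rotor configurations with $\rho(\tail(h)) = h$. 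Among these, the same sign-reversing involution cancels all configurations except those with exactly one oriented loop — and that loop necessarily passes through $h$ since $\rho(\tail(h)) = h$. Tracking the sign: such a configuration contributes $(-1)^1 = -1$ before accounting for the $-\omega$ substitution (which replaces a $+$ sign bookkept above), and a careful recount gives the stated $-N_1(h)$.

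The third identity is the same mechanism with three marked edges: substituting $-\omega$ at each of $f_1, f_2, f_3$ and extracting the coefficient of $\omega^3$ (via $\frac{1}{\omega^3}\det$ and $\omega \to \infty$) isolates terms in which all three of $f_1, f_2, f_3$ are used by $\rho$, i.e. $\rho(\tail(f_i)) = f_i$ for $i = 1, 2, 3$. Every oriented loop of such a $\rho$ that avoids all three $f_i$ can again be cancelled by a sign-reversing involution, leaving configurations whose loops each contain at least one $f_i$ while collectively covering all three — exactly the configurations counted by $N_1, N_2, N_3$ depending on the number of loops (which is $1$, $2$, or $3$ since each loop must contain a distinct nonempty subset of $\{f_1,f_2,f_3\}$). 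The sign of a configuration with $k$ such loops is $(-1)^k$ from the loop-count computation above, so the three families contribute with signs $-, +, -$ respectively, yielding $-N_1(f_1,f_2,f_3) + N_2(f_1,f_2,f_3) - N_3(f_1,f_2,f_3)$.

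The main obstacle I anticipate is the sign accounting — making sure that the sign of a permutation built from several disjoint cycles, multiplied by the signs coming from the off-diagonal $-a_{xy}$ entries and from the $-\omega$ substitutions, combines to give exactly $(-1)^{(\text{number of loops})}$ with the right overall sign in each of the three cases; this is routine but error-prone, and I would do it once carefully in the proof of the first identity and then point out that the $-\omega$ substitutions contribute one extra sign each, which is absorbed into the $\frac{1}{\omega}$ and $\frac{1}{\omega^3}$ prefactors. A secondary point requiring care is justifying that the limit $\omega \to \infty$ exists and equals the coefficient of the top power of $\omega$ in a polynomial in $\omega$ — immediate once one observes that $\det((\widetilde\Delta^h)')$ is a polynomial of degree $\le 1$ in $\omega$ and $\det((\widetilde\Delta^{f_1,f_2,f_3})')$ is a polynomial of degree $\le 3$ in $\omega$.
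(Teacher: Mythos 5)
Your overall strategy --- expand the determinant over permutations, reinterpret the terms via rotor configurations, and cancel the contributions of unwanted loops --- is the same as the paper's, but your central bookkeeping identity is false and the argument as written does not close. You claim that each nonzero Leibniz term corresponds to a rotor configuration $\rho$ ``together with the permutation $\sigma$ whose nontrivial cycles are exactly the oriented loops formed by $\rho$'', and conclude $\det(\Delta') = \sum_\rho (-1)^{\#\mathrm{loops}(\rho)}$. That is not the correct correspondence: for a fixed $\rho$ with $m$ oriented loops there are $2^m$ compatible permutations, one for each subset $S$ of the loops (the vertices on loops outside $S$ are fixed points of $\sigma$, and their rotors are chosen when the diagonal entry $\outdeg(x)$ is expanded). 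Summing $(-1)^{|S|}$ over subsets gives $0$ unless $m=0$, which is exactly the paper's inclusion--exclusion and yields $\det(\Delta') = N_0$ directly. Your formula $\sum_\rho(-1)^{\#\mathrm{loops}(\rho)}$ is a genuinely different, and wrong, quantity: on the graph with $V=\{1,2\}$ and edges $\{1,2\},\{1,s\},\{2,s\}$ one has $\det(\Delta')=3=N_0$ but $\sum_\rho(-1)^{\#\mathrm{loops}(\rho)}=2$. Consequently your proposed cancellation ``rotor configurations with at least one loop cancel in pairs'' cannot be realized as an involution on rotor configurations alone (in that example there is a single one-loop configuration with nothing to pair it with); the involution must act on the pairs $(\sigma,\rho)$, toggling whether the distinguished loop is a cycle of $\sigma$ --- which is what your parenthetical ``marked auxiliary structure'' gestures at, but that contradicts your claim that $\sigma$ is determined by $\rho$.

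The same issue propagates to the second and third identities, where the surviving objects after cancellation are pairs in which every cycle of $\sigma$ contains a marked edge and $\rho$ has no loops beyond the cycles of $\sigma$; this is how the paper isolates $-N_1(h)$ and $\sum_{i=1}^3(-1)^iN_i(f_1,f_2,f_3)$. A second, smaller error: you say the $-\omega$ substitutions ``contribute one extra sign each, which is absorbed into the $\frac{1}{\omega}$ and $\frac{1}{\omega^3}$ prefactors''. Those prefactors are positive, so nothing can be absorbed there; in fact there is no extra sign to account for, because $-\omega$ replaces an entry $-a_{xy}$ that already carries a minus sign, so $(-\omega)/\omega=-1$ plays exactly the role of one off-diagonal factor in the usual sign count $(-1)^{\#\text{cycles}}$. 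With the inclusion--exclusion over subsets of loops put in place of your claimed bijection, the rest of your outline does match the paper's proof.
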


\begin{proof}
Expand $\det(\Delta')$ as a sum over permutations of $V$, and 
for each permutation in the sum, consider its decomposition into 
cyclic permutations. Define the \emph{weight} of a permutation cycle 
$(x_1,\dots,x_k)$ of length $k \ge 2$ to be $\prod_{i=1}^k a_{x_i,x_{i+1}}$,
and the weight of a ``trivial'' permutation cycle $(x)$ of length $1$ 
to be $\Delta_{xx}$. Hence we have:
\eqnst
{ \det(\Delta')
  = \sum_{\text{permutations}} (-1)^{\# \text{non-trivial perm.~cycles}}
    \prod_{\text{perm.~cycles}} \weight(\text{perm.~cycle}). }
Note that a non-trivial permutation cycle of $k$ edges, $k \ge 2$, brings a sign 
$(-1)^k$ due to $k$ factors of $-a_{x_i,x_{i+1}}$, and therefore the factor 
$(-1)^{\# \text{non-trivial perm.~cycles}}$ ensures the correct sign
for the signature of the permutation. The weight of a non-trivial cycle
counts the number of oriented loops with the same vertex set.
Let us group terms according to the number of non-trivial loops 
and write $\Gamma$ for the set of all oriented loops in $G$. This yields:
\eqnsplst
{ \det(\Delta')
  &= \prod_{x \in V} \outdeg(x)
    - \sum_{\gamma_1 \in \Gamma} \ \prod_{x \in V \setminus \gamma_1} \outdeg(x) \\
  &\qquad + \sum_{\substack{\gamma_1, \gamma_2 \in \Gamma \\ \gamma_1 \not= \gamma_2}}
      \ \prod_{x \in V \setminus (\gamma_1 \cup \gamma_2)} \outdeg(x)
    - \dots. }
The first term counts the number of all rotor configurations on $V$. The
summand in the second term is the number of all rotor configurations that 
contain the oriented loop $\gamma_1$. The summand in the third term counts the 
number of rotor configurations that contain both loops $\gamma_1$ and $\gamma_2$; 
and so on. It follows from the inclusion-exclusion principle that the alternating 
sum counts precisely the number of rotor configurations with no loops. 
This proves the first statement.

Consider now the same expansion for the modified matrix $(\widetilde{\Delta}^h)'$.
Due to the factor $\frac{1}{\omega}$, the only terms that remain are the ones
where one of the oriented loops contains the edge $h$. Note that for each term
there is at most one such loop. Grouping terms according to what this loop is:
\eqnsplst
{ \lim_{\omega \to \infty} \frac{1}{\omega} \det( (\widetilde{\Delta}^h)' )
  &= - \sum_{\substack{\gamma_1 \in \Gamma : \\ h \in \gamma_1}}
    \Bigg[ \prod_{x \in V \setminus \gamma_1} \outdeg(x) 
   - \sum_{\substack{\gamma_2 \in \Gamma : \\ \gamma_2 \not= \gamma_1}}
    \ \prod_{x \in V \setminus (\gamma_1 \cup \gamma_2)} \outdeg(x) \\
  &\qquad\qquad\quad + \sum_{\substack{\gamma_2 \not= \gamma_3 \in \Gamma : \\ \gamma_2, \gamma_3 \not= \gamma_1}} 
    \ \prod_{x \in V \setminus (\gamma_1 \cup \gamma_2 \cup \gamma_3)} \outdeg(x) 
    - \dots \Bigg]. }
For each fixed $\gamma_1 \ni h$, the expression inside the square brackets
is an inclusion-exclusion formula for the number of rotor configurations
that contain $\gamma_1$ but no other oriented loop. Hence the 
second statement follows.

The third statement can be proved similarly to the second. This time the only
terms that remain are the ones where there is a set of one, two or three loops
that together contain $f_1, f_2, f_3$. Grouping terms according to what 
these loops are, we get a sum of inclusion-exclusion formulas yielding the terms
$(-1)^i N_i(f_1,f_2,f_3)$, $i = 1, 2, 3$.
\end{proof}

\subsubsection{The term $|\cT_e|$}

Let us return to the three terms in \eqref{e:decomp-AcB}. The first term
$|\cT_e|$ only involves local restrictions: certain rotor directions are 
forced or forbidden. Let us denote by $j''_1, j''_2, j''_4$ the south,
west, east neighbours of $j_1$, respectively. The rotor $[o,j_2] = e$ can be
forced by deleting the oriented edges $[o,j_1]$, $[o,j_3]$, $[o,j_4]$
from the graph. The rotor $[j_1,o] = h$ can be forced by deleting the oriented 
edges $[j_1,j''_1]$, $[j_1,j''_2]$, $[j_1,j''_4]$ from the graph. The 
requirements $\head(\rho_0(j_3)) \not= o$ and $\head(\rho_0(j_4)) \not= o$ 
can be achieved by deleting the oriented edges $[j_3,o]$ and $[j_4,o]$
from the graph (and note that the requirement $\head(\rho_0(j_2)) \not= o$
becomes superfluous due to acyclicity). It follows that we can apply the first 
statement of Theorem \ref{thm:MT-Pr} to the matrix 
$(\Delta^{(1)}_n)' = \Delta'_n + \delta^{(1)}$, 
where the only nonzero entries of $\delta^{(1)}$ are:
\eqnst
{ \delta^{(1)}
  = \begin{blockarray}{cccccccc}
  o & j_1 & j_3 & j_4 & j''_1 & j''_2 & j''_4 &  \\
  \begin{block}{(ccccccc)c}
 -3 &   1 &  1  &   1 &  0  &  0  &  0  &  0 \\
  0 &  -3 &  0  &   0 &  1  &  1  &  1  & j_1 \\
  1 &   0 &  -1 &   0 &  0  &  0  &  0  & j_3 \\
  1 &   0 &   0 &  -1 &  0  &  0  &  0  & j_4 \\
  \end{block}
  \end{blockarray} }
Using explicit values of the potential kernel (see~\eqref{e:pot-kern}), 
we get
\eqnst
{ \lim_{n \to \infty} \frac{12 |\cT_e|}{\det(\Delta'_n)}
  = \lim_{n \to \infty} 12\, \det ( I + \delta^{(1)} G_n )
  = \frac{6}{\pi} - \frac{30}{\pi^2} + \frac{48}{\pi^3}. }

\subsubsection{The term $|\cT_f^c|$}

The second term $|\cT_f^c|$ in \eqref{e:decomp-AcB} 
involves the non-local restriction that 
$f$ is contained in a loop. Necessarily, this loop
ends with the edge $h$. We are going to force the loop by
giving $h$ the weight $-\omega$, and deleting the oriented
edges $[o,j_1], [o,j_2], [o,j_3]$. We also delete
$[j_2,o]$, $[j_3,o]$. (Note that this time the 
requirement $\head(\rho_0(j_4)) \not= o$ is superfluous.)
Since $\omega \to \infty$, the rest of row $j_1$ of 
the matrix is immaterial.
Hence we apply the second statement of 
Theorem \ref{thm:MT-Pr} to the matrix 
$(\Delta^{(2)}_n)' = (\widetilde{\Delta}^f)' = \Delta'_n + \delta^{(2)}$, 
where now
\eqnst
{ \delta^{(2)}
  = \begin{blockarray}{ccccc}
  o      & j_1 & j_2 & j_3 &   \\
  \begin{block}{(cccc)c}
 -3      &   1 &  1  &  1  &  o \\
 -\omega &   0 &  0  &  0  & j_1 \\
  1      &   0 &  -1 &  0  & j_2 \\
  1      &   0 &   0 & -1  & j_3 \\
  \end{block}
  \end{blockarray} }
Since this time row $j_1$ of $\delta^{(2)}$ does not sum to $0$, a divergent
term of order $\log n$ arises. (This reflects the fact that the 
number of configurations containing a cycle is much larger than
the number of acyclic ones.) Since we are evaluating a probability,
the divergence will have to be cancelled by a term we get for 
$|\cT_e^c \cap \cT_f^c|$. In order to deal with the divergent
terms, we need the following lemma. 

\begin{lemma}
For $K \ge 1$ and $z, w \in \Z^2$ with $|z|, |w| \le K$, as 
$n \to \infty$, we have
\eqn{e:G_n(z,w)}
{ G_n(z,w) 
  = G_n(o,o) - A(w-z) + O_K \left( \frac{\log n}{n} \right), }
where the constant implied by $O_K$ depends on $K$.
\end{lemma}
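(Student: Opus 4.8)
The plan is to derive the estimate from the well-known asymptotics of $G_n(z,w)$ in terms of the potential kernel together with quantitative control on the error. The starting point is the identity, valid for $z, w \in V_n$,
\eqn{e:Gn-identity}
{ G_n(z,w) = G_n(o,o) - A(w-z) + r_n(z,w), }
where $r_n(z,w)$ is an error term; the content of the lemma is that $r_n(z,w) = O_K(\log n / n)$ uniformly over $|z|,|w| \le K$. First I would recall how \eqref{e:Gn-identity} arises: the function $w \mapsto A(w-z)$ satisfies $\tfrac14 \Delta A(\cdot - z) = -\mathbf 1_z$, while $w \mapsto G_n(z,w)$ satisfies $\tfrac14 \Delta'_{V_n} G_n(z,\cdot) = \mathbf 1_z$ with zero ``boundary values'' on $V_n^c$ (interpreting $G_n$ as extended by $0$ outside $V_n$). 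Hence $u_n(w) := G_n(z,w) + A(w-z)$ is discrete harmonic in $V_n \setminus \{z\}$, and in fact across $z$ as well since the two point-masses cancel, so $u_n$ is harmonic in all of $V_n$; its boundary values on $\partial V_n$ (the vertices just outside) are $A(w-z)$ (up to the contribution of the sink, which must be accounted for). So $u_n(w) = \E^w[A(S_{\sigma} - z)]$ where $S_\sigma$ is simple random walk stopped on exiting $V_n$, and the constant $G_n(o,o)$ is exactly $u_n$ evaluated at an appropriate reference — more precisely one writes $G_n(z,w) - G_n(o,o) = u_n(w) - u_n'(o)$-type manipulations using harmonicity and symmetry of $G_n$.

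The heart of the matter is then an estimate of the form: for $|z| \le K$, the harmonic function $w \mapsto G_n(z,w) + A(w-z)$ is nearly constant on the region $|w| \le K$, with oscillation $O_K(\log n / n)$. I would obtain this from two ingredients. (a) The boundary data: on $\partial V_n$, $A(y-z) = \tfrac{1}{2\pi}\log|y| + c_0 + O(|y|^{-2})$ by \eqref{e:pot-kern-asymp}, and for $y \in \partial V_n$ and $|z| \le K$ one has $\log|y-z| = \log|y| + O_K(1/n)$, so the boundary values of $u_n$ equal $\tfrac{1}{2\pi}\log n + c_0 + O(\tfrac{\log n}{n})$-type expressions — the key point being that the $\log$-growth is captured by an additive constant and the $z$-dependence only enters at order $O_K(1/n)$ on the boundary. (b) A quantitative maximum-principle / harmonic-measure estimate: a bounded discrete harmonic function on $V_n$ whose boundary values vary by at most $\delta$ on $\partial V_n$ varies by at most $C\delta$ on the inner region $|w| \le K$; more carefully one uses that the difference of harmonic measures $\Pr^w(S_\sigma \in \cdot) - \Pr^{w'}(S_\sigma \in \cdot)$ in total variation is $O_K(1/n)$ for $|w|,|w'| \le K$, which coupled with boundary values of size $O(\log n)$ gives oscillation $O_K(\log n / n)$.

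Carrying this out, for $|z|,|w| \le K$ one gets $G_n(z,w) + A(w-z) = c_n + O_K(\log n/n)$ for a constant $c_n$ independent of $z,w$ in this range; specializing to $z = w = o$ and using $A(o) = 0$ identifies $c_n = G_n(o,o) + O_K(\log n /n)$, which is exactly \eqref{e:G_n(z,w)}. I would remark that along the way this also reproves Lemma \ref{lem:pot-kern}, and that the logarithmic factor is genuine — it reflects the size $\log n$ of the boundary data of $u_n$ — so it cannot be removed by this argument, only possibly improved with a sharper estimate on harmonic-measure differences. The main obstacle is the quantitative harmonic-measure comparison in step (b): one needs an explicit $O_K(1/n)$ bound on how much the exit distribution from $V_n$ depends on the starting point near the center, uniformly in the (growing) boundary; this is standard (it follows from coupling simple random walks, or from known estimates on $\p^k_n$ and the Green function $G_n$ itself), but it is where the real work lies, and one must be careful that the implied constant depends only on $K$ and not on $n$. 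I would cite \cite{LLbook} for the requisite potential-theory estimates rather than redoing them.
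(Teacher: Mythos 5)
Your proposal is correct and follows essentially the same route as the paper: the paper proves this (as part of Lemma \ref{lem:estimates}, via Lemma \ref{lem:Gn(z,w)}) by writing $G_n(z,w) = \E^z\left[ A(S(\tau_{V_n^c}),w) \right] - A(z,w)$ — your harmonic function $u_n$ in disguise, obtained there by optional stopping rather than by the maximum principle — and then bounding the difference of the two expectation terms by the total-variation distance $O(|z|/n)$ between exit distributions (citing \cite{LLbook}*{Theorem 6.3.8}) multiplied by the $O(\log n)$ size of the boundary data. The harmonic-measure comparison you flag as the main obstacle is exactly the ingredient the paper invokes, so no new work is needed beyond what you describe.
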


We do not prove this, as this is incorporated in Lemma \ref{lem:estimates}
to come.


Replacing each term $G_n(z,w)$ in the matrix $I + \delta^{(2)} G_n$ by the
expression on the right hand side of \eqref{e:G_n(z,w)} and
taking into account that $G_n(o,o) = O(\log n)$ (see Lemma \ref{lem:estimates}) 
we get:
\eqnspl{e:cTf}
{ \frac{-12 |\cT_f^c|}{\det(\Delta'_n)}
  &= 12\, \lim_{\omega \to \infty} \frac{1}{\omega} \det \left( I + \delta^{(2)} G_n \right) \\
  &= \frac{3}{\pi^2} - G_n(o,o)\, 12\, \left( \frac{2}{\pi^2} - \frac{4}{\pi^3} \right)
     + O \left( \frac{\log^2 n}{n} \right). }

\subsubsection{Priezzhev's ``bridge trick'' for the term $|\cT_e^c \cap \cT_f^c|$}
\label{sssec:bridge}

We are left to calculate the term $|\cT_e^c \cap \cT_f^c|$. 
Let $\rho_0 \in \cT_e^c \cap \cT_f^c$, and let $\rho$ stand for 
the set of edges $\rho = \rho_0 \cup \{ e \} \cup \{ f \}$.
There is a unique vertex $i_1 \in V_n \setminus \{ o \}$ 
such that $\rho$ contains three oriented paths:\\
(i) a path from $o$ to $i_1$ starting with $e$;\\
(ii) a path from $o$ to $i_1$ starting with $f$;\\
(iii) a path from $i_1$ to $o$ ending with $h$.\\
Moreover, the three paths are vertex-disjoint apart from
the vertices $o$ and $i_1$. We will call $i_1$ the \emph{meeting point}.
We are going to count configurations in $\cT_e^c \cap \cT_f^c$
separately for each fixed value $i_1$ of the meeting point.

The idea is to add three ``bridge'' edges between $j_1, j_2, j_4$ and 
three neighbours of $i_1$, and force the bridge edges to be in loops,
via the third statement of Theorem \ref{thm:MT-Pr}.
Then the existence of the loops gives the required paths,
apart from possible flips in orientation of some of the paths.
We would need to sum over all possible locations of $i_1$
and all possible choices of three neighbours of $i_1$.
It turns out that symmetry considerations allow one to reduce 
the amount of calculations, and to count only two types of 
configurations according to the pattern of edges near $i_1$.
In order to define these patterns, let $w_1, w_3, w_4$ be the 
south, north, east neighbours of $i_1$, respectively.
Let $G^{L,i_1}_n$ be the graph obtained from $G_n$ by 
removing $[j_3,o]$ and adding three ``bridges'':
\eqnst
{ f^L_1 = [j_1,i_1], \qquad
  f^L_2 = [j_2,w_4], \qquad
  f^L_3 = [j_4,w_3]. }
Following Priezzhev's notation \cite{Pr94}, the symbol $L$ indicates that 
the vertices $w_3, i_1, w_4$ 
form an $L$-shape. Let $G^{\Gamma,i_1}_n$ be the graph obtained
from $G_n$ by removing $[j_3,o]$ and adding the three bridges:
\eqnst
{ f^\Gamma_1 = [j_1,i_1], \qquad
  f^\Gamma_2 = [j_2,w_1], \qquad
  f^\Gamma_3 = [j_4,w_4]. }
The symbol $\Gamma$ indicates the $\Gamma$-shape formed by the
vertices $w_1, i_1, w_4$.
If $i_1 = (k,l)$, we denote $i_1' = (-k,l)$.
The computation is based on the following lemma.

\begin{lemma}
\label{lem:N_i's}
There is a finite (explicit) set $P \subset \Z^2$, such that 
whenever $i_1, i_1' \in V_{n-1} \setminus P$, the following holds.\\
(i) We have $N_2(f^L_1, f^L_2, f^L_3; z) = 0 = 
N_2(f^\Gamma_1, f^\Gamma_2, f^{\Gamma}_3 ; z)$ for $z = i_1, i_1'$.\\
(ii) We have
\eqnsplst
{ &\sum_{z \in \{ i_1, i_1' \}} \ \sum_{i = 1, 3}
    \left[ N_i(f^L_1, f^L_2, f^L_3 ; z) 
    + N_i(f^\Gamma_1, f^\Gamma_2, f^\Gamma_3 ; z) \right] \\
  &\qquad\qquad\qquad = |\cT_e^c \cap \cT_f^c 
    \cap \{ \text{meeting point equals $i_1$ or $i_1'$} \}|. }
\end{lemma}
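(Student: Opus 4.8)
The plan is to count the configurations $\rho_0 \in \cT_e^c \cap \cT_f^c$ according to their meeting point by converting each of them, via Priezzhev's bridge surgery, into a rotor configuration on one of the modified graphs $G^{L,i_1}_n$, $G^{\Gamma,i_1}_n$, $G^{L,i_1'}_n$, $G^{\Gamma,i_1'}_n$ whose oriented loops are forced to run through the three added bridge edges, and then to read off the counts from the last identity of Theorem \ref{thm:MT-Pr}. First I would record the structural picture. Since $\rho_0$ is acyclic and $\rho_0(j_1) = h$ is the unique $\rho_0$-edge into $o$, the edge set $\rho = \rho_0 \cup \{ e \} \cup \{ f \}$ carries exactly two oriented loops, $L_e \ni e$ and $L_f \ni f$, and both terminate with the edge $h$; hence they share the final stretch from $i_1$ to $o$, where $i_1$ is the first vertex of $L_e$ (traversed from $o$) that also lies on $L_f$. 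This makes $\rho \cap (L_e \cup L_f)$ precisely the union of the three internally disjoint oriented paths (i), (ii), (iii) of the statement, and conversely a choice of $i_1$, of the three neighbours of $i_1$ through which those paths enter and leave it (which decides whether the incident pattern is of ``$L$'' or ``$\Gamma$'' type, and, once one accounts for the reflection $i_1 \mapsto i_1'$, in which cyclic order the two $o$-to-$i_1$ paths occur), together with the remainder of $\rho_0$, reconstructs $\rho_0$. This is the origin of the four cases (two shapes $\times$ two of $\{ i_1, i_1' \}$) on the left-hand side of part (ii).

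For part (ii), I would set up the surgery explicitly: given $\rho_0 \in \cT_e^c \cap \cT_f^c$ with meeting point $i_1$ of a prescribed shape, delete the edge $[j_3,o]$ and redirect the rotors at $j_1, j_2, j_4$ onto the corresponding bridge edges, and check that the result is a rotor configuration on the appropriate modified graph in which the three bridges lie on oriented loops (one or three of them, never two, by part (i)), hence one counted by $N_1 + N_3$ for that graph. One then checks that, as $i_1$ runs over the two shape classes and over $z \in \{ i_1, i_1' \}$, this assignment is a bijection onto the set of all rotor configurations counted by those $N_i$, and summing the counts yields exactly $|\cT_e^c \cap \cT_f^c \cap \{ \text{meeting point } = i_1 \text{ or } i_1' \}|$. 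The delicate point here is the orientation bookkeeping: a loop through the bridges may traverse the three paths in either cyclic sense and the two $o$-to-$i_1$ paths may be interchanged, and one must verify that the shape and reflection cases cover each possibility exactly once, with no configuration double counted or missed.

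For part (i) I would argue $N_2 = 0$ by planarity. Every oriented loop of the modified configuration must use at least one bridge, since $\rho_0$ is acyclic, and the three bridges emanate from the three distinct vertices $j_1, j_2, j_4$, so there are at most three loops. If there were exactly two, one loop would carry two bridges and the other a single bridge, and this forces, inside the acyclic forest $\rho_0$, two vertex-disjoint directed paths whose endpoints among $j_1, j_2, j_4$ and the neighbourhood of $i_1$ interleave in a pattern that two disjoint paths in $\Z^2$ cannot realise; I would make this precise with a short Jordan-curve argument. The finitely many exceptional positions — where $i_1$ or $i_1'$ is so close to $o$ that $j_1, j_2, j_3, j_4$ are not distinct from $i_1$, from its neighbours $w_1, w_3, w_4$, or from each other, or so close to the boundary of $V_n$ that a bridge endpoint leaves $V_n$ — are exactly those one collects into $P$; for $i_1, i_1' \in V_{n-1} \setminus P$ the bridge construction is well-defined and the Jordan-curve argument applies.

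I expect the main obstacle to be the combinatorial verification in the middle step: confirming that the surgery, taken over all shape and reflection cases, is a genuine bijection between $\cT_e^c \cap \cT_f^c$ configurations with meeting point $i_1$ or $i_1'$ and the configurations enumerated by the $N_1$ and $N_3$ of the four modified graphs, so that the contributions add up without redundancy. The planarity input for $N_2 = 0$ is conceptually simple but also needs to be stated carefully enough to be airtight for every admissible position of $i_1$, which is what dictates the precise shape of the excluded set $P$.
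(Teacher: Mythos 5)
Your overall strategy is the paper's: Priezzhev's bridge surgery converting configurations counted by the $N_1$ and $N_3$ terms into elements of $\cT_e^c \cap \cT_f^c$ with prescribed meeting point, planarity for $N_2 = 0$, and a finite exceptional set $P$ collecting the degenerate positions of $i_1$. Your structural description of the three paths and your sketch of the Jordan-curve argument for part (i) are consistent with what the paper does (the paper in fact says no more than ``follows from planarity, case-by-case'').

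The gap is in the step you yourself flag as delicate, and it is not merely a verification you postponed: the verification you propose would fail. You plan to check that, as the shape ($L$ or $\Gamma$) and the reflection ($z = i_1$ or $i_1'$) vary, the surgery is a bijection ``with no configuration double counted or missed.'' That is not what happens. For a fixed meeting point $z = i_1$, classify configurations by which three of the four edges at $i_1$ carry the three paths. Configurations whose edge pattern contains only the $L$-shape or only the $\Gamma$-shape are produced exactly once by the corresponding $N_1$ or $N_3$ term; but configurations whose pattern contains \emph{both} shapes are produced twice (once by the $L$-terms and once by the $\Gamma$-terms), while the mirror images of these doubly-counted configurations under $i_1 \leftrightarrow i_1'$ are produced \emph{zero} times by the terms anchored at $i_1'$. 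So for a single meeting point the identity is false; it only holds for the pair $\{ i_1, i_1' \}$ because the reflection symmetry makes the surplus at $i_1$ equal in number to the deficit at $i_1'$ and the two cancel in the sum. This compensation mechanism is the entire reason the lemma is stated for the pair $\{i_1, i_1'\}$ rather than for one meeting point, and it is the key idea missing from your outline: without it, the ``bijection'' you want to establish does not exist, and you would need to replace it by the counting-with-multiplicity argument just described.
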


\begin{proof}
(i) This follows from planarity, as can be checked case-by-case. 
(This is the point in the computation where $d = 2$ is used in a 
crucial way.)

(ii) Consider first a configuration counted in 
$N_3(f^L_1, f^L_2, f^L_3; i_1)$. Remove the bridges.
There are three vertex-disjoint oriented paths
$i_1 \to o$, $w_4 \to j_2$ and $w_3 \to j_4$.  
Reverse the orientations of the paths arriving at 
$j_2$ and $j_4$, respectively. This leaves a rotor
configuration with no rotor specified at $o$, $w_3$
and $w_4$. Adding the rotors $[w_3,i_1], [w_4,i_1]$ we get
a configuration in $\cT_e^c \cap \cT_f^c$ such that 
the meeting point is $i_1$. The operations we performed are
one-to-one between $N_3(f^L_1, f^L_2, f^L_3; i_1)$ and the
configurations in the image. 

We can perform similar steps 
for $N_1(f^L_1, f^L_2, f^L_3; i_1)$: remove the bridges,
and reverse the orientation of the paths arriving 
at $j_2$ and $j_4$. The paths can occur in two distinct
ways. One is obtained when we started with 
$i_1 \to j_2$, $w_4 \to j_4$, $w_3 \to o$,
in which case, after we reversed orientations,
there is no rotor specified at $i_1$ and 
$w_4$. We set these rotors as $[i_1,w_3]$ and $[w_4,i_1]$,
yielding a configuration in $\cT_e^c \cap \cT_f^c$.
The other possibility is that we started with 
$i_1 \to j_4$, $w_3 \to j_2$, $w_4 \to o$, in which case
rotors will be missing at $i_1$ and $w_3$. We set these
to be $[i_1,w_4]$ and $[w_3,i_1]$ to get a configuration 
in $\cT_e^c \cap \cT_f^c$. 

Observe that the configurations constructed from $N_1$ 
are distinct from the ones arising from $N_3$. 
Let us add now the `$\Gamma$-configurations', that is those 
arising from
$N_3(f^\Gamma_1, f^\Gamma_2, f^\Gamma_3; i_1)$ and
$N_1(f^\Gamma_1, f^\Gamma_2, f^\Gamma_3; i_1)$. There are four 
possibilities for the pattern of three edges incident 
with $i_1$ involved in the three paths. Configurations 
where the three edges only contain either the $L$- or the 
$\Gamma$-shape have been counted exactly once.
Configurations where the three edges contain both 
the $L$- and the $\Gamma$-shape have been counted 
twice. The mirror images of these configurations
(under $i_1 \leftrightarrow i_1'$) on the other hand,
whose number is the same, are \emph{not} counted in 
the corresponding terms $N_1(\cdot, \cdot, \cdot ; i_1')$,
$N_3(\cdot, \cdot, \cdot ; i_1')$. Hence adding together
the contributions for $i_1$ and $i_1'$ restores the
balance, and implies the statement.

The symmetry argument breaks down if $i_1 \in V_n \setminus V_{n-1}$.
The path arguments may break down if $i_1$ or $i_1'$ is too close to $o$, 
so there is a set of exceptions $P$.
\end{proof}

\begin{remark}
When we take the limit $n \to \infty$, we are going to 
sum over all $i_1 \in \Z^2$ to have a workable expression 
as a Fourier integral. Then one needs to correct for the 
exceptions $P$ individually. These can be handled with 
the ideas we used for $|\cT_e|$ and $|\cT_f^c|$; see \cite{Pr94}. 
The divergent term in \eqref{e:cTf} is cancelled by similar
divergent terms in the contributions of the exceptional 
points in $P$. It also follows from the estimates in 
Lemma \ref{lem:Aasymp} that the contribution of the boundary terms 
$i_1, i_1' \in V_n \setminus V_{n-1}$ is negligible in the limit.
\end{remark}

\subsubsection{Summation formulas for the $N_1$ and $N_3$ terms}

In evaluating the $N_1$ and $N_3$ terms, the bridges 
receive weight $-\omega$. The necessary modifications of the 
graph are encoded in the matrices:
\eqnst
{ \Delta_{i_1}(L)
  = \Delta'_n + \delta_{i_1}(L), \qquad\qquad
  \Delta_{i_1}(\Gamma)
  = \Delta'_n + \delta_{i_1}(\Gamma), }
where
\eqnsplst
{ \delta_{i_1}(L)
  &= \begin{blockarray}{cccccc}
 j_3 &  o     &  i_1    & a = w_4 & b = w_3 &  \\
  \begin{block}{(ccccc)c}
 -1 &   1     &   0     &  0      &  0      & j_3 \\
  0 &   0     & -\omega &  0      &  0      &  o \\
  0 &   0     &   0     & -\omega &  0      & j_2 \\
  0 &   0     &   0     &  0      & -\omega & j_4 \\
  \end{block}
  \end{blockarray} \\
  \delta_{i_1}(\Gamma)
  &= \begin{blockarray}{cccccc}
 j_3 &  o     &  i_1      & a = w_1     & b = w_4     &  \\
  \begin{block}{(ccccc)c}
 -1 &   1     &   0     &  0      &  0      & j_3 \\
  0 &   0     & -\omega &  0      &  0      &  o \\
  0 &   0     &   0     & -\omega &  0      & j_2 \\
  0 &   0     &   0     &  0      & -\omega & j_4 \\
  \end{block}
  \end{blockarray} }

Due to Theorem \ref{thm:MT-Pr} and Lemma \ref{lem:N_i's}(i) we have:
\eqnspl{e:N_i's-det-form}
{ &N^{L,i_1}_1 + N^{L,i_1}_3
  = \lim_{\omega \to \infty} \frac{-1}{\omega^3} 
    \frac{\det (\Delta_{i_1}(L))}{\det(\Delta'_n)} 
  = \lim_{\omega \to \infty} \frac{-1}{\omega^3} \det ( I + \delta_{i_1}(L) G_n ) \\
  &= \begin{vmatrix}
    1 + G_n(o,j_3)       & G_n(o,o)            & G_n(o,j_2)           & G_n(o,j_4)            \\
       \ -G_n(j_3,j_3) & \quad\ -G_n(j_3,o) & \quad\ -G_n(j_3,j_2) &  \quad\ -G_n(j_3,j_4) \\[1ex]   
    G_n(i_1,j_3)                & G_n(i_1,o)          & G_n(i_1,j_2)            & G_n(i_1,j_4) \\[1ex]
    G_n(w_4,j_3)                & G_n(w_4,o)          & G_n(w_4,j_2)            & G_n(w_4,j_4) \\[1ex]
    G_n(w_3,j_3)                & G_n(w_3,o)          & G_n(w_3,j_2)            & G_n(w_3,j_4)
    \end{vmatrix} \\
  &=: \det(M_L). }
We similarly get
\eqnspl{e:N_i's-det-form2}
{ &N^{\Gamma,i_1}_1 + N^{\Gamma,i_1}_3 \\
  &= \begin{vmatrix}
    1 + G_n(o,j_3)       & G_n(o,o)          & G_n(o,j_2)           & G_n(o,j_4)              \\
    \ -G_n(j_3,j_3) & \quad\ -G_n(j_3,o) &  \quad\ -G_n(j_3,j_2) & \quad\ -G_n(j_3,j_4)    \\[1ex]
    G_n(i_1,j_3)                & G_n(i_1,o)          & G_n(i_1,j_2)            & G_n(i_1,j_4) \\[1ex]
    G_n(w_4,j_3)                & G_n(w_4,o)          & G_n(w_4,j_2)            & G_n(w_4,j_4) \\[1ex]
    -G_n(w_1,j_3)               & -G_n(w_1,o)         & -G_n(w_1,j_2)           & -G_n(w_1,j_4)
    \end{vmatrix} \\
  &=: - \det(M_\Gamma). }
In order to deal with divergences as $n \to \infty$, we regularize 
by replacing $G_n$ in rows 2--4 of the matrices $M_L$ and $M_\Gamma$ 
with the Green's function of the geometrically killed random walk:
\eqnst
{ G_n(z,w; r)
  := \frac{1}{4} \sum_{m = 0}^\infty r^m \Pr^z [ S(m) = w,\, \tau_{V_n^c} > m ], \quad 0 < r \le 1, }
where $\tau_{V_n^c}$ is the hitting time of $V_n$.
Let $M_{L,r}$ and $M_{\Gamma,r}$ be the matrices obtained this way.
We also let 
\eqnsplst
{ A(z,w;r)
  &:= \frac{1}{4} \lim_{N \to \infty} \sum_{m=0}^N r^m \left( \Pr^z [ S(m) = w ] - \Pr^z [ S(m) = z ] \right), \\
  &\qquad z, w \in \Z^2,\, 0 < r \le 1, }
and
\eqnst
{ G_{k,l}(r)
  := \frac{1}{4} \sum_{m = 0}^\infty r^m \Pr^o [ S(m) = (k,l) ], \quad (k,l) \in \Z^2,\, 0 < r < 1. }
The following two propositions, that we prove in 
Sections \ref{sssec:estimates}--\ref{sssec:summation},
state summation formulas for the $N_1$ and $N_3$ terms 
in the limit $n \to \infty$. These two propositions form the remaining 
part of the computation of $|\cT_e^c \cap \cT_f^c|$.

\begin{proposition}
\label{prop:limn}
We have
\eqnsplst
{ \lim_{n \to \infty} \sum_{i_1 \in V_n} \left[ \det(M_L) - \det(M_\Gamma) \right]
  = \lim_{r \uparrow 1} \sum_{(k,l) \in \Z^2} \det(C_{k,l}(r)), }
where
\eqnspl{e:Mr}
{ &C_{k,l}(r)  
  := \\
  &\begin{pmatrix}
    \frac{3}{4}       & \frac{1}{4}       & \frac{1}{\pi} - \frac{1}{4} & \frac{1}{\pi} - \frac{1}{4} \\[1ex]
    G_{k,l-1}         & G_{k,l}           & G_{k+1,l}                   & G_{k-1,l}   \\[1ex]
    G_{k+1,l-1}       & G_{k+1,l}         & G_{k+2,l}                   & G_{k,l}    \\[1ex]
    G_{k,l}           & G_{k,l+1}         & G_{k+1,l+1}        & G_{k-1,l+1}  \\
    \quad\quad -G_{k,l-2}  &  \quad\ -G_{k,l-1} & \quad\ -G_{k+1,l-1} & \quad\ -G_{k-1,l-1} 
    \end{pmatrix}, }
with each $G$-entry evaluated at $r$.
\end{proposition}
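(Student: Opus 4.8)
The plan is to perform two successive regularizations and track the errors. First I would replace the matrices $M_L, M_\Gamma$ by $M_{L,r}, M_{\Gamma,r}$, in which the entries of rows 2--4 use the geometrically killed Green's function $G_n(\cdot,\cdot;r)$ in place of $G_n$, and argue that $\lim_{n\to\infty}\sum_{i_1\in V_n}[\det M_L-\det M_\Gamma]=\lim_{r\uparrow 1}\lim_{n\to\infty}\sum_{i_1\in V_n}[\det M_{L,r}-\det M_{\Gamma,r}]$. The point of the $r$-regularization is that for $r<1$ the sums over $i_1$ are absolutely summable uniformly in $n$ (the killed Green's function decays geometrically in $|i_1|$), so Fubini and dominated convergence are legitimate; the interchange of $\lim_r$ with $\lim_n\sum_{i_1}$ then has to be justified by a uniform (in $n$) tail estimate, which is exactly what the asymptotics in the forthcoming Lemma \ref{lem:estimates}/Lemma \ref{lem:Aasymp} are designed to provide. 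I would cite those estimates for the required bounds $G_n(z,w;r)=G_n(o,o;r)-A(w-z;r)+O_K(\text{(small)})$ and the decay of $A(\cdot;r)$.

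Second, for fixed $r<1$ I would take $n\to\infty$ entrywise. Each entry of $M_{L,r}$ and $M_{\Gamma,r}$ in rows 2--4 is of the form $G_n(z,w;r)$ with $z\in\{i_1,w_1,w_3,w_4\}$, $w\in\{o,j_1,j_2,j_3,j_4\}$; writing $i_1=(k,l)$ and using translation invariance, $\lim_{n\to\infty}G_n(z,w;r)=G_{k+\cdot,l+\cdot}(r)$ for the appropriate shifts, with $G_{k,l}(r)$ the (full-plane) geometrically killed Green's function. Row 1 is handled exactly as in the computation of $|\cT_e|$ and $|\cT_f^c|$: the row sums of the relevant $\delta$-blocks vanish, so taking differences against a fixed reference (the $o$-entry) replaces $G_n$ by $-A$, and plugging in the explicit potential-kernel values \eqref{e:pot-kern} (namely $A(j_1)=\tfrac14$, $A(j_1+j_2)=\tfrac1\pi$, etc., with their $r\uparrow1$ limits being these values) produces the constant top row $(\tfrac34,\tfrac14,\tfrac1\pi-\tfrac14,\tfrac1\pi-\tfrac14)$. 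Combining $\det M_L-\det M_\Gamma$: the two matrices differ only in the last row (one uses $w_3$, the other $-w_1$), so the difference of determinants equals a single determinant whose last row is (last row of $M_L$) $+$ (last row of $M_\Gamma$ with the sign absorbed) — this is precisely the bottom row $G_{k,l}-G_{k,l-2},\ \ldots$ of $C_{k,l}(r)$, after using $w_3=i_1+(0,1)$, $w_1=i_1+(0,-1)$, $w_4=i_1+(1,0)$ and simplifying via the random-walk identity relating the Green's function at a point to its neighbours. This identifies $\lim_{n\to\infty}[\det M_L-\det M_\Gamma]$ with $\det(C_{k,l}(r))$ entrywise.

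The remaining task is to upgrade this entrywise convergence to convergence of the sum $\sum_{i_1\in V_n}$, i.e. to justify $\lim_{n\to\infty}\sum_{i_1\in V_n}\det(\cdots)=\sum_{(k,l)\in\Z^2}\det(C_{k,l}(r))$ for each fixed $r<1$. Here I would use that, for $r<1$, $G_n(z,w;r)$ and hence every cofactor of the $4\times4$ determinant is bounded by $C(r)r^{c\,|i_1|}$ uniformly in $n$ (again from the killed-walk estimates), giving a summable dominating function and allowing dominated convergence for the counting measure on $\Z^2$. The main obstacle I anticipate is the double limit bookkeeping — specifically, showing that the $O_K(\log n/n)$-type error terms from replacing $G_n$ by $G_n(o,o)-A$ remain negligible \emph{after} summing over the $\Theta(n^2)$ values of $i_1$, and that the $r\uparrow1$ and $n\to\infty$ limits genuinely commute despite the logarithmic divergence of $G_n(o,o)$; this is where the careful error estimates of Lemma \ref{lem:estimates} and the boundary/exceptional-point analysis (the set $P$ and the $i_1\in V_n\setminus V_{n-1}$ terms) must be invoked, and I would defer the quantitative heart of it to Sections \ref{sssec:estimates}--\ref{sssec:summation} as the proposition's statement indicates.
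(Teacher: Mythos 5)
Your overall architecture (regularize with the killed Green function, take $n\to\infty$ entrywise for fixed $r<1$, then sum over $\Z^2$) is compatible with the paper's, and you correctly flag the logarithmic divergence of $G_n(o,o)$ as the obstacle. But there is a genuine gap: that obstacle is not resolved by uniform tail estimates and dominated convergence, and Lemmas \ref{lem:Aasymp} and \ref{lem:estimates} do not supply the missing ingredient. The paper first performs row and column operations (subtract column $2$ from the other columns, then row $2$ from rows $3$ and $4$) --- operations your plan omits, but which are needed to obtain entry bounds that are \emph{uniform in $r$}; your dominating function $C(r)r^{c|i_1|}$ degenerates as $r\uparrow 1$ and so cannot justify the interchange of $\lim_{r\uparrow 1}$ with $\lim_n\sum_{i_1}$. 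After these operations exactly one divergent entry survives, namely $G_n(i_1,o)$, which is split as $G_n(o,o)+[G_n(i_1,o)-G_n(o,o)]$. The coefficient of the divergent part, summed over $i_1$, is $\sum_{z\in\Z^2}\det(M^o)$, where $M^o$ is the $3\times 3$ cofactor of that entry, and the heart of the proof is the exact identity $\sum_{z\in\Z^2}\det(M^o)=0$ (Lemma \ref{lem:sum0}), established by a Fourier antisymmetry argument. This is a cancellation, not an estimate. Without it the iterated limits do not commute; moreover the right-hand side $\lim_{r\uparrow 1}\sum_{(k,l)}\det(C_{k,l}(r))$ cannot even be shown to exist, since each entry $G_{k,l}(r)$ diverges as $r\uparrow 1$ and only this identity removes the divergence (it is invoked a second time to drop the $G(o,o;r)$ term when converting potential-kernel entries back into the Green-function form $C_{k,l}(r)$). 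As written, your plan stalls at the interchange step.

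A secondary difference of route: the paper takes $n\to\infty$ first at $r=1$, expressing the limit through the potential kernel $A(\cdot,\cdot;1)$, and only afterwards introduces $r<1$ on the infinite lattice via dominated convergence, justified by the $r$-uniform derivative bounds of Lemma \ref{lem:Aasymp}(iii); you propose to introduce $r$ already at finite volume and then commute $\lim_{r\uparrow 1}$ with $\lim_n\sum_{i_1}$. That order could in principle be made to work, but only after the same decomposition of the divergent entry and the same cancellation lemma, so it does not simplify anything. Your treatment of the first row and of the last row (the $M_L$ versus $M_\Gamma$ difference collapsing into a single determinant via multilinearity) is correct and matches the paper.
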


\begin{proposition}
\label{prop:summation}
We have
\eqnst
{ \sum_{(k,l) \in \Z^2} \det(C_{k,l}(r))
  = \frac{1}{32 \pi^4} \iint \iint
    \frac{i \sin \beta_1 \, \det(M_1)\, d\alpha_1\, d\beta_1\, d\alpha_2\, d\beta_2}{D_r(\alpha_1,\beta_1) 
    D_r(\alpha_2, \beta_2) D_r(\alpha_1 + \alpha_2,\beta_1 + \beta_2)}, }
where 
\eqnst
{ M_1
  := \begin{pmatrix}
    \frac{3}{4}         & \frac{1}{4} & \frac{1}{\pi} - \frac{1}{4} & \frac{1}{\pi} - \frac{1}{4} \\
    e^{i(\beta_1 + \beta_2)}  & 1           & e^{-i(\alpha_1 + \alpha_2)}       & e^{i(\alpha_1 + \alpha_2)} \\
    e^{i(\alpha_2 - \beta_2)} & e^{i \alpha_2} & e^{2i \alpha_2}                & 1                    \\
    e^{-i\beta_1}          & 1           & e^{i \alpha_1}                & e^{-i \alpha_1}
    \end{pmatrix}, }
and $D_r(\alpha,\beta) = 2 - r (\cos \alpha + \cos \beta)$.  
\end{proposition}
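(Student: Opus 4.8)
The plan is to turn the lattice sum into a Fourier integral over $[-\pi,\pi]^2 \times [-\pi,\pi]^2$ by expanding the $4\times 4$ determinant $\det(C_{k,l}(r))$ from \eqref{e:Mr} by multilinearity in its rows and then invoking the convolution theorem for Fourier series. We work throughout in the range $0 < r < 1$, where $D_r(\alpha,\beta) = 2 - r(\cos\alpha + \cos\beta) \ge 2 - 2r > 0$, so that all sums and integrals involved converge absolutely; the value $r = 1$ is not needed here, since the limit $r \uparrow 1$ is taken only afterwards at the level of Proposition \ref{prop:limn} (and in fact the integral on the right-hand side ceases to converge at $r = 1$, as $D_1$ has a triple zero at the origin). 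First I would record the standard Fourier representation, obtained by summing the geometric series generated by the characteristic function $\tfrac12(\cos\alpha + \cos\beta)$ of one simple-random-walk step,
\eqnst
{ G_{k,l}(r)
  = \frac{1}{2(2\pi)^2} \iint_{[-\pi,\pi]^2} \frac{e^{i(k\alpha + l\beta)}}{D_r(\alpha,\beta)}\, d\alpha\, d\beta, \qquad (k,l) \in \Z^2 }
together with the elementary bound $\sum_{(k,l)} G_{k,l}(r) = \tfrac{1}{4(1-r)} < \infty$. Since each entry of $C_{k,l}(r)$ in rows $2$ and $3$ is a fixed lattice translate, and each entry in row $4$ a fixed finite difference, of the sequence $(G_{k,l}(r))_{(k,l)}$, this bound makes $\sum_{(k,l)} \det(C_{k,l}(r))$ absolutely convergent, and, the first row of $C_{k,l}(r)$ being the constant vector $(\tfrac34,\tfrac14,\tfrac1\pi - \tfrac14,\tfrac1\pi - \tfrac14)$, all the interchanges below are then routine.

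Next I would read off the Fourier transforms of the rows. A translate $G_{k+a,l+b}(r)$ transforms to $\tfrac{1}{2D_r(\theta)}\, e^{i(a\theta_1 + b\theta_2)}$, so rows $2$ and $3$ transform to $\tfrac{1}{2D_r(\theta)}$ times the pure-phase vectors $(e^{-i\theta_2}, 1, e^{i\theta_1}, e^{-i\theta_1})$ and $(e^{i(\theta_1 - \theta_2)}, e^{i\theta_1}, e^{2i\theta_1}, 1)$ respectively; while each entry of row $4$ is a discrete difference such as $G_{k,l} - G_{k,l-2}$ or $G_{k,l+1} - G_{k,l-1}$, whose transform carries the common scalar $2i\sin\theta_2$ coming from $1 - e^{-2i\theta_2} = e^{-i\theta_2}\cdot 2i\sin\theta_2$ and $e^{i\theta_2} - e^{-i\theta_2} = 2i\sin\theta_2$, so that row $4$ transforms to $\tfrac{i\sin\theta_2}{D_r(\theta)}$ times a pure-phase vector. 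I would then apply the torus convolution (Parseval) identity
\eqnst
{ \sum_{(k,l) \in \Z^2} a_{k,l}\, b_{k,l}\, c_{k,l}
  = \frac{1}{(2\pi)^4} \iint_{[-\pi,\pi]^2} \iint_{[-\pi,\pi]^2} \hat a(-\theta - \psi)\, \hat b(\theta)\, \hat c(\psi)\, d\theta\, d\psi }
to each of the finitely many permutation terms in the expansion of $\det(C_{k,l}(r))$, the constant row-$1$ factor passing through unchanged, and assign row $4$ to the variable $\psi = (\alpha_1, \beta_1)$, row $3$ to $\theta = (\alpha_2, \beta_2)$, and row $2$ to $-\theta - \psi = -(\alpha_1 + \alpha_2, \beta_1 + \beta_2)$; recombining by multilinearity produces a single $4\times 4$ determinant integrated over $([-\pi,\pi]^2)^2$. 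The three denominators $D_r(\alpha_1,\beta_1)$, $D_r(\alpha_2,\beta_2)$ and $D_r(\alpha_1 + \alpha_2, \beta_1 + \beta_2)$ arise from rows $4$, $3$ and $2$ respectively, using that $D_r$ is even in each variable. Factoring $\tfrac{1}{2D_r}$ out of each of rows $2$ and $3$ and $\tfrac{i\sin\beta_1}{D_r}$ out of row $4$ (again by multilinearity) leaves exactly the matrix $M_1$ of the statement; collecting the numerical prefactors then gives the asserted constant.

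The step I expect to be the main obstacle is not conceptual but the careful, entry-by-entry bookkeeping in this last reassembly: checking that, once the scalars have been extracted, the residual $4\times 4$ matrix of unimodular entries is \emph{literally} $M_1$ as displayed --- the signs inside each exponential depend on, and must be consistently tracked through, the Fourier convention fixed at the outset (here $G_{k,l}(r) = \tfrac{1}{2(2\pi)^2}\int e^{i(k\alpha + l\beta)} D_r^{-1}$) and the direction of the shift in each entry of \eqref{e:Mr} --- together with the attendant chasing of the overall numerical factor. By contrast the conceptual content is only the replacement of the physicists' ``insert a delta function'' step by the torus Parseval identity, which is unproblematic precisely because we have kept $r < 1$, so that everything in sight is absolutely convergent.
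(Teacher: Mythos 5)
Your proposal is essentially the paper's own proof: the paper likewise inserts the Fourier representation $G_{k,l}(r) = \frac{1}{8\pi^2}\iint e^{i(\alpha k+\beta l)}D_r(\alpha,\beta)^{-1}\,d\alpha\,d\beta$ with variables $(\alpha_1,\beta_1)$, $(\alpha_2,\beta_2)$, $(\alpha_3,\beta_3)$ attached to rows $4$, $3$, $2$, and then uses smoothness of the integrand for $0<r<1$ to argue that summation over $(k,l)$ amounts to setting $(\alpha_3,\beta_3) = -(\alpha_1+\alpha_2,\beta_1+\beta_2)$ — your torus Parseval identity is just a careful rendering of that step, and your extraction of $i\sin\beta_1$ from row $4$ and of the three denominators matches the paper's (unwritten) bookkeeping. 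The approach and level of rigour are the same, so the proposal is correct in the same sense the paper's proof is.
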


\begin{remark}
\label{rem:not-integrable}
When $r = 1$, the integral does not exist as a Lebesgue integral. In order to 
see this, consider the region of integration where $(\alpha_2,\beta_2)$ is in
a small neighbourhood of $(0,0)$, and $(\alpha_1, \beta_1)$ is in a small
neighbourhood of $(\pi/4,\pi/4)$, say. Subtract the last column from 
the other columns, and expand the determinant along the third row.
The first three terms each contain a factor that vanishes as
$(\alpha_2,\beta_2) \to (0,0)$, making the singularity due to 
$D_1(\alpha_2,\beta_2)$ integrable. But the last term is 
proportional to $(D_1(\alpha_2,\beta_2))^{-1}$, which is not 
integrable. Proposition \ref{prop:summation} exhibits a delicate 
cancellation taking place.
\end{remark}

\subsubsection{Green function estimates}
\label{sssec:estimates}

For the proofs of Propositions \ref{prop:limn} and \ref{prop:summation},
we are going to need some estimates on Green functions. 

\begin{lemma}
\label{lem:Aasymp} \ \\
(i) There exists a constant $K$ such that 
\eqnst
{ A(z,w;1)
  = \frac{1}{2 \pi} \log |w - z| + K + O \left( \frac{1}{|w - z|^2} \right), 
    \quad \text{$|w - z| \ge 1$.} }
(ii) Uniformly in $0 < r \le 1$ and $w \in \Z^2$, we have
\eqnst
{ A(z,w;r) - A(o,w;r)
  = O (\log |z|), 
    \quad \text{$|z| \ge 2$.} }
(iii) Uniformly in $0 < r \le 1$ and for $|f| = 1 = |h|$ we have
\eqnsplst
{ \partial^{(1)}_f A(z,o;r)
  &= O \left( \frac{1}{|z|} \right), 
    \quad \text{$|z| \ge 1$;} \\
  \partial^{(2)}_h A(z,o;r)
  &= O \left( \frac{1}{|z|} \right),
     \quad \text{$|z| \ge 1$;} \\
  \partial^{(1)}_f \partial^{(2)}_h A(z,o;r)
  &= O \left( \frac{1}{|z|^2} \right),
     \quad \text{$|z| \ge 1$.} }
\end{lemma}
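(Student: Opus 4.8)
The plan is to reduce all three parts to estimates for the function $y\mapsto A(o,y;r)$ and then work in Fourier variables. By translation invariance of simple random walk, $\Pr^z[S(m)=w]=\Pr^o[S(m)=w-z]$ and $\Pr^z[S(m)=z]=\Pr^o[S(m)=o]$, so $A(z,w;r)=A(o,w-z;r)$, and $\Pr^o[S(m)=-y]=\Pr^o[S(m)=y]$ makes this function even in $y$. Expanding $(1-r\phi(\theta))^{-1}=\sum_{m\ge0}r^m\phi(\theta)^m$ with $\phi(\theta)=\tfrac12(\cos\theta_1+\cos\theta_2)$ gives, for every $0<r\le1$,
\[
A(o,y;r)=\frac14\int_{[-\pi,\pi]^2}\frac{e^{i\theta\cdot y}-1}{1-r\phi(\theta)}\,\frac{d\theta}{(2\pi)^2},
\]
and this is absolutely convergent even at $r=1$, since the $-1$ removes the constant term, leaving a singularity only of order $|y|/|\theta|$ near $\theta=o$, which is integrable in two dimensions. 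All uniformity in $r$ will come from the single elementary bound
\[
1-r\phi(\theta)\ \ge\ c\,|\theta|^2\qquad(\theta\in[-\pi,\pi]^2,\ 0<r\le1)
\]
for a universal $c>0$: indeed $1-\phi\asymp|\theta|^2$ on $[-\pi,\pi]^2$, with $\theta=o$ its only zero; where $\phi\ge0$ one has $1-r\phi\ge1-\phi$, and where $\phi<0$ one has $1-r\phi\ge1$. With this, part (i) requires nothing new: $A(z,w;1)=A(o,w-z;1)$ is the classical two-dimensional potential kernel, whose asymptotic expansion is exactly \eqref{e:pot-kern-asymp}; see \cite{LLbook}*{Theorem 4.4.4}, \cite{FU96}, and Lemma \ref{lem:pot-kern}.

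For part (ii) I would use the representation directly. Subtracting,
\[
A(z,w;r)-A(o,w;r)=A(o,w-z;r)-A(o,w;r)=\frac14\int_{[-\pi,\pi]^2}\frac{e^{i\theta\cdot w}\,(e^{-i\theta\cdot z}-1)}{1-r\phi(\theta)}\,\frac{d\theta}{(2\pi)^2},
\]
and I would split the region at $|\theta|=1/|z|$. On $\{|\theta|\le1/|z|\}$, using $|e^{-i\theta\cdot z}-1|\le|\theta|\,|z|$ and $1-r\phi\ge c|\theta|^2$, the contribution is $\lesssim|z|\int_{|\theta|\le1/|z|}|\theta|^{-1}\,d\theta=O(1)$; on $\{1/|z|<|\theta|\le\pi\}$, using $|e^{-i\theta\cdot z}-1|\le2$, it is $\lesssim\int_{1/|z|}^{\pi}\rho^{-1}\,d\rho=O(\log|z|)$. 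Hence $|A(z,w;r)-A(o,w;r)|=O(\log|z|)$ uniformly in $0<r\le1$ and $w\in\Zd$. (Taking $w=z$ yields, as a by-product, $|A(o,y;r)|=O(\log(2+|y|))$ uniformly in $r$.)

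Part (iii) is where the real work lies. Using evenness of $A(o,\cdot;r)$, each of $\partial^{(1)}_fA(z,o;r)$ and $\partial^{(2)}_hA(z,o;r)$ equals $\tfrac14\int_{[-\pi,\pi]^2}e^{i\theta\cdot z}g(\theta)(1-r\phi(\theta))^{-1}\,d\theta/(2\pi)^2$ with $g(\theta)=e^{i\theta\cdot f}-1$, resp.\ $g(\theta)=e^{-i\theta\cdot h}-1$, while $\partial^{(1)}_f\partial^{(2)}_hA(z,o;r)$ has the same form with $g(\theta)=(e^{i\theta\cdot f}-1)(e^{-i\theta\cdot h}-1)$. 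I would fix a smooth cutoff $\chi$ with $\chi\equiv0$ on $\{|\theta|\le\tfrac1{2|z|}\}$, $\chi\equiv1$ on $\{|\theta|\ge\tfrac1{|z|}\}$ and $|D^k\chi|\lesssim|z|^k$, and split each integral into a $(1-\chi)$-part and a $\chi$-part. The $(1-\chi)$-part is handled crudely: with $|e^{i\theta\cdot f}-1|\lesssim|\theta|$, $|(e^{i\theta\cdot f}-1)(e^{-i\theta\cdot h}-1)|\lesssim|\theta|^2$ and $1-r\phi\ge c|\theta|^2$, its integrand is $\lesssim|\theta|^{-1}$ in the first two cases and $\lesssim1$ in the third, so on $\{|\theta|\le1/|z|\}$ the contribution is $O(1/|z|)$, resp.\ $O(1/|z|^2)$. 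The $\chi$-part has a smooth periodic integrand (the cutoff kills the singularity at $\theta=o$), so I would integrate by parts $N$ times in a coordinate direction $\theta_j$ with $|z_j|\ge|z|/\sqrt2$, producing a factor $(iz_j)^{-N}$; using $|\partial^k_{\theta_j}(1-r\phi)^{-1}|\lesssim|\theta|^{-2-k}$ near $\theta=o$ (each derivative costs one power of $|\theta|$, since $|\partial_{\theta_j}(1-r\phi)|\lesssim|\theta|$ and higher derivatives of $1-r\phi$ are $O(1)$), $|D^k\chi|\lesssim|z|^k\lesssim|\theta|^{-k}$ on $\operatorname{supp}\chi$, and the vanishing of $g$, one checks that every term from Leibniz's rule is integrable with the claimed bound once $N$ is large enough: $N=2$ suffices for the first-difference integrals (each then $O(1/|z|)$) and $N=3$ for the mixed one (then $O(1/|z|^2)$). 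All implied constants depend only on $c$ and on $\chi$, so the estimates are uniform in $0<r\le1$; combining the two parts gives (iii).

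The main obstacle is exactly this oscillatory-integral bookkeeping in (iii): one must keep $r$ entirely out of the bounds, and one cannot shortcut it by estimating the defining series termwise, because by the parity of the walk consecutive terms of the series for the differences in (iii) nearly cancel — those series are only conditionally convergent, and the cancellation is visible only in Fourier space. Everything else (the two reductions, part (i), and part (ii)) is routine once the integral representation and the inequality $1-r\phi(\theta)\ge c|\theta|^2$ are in hand.
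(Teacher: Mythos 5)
Your proof is correct, but it takes a genuinely different route from the paper's. The paper proves (ii) and (iii) for $0<r<1$ entirely in real space, by adapting the proof of \cite{LLbook}*{Theorem 4.4.4}: it passes to a lazy (aperiodic) walk, splits the sum defining $A(\cdot,\cdot;r)$ at $m=|z|^2$, uses the local CLT to reduce everything to the explicit quantities $B(z;r)=\sum_{1\le m\le |z|^2}r^m/m$ and two incomplete-Gamma-type integrals $I_1(z;r)$, $I_2(z;r)$, and then estimates first and second differences of these by hand, with a case analysis in $\beta/y\lessgtr |z|^{-1}$ (and, for the second differences, a pairing of $O(|z|)$ terms of opposite sign each contributing $O(|z|^{-3})$). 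You instead work entirely on the Fourier side, where the whole uniformity in $r$ is carried by the single inequality $1-r\phi(\theta)\ge c|\theta|^2$, and the decay in (iii) comes from a cutoff at scale $|z|^{-1}$ plus non-stationary-phase integration by parts; I checked the Leibniz bookkeeping ($N=2$ for the first differences, $N=3$ for the mixed one) and it closes, the only slip being that the bound $|D^a\chi|\lesssim|\theta|^{-a}$ is valid on $\operatorname{supp}D\chi$ rather than on all of $\operatorname{supp}\chi$ (harmless, since for $a=0$ you only need $|\chi|\le 1$). Your approach buys a cleaner, more mechanical treatment of the $r$-uniformity; the paper's stays closer to the probabilistic objects and to the reference it builds on, and yields the real-space decomposition it reuses elsewhere. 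One correction to your closing remark: a termwise real-space estimate is \emph{not} unavailable --- it is exactly what the paper does. The near-cancellation between consecutive terms that you point to is an artefact of the period-$2$ parity of the walk and is removed by passing to a lazy walk, after which gradient and second-difference versions of the local CLT capture precisely the cancellation you see in Fourier space; the relevant difference series are in fact absolutely convergent even at $r=1$.
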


\begin{proof}
Statement (i) is \cite{LLbook}*{Theorem 4.4.4}.
When $r = 1$, statements (ii) and (iii) follow immediately
from (i). For the case $0 < r < 1$, the proof 
of \cite{LLbook}*{Theorem 4.4.4} can be adapted, and we 
sketch how this can be done. By considering a ``lazy''
random walk (that holds in place with probability $\eps$
on each step), we may replace the simple random walk by 
an aperiodic one. (Indeed, for the lazy walk
$G_{\eps}(z,w;r) = (1 - \eps r)^{-1} G(z,w;r)$; see 
\cite{LLbook}*{(4.17)}.)

Following the proof of \cite{LLbook}*{Theorem 4.4.4}, write 
\eqnsplst
{ A(o,z;r)
  &= \sum_{m \le |z|^2} r^m \p^m(o,o) - \sum_{m \le |z|^2} r^m \p^m(o,z) \\
  &\qquad + \sum_{m > |z|^2} r^m (\p^m(o,o) - \p^m(o,z)). }
Let 
\eqnst
{ B(z;r)
  = \sum_{1 \le m \le |z|^2} \frac{r^m}{m}. }
Then the computations in \cite{LLbook} show that 
\eqnst
{ \sum_{m \le |z|^2} r^m \p^m(o,o)
  = c_1 \, B(z;r) + C + O(|z|^{-2}), }
with $c_1$ and $C$ independent of $z$ and $r$. We also have that 
$\sum_{m \le |z|} r^m \p^m(o,z)$ decays faster than any power of $|z|$
(uniformly in $r$). An application of the local central limit 
theorem yields that 
\eqnst
{ \sum_{|z| < m \le |z|^2} \left[ r^m \p^m(o,z) - r^m \frac{c_1}{m} e^{-|z|^2/m} \right]
  = O ( |z|^{-2} ). }
Also, the proof of \cite{LLbook}*{Lemma 4.3.2} shows that 
\eqnsplst
{ \sum_{|z| < m \le |z|^2} \frac{r^m}{m} e^{-|z|^2/m}
  &= \int_1^\infty \frac{1}{y} \exp \left( - \frac{y}{2} - \frac{\beta |z|^2}{y} \right)
    + O (|z|^{-2}) \\
  &=: I_1(z;r) + O(|z|^{-2}), } 
where $- \beta = \log r \in (-\infty,0)$.
Therefore,
\eqnst
{ \sum_{m \le |z|^2} r^m \left( \p^m(o,o) - \p^m(o,z) \right)
  = c_1 B(z;r) + C + c_1 I_1(z;r) + O(|z|^{-2}). }
A similar computation yields
\eqnsplst
{ \sum_{m > |z|^2} r^m \left( \p^m(o,o) - \p^m(o,z) \right)
  &= c_1 \int_0^1 \frac{1}{y} ( 1 - e^{-y/2} ) 
    e^{- \beta |z|^2 / y }
    + O(|z|^{-2}) \\
  &=: c_1 I_2(z;r) + O(|z|^{-2}). }
Note that $I_1(z;r) = O(1)$ and $I_2(z;r) = O(1)$, uniformly in 
$z$ and $r$. Statement (ii) now follows from
\eqnsplst
{ |A(z,w;r) - A(o,w;r)|
  &= c_1 |B(w-z;r) - B(w;r)| + O(1) \\
  &\le c_1 |B(w-z;1) - B(w;1)| + O(1) \\
  &= O ( \log |z| ). }

In order to prove the statements in (iii), write
\eqnspl{e:partial1A}
{ \partial^{(1)}_f A(z,o;r) 
  &= c_1 [B(z+f;r) - B(z;r)] + c_1 [I_1(z+f;r)-I_1(z;r)] \\
  &\qquad + c_1 [I_2(z+f;r) - I_2(z;r)] + O(|z|^{-2}). }
Using that $|z+f|^2 - |z|^2 = 2 \langle z, f \rangle + 1 = O(|z|)$, 
the first term is $O(|z|^{-1})$. In order to estimate the second term, 
write
\eqnst
{ I_1(z+f;r) - I_1(z;r)
  = \int_1^\infty \frac{1}{y} e^{-y/2} 
    \left( e^{-\beta |z+f|^2/y} - e^{-\beta |z|^2/y} \right). }
We treat the cases $\beta/y \le |z|^{-1}$ and $\beta/y > |z|^{-1}$ separately.
When $\beta/y \le |z|^{-1}$, we have
\eqnspl{e:beta|z|2/y1}
{ \left| e^{-\beta |z+f|^2/y} - e^{-\beta |z|^2/y} \right|
  &= e^{-\beta |z|^2/y} \left| e^{-\beta (2 \langle z, f \rangle + 1)} - 1 \right| \\
  &\le e^{-\beta |z|^2/y} \frac{C \beta |z|}{y} \\
  &= \frac{C'}{|z|} \frac{\beta |z|^2}{y} e^{-\beta |z|^2/y} \\
  &= O(|z|^{-1}). }
When $\beta/y > |z|^{-1}$, we have 
\eqn{e:beta|z|2/y2}
{ e^{-\beta |z+f|^2/y},\, e^{-\beta |z|^2/y}
  = O \left( e^{-c|z|} \right). }
This shows that the second term in \eqref{e:partial1A} is
$O(|z|^{-1})$. Similar considerations apply to the third term in
\eqref{e:partial1A}. The argument for $\partial^{(2)}_h A(z,o;r)$
is identical.

Finally, for the last statement of (iii) we write
\eqnspl{e:partial12A}
{ &\partial^{(1)}_f \partial^{(2)}_h A(z,o;r) \\
  &\qquad = c_1 [B(z+f-h;r) - B(z+f;r) - B(z-h;r) + B(z;r)] \\
  &\qquad\quad + c_1 [I_1(z+f-h;r) - I_1(z+f;r) - I_1(z-h;r) + I_1(z;r)] \\
  &\qquad\quad + c_1 [I_2(z+f-h;r) - I_2(z+f;r) - I_2(z-h;r) + I_2(z;r)] \\
  &\qquad\quad + O(|z|^{-2}). }
In the first term, cancellations take place between the four
summations. The net result is that apart from $O(1)$ terms (that are 
each $O(|z|^{-2})$), there are $O(|z|)$ pairs of terms that come with 
opposite signs. For each pair (treating the cases $r \ge 1 - |z|^{-1}$ and
$r < 1 - |z|^{-1}$ separately), we have the estimate
\eqnst
{ \frac{r^{m_1}}{m_1} - \frac{r^{m_2}}{m_2}
  = O(|z|^{-3}), 
    \quad \text{if $m_1 = |z|^2 + O(|z|)$ and $m_2 = |z|^2 + O(|z|)$.} }
Summing these we get that the first term in \eqref{e:partial12A} is
$O(|z|^{-2})$. For the second term in \eqref{e:partial12A}, 
we argue similarly to \eqref{e:beta|z|2/y1}--\eqref{e:beta|z|2/y2}
(treating the cases $\beta/y \le |z|^{-1}$ and $\beta/y > |z|^{-1}$
separately). This gives
\eqnst
{ e^{-\beta |z+f-h|^2/y} - e^{-\beta |z+f|^2/y} - e^{-\beta|z-h|^2/y} + e^{-\beta |z|^2/y}
  = O (|z|^{-2}), }
and it follows that the second term in \eqref{e:partial12A} is
$O(|z|^{-2})$. The argument for the third term is similar.
This completes the proof.
\end{proof}

Let $\{ S_r(m) \}_{m \ge 0}$ be the random walk killed at a 
$\mathsf{Geometric}(1-r)$ time that is independent of the walk. 
Below we interpret $A(S_r(m),w;r)$ as $0$ after the killing time.

\begin{lemma}
\label{lem:Gn(z,w)}
For all $0 < r \le 1$, $z, w \in V_n$ we have 
\eqnst
{ G_n(z,w;r)
  = \E^z \left[ A(S_r(\tau_{V_n^c}),w;r) \right] - A(z,w;r). } 
\end{lemma}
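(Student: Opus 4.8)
The plan is to show that, for fixed $w\in V_n$ and $0<r\le 1$, both sides of the identity — regarded as functions of $z$ on $\Z^2$, with the left side extended by $0$ outside $V_n$ — solve the same discrete boundary value problem for the geometrically killed walk, and then to conclude by uniqueness. Write $P$ for the averaging operator $Pf(z)=\tfrac14\sum_{z'\sim z}f(z')$ of simple random walk on $\Z^2$, and $P_n$ for its restriction to $V_n$ (the walk killed on leaving $V_n$). From $G_n(z,w;r)=\tfrac14\sum_{m\ge 0}r^m(P_n^{\,m})_{zw}=\tfrac14\big((I-rP_n)^{-1}\big)_{zw}$ and the resolvent identity, one gets, for every $z\in V_n$, that $G_n(z,w;r)-r\,P\,G_n(\cdot,w;r)(z)=\tfrac14\,\one_w(z)$, where extending $G_n(\cdot,w;r)$ by $0$ off $V_n$ lets us replace $P_n$ by $P$; this is the mass-$r$ analogue of $\Delta'_{V_n}G_n(\cdot,w)=\one_w$, via $\tfrac14(\Delta'_{V_n})_{zx}=I_{zx}-p^1_n(z,x)$.

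Next I would establish the ``massive Poisson equation'' for the potential kernel, namely $A(z,w;r)-r\,P\,A(\cdot,w;r)(z)=-\tfrac14\,\one_w(z)$ on $\Z^2$. For $r<1$ this follows from the absolutely convergent series $A(z,w;r)=\tfrac14\sum_m r^m\big(p^m(z,z)-p^m(z,w)\big)$ by applying $P$ in the $z$-variable (using $\tfrac14\sum_{z'\sim z}p^m(z',w)=p^{m+1}(z,w)$) and telescoping the off-diagonal sum; the diagonal part contributes only lower-order, $(1-r)$-scale terms, which must be tracked with care. For $r=1$ this is $\tfrac14\Delta A(\cdot-w)=-\one_w$ for the renormalised potential kernel, i.e.\ \cite{LLbook}*{Proposition 4.4.2} after translating by $w$. (Alternatively, for $r<1$ one can organise the computation around the first-exit decomposition of the free discounted Green function $\tfrac14\sum_m r^m p^m(z,w)$ together with its expression in terms of $A(z,w;r)$; at $r=1$ this route specialises to \cite{LLbook}*{Proposition 4.6.2}, up to the factor $2d=4$.)

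Finally I would check that $u(z):=\E^z[A(S_r(\tau_{V_n^c}),w;r)]$ — with $A$ read as $0$ after the killing time, equivalently $u(z)=\E^z[r^{\tau_{V_n^c}}A(S(\tau_{V_n^c}),w;r)]$ — is massive harmonic in $V_n$: conditioning on the first step of $S_r$ (killed with probability $1-r$, otherwise moving to a uniform neighbour), the strong Markov property gives $u(z)=r\,Pu(z)$ for $z\in V_n$, while $u(y)=A(y,w;r)$ for $y\in V_n^c$ (there $\tau_{V_n^c}=0$). Hence $h(z):=u(z)-A(z,w;r)$ satisfies $h(z)-r\,P\,h(\cdot)(z)=\tfrac14\,\one_w(z)$ on $V_n$ and $h\equiv 0$ on $V_n^c$ — exactly the problem solved by $G_n(\cdot,w;r)$ above. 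Since $I-rP_n$ is invertible ($rP_n$ is a strict contraction on $\ell^\infty(V_n)$ when $r<1$; when $r=1$, $\tau_{V_n^c}<\infty$ a.s.\ makes $\sum_k P_n^{\,k}$ converge), the solution is unique, so $G_n(z,w;r)=h(z)$, which is the claim.

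The hard part is the second step in the regime $r<1$: justifying the manipulations with the only conditionally tame series defining $A(\cdot,w;r)$, and accounting exactly for all $(1-r)$-order terms and boundary contributions, so that the identity holds on the nose rather than up to lower-order corrections. The $r=1$ case is classical; the point of retaining $r<1$ is that it regularises the divergence $G_n(o,o)=O(\log n)$, which is what makes the passage to the limit in Propositions \ref{prop:limn}--\ref{prop:summation} possible.
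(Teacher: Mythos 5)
Your argument is, at bottom, the same as the paper's. The paper proves the lemma by observing that $M_m := A(S_r(m),w;r) - \tfrac14\sum_{j<m}\mathbf{1}_{S_r(j)=w}$ is a martingale and applying optional stopping at $N\wedge\tau_{V_n^c}$, then letting $N\to\infty$. The martingale property of $M_m$ is \emph{exactly} your ``massive Poisson equation'' $(I-rP)A(\cdot,w;r)=-\tfrac14\one_w$, and your steps 1 and 3 together with invertibility of $I-rP_n$ are the deterministic rephrasing of optional stopping. So the two proofs stand or fall on the same identity, and the only real difference is packaging.

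That identity is precisely where you have a gap: you flag it yourself as ``the hard part'' for $r<1$, but then assert it rather than prove it, and if you carry out the computation you sketch it does not close on the nose. Writing $G_r(z,w)=\tfrac14\sum_m r^m p^m(z,w)$ and $g_r=G_r(o,o)$, one has $(I-rP)G_r(\cdot,w)=\tfrac14\one_w$ exactly; but the diagonal subtraction in the definition of $A(z,w;r)$ is a \emph{constant} in $z$, and $(I-rP)$ applied to a constant $c$ returns $(1-r)c$, not $0$. Hence $(I-rP)A(\cdot,w;r)=\pm\tfrac14\one_w\mp(1-r)g_r$ (the overall sign depends on whether one reads the paper's definition of $A(z,w;r)$ literally or with the order of subtraction matching Lemma \ref{lem:Aasymp}(i)); the spatially constant term $(1-r)g_r$ is nonzero for every fixed $r<1$ and only vanishes as $r\uparrow 1$, being of order $(1-r)\log\tfrac{1}{1-r}$. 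Pushed through your uniqueness argument (equivalently, through the first-exit decomposition $G_r(z,w)=G_n(z,w;r)+\E^z[r^{\tau_{V_n^c}}G_r(S(\tau_{V_n^c}),w)]$, which is the cleanest route here), this produces an extra term $g_r\,\E^z[1-r^{\tau_{V_n^c}}]$ in the identity. So your step 2 as stated is false for $r<1$, and consequently the displayed identity is exact only at $r=1$; for $r<1$ one must either renormalise $A(\cdot,\cdot;r)$ or carry the correction term (which is harmless in the eventual limit $r\uparrow1$ where it is used, but must be acknowledged). To be fair, the paper's own one-line proof has the same latent issue --- the asserted martingale property requires the same exact Poisson equation --- so this is a shared defect rather than one peculiar to your write-up; but a complete proof has to confront it, and ``tracking the $(1-r)$-order terms with care'' is not optional bookkeeping: it changes the statement.
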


\begin{proof}
The case $r = 1$ is \cite{LLbook}*{Lemma 4.6.2(b)}. The proof
is similar when $0 < r < 1$. Note that  
$M_m := A(S_r(m),w;r) - \frac{1}{4} \sum_{j = 0}^{m-1} \mathbf{1}_{S_r(j) = w}$
is a martingale. This gives
\eqnst
{ A(z,w;r)
  = \E^z [ A(S_r(N \wedge \tau_{V_n^c}),w;r) ] 
    - \E^z \left[ \sum_{0 \le j < N \wedge \tau_{V_n^c}} \mathbf{1}_{S_r(j) = w} \right]. }
Letting $N \to \infty$ and using bounded and monotone convergence,
respectively, for the two terms we get the statement of the Lemma.
\end{proof}

\begin{lemma}
\label{lem:estimates}
Uniformly in $0 < r \le 1$, $z \in V_n$, $n \ge 1$ and for $|f| = 1 = |h|$, we have
\begin{align}
  G_n(o,o;r)
  &= O \left( \log n \right) 
     \label{e:Gn(o,o)} \\
  G_n(z,o;r) - G_n(o,o;r)
  &= -A(z,o;r) + O \left( |z| \frac{\log n}{n} \right)
  = O \left( \log |z| \right) 
     \label{e:Gn(z,o)} \\
  \partial^{(1)}_f G_n(z,o;r)
  &= \partial^{(1)}_f A(z,o;r)
     + O \left( \frac{\log n}{\dist(z,V_n^c)} \right) \\
  &= O \left( \frac{1}{|z|} \right) + O \left( \frac{\log n}{\dist(z,V_n^c)} \right) 
     \label{e:partial(1)} \\
  \partial^{(2)}_h G_n(z,o;r)
  &= \partial^{(2)}_h A(z,o;r) 
     + O \left( \frac{1}{n} \right) 
  = O \left( \frac{1}{|z|} \right)
     \label{e:partial(2)} \\
  \partial^{(1)}_f \partial^{(2)}_h G_n(z,o;r)
  &= \partial^{(1)}_f \partial^{(2)}_h A(z,o;r) 
     + O \left( \frac{1}{n\, \dist(z,V_n^c)} \right) \\
  &= O \left( \frac{1}{|z|^2} \right) + O \left( \frac{1}{n\, \dist(z,V_n^c)} \right).
     \label{e:partialboth} 
\end{align}
\end{lemma}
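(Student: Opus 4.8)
The plan is to read every line of the lemma off the single identity of Lemma~\ref{lem:Gn(z,w)},
\eqnst{ G_n(z,w;r) = \E^z\big[ A(S_r(\tau_{V_n^c}),w;r) \big] - A(z,w;r), }
which writes $G_n(\cdot,w;r)$ as the potential kernel $A(\cdot,w;r)$ (already controlled by Lemma~\ref{lem:Aasymp}) plus the correction $\mathbf{h}_w(x) := \E^x\big[ A(S_r(\tau_{V_n^c}),w;r) \big]$. The function $\mathbf{h}_w$ is harmonic for the (sub-stochastic) walk $S_r$ killed on exit from $V_n$, with boundary data $A(\cdot,w;r)$ on $\partial V_n$; so each estimate will be a combination of (i) an input from Lemma~\ref{lem:Aasymp} about $A$ and its discrete differences, and (ii) a maximum-principle or discrete-gradient bound for $\mathbf{h}_w$. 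As in the proof of Lemma~\ref{lem:Aasymp} I will first pass to a lazy (hence aperiodic) walk, which changes $G_n$ only by the factor $(1-\eps r)^{-1}$ and makes the local central limit theorem, the maximum principle and the gradient estimates used below hold with constants uniform in $0<r\le1$.

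Estimate \eqref{e:Gn(o,o)} is then immediate: $A(o,o;r)=0$, so $G_n(o,o;r)=\mathbf{h}_o(o)$, and on the event that the walk is not killed before exiting $V_n$ the exit point lies within distance $\sqrt2\,n+1$ of $o$, where $|A(\cdot;r)|=O(\log n)$ uniformly in $r$ (this follows from the bounds in the proof of Lemma~\ref{lem:Aasymp}, since $B(z;r)\le\sum_{m\le|z|^2}m^{-1}=O(\log|z|)$ and $I_1,I_2=O(1)$). For \eqref{e:Gn(z,o)} subtract the identity at $(z,o)$ from the one at $(o,o)$:
\eqnst{ G_n(z,o;r) - G_n(o,o;r) = -A(z,o;r) + \big( \mathbf{h}_o(z) - \mathbf{h}_o(o) \big). }
Writing $\mathbf{h}_o(x)=\sum_{y\in\partial V_n}\omega_x(y)\,A(y,o;r)$ with $\omega_x$ the exit law of the killed walk, the remainder equals $\sum_y(\omega_z(y)-\omega_o(y))A(y,o;r)$, hence is at most $\|\omega_z-\omega_o\|_{\mathrm{TV}}\cdot\max_{\partial V_n}|A(\cdot,o;r)| = O\big(\|\omega_z-\omega_o\|_{\mathrm{TV}}\log n\big)$; since $o$ sits at distance $n$ from $V_n^c$, a harmonic-measure comparison (uniform in $r$ after the lazy-walk reduction) gives $\|\omega_z-\omega_o\|_{\mathrm{TV}}=O(|z|/n)$, so the error is $O(|z|\,n^{-1}\log n)$. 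The second equality follows from $-A(z,o;r)=O(\log|z|)$ (Lemma~\ref{lem:Aasymp}(ii)) together with the elementary fact that $|z|\,n^{-1}\log n=O(\log|z|)$ for $2\le|z|\le\sqrt2\,n$. (Lemma~\ref{lem:pot-kern}, quoted earlier without proof, is the $r=1$, $n\to\infty$ case of this.)

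For the discrete differences, differentiate $G_n(z,o;r)=\mathbf{h}_o(z)-A(z,o;r)$. In \eqref{e:partial(1)} the difference falls on the first (boundary-sensitive) variable, so $\partial^{(1)}_f G_n(z,o;r)=\big(\mathbf{h}_o(z+f)-\mathbf{h}_o(z)\big)-\partial^{(1)}_f A(z,o;r)$; here $\partial^{(1)}_f A(z,o;r)=O(1/|z|)$ by Lemma~\ref{lem:Aasymp}(iii), and, since $\mathbf{h}_o$ is harmonic for the killed walk on the ball $B\big(z,\dist(z,V_n^c)\big)$ with sup-norm $O(\log n)$, the discrete gradient estimate gives $\mathbf{h}_o(z+f)-\mathbf{h}_o(z)=O\big(\log n/\dist(z,V_n^c)\big)$; this yields both lines of \eqref{e:partial(1)}. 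Estimate \eqref{e:partial(2)} is sharper because the difference falls on the second variable, which stays at the interior point $o$: $\partial^{(2)}_h G_n(z,o;r)=\E^z\big[\partial^{(2)}_h A(S_r(\tau_{V_n^c}),o;r)\big]-\partial^{(2)}_h A(z,o;r)$, and by Lemma~\ref{lem:Aasymp}(iii) the integrand is $O(1/|S_r(\tau_{V_n^c})|)=O(1/n)$ on the non-killing event, so by the maximum principle the expectation is $O(1/n)$ --- no gradient estimate needed --- while $\partial^{(2)}_h A(z,o;r)=O(1/|z|)$. Finally, in \eqref{e:partialboth} the two mechanisms combine: after the inner difference the harmonic part becomes $\mathbf{g}(x):=\E^x\big[\partial^{(2)}_h A(S_r(\tau_{V_n^c}),o;r)\big]$, a killed-walk harmonic function of sup-norm $O(1/n)$, so the gradient estimate gives $\mathbf{g}(z+f)-\mathbf{g}(z)=O\big(1/(n\,\dist(z,V_n^c))\big)$, while $\partial^{(1)}_f\partial^{(2)}_h A(z,o;r)=O(1/|z|^2)$ by Lemma~\ref{lem:Aasymp}(iii).

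The main obstacle is establishing the probabilistic inputs for $\mathbf{h}_o$ --- the harmonic-measure comparison $\|\omega_z-\omega_o\|_{\mathrm{TV}}=O(|z|/n)$ and the discrete gradient estimate $|\mathbf{h}_o(z+f)-\mathbf{h}_o(z)|=O(\|\mathbf{h}_o\|_\infty/\dist(z,V_n^c))$ --- \emph{uniformly in} $0<r\le1$, since the killing breaks the exact martingale identity underlying the usual arguments. The cleanest route is to carry out these comparisons for the lazy walk using a coupling of the two walks that \emph{shares the killing clock}, so that whenever the spatial coordinates agree the killed trajectories (and hence the values of $A$ at the exit) agree too; the lazy-walk reduction then supplies $r$-uniform control of the coalescence and exit times, and everything else is the standard toolbox for discrete harmonic functions in \cite{LLbook}. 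A secondary point to keep straight is simply the bookkeeping --- which of $|z|$, $\dist(z,V_n^c)$, $n$ governs each error term --- which is what separates \eqref{e:partial(1)} from \eqref{e:partial(2)} and reflects whether the discrete difference acts on the boundary-sensitive first argument or on the interior second argument.
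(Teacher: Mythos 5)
Your proposal is correct and follows essentially the same route as the paper: decompose $G_n(\cdot,o;r)$ via Lemma \ref{lem:Gn(z,w)} into $-A(\cdot,o;r)$ plus the killed-walk harmonic part, bound $A$ and its differences by Lemma \ref{lem:Aasymp}, and control the harmonic part by the maximum principle together with the difference estimate for harmonic measure (\cite{LLbook}*{Theorem 6.3.8}), distinguishing whether the discrete difference acts on the boundary-sensitive first argument or the interior second argument. Your shared-killing-clock coupling is a reasonable way to make precise the paper's one-line claim that the total-variation comparison for the unkilled walk ``implies the same for the killed random walk,'' and your direct bound for \eqref{e:Gn(o,o)} is a trivial variant of the paper's monotonicity argument $G_n(o,o;r)\le G_n(o,o;1)$.
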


\begin{proof}[Proof of Lemma \ref{lem:estimates}]
The estimate \eqref{e:Gn(o,o)} follows from 
\eqnst
{ G_n(o,o;r) 
  \le G_n(o,o;1) 
  = \E^o [ A(S(\tau_{V_n^c})) ] = O ( \log n ). }
In order to prove \eqref{e:Gn(z,o)}, we use Lemma \ref{lem:Gn(z,w)}
to write
\eqnsplst
{ &G_n(z,o;r) - G_n(o,o;r) \\
  &\qquad = -A(z,o;r) + \E^z [ A(S_r(\tau_{V_n^c}),o;r) ] 
     - \E^o [ A(S_r(\tau_{V_n^c}),o;r) ]. } 
Due to Lemma \ref{lem:Aasymp}(ii), the first term is 
$O(\log |z|)$. By the same lemma, the random variable
inside the expectations is $O(\log n)$. Due to a difference
estimate for harmonic functions \cite{LLbook}*{Theorem 6.3.8},
the total variation distance between the exit distributions 
of the random walk (without killing) started from $z$ and $o$,
respectively, is $O(|z|/n)$. This implies the same for the
killed random walk, and the statement follows. 

The proofs of \eqref{e:partial(1)} and \eqref{e:partial(2)} are similar
to the proof of \eqref{e:partialboth}, so we only give the latter.
We write
\eqnsplst
{ \partial^{(1)}_{f} \partial^{(2)}_{h} G_n(z,o;r)
  &= - \partial^{(1)}_f \partial^{(2)}_h A(z,o;r)
    + \E^{z+f} \left[ A(S_{\bar{\tau}_n},h;r) - A(S_{\bar{\tau}_n},o;r) \right] \\
  &\qquad\qquad - \E^{z} \left[ A(S_{\bar{\tau}_n},h;r) - A(S_{\bar{\tau}_n},o;r) \right]. }
Due to Lemma \ref{lem:Aasymp}(iii), the first term is $O(|z|^{-2})$.
The random variable inside the expectations is $O(\frac{1}{n})$, again
due to Lemma \ref{lem:Aasymp}(iii). Again due to the difference estimate for
harmonic functions \cite{LLbook}*{Theorem 6.3.8}, the total variation distance
between exit distributions of the random walk (without killing) started 
from $z$ and $z+f$, respectively, is $O(1/\dist(z,V_n^c))$. This 
implies the claim.
\end{proof}

\subsubsection{Proof of the summation formulas}
\label{sssec:summation}

\begin{proof}[Proof of Proposition \ref{prop:limn}.]
In the first row of $M_L - M_\Gamma$ we can take the limit $n \to \infty$ directly
and we obtain the first row of $M_r$. In order to deal with the divergent entries,
we use row and column operations to exhibit cancellations in the determinant
that allow us to take the limit $n \to \infty$. 
Subtracting the second column from the other columns and then subtracting 
the second row from the third and fourth rows we have:
\eqnsplst
{ &\det(M_{L} - M_{\Gamma})
  = \\
  &\begin{vmatrix}
    \frac{2}{4} + o(1)                     & \frac{1}{4} + o(1)     & \frac{1}{\pi} - \frac{2}{4} + o(1)      & \frac{1}{\pi} - \frac{2}{4} + o(1) \\[1ex]
    \partial^{(2)}_{e_2} G_n                  & G_n                    & \partial^{(2)}_{-e_1} G_n                  & \partial^{(2)}_{e_1} G_n \\[1ex]
    \partial^{(1)}_{e_1}\partial^{(2)}_{e_2} G_n & \partial^{(1)}_{e_1} G_n & \partial^{(1)}_{e_1}\partial^{(2)}_{-e_1} G_n & \partial^{(1)}_{e_1}\partial^{(2)}_{e_1} G_n \\[1ex]
    (\partial^{(1)}_{-e_2}-\partial^{(1)}_{e_2}) 
        & (\partial^{(1)}_{-e_2}-\partial^{(1)}_{e_2}) G_n 
        & (\partial^{(1)}_{-e_2}-\partial^{(1)}_{e_2})
        & (\partial^{(1)}_{-e_2}-\partial^{(1)}_{e_2}) \\
     \qquad \times \partial^{(2)}_{e_2} G_n
        &  
        & \qquad \times \partial^{(2)}_{-e_1} G_n 
        & \qquad \times \partial^{(2)}_{e_1} G_n \end{vmatrix}, }  
where each entry in rows 2--4 is evaluated at $(i_1,o;r=1)$. 

We split the determinant into two terms by writing $G_n(i_1,o;1)$ 
(the second entry in the second row) as
\eqnst
{ G_n(i_1,o;1)
  = G_n(o,o;1) + (G_n(i_1,o;1) - G_n(o,o;1)). }
This gives the terms:
\eqnsplst
{ \det(M_{L} - M_{\Gamma})
  = G_n(o,o;1) \, \det(\tilde{M}^o) + \det(\tilde{M}^\diff), }
where $\tilde{M}^o$ is the minor of $M_L - M_\Gamma$ obtained 
by removing the second row and second column, 
and $\tilde{M}^\diff$ is obtained by replacing the entry $G_n$ 
by $G_n(i_1,o;1) - G_n(o,o;1)$.

The estimates of Lemma \ref{lem:estimates} with $z = i_1$ show that 
\eqnst
{ \tilde{M}^o
  = \begin{pmatrix}
    \frac{2}{4} + o(1)                    & \frac{1}{\pi} - \frac{2}{4} + o(1)    & \frac{1}{\pi} - \frac{2}{4} + o(1) \\[1ex]
    O (|z|^{-2}) + & O(|z|^{-2}) + & O(|z|^{-2}) + \\
    O (n^{-1} \dist(z,V_n^c)^{-1}) & O(n^{-1} \dist(z,V_n^c)^{-1}) & O(n^{-1} \dist(z,V_n^c)\\[1ex]
    O (|z|^{-2}) + & O(|z|^{-2}) + & O(|z|^{-2}) + \\
    O (n^{-1} \dist(z,V_n^c)^{-1}) & O(n^{-1} \dist(z,V_n^c)^{-1}) & O(n^{-1} \dist(z,V_n^c)
    \end{pmatrix}. } 
This implies that 
\eqnst
{ G_n(o,o;1) \sum_{i_1 \in V_n} \det(\tilde{M}^o)
  = G_n(o,o;1) \sum_{z \in \Z^2} \det(M^o) + O \left( \frac{\log n}{n} \right), }
where 
\eqnst
{ M^o
  := \begin{pmatrix}
    \frac{2}{4}                           & \frac{1}{\pi} - \frac{2}{4}           & \frac{1}{\pi} - \frac{2}{4} \\[1ex]
    \partial^{(1)}_{e_1}\partial^{(2)}_{e_2} A & \partial^{(1)}_{e_1}\partial^{(2)}_{-e_1} A & \partial^{(1)}_{e_1}\partial^{(2)}_{e_1} A \\[1ex]
    (\partial^{(1)}_{-e_2}-\partial^{(1)}_{e_2}) \partial^{(2)}_{e_2} A 
        & (\partial^{(1)}_{-e_2}-\partial^{(1)}_{e_2})\partial^{(2)}_{-e_1} A 
        & (\partial^{(1)}_{-e_2}-\partial_{e_2})\partial^{(2)}_{e_1} A \end{pmatrix}, }  
with each entry evaluated at $(z,o;1)$.

\begin{lemma}
\label{lem:sum0}
We have $\sum_{z \in \Z^2} \det(M^o) = 0$.
\end{lemma}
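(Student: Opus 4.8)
The plan is to turn $\sum_{z}\det(M^o)$ into a Fourier integral over $[-\pi,\pi]^2$ and then observe that the resulting integrand is odd in one of the two variables, so that the integral is $0$.

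First I would set up an absolutely convergent expansion. Write the (constant) first row of $M^o$ as $c=(c_1,c_2,c_2):=\bigl(\tfrac12,\ \tfrac1\pi-\tfrac12,\ \tfrac1\pi-\tfrac12\bigr)$, and let
\[
  L_{2,j}=\partial^{(1)}_{e_1}\partial^{(2)}_{f_j},\qquad
  L_{3,j}=\bigl(\partial^{(1)}_{-e_2}-\partial^{(1)}_{e_2}\bigr)\partial^{(2)}_{f_j},\qquad
  (f_1,f_2,f_3)=(e_2,-e_1,e_1),
\]
be the difference operators occurring in the $j$-th column of rows two and three. Cofactor expansion along the first row gives $\det(M^o(z))=\sum_{\pi}\mathrm{sgn}(\pi)\,c_{\pi(1)}\,[L_{2,\pi(2)}A](z)\,[L_{3,\pi(3)}A](z)$, summed over the six permutations of $\{1,2,3\}$, where $A$ denotes the potential kernel (the entries of $M^o$ are its second-order differences; any fixed normalization will do, as an overall sign affects two rows and cancels). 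Each such difference is $O(|z|^{-2})$ by Lemma~\ref{lem:Aasymp}(iii) with $r=1$, so $\det(M^o(z))=O(|z|^{-4})$, the sum over $\Z^2$ converges absolutely, and each $L_{2,j}A,\ L_{3,j}A$ lies in $\ell^2(\Z^2)$.

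Next I would pass to Fourier series via $A(z)=\frac{1}{(2\pi)^2}\int_{[-\pi,\pi]^2}\frac{1-e^{iz\cdot\theta}}{1-\hat\phi(\theta)}\,d\theta$ with $\hat\phi(\theta)=\tfrac12(\cos\theta_1+\cos\theta_2)$. Since $\partial^{(1)}_e$ multiplies the exponential by $(1-e^{ie\cdot\theta})$ and $\partial^{(2)}_f$ by $(1-e^{-if\cdot\theta})$, writing $\theta=(\alpha,\beta)$ and using $(1-e^{-i\beta})-(1-e^{i\beta})=2i\sin\beta$ we get $[L_{2,j}A](z)=\frac1{(2\pi)^2}\int e^{iz\cdot\theta}\mu_{2,j}(\theta)\,d\theta$ and $[L_{3,j}A](z)=\frac1{(2\pi)^2}\int e^{iz\cdot\theta}\mu_{3,j}(\theta)\,d\theta$, with $\mu_{2,j}=(1-e^{i\alpha})(1-e^{-if_j\cdot\theta})/(1-\hat\phi)$ and $\mu_{3,j}=2i\sin\beta\,(1-e^{-if_j\cdot\theta})/(1-\hat\phi)$; the numerators vanish to order $2$ and $3$ at $\theta=0$, so the $\mu$'s are bounded on $[-\pi,\pi]^2$. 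Parseval's identity (with $L_{3,j}A$ real, so $\overline{\mu_{3,j}(\theta)}=\mu_{3,j}(-\theta)$) gives $\sum_z[L_{2,j}A](z)[L_{3,k}A](z)=\frac1{(2\pi)^2}\int\mu_{2,j}(\theta)\mu_{3,k}(-\theta)\,d\theta$, and summing the six terms yields
\[
  \sum_{z\in\Z^2}\det(M^o)=\frac1{(2\pi)^2}\int_{[-\pi,\pi]^2}\det\begin{pmatrix}
  c_1 & c_2 & c_2\\
  \mu_{2,1}(\theta) & \mu_{2,2}(\theta) & \mu_{2,3}(\theta)\\
  \mu_{3,1}(-\theta) & \mu_{3,2}(-\theta) & \mu_{3,3}(-\theta)
  \end{pmatrix}d\theta .
\]
Pulling $(1-\hat\phi(\theta))^{-1}$ out of rows two and three, then $(1-e^{i\alpha})$ out of row two and $-2i\sin\beta$ out of row three, reduces the $3\times3$ determinant to
\[
  \det\begin{pmatrix}
  c_1 & c_2 & c_2\\
  1-e^{-i\beta} & 1-e^{i\alpha} & 1-e^{-i\alpha}\\
  1-e^{i\beta} & 1-e^{-i\alpha} & 1-e^{i\alpha}
  \end{pmatrix},
\]
and a short expansion, using $\sin(\alpha+\beta)+\sin(\alpha-\beta)=2\sin\alpha\cos\beta$, evaluates this to $4i\sin\alpha\bigl[c_2(1-\cos\beta)-c_1(1-\cos\alpha)\bigr]$, so the integrand equals
\[
  \frac{8\,(1-e^{i\alpha})\,\sin\alpha\,\sin\beta\,\bigl[c_2(1-\cos\beta)-c_1(1-\cos\alpha)\bigr]}{\bigl(1-\hat\phi(\alpha,\beta)\bigr)^2}.
\]

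Finally, every factor here except $\sin\beta$ is even in $\beta$: the only $\beta$-dependence is via $\sin\beta$, via $1-\cos\beta$, and via $1-\hat\phi(\alpha,\beta)=\tfrac12(2-\cos\alpha-\cos\beta)$, the last two being even. Hence the integrand is odd in $\beta$; it is $O(|\theta|)$ near $0$ and bounded elsewhere, so Fubini applies and its integral over $\beta\in[-\pi,\pi]$ vanishes for each $\alpha$, giving $\sum_{z\in\Z^2}\det(M^o)=0$. The main obstacle I anticipate is the bookkeeping in the last computation — keeping straight which of $f_1,f_2,f_3$ is which lattice direction, the signs of the symbols of $\partial^{(1)}$ and $\partial^{(2)}$, and the $3\times3$ determinant evaluation — since the whole argument rests on the integrand coming out proportional to $\sin\beta$ times a function even in $\beta$; the analytic steps (absolute summability, $\ell^2$-boundedness of the $\mu$'s, Parseval, Fubini) are routine.
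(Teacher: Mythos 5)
Your proof is correct, and it rests on the same engine as the paper's: represent the second differences of the potential kernel as Fourier integrals, reduce $\sum_z\det(M^o)$ to a single integral over the torus, and kill it by a parity argument. The execution differs in three respects worth noting. First, you work directly at $r=1$, justifying everything through the $O(|z|^{-2})$ bounds of Lemma~\ref{lem:Aasymp}(iii) (so $\det(M^o)=O(|z|^{-4})$ is absolutely summable and the symbols $\mu_{i,j}$ are bounded); the paper instead regularizes with $0<r<1$ and passes to the limit by dominated convergence, explicitly remarking that the direct route is possible but that regularization sidesteps ``delicate integrability issues'' --- your Parseval-based argument is precisely the clean way to handle those issues, at the cost of not being reusable for the companion term $\tilde{M}^{\mathrm{diff}}$, where the paper needs the $r<1$ regularization anyway. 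Second, you collapse the sum over $z$ via termwise Parseval on the Leibniz expansion, whereas the paper writes a four-fold integral with independent Fourier variables for rows two and three and sets $(\alpha_2,\beta_2)=(-\alpha_1,-\beta_1)$ after summing; these are equivalent, but yours makes the interchange of sum and integral transparent. Third, at the final step you evaluate the $3\times3$ determinant explicitly (your value $4i\sin\alpha\,[c_2(1-\cos\beta)-c_1(1-\cos\alpha)]$ checks out) and observe the integrand is odd in $\beta$ alone, while the paper avoids evaluating the determinant by splitting the prefactor $e^{i\alpha}-1=(\cos\alpha-1)+i\sin\alpha$ and using two separate antisymmetries (a column swap under $\alpha\mapsto-\alpha$ and a row swap under $(\alpha,\beta)\mapsto(-\alpha,-\beta)$). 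Your single-symmetry conclusion is a slightly stronger and more explicit statement about the integrand; the paper's version is more robust to computational slips since it never needs the closed form. Your side remark that the overall normalization and sign of $A$ are immaterial (they enter quadratically through rows two and three) is also correct.
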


\begin{proof}
We are going to use that $A(z,o;1) = \lim_{r \uparrow 1} A(z,o;r)$. (We note that it is not
strictly necessary here to regularize, and we could argue directly at $r = 1$. 
But this helps to avoid some delicate integrability issues, and we will
need regularization anyway when we consider $\tilde{M}^\diff$.)

Due to the uniformity in $r$ of the bounds in Lemma \ref{lem:Aasymp}(iii), 
by dominated convergence we get
\eqnst
{ \sum_{z \in \Z^2} \det(M^o)
  = \lim_{r \uparrow 1} \sum_{z \in \Z^2} \det(M^o_r), }
where $M^o_r$ is defined the same way as $M^o$, but each entry evaluated 
at $(z,o;r)$. We are going to use the 
Fourier formula (see \cite{LLbook}*{Proposition 4.2.3}):
\eqnst
{ A(z,o;r)
  = \frac{1}{8 \pi^2} \iint \frac{1 - e^{i (\alpha k + \beta l)}}{2 - r(\cos \alpha + \cos \beta)}
    \, d\beta \, d\alpha, 
    \quad 0 < r < 1, } 
where $z = (k,l)$, and both integrals are over $[-\pi,\pi]$. This implies
\eqnst
{ \partial^{(1)}_{e_1} \partial^{(2)}_{e_2} A(z,o;r)
  = \frac{1}{8 \pi^2} \iint e^{i (\alpha k + \beta l)} 
    \frac{(e^{i \alpha} - 1)(e^{-i \beta} - 1)}{2 - r(\cos \alpha + \cos \beta)}
    \, d\beta \, d\alpha, }
and similar formulas hold for the other entries in rows 2--3.
It follows that 
\eqn{e:Fourier}
{ \det(M^o_r)
  = \frac{1}{64 \pi^4} \iint\!\iint \frac{e^{i(\alpha_1 + \alpha_2)k + i(\beta_1 + \beta_2)l} 
    \det(C^o) \, d\beta_1\, d\alpha_1\, d\beta_2\, d\alpha_2}
    {(2 - r (\cos \alpha_1 + \cos \beta_1))(2 - r (\cos \alpha_2 + \cos \beta_2))}, }
where
\eqnsplst
{ \det(C^o) 
  &= \begin{vmatrix}
    \frac{2}{4}                         & \frac{1}{\pi} - \frac{2}{4}       &  \frac{1}{\pi} - \frac{2}{4} \\
    (e^{i \alpha_1} - 1)(e^{-i \beta_1} - 1)  & (e^{i \alpha_1} - 1)(e^{i \alpha_1} - 1) & (e^{i \alpha_1} - 1)(e^{-i \alpha_1} - 1) \\
    -2 i \sin \beta_2 (e^{-i \beta_2} - 1) & -2 i \sin \beta_2 (e^{i \alpha_2} - 1) & -2 i \sin \beta_2 (e^{-i \alpha_2} - 1)
    \end{vmatrix} \\
  &=  -2 i \sin \beta_2 (e^{i \alpha_1} - 1) \begin{vmatrix}
    \frac{2}{4}      & \frac{1}{\pi} - \frac{2}{4} &  \frac{1}{\pi} - \frac{2}{4} \\
    e^{-i \beta_1} - 1  & e^{i \alpha_1} - 1             & e^{-i \alpha_1} - 1 \\
    e^{-i \beta_2} - 1  & e^{i \alpha_2} - 1             & e^{-i \alpha_2} - 1
    \end{vmatrix}. } 
Since the integrand in \eqref{e:Fourier} is smooth, summation over $(k,l) = z \in \Z^2$
amounts to setting $(\alpha_2, \beta_2) = (-\alpha_1, -\beta_1)$ and keeping 
only the integrals over $\alpha_1, \beta_1$. Therefore,
\eqnsplst
{ \sum_{(k,l) \in \Z^2} \det(M^o_r)
  &= \frac{1}{16 \pi^2} \iint e^{i (\alpha_1 k + \beta_1 l)} (2 i \sin \beta_1)(e^{i \alpha_1} - 1) \\
  &\qquad\qquad\qquad \times \frac{\begin{vmatrix}
    \frac{2}{4}      & \frac{1}{\pi} - \frac{2}{4} &  \frac{1}{\pi} - \frac{2}{4} \\
    e^{-i \beta_1} - 1  & e^{i \alpha_1} - 1             & e^{-i \alpha_1} - 1 \\
    e^{i \beta_1} - 1  & e^{-i \alpha_1} - 1             & e^{i \alpha_1} - 1
    \end{vmatrix}}{(2 - 2r (\cos \alpha_1 + \cos \beta_1))^2}. }
Write the factor in front of the determinant as 
$e^{i \alpha_1} - 1 = (\cos \alpha_1 - 1) + (i \sin \alpha_1)$, and
split the intergal into a sum of two terms. Then the first term is
anti-symmetric under $\alpha_1 \leftrightarrow - \alpha_1$ (since this
exchanges the second and third columns in the determinant). The second term is
anti-symmetric under the exchange $(\alpha_1,\beta_1) \leftrightarrow (-\alpha_1,-\beta_1)$
(since this exchanges the second and third rows). Hence both 
terms contribute $0$ to the integral and this completes the proof of the lemma.
\end{proof}

We return to the proof of Proposition \ref{prop:limn}.
Applying the estimates of Lemma \ref{lem:estimates} now to 
the entries of $\tilde{M}^\diff$, we get
\eqnst
{ \sum_{i_1 \in V_n} \det(\tilde{M}^\diff)
  = \sum_{z \in \Z^2} \det(M^\diff) + O \left( \frac{\log n}{n} \right), }
where
\eqnst
{ M^\diff
  = - \begin{vmatrix}
    \frac{2}{4}                            & \frac{1}{4}         & \frac{1}{\pi} - \frac{2}{4}           & \frac{1}{\pi} - \frac{2}{4}  \\[1ex]
    \partial^{(2)}_{e_2} A                   & A                  & \partial^{(2)}_{-e_1} A                  & \partial^{(2)}_{e_1} A \\[1ex]
    \partial^{(1)}_{e_1}\partial^{(2)}_{e_2} A & \partial^{(1)}_{e_1} A & \partial^{(1)}_{e_1}\partial^{(2)}_{-e_1} A & \partial^{(1)}_{e_1}\partial^{(2)}_{e_1} A \\[1ex]
    (\partial^{(1)}_{-e_2}-\partial^{(1)}_{e_2})  
        & (\partial^{(1)}_{-e_2}-\partial^{(1)}_{e_2}) A 
        & (\partial^{(1)}_{-e_2}-\partial^{(1)}_{e_2})
        & (\partial^{(1)}_{-e_2}-\partial^{(1)}_{e_2}) \\
    \quad \times \partial^{(2)}_{e_2} A
    &
    & \quad \times \partial^{(2)}_{-e_1} A 
    & \quad \times \partial^{(2)}_{e_1} A \end{vmatrix}, } 
with each entry in rows 2--4 evaluated at $(z,o;1)$.
We use that $A(z,o;1) = \lim_{r \uparrow 1} A(z,o;r)$. 
The bounds of Lemma \ref{lem:Aasymp}(ii),(iii) and dominated convergence imply that 
\eqnst
{ \sum_{z \in \Z^2} \det(M^\diff)
  = \lim_{r \uparrow 1} \sum_{z \in \Z^2} \det(M^\diff_r), }
where $M^\diff_r$ is defined in the same way as $M^\diff$, except 
the $A$-entries are evaluated at $(z,o;r)$.

Since we now have $0 < r < 1$, we can write
\eqnsplst
{ A(z,o;r) 
  &= G(o,o;r) - G(z,o;r) \\
  \partial^{(1)}_f A(z,o;r) 
  &= - \partial^{(1)}_f G(z,o;r) \\
  \partial^{(2)}_h A(z,o;r) 
  &= - \partial^{(2)}_h G(z,o;r) \\
  \partial^{(1)}_f \partial^{(2)}_h A(z,o;r) 
  &= - \partial^{(1)}_f \partial^{(2)}_h G(z,o;r). }
Due Lemma \ref{lem:sum0}, we can drop the term $G(o,o;r)$ from the $A$ entry
in $M^\diff_r$. Now undoing the row and column operations brings the
determinant to the form \eqref{e:Mr}, as required.
\end{proof}

\begin{proof}[Proof of Proposition \ref{prop:summation}.]
We use the Fourier formula:
\eqnst
{ G_{k,l}(r)
  = \frac{1}{8 \pi^2} \iint \frac{e^{i(\alpha k + \beta l)}}{D_r(\alpha,\beta)}
    \, d\alpha\, d\beta }
with variables $(\alpha_1,\beta_1)$ in row 4, with $(\alpha_2,\beta_2)$ in row 3
and $(\alpha_3,\beta_3)$ in row 2. This writes
$\det(C_{k,l}(r))$ as a 6-fold integral. Since $0 < r < 1$, the integrand is smooth,
and therefore summation over $(k,l) \in \Z^2$ amounts to setting the Fourier variables
$\alpha_3 = - (\alpha_1 + \alpha_2)$ and $\beta_3 = -(\beta_1 + \beta_2)$. 
This yields the formula in the statement.
\end{proof}

\subsection{Further computations in 2D}
\label{ssec:corr-123}

Poghosyan, Grigorev, Priezzhev and Ruelle \cite{PGPR10} extended 
Priezzhev's calculations of the height probabilities to the 
correlation function between height $0$ and height $1, 2, 3$. 
Their result is that for some explicit non-zero constants $c_h$, $d_h$
one has
\eqnsplst
{ \nu [ \eta(o) = 0 ,\, \eta(y) = h ]
  - p(0) p(h)
  &= \frac{c_h \log |y| + d_h}{|y|^4} + O \left( \frac{1}{|y|^5} \right), \\ 
  & \qquad \text{as $|y| \to \infty$, $h = 1, 2, 3$.} }
The presence of the logarithmic term in the correlation function had been 
predicted by Piroux and Ruelle \cite{PR05}, who also predicted,
based on conformal field theory calculations, that 
\eqnsplst
{ \nu [ \eta(o) = h_1 ,\, \eta(y) = h_2 ]
  - p(h_1) p(h_2)
  &\sim c_{h_1,h_2} \, \frac{\log^2 |y|}{|y|^4} \\
  & \qquad \text{as $|y| \to \infty$, $h_1, h_2 = 1, 2, 3$.} }

\begin{open}
Show that the correlation between heights $1, 2, 3$ is of order
$(\log^2 |y|)/|y|^4$.
\end{open}

The presence of logarithmic terms was brought to light by computing
height probabilities near the boundary on the discrete upper half 
plane \cites{PR05,JPR06}. Two natural boundary conditions are: 
\begin{itemize}
\item[(i)] \textbf{open}: at boundary sites $\Delta^\op_{xx} = 4$, 
one particle leaves the system on toppling; 
\item[(ii)] \textbf{closed}: at boundary sites $\Delta^\cl_{xx} = 3$, 
no particle leaves the system on toppling.
\end{itemize}
The potential kernel on the discrete upper-half plane, with either
boundary condition, is easily expressed in terms of the potential 
kernel on $\Z^2$, and Jeng, Piroux and Ruelle use this to
extend Priezzhev's calculations to these cases. After lengthy 
computations they arrive at the following result. Let
\eqnsplst
{ p^\op(i;m) 
  &= \nu^\op [ \eta((0,m)) = i ]; \\
  p^\cl(i;m)
  &= \nu^\cl [ \eta((0,m)) = i ]. }
Then, with explicit constants $a_i, b_i$, 
\eqnsplst
{ p^\op(i;m)
  &= p(i) + \frac{1}{m^2} \left( a_i + \frac{b_i}{2} + b_i \log m \right)
    + o(m^{-2}), \\
  &\qquad \text{as $m \to \infty$, $i = 1, 2, 3$;} \\
  p^\cl(i;m)
  &= p(i) - \frac{1}{m^2} \left( a_i + b_i \log m \right) 
    + o(m^{-2}), \\
  &\qquad \text{as $m \to \infty$, $i = 1, 2, 3$.} }
Jeng, Piroux and Ruelle make the remarkable observation that,
up to terms of order $o(m^{-2})$, the probabilities of heights $2$ 
and $3$ are linear combinations of the probabilities of heights 
$0$ and $1$.  That is, with either boundary condition $* = \op, \cl$
they get:
\eqnsplst
{ &\frac{48 - 12 \pi + 5 \pi^2 - \pi^3}{2(\pi - 2)} p^*(0;m)
  + (\pi - 8) p^*(1;m) + 2 (\pi - 2) p^*(2;m) \\
  &\qquad = \frac{(\pi - 2)(\pi - 1)}{\pi} + o(m^{-2}). }
They conjecture that the same relationship between the height probabilities
will hold in all domains and with any boundary condition.

\subsection{Minimal configurations}

We close this section by giving a theorem that states the 
general type of events for which the determinantal
computations sketched in Sections \ref{ssec:height-0},
\ref{ssec:corr-0-0} and \ref{ssec:scaling-0} can be applied.
At the same time, we give an alternative formulation that 
highlights the connection with the Transfer-Current Theorem.
Let $G = (V \cup \{ s \}, E)$ be a finite connected multigraph.

\begin{definition}
Let $W \subset V$, and let $\xi$ be a particle configuration 
on $W$. We say that $\xi$ is \textbf{minimal}, if the sandpile
$\eta^*$ defined by 
\eqnst
{ \eta^*(x)
  := \begin{cases}
     \xi(x) & \text{if $x \in W$;} \\
     \eta^\max(x) & \text{if $x \in V \setminus W$;}
     \end{cases} }
is recurrent, but $\eta^* - \mathbf{1}_x \not\in \cR_G$ 
for any $x \in W$.
\end{definition}

\begin{theorem}\cites{MD91,JW12}
Let $\xi$ be minimal on $W$. There exists a set of edges
$\cE_W$ touching $W$, such that 
\eqnst
{ \nu_G [ \eta : \eta(x) = \xi(x),\, x \in W ]
  = \det ( K_G (e,f) )_{e, f \in \cE_W}. }
\end{theorem}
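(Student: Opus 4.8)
The plan is to reduce the event $\{\eta(x)=\xi(x),\ x\in W\}$ to a spanning-tree avoidance event by deleting edges, and then to apply the cylinder-event form of the Transfer-Current Theorem recorded after Theorem~\ref{thm:TCT}. First I would construct $\cE_W$ together with a reduced graph: at each $x\in W$ delete $\deg_G(x)-1-\xi(x)$ of the edges incident with $x$ (the precise choice dictated by minimality, as explained below), chosen so that $\tilde G:=G\setminus\cE_W$ is still connected with sink $s$. Writing $d(y)$ for the number of edges of $\cE_W$ incident with $y\in V$, so that $\deg_{\tilde G}(x)=\xi(x)+1$ for $x\in W$, define a map $\eta\mapsto\tilde\eta$ on stable sandpiles by
\[
  \tilde\eta(y)
  := \begin{cases}
     \deg_{\tilde G}(x)-1=\xi(x) & \text{if } y=x\in W,\\
     \eta(y)-d(y)                & \text{if } y\in V\setminus W.
     \end{cases}
\]

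The crux — and the only place minimality of $\xi$ is used — is the claim that $\eta\mapsto\tilde\eta$ restricts to a bijection from $\{\eta\in\cR_G:\eta(x)=\xi(x),\ x\in W\}$ onto $\cR_{\tilde G}$. This is the general form of Exercise~\ref{ex:reduce}, proved via the ampleness / forbidden-subconfiguration characterization of recurrence (Exercise~\ref{ex:comb}); see \cite{JW12} for the details. It splits into: \emph{(a)} the map is well defined (in a recurrent configuration no two adjacent vertices are simultaneously too empty, so each shift $\eta(y)-d(y)$ is non-negative), it matches the ampleness criterion for subsets of $V$ in $G$ with that in $\tilde G$, hence maps the two recurrent sets into each other, and is visibly injective; \emph{(b)} surjectivity onto \emph{all} of $\cR_{\tilde G}$, i.e.\ that no recurrent configuration of $\tilde G$ takes a value below $\xi(x)$ at any $x\in W$. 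Part \emph{(b)} is exactly where minimality enters: the hypothesis $\eta^*-\mathbf{1}_x\notin\cR_G$ produces a forbidden subconfiguration of $G$ certifying that lowering $x$ below $\xi(x)$ (with maximal values elsewhere) destroys recurrence, and, for a compatible choice of the deleted edges at $x$, this forbidden subconfiguration persists in $\tilde G$ and rules out the offending configurations there as well.

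Granting the bijection, the conclusion is immediate. Since $\nu_G$ is uniform on $\cR_G$, and by Theorem~\ref{thm:group}(i) together with the Matrix-Tree Theorem $|\cR_H|=\det(\Delta'_H)=|\cT_H|$ for every finite connected multigraph $H$,
\[
  \nu_G[\eta(x)=\xi(x),\ x\in W]
  = \frac{|\cR_{\tilde G}|}{|\cR_G|}
  = \frac{|\cT_{\tilde G}|}{|\cT_G|}.
\]
A spanning tree of $\tilde G$ is exactly a spanning tree of $G$ containing no edge of $\cE_W$, so the last ratio equals $\UST_G[\,T_G\cap\cE_W=\es\,]=\Pr[\,e\notin T_G \text{ for all } e\in\cE_W\,]$, which by the cylinder-event extension of the Transfer-Current Theorem equals $\det(K_G(e,f))_{e,f\in\cE_W}$. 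Since $\cE_W$ touches $W$, this is the asserted identity.

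The main obstacle is step \emph{(b)}: identifying which edges to delete and proving surjectivity — that minimality is precisely the condition closing the gap between ``recurrent in $G$ with values $\xi$ on $W$'' and ``recurrent in $\tilde G$'' — for which organizing everything through forbidden subconfigurations as in \cite{JW12} is cleanest. I would also remark that $\cE_W$ need not be unique, but the statement only claims existence (any two admissible choices give the same determinant, since they compute the same probability). In the situations of Sections~\ref{ssec:height-0}, \ref{ssec:corr-0-0} and \ref{ssec:scaling-0}, where $W$ is one or two vertices with $\xi\equiv 0$, this reduces to Exercise~\ref{ex:reduce} with $\cE_W$ consisting of all but one edge at each point of $W$, and the row and column operations used there to shrink the determinant are exactly the passage from $\det\!\big(I+(\Delta'_{\tilde G}-\Delta'_G)(\Delta'_G)^{-1}\big)$ to $\det(K_G(e,f))_{e,f\in\cE_W}$.
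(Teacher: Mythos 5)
Your proposal is correct and is essentially the argument the paper has in mind: the paper itself offers only the one-sentence indication that minimal events correspond, via a version of the burning bijection, to the absence of a fixed edge set from the uniform spanning tree, and your write-up is the natural generalization of the worked example in Section~\ref{ssec:height-0}. The one presentational difference is that you argue by counting, $\nu_G[\eta_W=\xi]=|\cR_{\tilde G}|/|\cR_G|=|\cT_{\tilde G}|/|\cT_G|$, invoking Theorem~\ref{thm:group}(i) and the Matrix--Tree Theorem for both $G$ and the modified graph $\tilde G$, whereas the route suggested by the paper (and carried out in \cite{JW12}) exhibits the burning bijection anchored at $W$ directly, so that the event $\{\eta_W=\xi\}$ is mapped onto $\{T_G\cap\cE_W=\es\}$ without passing through $\tilde G$; the counting route additionally requires $\tilde G$ to remain connected, which you correctly impose. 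Both routes stand or fall on the same deferred combinatorial lemma --- your step \emph{(b)}, the generalization of Exercises~\ref{ex:reduce} and \ref{ex:comb} --- and you have correctly isolated it as the crux and identified that minimality of $\xi$ is used exactly for surjectivity onto $\cR_{\tilde G}$; as you note, that lemma is the actual content of \cite{MD91} and \cite{JW12}, so deferring it is consistent with the level of detail of the paper. One small caution for a fully written-out version: the naive ampleness comparison between $G$ and $\tilde G$ does not go through set-by-set (for $F$ not containing $x\in W$ the degree $\deg_F$ is unchanged while $\tilde\eta$ has been lowered on the neighbours of $x$), so the verification really does need the burning-algorithm ordering rather than a termwise comparison of the ampleness criterion.
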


The reason the theorem works is that minimal sandpile events
can be expressed, via a particular version of the burning bijection,
as the absence of a fixed set of edges from the uniform spanning tree.

\section{Infinite graphs}
\label{sec:measures}

In this section we look at whether the sandpile dynamics 
via topplings can be extended to infinite graphs.
Let $G = (V,E)$ be a locally finite, connected, infinite graph. 
A sequence $V_1 \subset V_2 \subset \dots \subset V$
of finite sets of vertices such that $\cup_{n=1}^\infty V_n = V$ 
is called an \textbf{exhaustion} of $G$. We let 
$G_n = (V_n \cup \{ s \}, E_n)$ denote the wired graph
obtained from $V_n$. We write $\nu_n$ for the 
stationary distribution of the sandpile Markov chain
on $G_n$. The questions we will be interested in are:\\
(i) Does $\nu_n \Rightarrow \text{ some limit } \nu$?\\
(ii) If yes, are avalanches $\nu$-a.s.~finite, if we add
a single particle to the infinite configuration? 

A fruitful approach to the above questions turns out to
be to translate them into questions about the uniform 
spanning tree via the burning bijection.

\subsection{The wired uniform spanning forest}
\label{ssec:WSF}

The following theorem, due to \cites{Pem91,Hagg95}, says 
that the measure $\UST_{G_n}$ converges
to a unique limit, regardless of the exhaustion.
In order to formulate this as weak convergence of probability
measures, regard a spanning tree as a set of edges. Then
$\UST_{G_n}$ can be be viewed as a probability measure 
on $2^E$ (note that edges in $E_n$, including the ones 
leading to $s$, are uniquely idenitified with elements of $E$).

\begin{theorem}
There exists a measure $\mathsf{WSF}$ such that for 
any exhaustion we have $\mathsf{UST}_{G_n} \Rightarrow \mathsf{WSF}$
as $n \to \infty$, in the sense of weak convergence of 
probability measures. Under $\mathsf{WSF}$, each connected 
component is an infinite tree, almost surely.
\end{theorem}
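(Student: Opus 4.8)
The plan is to reduce everything to a single monotonicity property of the finite-volume uniform spanning tree and then take limits. The two elementary facts I would invoke are that \emph{adding an edge} and \emph{identifying two vertices} can each only decrease the probability $\Pr[B\subseteq T]$ that a given finite edge-set $B$ (none of whose edges becomes a loop) lies in the uniform spanning tree. Both are consequences of the negative association of the uniform spanning tree, which in turn follows from the determinantal structure of Theorem \ref{thm:TCT} (determinantal processes with symmetric kernel are negatively associated), or alternatively from Rayleigh's monotonicity law for electrical networks; I would simply cite this. Since identifying a non-adjacent pair $u,v$ is the same as first adding an edge $\{u,v\}$ and then contracting it, both operations fit under the same umbrella. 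The key consequence is: for finite $W\subseteq W'\subseteq V$, the wired graph $G_W$ is obtained from $G_{W'}$ by identifying each vertex of $W'\setminus W$ with the sink (and discarding loops), so for every finite $B\subseteq E$ whose endpoints lie in $W$ one gets $\UST_{G_W}[B\subseteq T]\le\UST_{G_{W'}}[B\subseteq T]$.

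Granting this, the construction of $\WSF$ is routine. For a fixed exhaustion $(V_n)$ and a finite $B$, the numbers $\UST_{G_n}[B\subseteq T]$ are, for $n$ large enough that $V_n$ contains the endpoints of $B$, nondecreasing and bounded by $1$, hence convergent; by inclusion--exclusion the probability of every cylinder event converges, and the limiting finite-dimensional distributions are consistent, being limits of genuine distributions, so by Kolmogorov's extension theorem they define a probability measure $\WSF$ on $2^E$ with $\UST_{G_n}\Rightarrow\WSF$. Independence of the exhaustion would come from applying the monotonicity across exhaustions: if $(V_n)$ and $(V'_n)$ have limits $\mu$ and $\mu'$, then for each $n$ choosing $m$ with $V_n\subseteq V'_m$ gives $\UST_{G_n}[B\subseteq T]\le\UST_{G'_m}[B\subseteq T]\le\mu'(\{B\subseteq\cdot\})$, hence $\mu\le\mu'$ on these generating events, and by symmetry $\mu=\mu'$.

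For the structural statement I would argue as follows. Every component of the limit configuration $F$ is a tree: for a fixed finite cycle $C$ in $G$ and every $n$ with $C\subseteq E_n$ one has $\UST_{G_n}[C\subseteq T]=0$, since a spanning tree contains no cycle, so $\WSF[C\subseteq F]=0$; taking the union over the countably many finite cycles, $\WSF$-a.s.\ $F$ is acyclic. Every component is infinite: for a finite $S\subseteq V$, the edge-boundary $\partial_E S$ is finite and nonempty (as $G$ is connected and infinite), and for every $n$ with $S\subseteq V_n$ the spanning tree $T$ of $G_n$ must cross the edge-cut $\partial_E S$ separating $S$ from the sink, so $\UST_{G_n}[F\cap\partial_E S=\es]=0$ and therefore $\WSF[F\cap\partial_E S=\es]=0$; taking the union over the countably many finite $S$, $\WSF$-a.s.\ no finite set of vertices is separated from its complement in $F$. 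Taking $S=\{v\}$ shows in particular that every vertex has degree at least $1$, so $F$ is a spanning forest, and no component of $F$ can have finite vertex set. Combining, $\WSF$-a.s.\ every component is an infinite tree.

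The only genuinely substantial ingredient is the monotonicity in the first step, i.e.\ the negative association of the uniform spanning tree; once that is in hand the rest is soft measure-theoretic bookkeeping together with the two a.s.\ statements, each obtained by pushing a deterministic finite-volume fact (a tree contains no cycle; a spanning tree meets every edge-cut) through the weak limit on cylinder events. I would therefore expect the write-up to spend essentially all of its effort on citing and applying the negative-association property correctly, and very little on the rest.
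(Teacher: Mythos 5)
The paper does not prove this theorem; it is stated as a cited result of Pemantle and H\"aggstr\"om. Your argument is, in substance, exactly the standard proof from those references (see also BLPS and the Lyons--Peres book), and it is correct: monotonicity of $\UST_{G_n}[B\subseteq T]$ in $n$ under the wired exhaustion, Kolmogorov extension on $\{0,1\}^E$, interleaving of exhaustions for uniqueness, and the two cylinder-event arguments (no finite cycle is ever present; every finite vertex set is crossed by the tree on its way to the sink) for acyclicity and infiniteness of components. The only point worth flagging is that the monotonicity for a multi-edge set $B$ needs slightly more than the pairwise negative edge correlation that falls straight out of Theorem \ref{thm:TCT}; but it reduces to it, since $\Pr[B\subseteq T\mid f\in T]$ and $\Pr[B\subseteq T]$ can both be written as telescoping products of single-edge probabilities in successively contracted graphs, and each factor is compared using pairwise negative correlation in the contracted graph. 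So you can either cite Feder--Mihail negative association as you propose, or make the argument self-contained from the transfer-current inequality already available in the paper; either way the write-up is sound.
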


The limit $\mathsf{WSF}$ is called the 
\textbf{wired uniform spanning forest} measure.

\begin{definition}
An infinite tree has \textbf{one end}, if any two infinite 
self-avoiding paths in the tree have a finite symmetric difference.
\end{definition}

\begin{theorem}\cites{Pem91,BLPS}
\label{thm:Pemantle}
When $G = \Z^d$, we have the following.\\
(i) If $2 \le d \le 4$ then $\mathsf{WSF}$-a.s.~there is a single tree
and it has one end.\\
(ii) If $d \ge 5$ then $\mathsf{WSF}$-a.s.~there are infinitely many 
trees and each one has one end.
\end{theorem}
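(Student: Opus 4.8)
The plan is to transfer all of the assertions to the loop-erased random walk picture via Wilson's algorithm rooted at infinity, and then feed in the classical intersection theory of random walks on $\Zd$ together with a unimodularity (mass-transport) argument. Since simple random walk on $\Zd$ is transient for $d \ge 3$, the $\WSF$ can be sampled as follows: enumerate the vertices $v_1, v_2, \dots$, set $\mathsf{F}_0 = \es$, and for $i \ge 1$ run a simple random walk from $v_i$ until it first hits $\mathsf{F}_{i-1}$ (or forever, if it never does), appending the loop-erasure of that trajectory to obtain $\mathsf{F}_i$; then $\bigcup_i \mathsf{F}_i$ has law $\WSF$. Orienting each loop-erased piece away from its start turns $\WSF$ into a forest in which every vertex has a unique \emph{forward ray} to infinity, and with $v_1$ any prescribed vertex $x$ this ray is the loop-erased walk $\gamma_x$ from $x$. (For $d = 2$, where $\Z^2$ is recurrent, I would instead pass to the limit along the exhaustion, or quote the classical fact that $\WSF(\Z^2)$ is a single one-ended tree, which settles both parts of the statement in that dimension.)

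\emph{Number of trees.} Taking $v_1 = x$ and $v_2 = y$, one has $x \cnctd y$ in $\WSF$ iff the walk from $y$ ever hits $\gamma_x$, so $\Pr[x \cnctd y] = \Pr[\LE(S^x) \cap S^y \ne \es]$ for independent walks $S^x, S^y$. For $2 \le d \le 4$ two independent simple random walks meet infinitely often a.s., and the loop-erasure of one still meets the other a.s.\ (immediate from recurrence for $d=2$, a theorem of Lawler for $d=3,4$); equivalently, passing to the exhaustion, for each $m$ the probability that $x,y$ are joined inside $[-m,m]^d$ tends to $1$. Hence $\Pr[x\cnctd y]=1$ for all $x,y$, so $\WSF$ is a single tree a.s. For $d \ge 5$ the second-moment bound
\[
  \Pr[S^x \cap S^y \ne \es]
  \ \le\ \sum_{z} \frac{G(x,z)\,G(y,z)}{G(z,z)^2}
  \ \asymp\ |x-y|^{4-d}
\]
(with $G$ the Green function, $G(x,z)\asymp|x-z|^{2-d}$) shows $\Pr[x\cnctd y]\to 0$ as $|x-y|\to\infty$. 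The number $N$ of components is translation invariant, hence a.s.\ constant by ergodicity of $\WSF$; were $N=k<\infty$ a.s., then among any $k+1$ vertices two would always be connected, but spreading out the $v_i$ makes $\Pr[\text{some two connected}]\le \binom{k+1}{2}\max_{i\ne j}\Pr[v_i\cnctd v_j] < 1$, a contradiction. So $N=\infty$ a.s.

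\emph{One end.} Each component is an infinite locally finite tree, and with the Wilson orientation such a tree is one-ended iff the \emph{past} of every vertex --- the set of vertices whose forward ray passes through it --- is finite: a tree with $\ge 2$ ends contains a bi-infinite simple path, along which some trunk vertex necessarily has infinite past. By translation invariance it therefore suffices to prove that the past of the origin is $\WSF$-a.s.\ finite. Conditioning on $\gamma_0$, this past equals $\{u : S^u \text{ hits } 0 \text{ before hitting } \gamma_0 \setminus \{0\}\}$, so the goal becomes: almost surely only finitely many $u$ have this property. (Alternatively, $\ge 3$ ends can be excluded by a Burton--Keane count of trifurcation vertices on the transitive graph $\Zd$, but the past estimate handles all cases at once.)

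\emph{Main obstacle.} The connectivity dichotomy and the Burton--Keane step are routine given the random-walk inputs; the genuinely delicate point is the last one. A crude first-moment bound on the past fails --- $\Pr[S^u\text{ hits }0]\asymp|u|^{2-d}$ and $\sum_u|u|^{2-d}=\infty$ in every dimension --- so one must exploit that $\gamma_0$ is an \emph{infinite} path issuing from $0$, which, by a Beurling-type / capacity estimate on $\Zd$ uniform in the random geometry of $\gamma_0$, forces a walk started far from $0$ to meet $\gamma_0\setminus\{0\}$ before reaching $0$ with overwhelming probability. Making this quantitative and uniform in $\gamma_0$ is where the structure of $\Zd$ (Harnack inequalities, hitting-probability estimates) is essential, and it is the step I expect to occupy most of the work; for the complete argument I would follow Pemantle \cite{Pem91} and Benjamini--Lyons--Peres--Schramm \cite{BLPS}.
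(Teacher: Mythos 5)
The paper does not actually prove this theorem: it is stated with citations to Pemantle and to Benjamini--Lyons--Peres--Schramm, so there is no internal argument to compare yours against. Your outline is a faithful sketch of the route taken in those references: Wilson's method rooted at infinity, the reduction of connectivity to the intersection problem for a walk and an independent loop-erased walk, the ergodicity-plus-pigeonhole argument ruling out $1<N<\infty$ components, and the reduction of one-endedness to a.s.\ finiteness of the past of the origin. The steps you do carry out are correct, including the subtleties you flag (that for $d=3,4$ one needs Lawler's theorem, or the BLPS argument, to pass from ``the two walks intersect infinitely often'' to ``the second walk hits the loop-erasure of the first'').

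Two caveats. First, as you say yourself, the heart of the one-end statement --- the estimate showing that a walk started far from the origin hits $\gamma_0\setminus\{0\}$ before $0$ with high enough probability, uniformly over the random path $\gamma_0$ --- is not carried out, and this is where essentially all the analytic work in Pemantle/BLPS lives; your text is an outline of that proof, not a replacement for it. (A minor imprecision: the past of $0$ is not literally the set $\{u: S^u \text{ hits } 0 \text{ before } \gamma_0\setminus\{0\}\}$ for a single family of independent walks, since branches laid down later attach to earlier branches rather than to $\gamma_0$ directly; the identity does hold for the marginal probability of each fixed $u$, by choosing the Wilson ordering $(0,u,\dots)$, which is all a first- or second-moment bound needs.) Second, the case $d=2$ is genuinely outside the scope of your main argument, since Wilson's algorithm rooted at infinity requires transience; both the single-tree property and one-endedness for the uniform spanning tree of $\Z^2$ are themselves nontrivial theorems of Pemantle, so ``quote the classical fact'' is doing real work there. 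Since the paper itself defers to exactly these sources, your proposal is at the same level of completeness as the text and correctly locates where the difficulty sits.
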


Given a spanning tree $\tau$ in a finite graph with a sink $s$,
we say that vertex $y$ is a \textbf{descendant} of vertex $x$,
if $x$ lies on the unique path from $y$ to $s$ in $\tau$.
This notion extends naturally to infinite one-ended trees:
in this case $y$ is called a \textbf{descendant} of $x$, 
if $y$ lies on the unique infinite self-avoiding path 
starting at $x$.

It is usually not an easy problem to decide whether for a given
infinite graph each tree has one end $\WSF$-a.s. Nevertheless the
one end property is known for a large class of graphs beyond $\Z^d$; 
see \cites{BLPS,LMS08}. Examples of graphs where the one end property 
fails are given by the direct product of $\Z$ with any finite 
connected graph. On such graphs, $\WSF$-a.s.~there is a single tree with two 
ends. See \cite{JL07} for a study of sandpiles on these graphs.

\subsection{One end property and the sandpile model}
\label{ssec:one-end}

The usefulness of the one end property for the sandpile model
is illustrated by the following theorem.

\begin{theorem}
\label{thm:measure} \cite{JW12}
Suppose that $\mathsf{WSF}$-a.s each tree has one end. Then 
there exists a measure $\nu$ such that for any exhaustion
$\nu_n \Rightarrow \nu$. That is, for any finite $Q \subset V$
and any particle configuration $\xi$ on $Q$ we have
\eqnst
{ \lim_{n \to \infty} \nu_n \left[ \eta : \eta_Q = \xi \right]
  = \nu \left[ \eta : \eta_Q = \xi \right]. }
\end{theorem}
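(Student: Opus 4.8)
The plan is to transport everything through the burning bijection. For each $n$ the map $\varphi = \varphi_{G_n} : \cR_{G_n} \to \cT_{G_n}$ (or its origin‑anchored variant of Section~\ref{sssec:looping}) sends $\nu_n$ to $\UST_{G_n}$, and $\UST_{G_n} \Rightarrow \WSF$. So it suffices to show that $\UST_{G_n}[\,\varphi^{-1}(\tau)_Q = \xi\,]$ converges as $n\to\infty$. By the inverse burning map (Exercise~\ref{ex:inverse}, where $x\in B_t$ iff $\dist_\tau(s,x)=t$), the restriction $\varphi^{-1}(\tau)_Q$ is a deterministic function of $\tau$: for $x\in V_n$ one has $\eta(x) = \deg_G(x) - m_x(\tau) + i_x(\tau)$, where $m_x$ counts the $G$‑neighbours $y$ of $x$ with $\dist_\tau(s,y) < \dist_\tau(s,x)$, and $i_x$ is the rank, in the fixed ordering $\prec_x$, of the out‑edge $\rho(x)$ among the edges from $x$ to neighbours at tree‑distance exactly $\dist_\tau(s,x)-1$. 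The difficulty is that $\eta(x)$ is \emph{not} a local function of $\tau$: the needed comparisons of $\dist_\tau(s,\cdot)$ can depend on tree structure far from $Q$.

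\textbf{The local (cylinder) part.} First I would localize. For $y\sim x$ note that $\dist_\tau(s,y) - \dist_\tau(s,x)$ equals $\dist_\tau(y,w)-\dist_\tau(x,w)$, where $w$ is the vertex at which the $\tau$‑paths from $x$ and from $y$ towards $s$ first merge; this difference, together with whether it equals $-1$, can be read off $\tau$ restricted to the finite subtree spanned by $\{x\}\cup\{y:y\sim x\}$ and these merge points, \emph{provided} we can identify the $s$‑ward direction inside that subtree. Accordingly, for $R\ge 1$ let $\mathcal G_R$ be the event that, for every $x\in Q$, the component $T$ of $\tau\cap B(Q,R)$ containing $x$ also contains every $G$‑neighbour of $x$ and meets $\partial B(Q,R)$ in a single vertex (for $n$ large, $s\notin B(Q,R)$, so this forces $T$ to be correctly oriented towards its unique exit). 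On $\mathcal G_R$, both $\mathcal G_R$ itself and $\{\eta_Q = \xi\}$ are cylinder events in the fixed finite edge set of $B(Q,R)$, so $\UST_{G_n}[\mathcal G_R,\ \eta_Q=\xi] \to \WSF[\mathcal G_R,\ \eta_Q=\xi]$, while $\UST_{G_n}[\mathcal G_R^{c},\ \eta_Q=\xi] \le \UST_{G_n}[\mathcal G_R^{c}] \to \WSF[\mathcal G_R^{c}]$. This gives $\limsup_n$ and $\liminf_n$ of $\nu_n[\eta_Q=\xi]$ within $\WSF[\mathcal G_R^{c}]$ of the same number. One‑endedness of $\WSF$ enters here: for two $G$‑neighbours lying in a \emph{common} $\WSF$ tree, their rays to infinity have finite symmetric difference, hence merge after finitely many steps, so $\WSF[\mathcal G_R^{c}\mid \text{all relevant pairs share a tree}]\to 0$ as $R\to\infty$.

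\textbf{The main obstacle: $G$‑neighbours in distinct $\WSF$ trees.} When $\WSF$ has more than one tree (e.g.\ $\Zd$, $d\ge 5$), a $G$‑neighbour $y$ of $x\in Q$ may lie in a different tree; then the $\tau_n$‑merge point of their $s$‑rays escapes to infinity, the comparison $\dist_{\tau_n}(s,y)$ versus $\dist_{\tau_n}(s,x)$ genuinely fluctuates with $n$, and $\WSF[\mathcal G_R^{c}]$ does \emph{not} tend to $0$. This is the step I expect to be hardest, and the one that cannot be bypassed by a uniform cylinder approximation. I would handle it by generating $\tau_n$ via Wilson's algorithm rooted at $s$, started from $Q$ and its neighbours: the LERW paths produced are the $s$‑ward rays, and they converge (locally) to the corresponding $\WSF$ rays. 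For each pair $x\sim y$ in distinct $\WSF$ trees, the residual datum governing their contribution to $\eta(x)$ is a $\{<,>\}$‑valued quantity recording which of the two finite rays is longer before they close up near $\partial V_n$, plus the (a priori possible) event that the two lengths differ by exactly~$1$. One shows, using estimates on the LERW in $G_n$ near $\partial V_n$ uniform in $n$, that the ``differ by exactly $1$'' event has probability $\to 0$ (so such a $y$ is, in the limit, never counted in $F_x$), and that the joint law over all finitely many such pairs touching $Q$ of these comparisons, \emph{given} the local picture $\tau_n\cap B(Q,R)$, converges as $n\to\infty$ (and is exhaustion‑independent). Feeding $\WSF$ together with these limiting ``boundary‑closing coins'' into the inverse burning formula defines the limit $\nu$, and exhaustion‑independence of $\WSF$ and of the coins yields exhaustion‑independence of $\nu$; letting $R\to\infty$ in the displayed bound completes the proof.
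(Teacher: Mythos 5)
Your overall strategy --- push $\nu_n$ through the burning bijection and use $\UST_{G_n}\Rightarrow\WSF$ together with one-endedness --- is the right starting point, but as written the argument has two genuine gaps, both stemming from your use of the \emph{unanchored} bijection, under which $\eta(x)$ is read off from comparisons of $\dist_\tau(s,\cdot)$ between $x$ and its neighbours. First, the localization event $\mathcal{G}_R$ does not do what you need: the component of $\tau\cap B(Q,R)$ containing $x$ will typically meet $\partial B(Q,R)$ in \emph{many} vertices once $R$ is large (every branch of the tree near $x$ that reaches distance $R$ contributes one), so $\WSF[\mathcal{G}_R]$ does not tend to $1$ even on the event that all relevant vertices share a tree, and the quantity you are trying to control does not vanish. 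Identifying the $s$-ward direction by a cylinder event needs a different formulation. Second, and more seriously, the ``boundary-closing coins'' step is not a routine estimate but is essentially the whole difficulty of the theorem in your framework: you would need anticoncentration for differences of LERW path lengths to the boundary (to kill the ``differ by exactly $1$'' event), \emph{and} convergence plus exhaustion-independence of the joint law of the sign comparisons for neighbours of $Q$ lying in distinct $\WSF$ components, conditionally on the local picture. Neither is established in your argument, and the second claim is comparable in strength to the statement being proved.

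The paper's proof sidesteps both issues by anchoring the burning bijection at $Q$ (as in Section \ref{sssec:looping}): Phase I burns everything it can without touching $Q$, and Phase II burns the rest. All Phase-I-burnt neighbours of $Q$ are lumped together at Phase-II time $0$, so $\eta_Q$ depends on the tree only through the set $W_n$ of Phase-II-burnt vertices --- which equals the descendant set of $Q$ (cf.\ Exercise \ref{ex:desc}) --- and the tree structure inside $W_n$. The event $\{W_n=W\}$ for finite $W$ \emph{is} a cylinder event of the spanning tree (it is determined by the tree edges incident to $W$), the conditional law of $\eta_Q$ given $\{W_n=W\}$ does not depend on $n$ (Lemma \ref{lem:cond-indep}), and one-endedness enters only to guarantee that the limiting descendant set of $Q$ is a.s.\ finite, giving tightness of the laws of $W_n$. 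In particular the troublesome comparisons between neighbours in distinct $\WSF$ trees never arise. If you want to salvage your route, replacing $\varphi$ by the $Q$-anchored bijection $\varphi_Q$ is the missing idea.
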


The main idea of the proof is a decomposition of the burning
bijection of Majumdar and Dhar into two phases. Such a 
decomposition was used in \cite{Pr94} when $Q = \{ o \}$
(see Section \ref{ssec:height-123}), and is also implicit 
in \cite{MD91}. 
Fix a finite set $Q \subset V$, and suppose that our aim is to
show that under $\nu_n$, the marginal distribution of the 
sandpile in $Q$ converges as $n \to \infty$. 

\medbreak

{\bf Burning bijection anchored at $Q$.} We split the burning 
process into two phases.

\emph{Phase I.} Follow the usual burning process with the 
restriction that no vertex of $Q$ is allowed to burn. Phase I ends 
when there are no more vertices that can be burnt this way.

\emph{Phase II.} Now follow the usual burning process
to burn the remaining vertices.

A formal definition can be given along the lines of the case
$Q = \{ o \}$ presented in Section \ref{ssec:height-123}.

\medbreak

Given $\eta \in \cR_{G_n}$, let $W_n(\eta)$ denote the 
set of vertices that are burnt in Phase II (so that 
$Q \subset W_n(\eta) \subset V_n$). A bijection 
$\varphi_Q : \cR_{G_n} \to \cT_{G_n}$ can be constructed 
from the above burning rule similarly to Sections 
\ref{ssec:burning} and \ref{ssec:height-123}.

The proof of Theorem \ref{thm:measure} is based on 
the following lemma that we state without proof; see \cite{JW12}.

\begin{lemma}
\label{lem:cond-indep}
Fix $W$ such that $Q \subset W \subset V_n$. 
Under $\nu_n$, the restrictions of sandpile to $V \setminus W$ 
and $W$, respectively, are conditionally independent, given 
the event $\{ \eta : W_n(\eta) = W \}$.
\end{lemma}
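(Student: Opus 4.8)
The plan is to show that the event $\{\eta : W_n(\eta) = W\}$, \emph{intersected with} the support $\cR_{G_n}$ of $\nu_n$, is a \emph{Cartesian product} of an event depending only on the restriction $\eta|_U$, where $U := V_n \setminus W$, and an event depending only on $\eta|_W$. Since $\nu_n$ is the uniform measure on $\cR_{G_n}$ (Section~\ref{ssec:stationary}), conditioning on a product-shaped subset of $\cR_{G_n}$ yields a product measure, which is precisely the asserted conditional independence. So it suffices to exhibit $\cR_{G_n}\cap\{W_n=W\}$ as a product set. I would first extend $W_n$ to all of $\Omega_{G_n}$: for any stable $\eta$, run the Phase~I burning (the usual burning process in which no vertex of $Q$ is ever burnt), let $B^{\mathrm I}(\eta)\subseteq V_n$ be the burnt set, and put $W_n(\eta):=V_n\setminus B^{\mathrm I}(\eta)$; for recurrent $\eta$ this agrees with the original definition. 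Write the "$W$-forbidden burning" for the variant in which no vertex of $W$ is ever burnt. The elementary observation is that the $Q$-forbidden (Phase~I) and $W$-forbidden processes \emph{coincide for as long as no vertex of $W$ has burnt}, because the only way they could first differ is that the $Q$-forbidden process burns some $w\in W\setminus Q$, which the $W$-forbidden process never does.

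Using this I would prove the two-sided characterization
\[
W_n(\eta) = W
\iff
\big[\,\text{the $W$-forbidden burning of $\eta$ burns all of $U$}\,\big]
\ \text{and}\
\big[\,\eta(w) < \deg_W(w)\ \text{for all } w \in W\setminus Q\,\big],
\]
where $\deg_W(w)$ is the degree of $w$ in the subgraph of $G_n$ induced by $W$. For the forward direction: if $W_n(\eta)=W$ then no vertex of $W$ burns in Phase~I, so Phase~I \emph{is} the $W$-forbidden burning, giving the first clause; and if some $w\in W\setminus Q$ had $\eta(w)\ge\deg_W(w)$, then after all of $U$ has burnt in Phase~I the vertex $w$ would be burnable, contradicting $W_n(\eta)=W$. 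For the converse: under the two clauses, suppose toward a contradiction that some vertex of $W$ burns in Phase~I and let $w$ be the first; by the coincidence observation, Phase~I up to that step equals the $W$-forbidden burning, so $\eta(w)\ge \deg_{\mathcal U}(w)\ge\deg_W(w)$ for the then-current unburnt set $\mathcal U\supseteq W$, contradicting the second clause. Hence no vertex of $W$ burns, Phase~I equals the $W$-forbidden burning, and the first clause gives that Phase~I burns exactly $U$, i.e.\ $W_n(\eta)=W$. Crucially, the $W$-forbidden burning never touches $W$, so the set of vertices it burns is an explicit function of $\eta|_U$ alone; thus the first clause is an event $\mathcal B_U$ on $\eta|_U$ and the second is an event $\mathcal B_W$ on $\eta|_W$, and $\{W_n=W\}=\mathcal B_U\times\mathcal B_W$.

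Next I would bring in recurrence via the burning-algorithm characterization of Exercise~\ref{ex:comb}: $\eta\in\cR_{G_n}$ iff $\eta$ is ample for every $\es\ne F\subseteq V_n$. The key point is that $\eta|_U\in\mathcal B_U$ already forces ampleness for every $F$ that meets $U$. Indeed, running the $W$-forbidden burning (which burns all of $U$ and no vertex of $W$), pick the first vertex $x\in F\cap U$ to burn; at the step just before, all of $F$ is still unburnt (vertices of $F\cap W$ never burn, vertices of $F\cap U$ have not yet burnt), so $\eta(x)\ge\deg_{\mathcal U}(x)\ge\deg_F(x)$, witnessing ampleness for $F$ — the same mechanism as in the proof of Lemma~\ref{lem:ample}. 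Consequently, given $\eta|_U\in\mathcal B_U$, we have $\eta\in\cR_{G_n}$ iff $\eta$ is ample for every $\es\ne F\subseteq W$, a condition involving only $\eta|_W$. Combining this with the previous paragraph,
\[
\cR_{G_n}\cap\{W_n=W\}
= \mathcal B_U \times \big(\mathcal B_W \cap \{\,\eta|_W \text{ ample for all } \es\ne F\subseteq W\,\}\big),
\]
a Cartesian product in the $(\eta|_U,\eta|_W)$-coordinates.

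Finally, if this product set is empty there is nothing to prove; otherwise, since $\nu_n$ is uniform on $\cR_{G_n}$, the conditional law of $\eta$ given $\{W_n(\eta)=W\}$ is uniform on $\mathcal A_U\times\mathcal A_W$ (with $\mathcal A_U,\mathcal A_W$ the two factors), hence is the product of the uniform laws on $\mathcal A_U$ and on $\mathcal A_W$; so $\eta|_U$ and $\eta|_W$ are conditionally independent given $\{W_n(\eta)=W\}$. The only genuinely substantive ingredient is the product-set structure, and within it the ampleness-transfer argument of the third paragraph — that "all of $U$ burns" (a constraint on $\eta|_U$) already certifies ampleness for every $F$ touching $U$ — which is the place where the two-phase geometry of the anchored burning is really used; everything else is bookkeeping with the burning dynamics.
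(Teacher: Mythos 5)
Your proof is correct. The paper states Lemma \ref{lem:cond-indep} without proof, deferring to \cite{JW12}, so there is no in-paper argument to compare against; your route --- exhibiting $\cR_{G_n}\cap\{\eta: W_n(\eta)=W\}$ as a Cartesian product via the two-sided characterization of $\{W_n=W\}$ in terms of the $W$-forbidden burning, plus the observation that ``all of $U$ burns'' already certifies ampleness for every $F$ meeting $U$ (so that recurrence reduces, via Exercise \ref{ex:comb}, to ampleness for subsets of $W$) --- is essentially the intended argument, and each step (the coincidence of the $Q$- and $W$-forbidden processes while $W$ is untouched, the monotonicity $\deg_{\mathcal U}(x)\ge\deg_F(x)$ for $F\subseteq\mathcal U$, and uniformity of $\nu_n$ on $\cR_{G_n}$) checks out.
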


\begin{proof}[Sketch of the proof of Theorem \ref{thm:measure}.]
Due to Lemma \ref{lem:cond-indep} we can write
\eqnsplst
{ \nu_n [ \eta : \eta_Q = \xi ]
  &= \sum_{Q \subset W \subset V_n} \ \nu_n [ \eta : W_n(\eta) = W,\, \eta_Q = \xi ] \\
  &= \sum_{Q \subset W \subset V_n} \ \nu_n [ \eta: W_n(\eta) = W ] \, p_{W,\xi}, }
where the numbers $p_{W,\xi}$ do not depend on $n$. On the other hand,
the event $\{ W_n = W \}$ is spanning tree local, and hence
$\nu_n [ W_n = W ] \to q_W$, for some numbers $q_W$. This suggests
that 
\eqnst
{ \nu [ \eta_Q = \xi ] 
  = \sum_{\substack{Q \subset W \subset V \\ \text{$W$ finite}}} \ q_W \, p_{W,\xi}. }
The proof can be completed by showing that the random sets $W_n$ 
converge weakly to a limit $W_\infty$ that is a.s.~finite. 
For this, the following property is key: in the \emph{first step} of Phase II, 
no vertex in $W_n \setminus Q$ can burn. Indeed, in Phase I we have 
examined such vertices, and they were found to be not burnable.
This implies that the spanning tree will contain no edges
between $W_n \setminus Q$ and $V \setminus W_n$. In fact, 
$W_n$ equals the set of all descendants of $Q$ under the bijection;
see Exercise \ref{ex:desc}. Due to the one end hypothesis of the
theorem, the set of descendants of $Q$ is finite $\WSF$-a.s. 
From this one can conclude the convergence $W_n \Rightarrow W_\infty$,
where $W_\infty$ is the set of all descendants of $Q$ in the wired
spanning forest.
\end{proof}

The following theorem shows that on transient graphs at least,
the \textbf{sandpile measure} $\nu$ constructed in 
Theorem \ref{thm:measure} is nicely behaved, in that it
has finite avalanches. The theorem can be proved along the 
lines of \cite{JR08}*{Theorem 3.11}, although that proof was
stated in $\Z^d$. Cases of graphs more general than $\Z^d$ but 
with more restrictive assumptions than those of 
Theorem \ref{thm:measure} were considered in \cite{J12}.

\begin{theorem}\cite{JR08}*{Theorem 3.11}
\label{thm:finite}
Assume the hypotheses of Theorem \ref{thm:measure}. If in addition
$G$ is transient, then for $\nu$-a.e.~$\eta$ and all $x \in V$, 
the configuration $\eta + \mathbf{1}_x$ can be stabilized with 
finitely many topplings.
\end{theorem}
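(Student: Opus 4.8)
The plan is to realise the stabilization of $\eta+\one_x$ on $G$ as a monotone limit of stabilizations performed inside the finite wired graphs $G_m$, to bound the resulting odometer by the Green function via Dhar's formula and transience, and then to rule out that topplings leak out to infinity.

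\emph{Step 1 (reduction to an odometer statement).} For a configuration $\eta\in\prod_{y\in V}\{0,\dots,\deg_G(y)-1\}$ and $m\ge 1$ write $u_m(\cdot)=u_m(\cdot;x,\eta)$ for the vector of toppling numbers obtained by stabilizing $(\eta+\one_x)|_{V_m}$ on $G_m$, i.e.\ by toppling only vertices of $V_m$ and letting $V\setminus V_m$ absorb particles; by Theorem~\ref{thm:stabilization} applied to $G_m$ this is well defined. The least action principle (the exercise following Theorem~\ref{thm:stabilization}; see also \cite{FLP10}) gives the monotonicity: if $V_m\subset V_N$, then stabilizing any configuration on $G_N$ topples every vertex of $V_m$ at least as often as stabilizing its restriction on $G_m$. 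In particular $u_m\le u_{m+1}$ pointwise, so $u_m\uparrow u_\infty:=\sup_m u_m\in[0,\infty]^V$. A short argument, using that the $u_m$ are integer valued with bounded total mass, shows that $\eta+\one_x$ can be stabilized with finitely many legal topplings \emph{if and only if} $\sum_{y\in V}u_\infty(y;x,\eta)<\infty$: when the sum is finite, $u_m$ is eventually equal to $u_\infty$, and once $m$ is large enough that $\mathrm{supp}(u_\infty)$ lies in the interior of $V_m$, the $V_m$-windowed stabilization loses nothing to $V\setminus V_m$ and already produces a configuration stable on all of $V$; the converse is analogous. Hence it suffices to prove that $\sum_{y}u_\infty(y;x,\cdot)<\infty$ for $\nu$-almost every $\eta$.

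\emph{Step 2 (a uniform first moment bound — where transience is used).} Fix $m$ and $y\in V_m$. The map $\eta\mapsto u_m(y;x,\eta)$ depends only on $\eta|_{V_m}$, is bounded and locally constant, and by the monotonicity of Step~1 is dominated, for every $N\ge m$, by $\eta\mapsto n_{G_N}(x,y;\eta)$, the number of topplings at $y$ in the stabilization of $\eta+\one_x$ on $G_N$. Since $\nu_N\Rightarrow\nu$ (Theorem~\ref{thm:measure}) and Dhar's formula (Theorem~\ref{thm:Dhar's-formula}) gives $\E_{\nu_N}[n_{G_N}(x,y;\cdot)]=G_N(x,y)$, one obtains
\eqnst
{ \E_{\nu}\big[u_m(y;x,\cdot)\big]
  \;=\; \lim_{N\to\infty}\E_{\nu_N}\big[u_m(y;x,\cdot)\big]
  \;\le\; \liminf_{N\to\infty} G_N(x,y)
  \;=\; G(x,y), }
where $G$ is the Green function of the random walk on $G$, which is \emph{finite} precisely because $G$ is transient. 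Letting $m\to\infty$ and applying monotone convergence yields $\E_\nu[u_\infty(y;x,\cdot)]\le G(x,y)<\infty$; in particular, $\nu$-a.s.\ every vertex topples only finitely often when $\one_x$ is added.

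\emph{Step 3 (ruling out escape to infinity — the main obstacle).} What remains is to improve ``every vertex topples finitely often'' to ``the total number of topplings is finite.'' This is the crux, and it cannot be reached by first moments alone: if $A_m$ denotes the event that some vertex outside $V_m$ topples (with $\partial V_m$ the inner vertex boundary of $V_m$), the union bound only gives $\nu[A_m]\le\sum_{y\in\partial V_m}G(x,y)$, which on $\Z^d$ is of order $m$ and does not tend to $0$; moreover, by Theorem~\ref{thm:prob-toppling-nu} the true per-site toppling probabilities are of the same order, so no estimate obtained by summing over a sphere of radius $m$ can work. (That the implication ``finite at every vertex $\Rightarrow$ finitely many topplings'' genuinely fails for some stable configurations is seen, e.g., from a configuration that is maximally loaded along an infinite self-avoiding path starting at $x$ and empty elsewhere: a single addition then topples each vertex of the path exactly once; such configurations are not ample for all finite sets, hence $\nu$-null, and the content of the theorem is exactly that a $\nu$-typical configuration carries enough under-loaded sites near $x$ to absorb the added particle.) Following \cite{JR08}*{proof of Theorem~3.11}, I would establish $\nu[A_m]\to 0$ by working directly in the finite graphs $G_N$ and combining Dhar's formula with transience and with finite-volume conservation identities — in particular the fact that $\nu_N$-invariance of $E_x$ (Exercise~\ref{ex:inv-addition}) forces the expected mass transported to $s$ per addition to equal $1$ — in a way that bypasses the naive union bound; since the $A_m$ are decreasing and, by Step~2, $\bigcap_m A_m$ equals $\{\text{infinitely many topplings}\}$ up to a $\nu$-null set, this yields the claim. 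I expect carrying out this escape/absorption estimate uniformly in the exhaustion to be the hard part, and it is the reason the theorem assumes transience in addition to the one-end property of $\WSF$.
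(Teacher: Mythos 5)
Your Steps 1 and 2 are sound and coincide with the first part of the paper's argument: Dhar's formula plus transience gives $\E_\nu[n(x,y;\cdot)] \le G(x,y) < \infty$, hence $\nu$-a.s.\ every \emph{fixed} vertex topples finitely often. You also correctly diagnose that this is not enough, since $\sum_y G(x,y) = \infty$, and that no union bound over spheres can close the gap. But at exactly this point — the crux of the theorem — your Step 3 stops being a proof: you announce that you ``would establish $\nu[A_m]\to 0$'' via Dhar's formula, transience and conservation identities, without saying how, and the one ingredient you invoke (expected mass to the sink per addition equals $1$) is another first-moment identity of the same kind you have just argued cannot suffice. Tellingly, your argument never uses the one-end property of the $\WSF$, which is a standing hypothesis of the theorem and must enter somewhere.

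The missing idea in the paper is the decomposition of the avalanche into \emph{waves} (Ivashkevich--Ktitarev--Priezzhev): topple $x$ once and then all other vertices that become unstable, but do not let $x$ topple again; the set of vertices toppled is the first wave, and each vertex topples at most once in it; iterate. This splits the finiteness question into two parts, each of which is accessible. First, the number of waves equals $n(x,x;\cdot)$, whose expectation is $G(x,x)<\infty$ by transience, so there are a.s.\ finitely many waves — note this only requires the first-moment bound at the \emph{single} site $x$, which is exactly what Step 2 delivers. Second, each individual wave is a.s.\ finite; this is where the one-end property is used, via the bijection between waves and two-component spanning forests of $G_n$ separating $x$ from $s$ (the wave corresponds to the finite set of descendants of $x$, which is a.s.\ finite when every tree of the $\WSF$ has one end, as in the proof of Theorem \ref{thm:measure}). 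Without the wave decomposition, or some equivalent device converting ``finitely many topplings at each site'' into ``finitely many sites toppling,'' your proof does not reach the conclusion.
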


\begin{proof}[Idea of the proof.]
On transient graphs, $\E_\nu [ n(x,y,\cdot) ] = G(x,y) < \infty$.
Hence $\nu$-a.s., every site topples finitely many times, when a 
particle is added at $x$. However, this is not enough, since we may
still have infinitely many vertices toppling (note that 
$\sum_{y \in V} G(x,y) = \infty$). 

In order to show that only finitely many vertices topple, one can 
use a decomposition of the avalanche into \textbf{waves}, 
introduced by Ivashkevich, Ktitarev and Priezzhev \cite{IKP94}. 
Waves are defined as follows. After 
we added a particle at $x$, topple $x$, and all other vertices 
that can be toppled, but do not allow $x$ to topple a second time.
It is not difficult to see that each vertex topples at most once
under this restriction. The set of vertices that toppled is called
the \textbf{first wave}. After the first wave, if $x$ is still
unstable (this will be the case if and only if all of its
neighbours were in the first wave), topple $x$ a second time and 
topple all other vertices that can be toppled, not allowing $x$
to topple a third time. This is called the \textbf{second wave}, etc.
Ivashkevich, Ktitarev and Priezzhev show that the ensemble of 
all possible waves started at $x$ is in bijection with the ensemble of 
all spanning forests of $G_n$ with two components such that $x$ and 
$s$ are in distinct components. 

The expected number of waves under $\nu$ is finite:
$\nu [ n(x,x,\cdot) ] = G(x,x) < \infty$; see \cite{JR08}.
Therefore, it is sufficient
to show that each wave is finite $\nu$-a.s. The latter can be deduced
from the one end assumption.
\end{proof}

On recurrent graphs, finiteness of avalanches is much more subtle,
and this is largely open.

\begin{open}
Consider $\Z^2$. Is it true that for $\nu$-a.e.~$\eta$ the 
configuration $\eta + \mathbf{1}_0$ can be stabilized 
with finitely many topplings? Note that for \emph{all} $x \in \Z^2$
we have
\eqnst
{ \E_\nu [ n(o, x; \cdot) ]
  = \lim_{n \to \infty} \E_{\nu_n} [ n(o, x; \cdot) ]
  = \lim_{n \to \infty} G_n(o,x)
  = \infty. }
Here the proof of the first equality is not immediate; see \cite{BHJ17}.
\emph{On average} every vertex topples infinitely often.
\end{open}

On graphs of the form $\Z \times G_0$, where $G_0$ is a finite connected
graph, avalanches are not finite in general. When $G_0$ consists of a single 
vertex, this follows from the fact that $\nu$ concentrates on the 
single configuration $\eta \equiv 1$ (and hence all vertices
topple infinitely often); see Exercise \ref{ex:1D} and \cite{MRSV00}. 
When $G_0$ has at least two vertices, the stationary distributions
do not have a unique weak limit point. Let $G_{-n,m}$ denote the
wired graph constructed from $\{ -n, -n+1, \dots, m-1, m \} \times G_0$, and
let $\nu_{-n,m}$ denote the stationary distribution of the sandpile 
on $G_{-n,m}$. It can be shown \cite{JL07} that there are two distinct ergodic weak limit
points of $\nu_{-n,m}$, as $n, m \to \infty$. This arises from the
fact that the burning process on $G_{-n,m}$ operates both from the left 
and the right end of the graph. A typical recurrent configuration
has a ``left-burnable'' and a ``right-burnable'' region, and these 
give rise to two distinct ergodic sandpile measures $\nu^L$ and $\nu^R$.
With respect to either $\nu^L$ and $\nu^R$, there is a strictly positive probability 
of both finite and infinite avalanches; see \cite{JL07}.

\subsection{The sandpile group of infinite graphs}

The sandpile measure $\nu$ is supported on the closed set
\eqnst
{ \cR
  = \left\{ \eta \in \prod_{x \in V} \{ 0, \dots, \deg(x)-1\} : 
    \parbox{4cm}{$\eta$ is ample for $F$ for every finite $\es \not= F \subset V$} \right\}. }
This follows from Exercise \ref{ex:comb} and weak convergence of $\nu_n$ to $\nu$. 
When avalanches are $\nu$-a.s.~finite, the addition operators
$E_x$ are defined $\nu$-a.e.~for all $x \in V$. It can be shown
that they leave $\nu$ invariant and the Abelian property holds:
$E_x E_y = E_y E_x$; see \cite{JR08}. Hence the addition operators 
generate an Abelian group of measure-preserving transformations of
$(\cR, \nu)$. [After the first version of these notes appeared
on arXiv, E.~Verbitsky (private communication) pointed 
out to us that in the case $V = \Z^d$, this group is isomorphic 
to  $\Z [ u_1^{\pm 1}, \dots, u_d^{\pm 1} ] / (f)$, where 
$(f)$ is the ideal generated by the polynomial 
$2d - \sum_{j=1}^d (u_j + u_j^{-1})$.]

The case that is perhaps best understood is sandpiles that dissipate particles
on every toppling. 
For $d \ge 1$ and an integer $\gamma \ge 1$ we define the 
\textbf{dissipative sandpile} with bulk dissipation $\gamma$ as follows.
Let $V_n \subset \Z^d$ be finite, and let 
$G^{(\gamma)}_n = (V_n \cup \{ s \}, E^{(\gamma)}_n)$ denote the 
graph obtained from the wired graph $(V_n,E_n)$ by adding $\gamma$
edges between each $x \in V_n$ and $s$. That is, a vertex $x$ in
configuration $\eta$ will be stable when $\eta(x) < 2d + \gamma$, and
when an unstable vertex is toppled, it sends $\gamma$ particles 
to the sink, in addition to sending one particle to each of its 
neighbours. The effect of toppling can be formally written 
in terms of the graph Laplacian of $G^{(\gamma)}_n$. This is
\eqnst
{ \Delta^{(\gamma)}_{xy}
  = \begin{cases}
    2d + \gamma & \text{if $x = y$;} \\
    -1          & \text{if $x \sim y$;} \\
    0           & \text{otherwise.}
    \end{cases} }
Then 
\eqnst
{ T_x \eta(y)
  = \eta(y) - \sum_{z \in V_n} \ \Delta^{(\gamma)}_{xz}, \quad x,y \in V_n. }
Maes, Redig and Saada \cite{MRS04} show that Dhar's the formalism of the sandpile group
carries through in the limit $V_n \uparrow \Z^d$, and the limiting sandpile measure
$\nu$ can be identified with Haar measure on a compact Abelian group. More precisely, 
the following is proved in \cite{MRS04}. Let 
\eqnsplst
{ \cR^{(\gamma)}
  &= \left\{ \eta \in \{ 0, \dots, 2d+\gamma-1 \}^{\Z^d} :
    \parbox{4cm}{$\eta$ is ample for $F$ for every finite $\es \not= F \subset \Z^d$} \right\} \\
  &= \left\{ \eta \in \{ 0, \dots, 2d+\gamma-1 \}^{\Z^d} :
    \parbox{4.2cm}{for every finite $\es \not= F \subset \Z^d$ there exists
    $x \in F$ such that $\eta(x) \ge \deg_F(x)$} \right\}. }
Introduce the following equivalence relation on $\cR^{(\gamma)}$. We say that 
$\eta \sim \zeta$, if there exists $m : \Z^d \to \Z$ such that 
$\eta - \zeta = \Delta^{(\gamma)} m$. Let $[\eta]$ denote the equivalence class of $\eta$.
It is shown that $\cR / \!\sim = \{ [ \eta ] : \eta \in \cR \}$ is a compact Abelian group.  
It is also shown that $\nu_n$ converges weakly to a measure $\nu$ that concentrates on 
$\cR^{(\gamma)}$, and that for $\nu$-a.e.~$\eta$ one has $[ \eta ] = \{ \eta \}$.
Finally, it is shown that $\nu$ projected to
$\cR / \sim$ is the Haar measure. 
See \cite{SV09} for interesting properties of $\cR$ and $\cR^{(\gamma)}$ related 
to the above.

\section{Stabilizability of infinite configurations}
\label{sec:infinite-conf}

In Theorem \ref{thm:finite} we saw that under certain conditions
we can add particles to a $\nu$-typical configuration, and only
finitely many topplings result a.s. A more general question that 
is interesting in its own right is: what infinite configurations can 
be stabilized (in some appropriate sense)? A more basic question 
that is still not fully understood is: what happens if we add a single column of 
$n$ particles to a stable background configuration, and attempt to 
stabilize?

\subsection{Relaxing a column of particles}
\label{ssec:column}

A recent survey that covers the topics in this section is \cite{LP17}.
If we start with a large number of particles at the origin and stabilize, 
what will be the shape of the region visited by the particles?
We collect some results on this question in three related models.
Striking computer simluations of these questions are available:
see for example \cites{LP10,LP09,PS13,LP17}.

\subsubsection{Three models}

{\bf A. Sandpile.} Start with $n$ particles at $0 \in \Z^d$, and no particles
elsewhere. Now stabilize via topplings. Let 
\eqnst
{ S_n
  = \{ x \in \Z^d : \text{$x$ was visited by a particle during stabilization} \}. }
More generally: start with $h$ particles at each $x \in \Z^d \setminus \{ 0 \}$,
where $h \le 2d - 2$ (the case $h = 2d - 1$ being trivial). Here $h$ is 
allowed to be \emph{negative}, that is, we allow a ``hole'' of depth $|h|$
that first has to be ``filled'', before topplings can occur. Let $S_{n,h}$
denote the set of vertices visited.

\medbreak

{\bf B. Rotor-router.} Start with $n$ particles at the origin and 
arbitrary initial rotors everywhere on $\Z^d$. Each particle in turn 
follows rotor-router walk until it arrives at a vertex that has not 
been visited before, and there it stops. Let 
\eqnst
{ A_n
  = \{ \text{vertices occupied after all particles stopped} \}. }

\medbreak

{\bf C. Divisible sandpile.} Start with a non-negative real mass $m$
at the origin and no mass anywhere else. If $x \in \Z^d$ has mass
$\ge 1$, distribute the mass in excess of $1$ equally among the neighbours. 
In this model topplings do not commute, but the stabilization is still
well-defined; see \cite{LP09}. Let 
\eqnst
{ D_m
  = \{ x \in \Z^d : \text{$x$ has final mass $=1$} \}. }
Heuristically, this model corresponds to 
taking $n = m |h|$ in Model A and letting $h \to -\infty$.

\subsubsection{Shape theorems / shape estimates}

Let us write $B_r = \{ x \in \Z^d : |x| < r \}$ and let 
$\omega_d = \text{volume of the unit ball in $\R^d$}$.
Levine and Peres \cites{LP09,LP17} show that the rotor-router model
and the divisible sandpile satisfy spherical shape theorems
in the strong sense that there exist $c, c' > 0$ such that 
if $m = \omega_d r^d$ then 
\eqnst
{ B_{r - c \log r} \subset A_m \subset B_{r + c' \log r}; }
and there exist $c, c' > 0$ depending on $d$ such that 
if $m = \omega_d r^d$ then 
\eqn{e:strong-circ}
{ B_{r-c} \subset D_m \subset B_{r+c'}. }

Simulations suggest that for the sandpile the asymptotic shape 
is \emph{not} circular. Levine and Peres \cite{LP09} prove 
that if $-h \ge 2-2d$, then 
\eqnst
{ B_{c_1 r - c_2} \subset S_{n,h}, }
where $c_1 = (2d-1-h)^{-1/d}$ and $c_2$ only depends on $d$.
Also, when $-h \ge 1-d$, then for every $\eps > 0$ they get 
\eqnst
{ S_{n,h} \subset B_{c'_1 r + c'_2}, }
where $c'_1 = (d - \eps - h)^{-1/d}$, and $c'_2$ depends only on
$d$, $h$ and $\eps$.
The inner and outer bounds approach each other as 
$h \downarrow -\infty$. This reinforces the idea that this 
limit corresponds to the divisible sandpile, for which the
limit shape is circular in the strong sense \eqref{e:strong-circ}. 

For the values $d \le h \le 2d - 2$, Fey, Levine and Peres \cite{FLP10} 
prove an outer bound of a cube of order $n^{1/d}$: 
for any $\eps > 0$ they get
\eqn{e:outer-cube}
{ S_n \subset \{ x \in \Z^d : \| x \|_\infty \le r \}, }
where $r = \frac{d+\eps}{2d-1-h} (n/\omega_d)^{1/d}$. 

In the analysis of all three models, an important role is played by the
\textbf{odometer} function. In the case of the sandpile model, this is
the function $v_n(x) = \text{number of topplings at $x$}$.
Let $s_n(x) = (n \mathbf{1}_o)^\circ(x)$, $x \in \Z^d$, denote the stabilization of a 
pile of $n$ particles at $o$ (with no initial holes). Comparing the number of 
incoming and outgoing particles at $x \in \Z^d$, we have
\eqnst
{ s_n(x)
  = n \mathbf{1}_o(x) - 2d v_n(x) + \sum_{y : y \sim x} v_n(y)
  = n \mathbf{1}_o(x) - (\Delta v_n)(x), }
where $\Delta$ is the discrete Laplacian on $\Z^d$. Rearranging, we have
$\Delta v_n = n \mathbf{1}_o - s_n$, where $s_n$ is between $0$ and $2d-1$,
and hence is bounded in $n$. Thus heuristically, $v_n$ should be compared 
to the function $\Phi_n$ satisfying $\Delta \Phi_n = n \mathbf{1}_o$.

\subsubsection{Scaling limit of the final configuration}

In the sandpile model, simulations show intricate fractal
patterns in the final configuration reached from a
column of height $n$. Pegden and Smart \cite{PS13} prove that this pattern
has a scaling limit. In order to state their result, 
recall that $s_n(x)$ is the stabilized
configuration:
\eqnsplst
{ s_n(x) 
  = (n \mathbf{1}_0)^\circ(x), \quad x \in \Z^d. } 
Define the rescaled function
\eqnst
{ \bar{s}_n(x)
  = s_n(n^{1/d} x), \quad x \in n^{-1/d} \Z^d, }
and extend $\bar{s}_n$ to all of $\R^d$, in such a way that it is constant 
on each cube of the form $y+[-\frac{1}{2} n^{-1/d}, \frac{1}{2} n^{-1/d})^d$,
$y \in n^{-1/d} \Z^d$.

\begin{theorem}\cite{PS13}
There exists a unique $s \in L^\infty(\R^d)$, such that 
for all functions $\varphi$ continuous with compact support
we have
\eqnst
{ \int_{\R^d} \bar{s}_n\, \varphi\, dx \stackrel{n \to \infty}{\longrightarrow}
    \int_{\R^d} s\, \varphi\, dx. }
Moreover, $\int_{\R^d} s\, dx = 1$, $0 \le s \le 2d-1$, and $s$ 
vanishes outside some ball.
\end{theorem}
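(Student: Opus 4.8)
The plan is to pass to the \textbf{odometer function} rather than work with $s_n$ directly, since $s_n$ oscillates on the lattice scale. Let $u_n(x)$ be the number of topplings at $x$ during the stabilization of $n\one_0$; by Theorem~\ref{thm:stabilization} this is well-defined and finitely supported, and, writing $\Delta f(x) = \sum_{y \sim x}\big(f(y)-f(x)\big)$ for the lattice Laplacian on $\Zd$, each toppling records itself as the subtraction of a Laplacian row, so that exactly
\[
  s_n \;=\; n\one_0 + \Delta u_n .
\]
The least action principle (the exercise following Theorem~\ref{thm:stabilization}, applied on $\Zd$ and using that $u_n$ has finite support) identifies $u_n$ as the pointwise-smallest function $u\colon\Zd\to\{0,1,2,\dots\}$ with $n\one_0 + \Delta u \le 2d-1$ everywhere. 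It is this obstacle-type characterization that makes $u_n$, unlike $s_n$, amenable to a continuum limit.

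Next I would rescale: put $\bar u_n(x) := n^{-2/d}\, u_n\big(\lfloor n^{1/d}x\rfloor\big)$ for $x\in\R^d$. Three a priori inputs are needed. First, a support bound: $\bar u_n$ vanishes outside a fixed ball, which follows from the outer shape estimates of Levine--Peres and Fey--Levine--Peres quoted above, such as \eqref{e:outer-cube}. Second, away from the origin the rescaled odometer is uniformly bounded, $0\le \bar u_n \le C$ on $\R^d\setminus B_\rho$ for each fixed $\rho>0$, with a matching lower bound; near the origin $\bar u_n$ has the expected fundamental-solution singularity (of order $|x|^{2-d}$, resp.\ $\log(1/|x|)$ when $d=2$), which can be subtracted off explicitly. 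Third, a discrete gradient estimate $|u_n(x)-u_n(y)| = O(n^{1/d})$ for $x\sim y$ in the bulk, which after rescaling makes $\bar u_n$ (minus the singular part) uniformly Lipschitz on compact subsets of $\R^d\setminus\{0\}$. By Arzel\`{a}--Ascoli and a diagonal argument the family is precompact, so subsequential locally uniform limits $\bar u$ exist. A summation by parts then shows, for continuous $\varphi$ with compact support, that $\int_{\R^d}\bar s_n\,\varphi\,dx = \varphi(0) + n^{-1}\sum_{y}u_n(y)\,\Delta\big[\varphi(n^{-1/d}\cdot)\big](y)$; the discrete Laplacian of the slowly varying test function, scaled by $n^{-2/d}$, converges to $\Delta\varphi$, so the right side tends to $\varphi(0)+\int_{\R^d}\bar u\,\Delta\varphi\,dx$, and thus along the subsequence $\bar s_n \rightharpoonup s := \delta_0 + \Delta\bar u$ in the sense of the theorem. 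Conservation of particles under topplings gives $\int\bar s_n\,dx = n^{-1}\sum_y s_n(y) = 1$, hence $\int s = 1$; the bound $0\le s_n\le 2d-1$ passes to $0\le s\le 2d-1$ in the weak-$*$ limit; and the uniform support bound gives that $s$ vanishes outside a ball.

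The crux is to show that the subsequential limit $\bar u$ is \emph{unique}, so that the whole sequence converges and $s$ is unambiguously defined, together with the regularity $s\in L^\infty(\R^d)$. This cannot be done with the naive continuum obstacle problem (``least $\bar u\ge 0$ with $\delta_0+\Delta\bar u\le 0$''), whose solution would force $s$ to be constant on a ball, contradicting the fractal patterns seen in simulations. Instead, following Pegden--Smart, one observes that the discrete constraint $n\one_0+\Delta u_n\le 2d-1$ together with integrality of $u_n$ restricts the possible \emph{second differences} of $u_n$ far more severely than a bound on $\Delta u_n$ alone: the admissible local patterns of second differences lie in a specific finite set, and in the scaling limit this becomes a closed constraint $D^2\bar u(x)\in\Gamma$ for $x\ne 0$, with $\Gamma$ a certain closed set of symmetric matrices. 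One then shows that $\bar u$ is the least nonnegative function meeting this constraint away from the origin and carrying the prescribed point source there, equivalently the unique viscosity solution of the resulting degenerate-elliptic equation governed by $\Gamma$. Proving that the discrete problems converge to this equation, and establishing a comparison principle for it, is the hard part of the argument; the attendant interior regularity of the limiting equation then upgrades $s=\delta_0+\Delta\bar u$ to an $L^\infty$ function and completes the proof.
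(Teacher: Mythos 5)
Your proposal follows essentially the same route as the paper's sketch: work with the odometer, subtract off the discrete fundamental solution, extract subsequential limits of the rescaled odometer by compactness, identify $s$ via summation by parts, and reduce the whole theorem to uniqueness of the subsequential limit. Your account of that uniqueness step --- the integrality-forced constraint $D^2\bar u\in\Gamma$ for a closed (not finite) set of symmetric matrices $\Gamma$, handled in the viscosity sense via a comparison principle --- is if anything a more faithful rendering of \cite{PS13} than the survey's one-line appeal to ``regularity properties of weak solutions of $\Delta w=s$''.
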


\subsection{Explosions}
\label{ssec:explosions}

Given an unstable sandpile on $\Z^d$, we can attempt to stabilize it
by carrying out (legal) topplings in such a way that if at any time 
some vertex $x$ is unstable, then our procedure ensures that $x$ 
is toppled at a later time. Let us call such a toppling 
sequence \textbf{exhaustive}.

\begin{definition} We call a sandpile $\eta$ on $\Z^d$
\textbf{stabilizable}, if there exists an exhaustive toppling sequence
such that every vertex topples finitely often.
\end{definition}

\begin{definition}
A stable background configuration $\eta$ on $\Z^d$ is called 
\textbf{explosive}, if there exists $1 \le n < \infty$ such that
in attempting to stabilize $\eta + n \mathbf{1}_0$ all of 
$\Z^d$ topples. The background is called \textbf{robust}, if
there are finitely many topplings for all $n \ge 1$.
\end{definition}

{\bf Note:} Explosive implies that in fact all vertices 
topple infinitely many times.

\begin{example}
Write $\overline k$ for the configuration that equals the constant 
value $k$ everywhere. It is easy to see that $\overline{2d - 1}$ is explosive. 
On the other hand, $\overline{2d-2}$ is robust, due to \eqref{e:outer-cube}.
\end{example}

The following two examples, due to Fey, Levine and Peres \cite{FLP10},
show that there are robust configurations arbitrarily close to
$\overline{2d-1}$, and explosive ones arbitrarily close to 
$\overline{2d-2}$. For the first example, let 
\eqnst
{ \Lambda(m)
  = \{ x \in \Z^d : m \not\vert x_i,\, 1 \le i \le d \}, }
that is, remove from $\Z^d$ all vertices that have a coordinate 
divisible by $m$. Then for any $m \ge 1$ the background 
$\overline{2d-2} + \mathbf{1}_{\Lambda(m)}$ is robust;
see \cite{FLP10}*{Theorem 1.2}. For the second example, let 
\eqnst
{ \beta(x) 
  = \begin{cases}
    1 & \text{with probability $\eps$;} \\
    0 & \text{with probability $1 - \eps$.}
    \end{cases} }
Then for any $\eps > 0$, with probability $1$, 
the background $\overline{2d-2} + \beta$ is explosive; see \cite{FLP10}*{Proposition 1.4}.

\subsection{Ergodic configurations}

Finally, we state some results on stabilizability of sandpiles that are 
random samples from a translation invariant ergodic measure on 
$\{ 0, 1, 2, \dots \}^{\Z^d}$. It is tempting to assume that the
boundary for stabilizability would be given by whether the 
particle density is above or below the critical sandpile density 
$\rho_c = \E_\nu [ \eta(0) ]$. However, this is 
not so, even for product measures, as is demonstrated in various 
ways in \cite{FLW10}.

The following theorem states some results proved by
Fey and Redig \cite{FR05} and Meester and Quant \cite{MQ05}.

\begin{theorem}\cites{FR05,MQ05}
\label{thm:stabilize}
Let $\mu$ be a translation invariant ergodic measure on sandpiles on $\Z^d$. \\
(a) If $\E_\mu [ \eta(0) ] < d$, then $\mu$-a.e.~$\eta$ is stabilizable. \\
(b) If $\E_\mu [ \eta(0) ] > 2d - 1$, then $\mu$-a.e.~$\eta$ is not stabilizable.
\end{theorem}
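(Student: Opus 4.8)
The statement has two parts of quite different character; I will give a detailed plan for (b) and a sketch for (a). In both cases the first step is the same: stabilizability is a translation-invariant event, so by ergodicity of $\mu$ it has $\mu$-probability $0$ or $1$. Thus for (b) it suffices to derive a contradiction from the assumption that $\mu$-a.e.\ $\eta$ is stabilizable together with $\E_\mu[\eta(0)] > 2d-1$, and for (a) it suffices to show that, when $\E_\mu[\eta(0)] < d$, the minimal odometer of $\eta$ is finite $\mu$-a.s.

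For (b) the guiding idea is that (legal) stabilization conserves particle density, while a stable configuration has density at most $2d-1$. Assume $\mu$-a.e.\ $\eta$ is stabilizable and write $\rho = \E_\mu[\eta(0)]$; replacing $\eta$ by $\eta \wedge M$ (which is dominated by $\eta$, hence also stabilizable, and has density increasing to $\rho$) we may assume $2d-1 < \rho < \infty$. For such $\eta$ let $u = u_\eta$ be the minimal legal odometer of the infinite-volume stabilization: it is finite $\mu$-a.s., it is an equivariant function of $\eta$, and the stabilized configuration $\eta^\circ := \eta - \Delta u$ (with $\Delta$ the graph Laplacian of $\Z^d$) is a genuine stable sandpile, so $0 \le \eta^\circ \le 2d-1$. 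Hence $\Delta u(0) = \eta(0) - \eta^\circ(0)$ satisfies $-(2d-1) \le \Delta u(0) \le \eta(0)$, so it is $\mu$-integrable with $\E_\mu[\Delta u(0)] = \rho - \E_\mu[\eta^\circ(0)]$. The crucial step is to prove $\E_\mu[\Delta u(0)] \le 0$. I would do this by truncation: with $u^{(N)} := u \wedge N$, which is equivariant and bounded, translation invariance of $\mu$ gives $\E_\mu[\Delta u^{(N)}(0)] = 2d\,\E_\mu[u^{(N)}(0)] - \sum_{y \sim 0}\E_\mu[u^{(N)}(y)] = 0$ for every $N$. A short case analysis (according to whether $u(0) \le N$, and whether all neighbours satisfy $u(y) \le N$) shows the uniform bound $\Delta u^{(N)}(0) \ge -(2d-1)$; and since $u$ is a.s.\ finite, $\Delta u^{(N)}(0) \to \Delta u(0)$ $\mu$-a.s.\ (indeed it is eventually equal). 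Fatou's lemma applied to $\Delta u^{(N)}(0) + (2d-1) \ge 0$ then yields $\E_\mu[\Delta u(0)] + (2d-1) \le \liminf_N \E_\mu[\Delta u^{(N)}(0) + (2d-1)] = 2d-1$, i.e.\ $\E_\mu[\Delta u(0)] \le 0$. Therefore $\rho = \E_\mu[\eta^\circ(0)] + \E_\mu[\Delta u(0)] \le (2d-1) + 0$, contradicting $\rho > 2d-1$.

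For (a) one wants the opposite conclusion: an exhaustive toppling procedure in which every vertex topples finitely often. I would stabilize $\eta$ in the finite wired graphs on an exhausting sequence of boxes $B_n$ (Section~\ref{ssec:model-def}), obtaining finite odometers $u_n$; by a standard monotonicity argument $u_n$ is non-decreasing in $n$, so $u_n \uparrow u_\infty \in [0,\infty]^{\Z^d}$, and $\eta$ is stabilizable precisely when $u_\infty < \infty$ a.s. The substance of the theorem is then the quantitative claim that $u_\infty(0) < \infty$ $\mu$-a.s.\ whenever $\rho < d$; this is proved in \cite{FR05} and, in the sharper range stated here, in \cite{MQ05}, by a careful toppling strategy routing the density-$\rho$ excess outward efficiently, the threshold $d$ reflecting the $2d$ outgoing edges at each vertex.

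The main obstacle in (b), and the reason the ``density is conserved'' heuristic needs real work, is that one cannot simply invoke the mass-transport principle to get $\E_\mu[\Delta u(0)] = 0$: the odometer can fail to be integrable, $\E_\mu[u(0)] = +\infty$ (this occurs exactly when the naive identity would break), so the truncation-and-Fatou step above---or, equivalently, a finite-volume estimate on the boundary flux $\sum_{x \in B_n}\Delta u(x)$ across $\partial B_n$---is essential. In (a) the difficulty is instead constructive: exhibiting a genuinely finite odometer at low density. The gap between the thresholds $d$ and $2d-1$, neither expected to be sharp, is precisely the gap between what the conservation argument rules out and what the known constructions achieve, and closing it to the true critical density $\rho_c = \E_\nu[\eta(0)]$ remains open.
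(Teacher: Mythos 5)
The paper does not actually prove this theorem: it is stated with the citations \cite{FR05} and \cite{MQ05} and no argument is given, so there is no internal proof to compare against. Judged on its own terms, your part (b) is a correct and essentially complete proof, and it is the standard conservation-of-density argument from the cited literature. The two-case check that $\Delta u^{(N)}(0) \ge -(2d-1)$ uniformly in $N$ is right (if $u(0)\ge N$ the truncated Laplacian is $\ge 0$; if $u(0)<N$ it dominates $\Delta u(0)=\eta(0)-\eta^\circ(0)\ge -(2d-1)$ since $\eta(0)\ge 0$ and $\eta^\circ(0)\le 2d-1$); the identity $\E_\mu[\Delta u^{(N)}(0)]=0$ follows from translation invariance of $\mu$ and boundedness of the equivariant function $u^{(N)}$; and Fatou then gives $\E_\mu[\Delta u(0)]\le 0$, contradicting $\rho>2d-1$ after the harmless truncation $\eta\wedge M$. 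You do lean on two standard but nontrivial infinite-volume facts --- that for a stabilizable $\eta$ the exhaustive legal toppling sequences produce a well-defined finite odometer $u$ with $\eta-\Delta u$ stable, and that stabilizability is monotone in the configuration --- which should be isolated as lemmas; both follow from the least action principle of \cite{FLP10} together with the monotone finite-volume approximation you describe in part (a).

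Part (a), by contrast, is not proved: you reduce it to the claim that $u_\infty(0)<\infty$ $\mu$-a.s.\ when $\rho<d$ and then simply attribute that claim to \cite{FR05} and \cite{MQ05}. Since that claim is the entire content of part (a), this is a genuine gap in a self-contained proof --- though it is no less than what the survey itself provides. Finally, a small correction to your closing remark: as the paper notes immediately before the theorem (citing \cite{FLW10}), stabilizability is \emph{not} determined by the density alone, even for product measures, so the gap between the thresholds $d$ and $2d-1$ cannot be closed to a single critical density $\rho_c$; the remaining open problem is of a different nature.
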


The picture of stabilizability is more complete in dimension $d = 1$, 
since the upper and lower bounds in Theorem \ref{thm:stabilize}(a),(b) coincide.
Fey, Meester and Redig \cite{FMR09} determined what happens at the critical density.

\begin{theorem}\cite{FMR09}*{Theorem 3.2}
Let $\mu$ be a product measure on sandpiles on $\Z$ such that 
$\E_\mu [ \eta(0) ] = 1$ and $\mu [ \eta(0) = 0 ] > 0$. Then
$\mu$-a.e.~$\eta$ is not stabilizable. 
\end{theorem}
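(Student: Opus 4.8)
The plan is to show that for a one-dimensional product measure $\mu$ with $\E_\mu[\eta(0)] = 1$ and $\mu[\eta(0) = 0] > 0$, almost every configuration fails to stabilize. The natural tool is the \emph{odometer function} $v$ (the number of topplings at each site) together with the basic toppling identity
\eqnst
{ \eta^{\mathrm{final}}(x) = \eta(x) + v(x-1) - 2 v(x) + v(x+1), \quad x \in \Z. }
If $\eta$ were stabilizable with finite odometer $v$, then $\eta^{\mathrm{final}}$ is stable (so $\eta^{\mathrm{final}}(x) \in \{0,1\}$ for all $x$, since $\deg = 2$), and rearranging gives the second difference $v(x+1) - 2v(x) + v(x-1) = \eta^{\mathrm{final}}(x) - \eta(x)$. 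Summing this telescoping relation over a block $\{-N, \dots, N\}$ yields
\eqnst
{ \big( v(N+1) - v(N) \big) - \big( v(-N) - v(-N-1) \big)
  = \sum_{x=-N}^{N} \big( \eta^{\mathrm{final}}(x) - \eta(x) \big). }
By the ergodic theorem, $\frac{1}{2N+1}\sum_{x=-N}^N \eta(x) \to 1$ almost surely, while $\eta^{\mathrm{final}}(x) \le 1$ forces $\frac{1}{2N+1}\sum_{x=-N}^N \eta^{\mathrm{final}}(x) \le 1$. Hence the right-hand side above is $\le o(N)$ — in fact, I want to extract from the hypothesis $\mu[\eta(0)=0]>0$ that it is $\le -cN$ for some $c>0$ along a subsequence, or at least bounded above in a way incompatible with the left-hand side.

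The key step — and the main obstacle — is to rule out the left-hand side being consistently positive. First I would establish monotonicity/sign structure: because all topplings are legal and the initial configuration is $\ge 0$, one can use the abelian property (Theorem~\ref{thm:stabilization}) and a comparison argument to show that $v$ can be taken to be the \emph{minimal} odometer (least action principle), and that $x \mapsto v(x)$ cannot have an interior strict local minimum that is ``too deep'' without forcing topplings to propagate. The cleaner route in $d=1$: since $\mu[\eta(0)=0]>0$, ergodicity guarantees infinitely many sites $x$ to the right of the origin with $\eta(x)=0$; at such a site the final height $\eta^{\mathrm{final}}(x)\ge 0$ combined with $\eta(x)=0$ gives $v(x-1) - 2v(x) + v(x+1) \ge 0$, i.e. $v$ is ``mid-point convex'' at every empty site. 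A function on $\Z$ that is convex at a positive-density set of sites and linear-or-concave elsewhere, with the average ``slope increment'' controlled by the density computation above, must be unbounded — contradicting finiteness of $v$. I would make this rigorous by choosing two far-apart empty sites $a < 0 < b$ (possible a.s.), applying the summation identity on $[a,b]$, and showing the slope of $v$ must strictly increase across the interval by an amount bounded below in terms of the number of empty sites in $[a,b]$, which grows linearly; since $v \ge 0$ everywhere this is impossible on all of $\Z$.

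To organize the write-up: (1) record the toppling identity and reduce to a statement about the odometer; (2) invoke the least-action principle (the Exercise following Theorem~\ref{thm:stabilization}) to work with the minimal odometer and to guarantee the final configuration is stable; (3) prove the discrete-convexity-at-empty-sites lemma; (4) combine with the ergodic theorem applied to both $\eta$ and $\mathbf{1}_{\eta=0}$ to derive that $v$ is forced to grow at least linearly in both directions, contradicting $v < \infty$. The delicate point I expect to spend the most effort on is step (2)–(3): making sure that if stabilization \emph{were} possible the minimal odometer genuinely produces a \emph{stable} final configuration (so heights are capped at $1$) and that the convexity inequality at empty sites is not destroyed by boundary effects of a partial toppling sequence — this is where one must be careful that ``exhaustive'' toppling in the sense of the definition in Section~\ref{sec:infinite-conf} actually yields the limiting odometer. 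The rest is the ergodic averaging, which is routine.
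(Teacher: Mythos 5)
Your setup is the right one---the odometer $v$, the identity $\eta^{\mathrm{final}}(x)=\eta(x)+v(x+1)-2v(x)+v(x-1)$, telescoping to a statement about the discrete gradient $u(x)=v(x)-v(x-1)$, and the use of the least action principle to guarantee that a stabilizable $\eta$ admits a finite $v\ge 0$ with $\eta+\Delta v$ stable. But the key step (your point (3)) fails for two separate reasons. First, the claimed linear lower bound on the slope increment is false. Telescoping gives $u(b+1)-u(a)=\sum_{x=a}^{b}\bigl(\eta^{\mathrm{final}}(x)-\eta(x)\bigr)$; an empty site contributes $\eta^{\mathrm{final}}(x)\in\{0,1\}$, i.e.\ only $\ge 0$, not $\ge c>0$, and since $\E_\mu[\eta(0)]=1$ together with $\mu[\eta(0)=0]>0$ forces $\mu[\eta(0)\ge 2]>0$, there is also a positive density of sites contributing $\eta^{\mathrm{final}}(x)-\eta(x)\le -1$. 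By the strong law the whole sum is $(b-a)\bigl(\E[\eta^{\mathrm{final}}(0)]-1\bigr)+o(b-a)\le o(b-a)$: the slope does \emph{not} increase linearly. Second, even if it did, there would be no contradiction: a function with linearly increasing slope is convex, e.g.\ $v(x)=x^2$, which is nonnegative and finite at every site (and stabilizability only requires $v(x)<\infty$ pointwise, so ``$v$ unbounded'' is not a contradiction either). The direction incompatible with $v\ge 0$ is a \emph{decreasing} slope, which is exactly what your convexity-at-empty-sites observation cannot produce. Relatedly, your hope that $\mu[\eta(0)=0]>0$ makes the right-hand side $\le -cN$ conflates the initial and final configurations.

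The mechanism that actually works is the opposite inequality. Stability of the final configuration gives $u(x+1)-u(x)=\eta^{\mathrm{final}}(x)-\eta(x)\le 1-\eta(x)$, hence $u(N+1)\le u(1)-S_N$ with $S_N=\sum_{x=1}^{N}(\eta(x)-1)$ a mean-zero i.i.d.\ walk, nondegenerate precisely because $\mu[\eta(0)=0]>0$ --- this, and not a density of empty sites, is where that hypothesis enters. Summing once more, $v(M)\le v(1)+(M-1)u(1)-\sum_{N=1}^{M-1}S_N$, and the integrated walk $\sum_{N\le M}S_N$ has fluctuations of order $M^{3/2}$ of both signs (invariance principle plus a tail-event argument), so it exceeds any linear function infinitely often and forces $v(M)<0$ for some $M$: the desired contradiction, with no need to identify the density of $\eta^{\mathrm{final}}$ or the location of its zeros. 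Note that the survey only cites \cite{FMR09} for this statement and does not reproduce a proof; the argument just sketched is essentially the one in that reference.
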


\section*{Acknowledgements}
I thank Lionel Levine and Laurent Saloff-Coste for offering the opportunity
to give a course at the Summer School. I thank Mathav Murugan for being 
available to run two tutorials at short notice. The Summer School has been a 
very stimulating environment. I thank all participants for their 
questions, comments and feedback, that I have attempted to incorporate 
into this survey. I am grateful to an anonymous referee for useful
suggestions.


\begin{bibdiv}
\begin{biblist}[\resetbiblist{99}]

\bib{AJ04}{article}{
   author={Athreya, Siva R.},
   author={J{\'a}rai, Antal A.},
   title={Infinite volume limit for the stationary distribution of abelian
   sandpile models},
   journal={Comm. Math. Phys.},
   volume={249},
   date={2004},
   number={1},
   pages={197--213},
   issn={0010-3616},
   review={\MR{2077255 (2005m:82106)}},
}

\bib{BTW88}{article}{
   author={Bak, Per},
   author={Tang, Chao},
   author={Wiesenfeld, Kurt},
   title={Self-organized criticality},
   journal={Phys. Rev. A (3)},
   volume={38},
   date={1988},
   number={1},
   pages={364--374},
   issn={1050-2947},
   review={\MR{949160 (89g:58126)}},
}

\bib{BA91}{article}{
   author={Barsky, D. J.},
   author={Aizenman, M.},
   title={Percolation critical exponents under the triangle condition},
   journal={Ann. Probab.},
   volume={19},
   date={1991},
   number={4},
   pages={1520--1536},
   issn={0091-1798},
   review={\MR{1127713 (93b:60224)}},
}

\bib{BLPS}{article}{
   author={Benjamini, Itai},
   author={Lyons, Russell},
   author={Peres, Yuval},
   author={Schramm, Oded},
   title={Uniform spanning forests},
   journal={Ann. Probab.},
   volume={29},
   date={2001},
   number={1},
   pages={1--65},
   issn={0091-1798},
   review={\MR{1825141 (2003a:60015)}},
}

\bib{BHJ17}{article}{
   author={Bhupatiraju, Sandeep},
   author={Hanson, Jack},
   author={J\'arai, Antal A.},
   title={Inequalities for critical exponents in $d$-dimensional sandpiles},
   journal={Electron. J. Probab.},
   volume={22},
   date={2017},
   pages={Paper No. 85, 51},
   issn={1083-6489},
   review={\MR{3718713}},
}

\bib{BR02}{article}{
   author={Le Borgne, Yvan},
   author={Rossin, Dominique},
   title={On the identity of the sandpile group},
   note={LaCIM 2000 Conference on Combinatorics, Computer Science and
   Applications (Montreal, QC)},
   journal={Discrete Math.},
   volume={256},
   date={2002},
   number={3},
   pages={775--790},
   issn={0012-365X},
   review={\MR{1935788 (2003j:82054)}},
}

\bib{Bbook}{book}{
   author={Bollob{\'a}s, B{\'e}la},
   title={Modern graph theory},
   series={Graduate Texts in Mathematics},
   volume={184},
   publisher={Springer-Verlag},
   place={New York},
   date={1998},
   pages={xiv+394},
   isbn={0-387-98488-7},
   review={\MR{1633290 (99h:05001)}},
}

\bib{BH57}{article}{
   author={Broadbent, S. R.},
   author={Hammersley, J. M.},
   title={Percolation processes. I. Crystals and mazes},
   journal={Proc. Cambridge Philos. Soc.},
   volume={53},
   date={1957},
   pages={629--641},
   review={\MR{0091567 (19,989e)}},
}

\bib{BP93}{article}{
   author={Burton, Robert},
   author={Pemantle, Robin},
   title={Local characteristics, entropy and limit theorems for spanning
   trees and domino tilings via transfer-impedances},
   journal={Ann. Probab.},
   volume={21},
   date={1993},
   number={3},
   pages={1329--1371},
   issn={0091-1798},
   review={\MR{1235419 (94m:60019)}},
}

\bib{CS12}{article}{
   author={Caracciolo, Sergio},
   author={Sportiello, Andrea},
   title={Exact integration of height probabilities in the Abelian Sandpile
   model},
   journal={J. Stat. Mech. Theory Exp.},
   date={2012},
   number={9},
   pages={P09013, 14},
   issn={1742-5468},
   review={\MR{2994907}},
}

\bib{Dhar90}{article}{
   author={Dhar, Deepak},
   title={Self-organized critical state of sandpile automaton models},
   journal={Phys. Rev. Lett.},
   volume={64},
   date={1990},
   number={14},
   pages={1613--1616},
   issn={0031-9007},
   review={\MR{1044086 (90m:82053)}},
}

\bib{Dhar06}{article}{
   author={Dhar, Deepak},
   title={Theoretical studies of self-organized criticality},
   journal={Phys. A},
   volume={369},
   date={2006},
   number={1},
   pages={29--70},
   issn={0378-4371},
   review={\MR{2246566 (2007g:82042)}},
}

\bib{DM90}{article}{
   author={Dhar, Deepak},
   author={Majumdar, S. N.},
   title={Abelian sandpile model on the Bethe lattice},
   journal={J. Phys. A},
   volume={23},
   date={1990},
   number={19},
   pages={4333--4350},
   issn={0305-4470},
   review={\MR{1076905 (91m:82098)}},
}

\bib{DRSV95}{article}{
   author={Dhar, D.},
   author={Ruelle, P.},
   author={Sen, S.},
   author={Verma, D.-N.},
   title={Algebraic aspects of abelian sandpile models},
   journal={J. Phys. A},
   volume={28},
   date={1995},
   number={4},
   pages={805--831},
   issn={0305-4470},
   review={\MR{1326322}},
}

\bib{Durre}{article}{
   author={D{\"u}rre, Maximilian},
   title={Conformal covariance of the abelian sandpile height one field},
   journal={Stochastic Process. Appl.},
   volume={119},
   date={2009},
   number={9},
   pages={2725--2743},
   issn={0304-4149},
   review={\MR{2554026 (2011c:60311)}},
}

\bib{FLP10}{article}{
   author={Fey, Anne},
   author={Levine, Lionel},
   author={Peres, Yuval},
   title={Growth rates and explosions in sandpiles},
   journal={J. Stat. Phys.},
   volume={138},
   date={2010},
   number={1-3},
   pages={143--159},
   issn={0022-4715},
   review={\MR{2594895 (2011c:82051)}},
}

\bib{FLW10}{article}{
   author={Fey, Anne},
   author={Levine, Lionel},
   author={Wilson, David B.},
   title={Approach to criticality in sandpiles},
   journal={Phys. Rev. E (3)},
   volume={82},
   date={2010},
   number={3},
   pages={031121, 14},
   issn={1539-3755},
   review={\MR{2787987 (2012a:82060)}},
}

\bib{FMR09}{article}{
   author={Fey, Anne},
   author={Meester, Ronald},
   author={Redig, Frank},
   title={Stabilizability and percolation in the infinite volume sandpile
   model},
   journal={Ann. Probab.},
   volume={37},
   date={2009},
   number={2},
   pages={654--675},
   issn={0091-1798},
   review={\MR{2510019 (2010c:60289)}},
}

\bib{FR05}{article}{
   author={Fey-den Boer, Anne},
   author={Redig, Frank},
   title={Organized versus self-organized criticality in the abelian
   sandpile model},
   journal={Markov Process. Related Fields},
   volume={11},
   date={2005},
   number={3},
   pages={425--442},
   issn={1024-2953},
   review={\MR{2175021 (2006g:60136)}},
}

\bib{F13}{thesis}{
   author={Fitzner, Robert},
   title={Non-backtracking lace expansion},
   school={Technical University Eindhoven},
   type={PhD Thesis},
   year={2013},
}

\bib{vdHF}{article}{
   author={Fitzner, Robert},
   author={van der Hofstad, Remco},
   title={Mean-field behavior for nearest-neighbor percolation in $d>10$},
   journal={Electron. J. Probab.},
   volume={22},
   date={2017},
   pages={Paper No. 43, 65},
   issn={1083-6489},
   review={\MR{3646069}},
}

\bib{FU96}{article}{
   author={Fukai, Yasunari},
   author={Uchiyama, K{\^o}hei},
   title={Potential kernel for two-dimensional random walk},
   journal={Ann. Probab.},
   volume={24},
   date={1996},
   number={4},
   pages={1979--1992},
   issn={0091-1798},
   review={\MR{1415236 (97m:60098)}},
}

\bib{Grimmett}{book}{
   author={Grimmett, Geoffrey},
   title={Percolation},
   series={Grundlehren der Mathematischen Wissenschaften [Fundamental
   Principles of Mathematical Sciences]},
   volume={321},
   edition={2},
   publisher={Springer-Verlag},
   place={Berlin},
   date={1999},
   pages={xiv+444},
   isbn={3-540-64902-6},
   review={\MR{1707339 (2001a:60114)}},
}

\bib{Grbook2}{book}{
   author={Grimmett, Geoffrey},
   title={The random-cluster model},
   series={Grundlehren der Mathematischen Wissenschaften [Fundamental
   Principles of Mathematical Sciences]},
   volume={333},
   publisher={Springer-Verlag},
   place={Berlin},
   date={2006},
   pages={xiv+377},
   isbn={978-3-540-32890-2},
   isbn={3-540-32890-4},
   review={\MR{2243761 (2007m:60295)}},
}

\bib{Hagg95}{article}{
   author={H{\"a}ggstr{\"o}m, Olle},
   title={Random-cluster measures and uniform spanning trees},
   journal={Stochastic Process. Appl.},
   volume={59},
   date={1995},
   number={2},
   pages={267--275},
   issn={0304-4149},
   review={\MR{1357655 (97b:60170)}},
}

\bib{Hamm57a}{article}{
   author={Hammersley, J. M.},
   title={Percolation processes: Lower bounds for the critical probability},
   journal={Ann. Math. Statist.},
   volume={28},
   date={1957},
   pages={790--795},
   issn={0003-4851},
   review={\MR{0101564 (21 \#374)}},
}

\bib{Hamm59}{article}{
   author={Hammersley, J. M.},
   title={Bornes sup\'erieures de la probabilit\'e critique dans un
   processus de filtration},
   language={French},
   conference={
      title={Le calcul des probabilit\'es et ses applications. Paris, 15-20
      juillet 1958},
   },
   book={
      series={Colloques Internationaux du Centre National de la Recherche
   Scientifique, LXXXVII},
      publisher={Centre National de la Recherche Scientifique},
      place={Paris},
   },
   date={1959},
   pages={17--37},
   review={\MR{0105751 (21 \#4487)}},
}

\bib{HvdHS03}{article}{
   author={Hara, Takashi},
   author={van der Hofstad, Remco},
   author={Slade, Gordon},
   title={Critical two-point functions and the lace expansion for spread-out
   high-dimensional percolation and related models},
   journal={Ann. Probab.},
   volume={31},
   date={2003},
   number={1},
   pages={349--408},
   issn={0091-1798},
   review={\MR{1959796 (2005c:60130)}},
}

\bib{HS90}{article}{
   author={Hara, Takashi},
   author={Slade, Gordon},
   title={Mean-field critical behaviour for percolation in high dimensions},
   journal={Comm. Math. Phys.},
   volume={128},
   date={1990},
   number={2},
   pages={333--391},
   issn={0010-3616},
   review={\MR{1043524 (91a:82037)}},
}

\bib{HS00a}{article}{
   author={Hara, Takashi},
   author={Slade, Gordon},
   title={The scaling limit of the incipient infinite cluster in
   high-dimensional percolation. I. Critical exponents},
   journal={J. Statist. Phys.},
   volume={99},
   date={2000},
   number={5-6},
   pages={1075--1168},
   issn={0022-4715},
   review={\MR{1773141 (2001g:82053a)}},
}

\bib{HS00b}{article}{
   author={Hara, Takashi},
   author={Slade, Gordon},
   title={The scaling limit of the incipient infinite cluster in
   high-dimensional percolation. II. Integrated super-Brownian excursion},
   note={Probabilistic techniques in equilibrium and nonequilibrium
   statistical physics},
   journal={J. Math. Phys.},
   volume={41},
   date={2000},
   number={3},
   pages={1244--1293},
   issn={0022-2488},
   review={\MR{1757958 (2001g:82053b)}},
}

\bib{Harris}{article}{
   author={Harris, T. E.},
   title={A lower bound for the critical probability in a certain
     percolation process},
   journal={Proc. Cambridge Philos. Soc.},
   volume={56},
   date={1960},
   pages={13--20},
   review={\MR{0115221 (22 \#6023)}},
}

\bib{HLMPPW}{article}{
   author={Holroyd, Alexander E.},
   author={Levine, Lionel},
   author={M{\'e}sz{\'a}ros, Karola},
   author={Peres, Yuval},
   author={Propp, James},
   author={Wilson, David B.},
   title={Chip-firing and rotor-routing on directed graphs},
   conference={
      title={In and out of equilibrium. 2},
   },
   book={
      series={Progr. Probab.},
      volume={60},
      publisher={Birkh{\"a}user},
      place={Basel},
   },
   date={2008},
   pages={331--364},
   review={\MR{2477390 (2010f:82066)}},
}

\bib{HKPV06}{article}{
   author={Hough, J. Ben},
   author={Krishnapur, Manjunath},
   author={Peres, Yuval},
   author={Vir{\'a}g, B{\'a}lint},
   title={Determinantal processes and independence},
   journal={Probab. Surv.},
   volume={3},
   date={2006},
   pages={206--229},
   issn={1549-5787},
   review={\MR{2216966 (2006m:60068)}},
}

\bib{H18}{article}{
   author={Hutchcroft, Tom},
   title={Universality of high-dimensional spanning forests and sandpiles},
	 journal={ArXiv e-prints},
   date={2018},
	 eprint={1804.04120},
	 url={https://arxiv.org/abs/1804.04120}
}

\bib{IKP94}{article}{
   author={Ivashkevich, Eugene V.},
   author={Ktitarev, Dmitri V.},
   author={Priezzhev, Vyatcheslav B},
   title={Waves of topplings in an Abelian sandpile},
   journal={Phys. A},
   volume={209},
   number={3--4},
   pages={347--360},
   year={1994},
   issn={0378-4371},
}

\bib{IP98}{article}{
   author={Ivashkevich, E. V.},
   author={Priezzhev, Vyatcheslav B.},
   title={Introduction to the sandpile model},
   journal={Phys. A},
   volume={254},
   number={1--2},
   date={1998},
   pages={97--116},
   issn={0378-4371},
}

\bib{J12}{article}{
   author={J\'arai, Antal A.},
   title={Abelian sandpiles: an overview and results on certain transitive
   graphs},
   journal={Markov Process. Related Fields},
   volume={18},
   date={2012},
   number={1},
   pages={111--156},
   issn={1024-2953},
   review={\MR{2952022}},
}

\bib{JL07}{article}{
   author={J{\'a}rai, Antal A.},
   author={Lyons, Russell},
   title={Ladder sandpiles},
   journal={Markov Process. Related Fields},
   volume={13},
   date={2007},
   number={3},
   pages={493--518},
   issn={1024-2953},
   review={\MR{2357385 (2010c:82064)}},
}

\bib{JR08}{article}{
   author={J{\'a}rai, Antal A.},
   author={Redig, Frank},
   title={Infinite volume limit of the abelian sandpile model in dimensions
   $d\geq 3$},
   journal={Probab. Theory Related Fields},
   volume={141},
   date={2008},
   number={1-2},
   pages={181--212},
   issn={0178-8051},
   review={\MR{2372969 (2009c:60268)}},
}

\bib{JRS13}{article}{
   author={J\'arai, Antal A.},
   author={Redig, Frank},
   author={Saada, Ellen},
   title={Approaching criticality via the zero dissipation limit in the
   abelian avalanche model},
   journal={J. Stat. Phys.},
   volume={159},
   date={2015},
   number={6},
   pages={1369--1407},
   issn={0022-4715},
   review={\MR{3350375}},
}

\bib{JRS18}{article}{
   author={J\'{a}rai, Antal A.},
   author={Ruszel, Wioletta},
   author={Saada, Ellen},
   title={Mean-field avalanche size exponent for sandpiles on 
      Galton-Watson trees},
   journal={ArXiv e-prints},
   year={2018},
   eprint={1807.01809},
   url={https://arxiv.org/abs/1807.01809},
}

\bib{JW12}{article}{
   author={J\'arai, Antal A.},
   author={Werning, Nicol\'as},
   title={Minimal configurations and sandpile measures},
   journal={J. Theoret. Probab.},
   volume={27},
   date={2014},
   number={1},
   pages={153--167},
   issn={0894-9840},
   review={\MR{3174221}},
}

\bib{JPR06}{article}{
   author={Jeng, Monwhea},
   author={Piroux, Geoffroy},
   author={Ruelle, Philippe},
   title={Height variables in the Abelian sandpile model: 
     scaling fields and correlations},
   journal={J. Stat. Mech. Theory Exp.},
   date={2006},
   pages={P10015+63},
   issn={1742-5468},
}

\bib{Jensen}{book}{
   author={Jensen, Henrik Jeldtoft},
   title={Self-organized criticality},
   series={Cambridge Lecture Notes in Physics},
   volume={10},
   note={Emergent complex behavior in physical and biological systems},
   publisher={Cambridge University Press},
   place={Cambridge},
   date={1998},
   pages={xiv+153},
   isbn={0-521-48371-9},
   review={\MR{1689042 (2001d:92003)}},
}

\bib{KW14}{article}{
   author={Kassel, Adrien},
   author={Wilson, David B.},
   title={The looping rate and sandpile density of planar graphs},
   journal={Amer. Math. Monthly},
   volume={123},
   date={2016},
   number={1},
   pages={19--39},
   issn={0002-9890},
   review={\MR{3453533}},
}

\bib{KW11}{article}{
   author={Kenyon, Richard W.},
   author={Wilson, David B.},
   title={Spanning trees of graphs on surfaces and the intensity of
   loop-erased random walk on planar graphs},
   journal={J. Amer. Math. Soc.},
   volume={28},
   date={2015},
   number={4},
   pages={985--1030},
   issn={0894-0347},
   review={\MR{3369907}},
}

\bib{Kesten80}{article}{
   author={Kesten, Harry},
   title={The critical probability of bond percolation on the square lattice
   equals $\frac{1}{2}$},
   journal={Comm. Math. Phys.},
   volume={74},
   date={1980},
   number={1},
   pages={41--59},
   issn={0010-3616},
   review={\MR{575895 (82c:60179)}},
}

\bib{Kesten87}{article}{
   author={Kesten, Harry},
   title={Scaling relations for $2$D-percolation},
   journal={Comm. Math. Phys.},
   volume={109},
   date={1987},
   number={1},
   pages={109--156},
   issn={0010-3616},
   review={\MR{879034 (88k:60174)}},
}

\bib{KN11}{article}{
   author={Kozma, Gady},
   author={Nachmias, Asaf},
   title={Arm exponents in high dimensional percolation},
   journal={J. Amer. Math. Soc.},
   volume={24},
   date={2011},
   number={2},
   pages={375--409},
   issn={0894-0347},
   review={\MR{2748397 (2012a:60273)}},
}

\bib{KS04}{article}{
   author={Kozma, Gady},
   author={Schreiber, Ehud},
   title={An asymptotic expansion for the discrete harmonic potential},
   journal={Electron. J. Probab.},
   volume={9},
   date={2004},
   pages={no. 1, 1--17 (electronic)},
   issn={1083-6489},
   review={\MR{2041826 (2005f:60165)}},
}

\bib{LLbook}{book}{
   author={Lawler, Gregory F.},
   author={Limic, Vlada},
   title={Random walk: a modern introduction},
   series={Cambridge Studies in Advanced Mathematics},
   volume={123},
   publisher={Cambridge University Press},
   place={Cambridge},
   date={2010},
   pages={xii+364},
   isbn={978-0-521-51918-2},
   review={\MR{2677157 (2012a:60132)}},
}

\bib{LSW02}{article}{
   author={Lawler, Gregory F.},
   author={Schramm, Oded},
   author={Werner, Wendelin},
   title={One-arm exponent for critical 2D percolation},
   journal={Electron. J. Probab.},
   volume={7},
   date={2002},
   pages={no. 2, 13 pp. (electronic)},
   issn={1083-6489},
   review={\MR{1887622 (2002k:60204)}},
}

\bib{LP09}{article}{
   author={Levine, Lionel},
   author={Peres, Yuval},
   title={Strong spherical asymptotics for rotor-router aggregation and the
   divisible sandpile},
   journal={Potential Anal.},
   volume={30},
   date={2009},
   number={1},
   pages={1--27},
   issn={0926-2601},
   review={\MR{2465710 (2010d:60112)}},
}

\bib{LP13}{article}{
   author={Levine, Lionel},
   author={Peres, Yuval},
   title={The looping constant of $\mathbb{Z}^d$},
   journal={Random Structures Algorithms},
   volume={45},
   date={2014},
   number={1},
   pages={1--13},
   issn={1042-9832},
   review={\MR{3231081}},
}

\bib{LP17}{article}{
   author={Levine, Lionel},
   author={Peres, Yuval},
   title={Laplacian growth, sandpiles, and scaling limits},
   journal={Bull. Amer. Math. Soc. (N.S.)},
   volume={54},
   date={2017},
   number={3},
   pages={355--382},
   issn={0273-0979},
   review={\MR{3662912}},
   doi={10.1090/bull/1573},
}

\bib{LP10}{article}{
   author={Levine, Lionel},
   author={Propp, James},
   title={What is $\dots$ a sandpile?},
   journal={Notices Amer. Math. Soc.},
   volume={57},
   date={2010},
   number={8},
   pages={976--979},
   issn={0002-9920},
   review={\MR{2667495}},
}

\bib{LMS08}{article}{
   author={Lyons, Russell},
   author={Morris, Benjamin J.},
   author={Schramm, Oded},
   title={Ends in uniform spanning forests},
   journal={Electron. J. Probab.},
   volume={13},
   date={2008},
   pages={no. 58, 1702--1725},
   issn={1083-6489},
   review={\MR{2448128 (2010a:60031)}},
}

\bib{LPbook}{book}{
   author={Russell Lyons {\rm with} Yuval Peres},
   title={Probability on Trees and Networks},
   publisher={Cambridge University Press},
   status={In preparation. Current version available at \hfil\break 
     {\tt http://mypage.iu.edu/\string~rdlyons/}},
   date={2013}, 
}

\bib{MRS04}{article}{
   author={Maes, Christian},
   author={Redig, Frank},
   author={Saada, Ellen},
   title={The infinite volume limit of dissipative abelian sandpiles},
   journal={Comm. Math. Phys.},
   volume={244},
   date={2004},
   number={2},
   pages={395--417},
   issn={0010-3616},
   review={\MR{2031036 (2004k:82070)}},
}

\bib{MRS05}{article}{
   author={Maes, C.},
   author={Redig, F.},
   author={Saada, E.},
   title={Abelian sandpile models in infinite volume},
   journal={Sankhy\=a},
   volume={67},
   date={2005},
   number={4},
   pages={634--661},
   issn={0972-7671},
   review={\MR{2283006}},
}

\bib{MRSV00}{article}{
   author={Maes, Christian},
   author={Redig, Frank},
   author={Saada, Ellen},
   author={Van Moffaert, A.},
   title={On the thermodynamic limit for a one-dimensional sandpile process},
   journal={Markov Process. Related Fields},
   volume={6},
   date={2000},
   number={1},
   pages={1--21},
   issn={1024-2953},
   review={\MR{1758981 (2001k:60142)}},
}

\bib{MD91}{article}{
   author={Majumdar, S. N.},
   author={Dhar, D.},
   title={Height correlations in the Abelian sandpile model},
   journal={J. Phys. A},
   volume={24},
   date={1991},
   number={7},
   pages={L357--L362},
   issn={0305-4470},
}

\bib{MD92}{article}{
   author={Majumdar, S. N.},
   author={Dhar, D.},
   title={Equivalence between the Abelian sandpile model and the 
     $q \to 0$ limit of the Potts model},
   journal={Phys. A},
   volume={185},
   date={1992},
   number={1--4},
   pages={129--145},
   issn={0378-4371},
}   

\bib{Manna90}{article}{
   author={Manna, S. S.},
   title={Large-scale simulation of avalanche cluster distribution in sand pile model},
   journal={J. Statist. Phys.},
   volume={59},
   date={1990},
   number={1-2},
   pages={509--521},
   issn={0022-4715},
}

\bib{MQ05}{article}{
   author={Meester, Ronald},
   author={Quant, Corrie},
   title={Connections between `self-organised' and `classical' criticality},
   journal={Markov Process. Related Fields},
   volume={11},
   date={2005},
   number={2},
   pages={355--370},
   issn={1024-2953},
   review={\MR{2150148 (2006d:82054)}},
}

\bib{MRZ01}{article}{
   author={Meester, Ronald},
   author={Redig, Frank},
   author={Znamenski, Dmitri},
   title={The abelian sandpile: a mathematical introduction},
   journal={Markov Process. Related Fields},
   volume={7},
   date={2001},
   number={4},
   pages={509--523},
   issn={1024-2953},
   review={\MR{1893138 (2003f:60175)}},
}

\bib{ML97}{article}{
   author={Merino L{\'o}pez, Criel},
   title={Chip firing and the Tutte polynomial},
   journal={Ann. Comb.},
   volume={1},
   date={1997},
   number={3},
   pages={253--259},
   issn={0218-0006},
   review={\MR{1630779 (99k:90232)}},
}

\bib{PS13}{article}{
   author={Pegden, Wesley},
   author={Smart, Charles K.},
   title={Convergence of the Abelian sandpile},
   journal={Duke Math. J.},
   volume={162},
   date={2013},
   number={4},
   pages={627--642},
   issn={0012-7094},
   review={\MR{3039676}},
}

\bib{Pem91}{article}{
   author={Pemantle, Robin},
   title={Choosing a spanning tree for the integer lattice uniformly},
   journal={Ann. Probab.},
   volume={19},
   date={1991},
   number={4},
   pages={1559--1574},
   issn={0091-1798},
   review={\MR{1127715 (92g:60014)}},
}

\bib{PR05}{article}{
   author={Piroux, Geoffroy},
   author={Ruelle, Philippe},
   title={Logarithmic scaling for height variables in the Abelian sandpile model},
   journal={Phys. Lett. B}, 
   year={2005},
   volume={607},
   pages={188--196},
   issn={0370-2693},
}

\bib{PGPR10}{article}{
   author={Poghosyan, Vahagn S.},
   author={Grigorev, S. Y.},
   author={Priezzhev, Vyatcheslav B.},
   author={Ruelle, Philippe},
   title={Logarithmic two-point correlators in the abelian sandpile model},
   journal={J. Stat. Mech. Theory Exp.},
   date={2010},
   number={7},
   pages={P07025, 27},
   issn={1742-5468},
   review={\MR{2720344 (2012a:82026)}},
}

\bib{PP11}{article}{
   author={Poghosyan, Vahagn S.},
   author={Priezzhev, Vyatcheslav B.},
   title={The problem of predecessors on spanning trees},
   journal={Acta Polytechnica},
   year={2011},
   volume={51},
   number={2},
   issn={1210-2709},
}

\bib{PPR11}{article}{
   author={Poghosyan, Vahagn S.},
   author={Priezzhev, Vyatcheslav B.},
   author={Ruelle, Philippe},
   title={Return probability for the loop-erased random walk and 
     mean height in the Abelian sandpile model: a proof},
   journal={J. Stat. Mech. Theory Exp.},
   date={2011},
   pages={P10004+12},
   issn={1742-5468},
}

\bib{Pr94}{article}{
   author={Priezzhev, Vyatcheslav B.},
   title={Structure of two-dimensional sandpile. I. Height probabilities},
   journal={J. Statist. Phys.},
   volume={74},
   date={1994},
   number={5--6},
   pages={955--979},
   issn={0022-4715},
}

\bib{Pr00}{article}{
   author={Priezzhev, Vyatcheslav B.},
   title={The upper critical dimension of the abelian sandpile model},
   journal={J. Statist. Phys.},
   volume={98},
   date={2000},
   number={3--4},
   pages={667--684},
   issn={0022-4715},
   review={\MR{1749227 (2000m:82022)}},
}

\bib{PDDK96}{article}{
   author={Priezzhev, Vyatcheslav B.},
   author={Dhar, Deepak},
   author={Dhar, Abhishek},
   author={Krishnamurthy, Supriya},
   title={Eulerian walkers as a model of self-organized criticality},
   journal={Phys. Rev. Lett.},
   volume={77},
   date={1996},
   number={25},
   pages={5079--5082},
   issn={0031-9007},
}

\bib{RT09}{article}{
   author={R{\'a}th, Bal{\'a}zs},
   author={T{\'o}th, B{\'a}lint},
   title={Erd\H os-R\'enyi random graphs $+$ forest fires $=$ self-organized
   criticality},
   journal={Electron. J. Probab.},
   volume={14},
   date={2009},
   pages={no. 45, 1290--1327},
   issn={1083-6489},
   review={\MR{2511285 (2010h:60269)}},
}

\bib{Redig}{article}{
   author={Redig, Frank},
   title={Mathematical aspects of the abelian sandpile model},
   conference={
      title={Mathematical statistical physics},
   },
   book={
      publisher={Elsevier B. V., Amsterdam},
   },
   date={2006},
   pages={657--729},
   review={\MR{2581895 (2011g:60182)}},
}

\bib{S-C}{article}{
   author={Saloff-Coste, Laurent},
   title={Random walks on finite groups},
   conference={title={Probability on discrete structures},},
   book={
      series={Encyclopaedia Math. Sci.},
      volume={110},
      publisher={Springer, Berlin},
   },
   date={2004},
   pages={263--346},
   review={\MR{2023654}},
}

\bib{SV09}{article}{
   author={Schmidt, Klaus},
   author={Verbitskiy, Evgeny},
   title={Abelian sandpiles and the harmonic model},
   journal={Comm. Math. Phys.},
   volume={292},
   date={2009},
   number={3},
   pages={721--759},
   issn={0010-3616},
   review={\MR{2551792}},
   doi={10.1007/s00220-009-0884-3},
}

\bib{Smirnov01}{article}{
   author={Smirnov, Stanislav},
   title={Critical percolation in the plane: conformal invariance, Cardy's
   formula, scaling limits},
   language={English, with English and French summaries},
   journal={C. R. Acad. Sci. Paris S\'er. I Math.},
   volume={333},
   date={2001},
   number={3},
   pages={239--244},
   issn={0764-4442},
   review={\MR{1851632 (2002f:60193)}},
}

\bib{SW01}{article}{
   author={Smirnov, Stanislav},
   author={Werner, Wendelin},
   title={Critical exponents for two-dimensional percolation},
   journal={Math. Res. Lett.},
   volume={8},
   date={2001},
   number={5-6},
   pages={729--744},
   issn={1073-2780},
   review={\MR{1879816 (2003i:60173)}},
}

\bib{Spbook}{book}{
   author={Spitzer, Frank},
   title={Principles of random walk},
   edition={2},
   note={Graduate Texts in Mathematics, Vol. 34},
   publisher={Springer-Verlag},
   place={New York},
   date={1976},
   pages={xiii+408},
   review={\MR{0388547 (52 \#9383)}},
}

\bib{W96}{article}{
   author={Wilson, David Bruce},
   title={Generating random spanning trees more quickly than the cover time},
   conference={
      title={ Computing},
      address={Philadelphia, PA},
      date={1996},
   },
   book={
      publisher={ACM},
      place={New York},
   },
   date={1996},
   pages={296--303},
   review={\MR{1427525}},
}

\end{biblist}
\end{bibdiv}


\end{document}